\def\eqdefa{\buildrel\hbox{\footnotesize def}\over =}
\def\S{\mathbb{S}}
\def\T{\mathbb{T}}
\newcommand{\R}{\mathbb{R}}
\newcommand{\E}{\mathcal{E}}
\newcommand{\D}{\mathcal{D}}
\renewcommand{\L}{\mathcal{L}}
\newcommand{\ve}{\varepsilon}
\newcommand{\pa}{\partial}
\newcommand{\na}{\nabla}
\newcommand{\al}{\alpha}
\newcommand{\lam}{\lambda}
\renewcommand{\S}{\mathbb{S}}
\newcommand{\<}{\langle}
\renewcommand{\>}{\rangle}
\newcommand{\I}{\mathbf{I}}
\newcommand{\II}{\mathbf{I}_{\pm}}
\renewcommand{\P}{\mathbf{P}}
\newcommand{\PP}{\mathbf{P}_{\pm}}
\numberwithin{equation}{section}
\def\eqdefa{\buildrel\hbox{\footnotesize def}\over =}
\newcommand{\vertiii}[1]{{\left\vert\kern-0.25ex\left\vert\kern-0.25ex\left\vert #1 \right\vert\kern-0.25ex\right\vert\kern-0.25ex\right\vert}}
\newtheorem{thm}{Theorem}[section]
\newtheorem{cor}[thm]{Corollary}
\newtheorem{lem}[thm]{Lemma}
\newtheorem{rmk}[thm]{Remark}
\theoremstyle{remark} }
\begin{document}

\title[VPB system with polynomial perturbation near Maxwellian]
{The Vlasov-Poisson-Boltzmann/Landau system with polynomial perturbation near Maxwellian}

\author[C. Cao, D. Deng, L. Li]{Chuqi Cao, Dingqun Deng and Xingyu Li}
\address[Chuqi Cao]{Yau Mathematical Science Center and Beijing Institute of Mathematical Sciences and Applications, Tsinghua University\\
Beijing 100084,  P. R.  China.} \email{chuqicao@gmail.com}
\address[Dingqun Deng]{Department of Mathematics, Pohang University of Science and Technology, Pohang, Republic of Korea (South), ORCID: \href{https://orcid.org/0000-0001-9678-314X}{0000-0001-9678-314X}} \email{dingqun.deng@postech.ac.kr} \email{dingqun.deng@gmail.com}
\address[Xingyu Li] {Institut de Mathematiques, Universite de Toulouse III Paul Sabatier     118 Rte de Narbonne, 31400 Toulouse, France.} \email{xingyuli92@gmail.com}

\subjclass[2020]{35A23; 35J05; 35Q20.}
\keywords{Vlasov-Poisson-Boltzmann system, Vlasov-Poisson-Landau system, global existence, polynomial weighted space.}

\begin{abstract} 
	In this work, we consider the Vlasov-Poisson-Boltzmann system without angular cutoff and the Vlasov-Poisson-Landau system with Coulomb potential near a global Maxwellian $\mu$ in torus or union of cubes. We establish the global existence, uniqueness, and large-time behavior for solutions in a polynomial-weighted Sobolev space $H^2_{x, v}( \langle v \rangle^k)$ for some constant $k >0$. 
 For the domain union of cubes, We will consider the specular-reflection boundary condition and its high-order compatible specular boundary condition. The proof is based on an extra dissipation term generated from an improved semigroup method including the electrostatic field with the help of macroscopic estimates. 
\end{abstract}
\maketitle
\tableofcontents

\section{Introduction}

\subsection{Models and equations}
We consider the Vlasov-Poisson-Boltzmann (VPB) and Vlasov-Poisson-Landau (VPL) systems describing the motion of plasma particles of two species in 
bounded domain $\Omega\subseteq\mathbb R^3$ (cf. \cite{G6,G8}):
\begin{equation}
\label{vpb1}
\begin{aligned}
	&\pa_tF_+ + v\cdot \na_xF_+ - \na_x\phi\cdot\na_vF_+ = Q(F_+,F_+)+Q(F_-,F_+),\\
	&\pa_tF_- + v\cdot \na_xF_- + \na_x\phi\cdot\na_vF_- = Q(F_+,F_-)+Q(F_-,F_-),\\
	&F(0)=F_0,\quad E(0)=E_0,
\end{aligned}
\end{equation}
here $F(t, x, v) \ge 0$ is a distribution function of particles at time $t>0$ with position $x \in \Omega$ and velocity $v \in \R^3$, $E = -\nabla_x \phi$ is the self-consistent electrostatic field satisfying 
\begin{equation}
\label{vpb2}
-\Delta_x\phi = \int_{\R^3}(F_+-F_-)\,dv, \quad\int_{\Omega}\phi(t, x) dx =0.
\end{equation}
For the VPB system, the bilinear collision operator $Q$ acts only on the velocity variable $v$ given by 
\begin{align*}
 Q(G,F)(v)=\int_{\R^3}\int_{\mathbb{S}^2}B(v-v_*,\sigma )(G'_*F'-G_*F)d\sigma dv_*.
\end{align*}
Here we use the standard notation $F=F(v),$ $G_*=G(v_*)$, $F'=F(v')$, $G'_*=G(v'_*)$, where $v',v'_*$ are velocities of two particles after collision given by
\begin{align*}
v'=\frac{v+v_*}{2}+\frac{|v-v_*|}{2}\sigma, \quad v_*'=\frac{v+v_*}{2}-\frac{|v-v_*|}{2}\sigma, \quad \sigma\in\mathbb{S}^2.
\end{align*}
 This representation follows from the physical law of elastic collision:
\begin{align*}
  v+v_*=v'+v'_*,  \quad
  |v|^2+|v_*|^2=|v'|^2+|v'_*|^2.
\end{align*}
The nonnegative function $B(v-v_*,\sigma)$ is called the Boltzmann collision kernel. It depends only on relative velocity $|v-v_*|$ and the deviation angle $\theta$ through 
$\cos\theta\eqdefa \frac{v-v_*}{|v-v_*|}\cdot\sigma$.

In the present work, we consider the non-cutoff kernel $B$ 
  as the following. 
\begin{itemize}
  \item[$\mathbf{(A1).}$] 
  The Boltzmann kernel $B$ takes the form 
  $B(v-v_*,\sigma)=|v-v_*|^\gamma b(\frac{v-v_*}{|v-v_*|}\cdot\sigma )$,
where   $b$ is a nonnegative angular function.

  \item[$\mathbf{(A2).}$] $b(\cos \theta)$ is not locally integrable and there exists some constant $\mathcal{K}>0$, such that
\begin{align*}
  \mathcal{K}\theta^{-1-2s}\leq \sin\theta b(\cos\theta) \le \mathcal{K}^{-1}\theta^{-1-2s},~\mbox{with}~0<s<1.
\end{align*}

  \item[$\mathbf{(A3).}$]
  The parameter $\gamma$ and $s$ satisfy the condition $-3 < \gamma  \le 1, \quad 1/2 \le s \le 1, \quad \gamma+2s> -1$.

\item[$\mathbf{(A4).}$]  Without loss of generality, we may assume that $B(v-v_*,\sigma )$ is supported in the set $0\leq \theta \le \pi/2$, i.e.$\frac{v-v_*}{|v-v_*|}   \cdot  \sigma  \ge 0$. Otherwise, $B$ can be replaced by its symmetrized form:
\begin{align*}
\overline{B}(v-v_*,\sigma )=|v-v_*|^\gamma\Big(b(\frac{v-v_*}{|v-v_*|}\cdot\sigma) + b(\frac{v-v_*}{|v-v_*|}\cdot(-\sigma ))\Big)  \mathrm{1}_{\frac{v-v_*}{|v-v_*|}\cdot\sigma \ge 0},
\end{align*}
where $\mathrm{1}_A$ is the indicator function of the set $A$.
\end{itemize}

\begin{rmk} For inverse repulsive potential, we have $\gamma = \frac {p-5} {p-1}$ and $s = \frac 1 {p-1}$ with $p > 2$. It is worth noting that the condition $\gamma + 2s > - 1$ is met across the entire spectrum of the inverse power law model. Generally, the cases $\gamma > 0$,  $\gamma = 0$, and  $\gamma < 0$ correspond to so-called hard, Maxwellian, and soft potentials respectively.
\end{rmk}

For the case of the VPL system, $Q$ is the Landau collision operator given by 
	\begin{align*}
		Q(g, f)(v)
= \partial_i \int_{\R^3}  \phi^{ij} (v-v_*) (g_* \partial_j f - f  \partial_j g_*  )    dv_*,\quad\phi^{i j} (v) = |v|^{\gamma+2} \left( \delta_{ij} - \frac {v_i v_j}  {|v|^2}\right), \quad -3 \le \gamma \le 1.
	\end{align*}
We use the convention of summation for repeated indices, and the derivatives are in the velocity variable $\partial_i = \partial_{v_i}$. 
Define the notations $g_* = g(v_*), f = f(v), \partial_j g_* = \partial_{v_{*j}}g(v_*),  \partial_j f = \partial_{v_j} f(v)$.
Notice that
\begin{equation*}
 \partial_j \phi^{ij} (v) = -2|v|^\gamma v_i, \quad\partial_{ij} \phi^{ij}(v) = \left\{\begin{aligned}& -2(\gamma + 3) |v|^\gamma,\ \ \text{ if }  -3 < \gamma \le 1,\\
 	 & - 8 \pi \delta_0,   \qquad\qquad\text{ if }\gamma=-3,
 \end{aligned}\right.
\end{equation*}
where $\delta_0$ is the Dirac measure.
We call it {\it hard potential} if $\gamma\ge 0$, and {\it soft potential} if $\gamma\in[-3,0)$.

\subsection{Reformulation}
 We reformulate the VPB/VPL system near a global Maxwellian.
 For simplicity, we assume the initial data $F_0$ is normalized such that the equilibrium associated with \eqref{vpb1} will be the standard Gaussian function, i.e. $\mu(v)\eqdefa (2\pi)^{-3/2} e^{-|v|^2/2}$, 
 which enjoys the same mass, momentum, and energy as $F_0$.
 For the perturbation framework, we denote $F=[F_+,F_-]$ and let $f=[f_+,f_-]$ satisfies $F_\pm=\mu+f_\pm$. Then system  \eqref{vpb1} and \eqref{vpb2} become
\begin{equation}
	\left\{
	\begin{aligned}
		\label{vplocal1}
		&\partial_t f_\pm + v\cdot\nabla_x f_\pm \mp\nabla_x\phi\cdot\nabla_v f_\pm \pm \nabla_x\phi\cdot v\mu = Q(f_\pm+f_\mp,\mu)+ Q(2\mu+f_\pm+f_\mp, f_\pm),\\
		&-\Delta_x\phi=\int_{\mathbb R^3}{(f_+-f_-)dv},\quad\int_{\Omega}{\phi(x) dx}=0, f(0) = f_0,\quad \phi(0) = \phi_0.
	\end{aligned}\right.
\end{equation}
Note that $Q(\mu,\mu)=0$. 
We also denote linear operator $L=[L_+,L_-]$ and $\L=[\L_+,\L_-]$ by 
\begin{equation}\label{L}
	L_\pm f=2Q(\mu,f_\pm)+Q(f_\pm+f_\mp,\mu),\quad \L f = -v\cdot\nabla_x f \mp\na_x\phi\cdot v\mu+L f,
\end{equation}
where $\phi(x)$ is solved by the second equation of \eqref{vplocal1}. 
%
%
The kernel of $L$ on $L^2_v\times L^2_v$ is the span of $\big\{[1,0]\mu,[0,1]\mu,[1,1]v\mu,[1,1]|v|^2\mu\big\}$ (cf. \cite{G6}) and we define the projection of $L^2_v\times L^2_v$ onto $\ker L$ by 
\begin{equation}\label{P}
	\P f = \Big(a_+(t,x)[1,0]+a_-(t,x)[0,1]+v\cdot b(t,x)[1,1]+(|v|^2-3)c(t,x)[1,1]\Big)\mu,
\end{equation}
or equivalently by 
\[
\PP f = \Big(a_\pm(t,x)+v\cdot b(t,x)+(|v|^2-3)c(t,x)\Big)\mu,
\]
where function $a_\pm,b,c$ are given by 
\begin{equation}
	\begin{aligned}\label{abc}
		a_\pm &= \int_{\R^3} f_\pm dv,\quad 
		b_j= \frac{1}{2}\int_{\R^3} v_i (f_++f_-) dv , \quad 
		c=\int_{\R^3} \frac {|v|^2-3}{12}(f_++f_-) dv. 
	\end{aligned}
\end{equation}
Taking inner produce of the first equation of \eqref{vplocal1} with $1$ over $\R^3_v$, we have the continuity equation 
\begin{align}\label{19}
	\frac {\partial} {\partial t} \int_{\R^3}  f_\pm ( v) dv + \nabla_x \cdot \int_{\R^3} v f_\pm(v)  dv =0.
\end{align}

\subsection{Domains}
In this paper, we suppose the bounded domain $\Omega$ is either a torus or the union of finite cubes. 

\paragraph{\textbf{Torus}}
For the case of a torus, we set $\Omega = \T^3 = [-\pi,\pi]^3$. In this case, the solution $(F,\phi)$ to the VPB/VPL system \eqref{vpb1} with the initial data $F_0$ enjoys the conservation of mass, momentum, and the energy, i.e. 
\begin{equation}
	\begin{aligned}
		\label{conse1}
		&\frac d {dt} \int_{\Omega} \int_{  \R^3} F_+ dv dx =\frac d {dt} \int_{\Omega} \int_{  \R^3} F_- dv dx =0, \quad \frac d {dt}\int_{\Omega} \int_{  \R^3} v (F_++F_-) dv dx =0,\\
		&\frac d {dt} \int_{\Omega} \int_{  \R^3} |v|^2 (F_++F_-) dv dx+\frac {d} {dt}  \int_{\Omega} |\nabla_x \phi  (t, x)|^2 dx=0.
	\end{aligned}
\end{equation}
Suppose $f_0$ have the same mass, momentum and energy as $\mu$, then conservation laws  \eqref{conse1} yield
\begin{equation}
	\begin{aligned}
		\label{conse2.torus}
		&\int_{\Omega} \int_{ \R^3}  f_+ dv dx=\int_{\Omega} \int_{ \R^3}  f_- dv dx  =0,\quad\int_{\Omega} \int_{ \R^3} v (f_++f_-) dv dx = 0,\\
		&\int_{\Omega} \int_{ \R^3} |v|^2(f_++f_-) dv dx  +  \int_{\Omega} |\nabla_x \phi  (t, x)|^2 dx=0.
	\end{aligned}
\end{equation}

\paragraph{\textbf{Union of cubes}}
The second type of bounded domain we consider is a union of finitely many cubes, denoted as:
\begin{align}\label{Omega}
	\Omega=\cup_{i=1}^N\Omega_i,
\end{align}  
where each $\Omega_i$ is defined as a rectangular region: $\Omega_i = (a_{i,1},b_{i,1})\times(a_{i,2},b_{i,2})\times(a_{i,3},b_{i,3})$, with $a_{i,j},b_{i,j}\in\R$ satisfying $a_{i,j} < b_{i,j}$. The boundary of this domain, denoted as $\partial\Omega$, is composed of three types of boundaries, $\Gamma_i$ $(i=1,2,3)$, each of which is orthogonal to one of the coordinate axes $x_i$. These boundaries are further divided into connected sets. It is important to note that we assume that each $\Gamma_i$ has a non-zero spherical measure, while the boundaries of these sets themselves have zero spherical measure. Therefore, we do not distinguish between $\Gamma_i$ and their interiors.

On the interior of $\Gamma_i$ $(i=1,2,3)$, we have a unit normal outer vector $n(x)$ defined almost everywhere with respect to spherical measure. This vector takes the form $n(x) = e_i$ or $-e_i$, where $e_i$ is a unit vector with its $i$th component equal to 1. Additionally, we denote vectors $\tau_1(x)$ and $\tau_2(x)$ on the boundary $\partial\Omega$ such that $(n(x),\tau_1(x), \tau_2(x))$ forms a unit orthonormal basis for $\R^3$. Furthermore, for $j=1,2$, $\tau_j$ takes the values $e_k$ or $-e_k$ for some $k$.


The boundary of the phase space is denoted as $\gamma:=\{(x,v)\in\partial\Omega\times\R^3\}$.
We define $n=n(x)$ to represent the outward normal direction at $x\in\partial\Omega$. This allows us to partition $\gamma$ into three distinct sets:
\begin{align*}
	\gamma_- &= \{(x,v)\in\partial\Omega\times\R^3 : n(x)\cdot v<0\},\quad\text{(the incoming set),}\\
	\gamma_+ &= \{(x,v)\in\partial\Omega\times\R^3 : n(x)\cdot v>0\},\quad\text{(the outgoing set),}\\
	\gamma_0 &= \{(x,v)\in\partial\Omega\times\R^3 : n(x)\cdot v=0\},\quad\text{(the grazing set).}
\end{align*}
For these sets, we assume that $F(t,x,v)$ satisfies a {\it specular-reflection} boundary condition on $\gamma_-$, where $(x,v)\in\gamma$ is subject to the following reflection operation:
\begin{equation}\label{Rx}
	R_xv = v - 2n(x)(n(x)\cdot v). 
\end{equation}
This specular reflection condition for $f$ can be expressed as:
\begin{equation}\label{specular}\begin{aligned}
		f(t,x,R_xv) = f(t,x,v), \ \text{ on } \gamma_-. 
	\end{aligned}
\end{equation}Regarding the boundary condition for the electric potential $\phi$, we further assume that it satisfies a Neumann boundary condition:
\begin{align}\label{Neumann}
	\partial_{n}\phi=0, \ \text{ on }x\in \partial\Omega.  
\end{align}
In particular, the Poisson equation for potential $\phi$ becomes a pure Neumann boundary problem. To ensure the existence of a solution, we impose a zero-mean condition:
\begin{align*}
	\int_\Omega\int_{\R^3}(f_+-f_-)\,dvdx=0,\quad \text{ for } t\ge 0,
\end{align*}
which follows from the conservation laws \eqref{conservation_bounded}. Similar to \eqref{conse2.torus}, it's also well-known that the solution to \eqref{vplocal1} in the bounded domain $\Omega$ given by \eqref{Omega} satisfies the conservation laws on mass and energy. Specifically, the solution $f$ to \eqref{vplocal1} satisfies the following identities when the initial data $f_0$ satisfies them:
\begin{equation}\label{conservation_bounded}
	\begin{aligned}
		&\int_{\Omega\times\R^3}f_+(t)\,dvdx = \int_{\Omega\times\R^3}f_-(t)\,dvdx =0,
		\\
		&\int_{\Omega\times\R^3}(f_+(t)+f_-(t))|v|^2\,dvdx +\int_{\Omega}|\nabla_x \phi(t, x)|^2\,dx= 0.
	\end{aligned}
\end{equation}

\subsection{Notations}  Let us first introduce the function spaces and notations.
We let the multi-indices $\alpha$ and $\beta$ be $\alpha=[\alpha_1,\alpha_2,\alpha_3]$, $\beta=[\beta_1,\beta_2,\beta_3]$ and define $\partial^\alpha_{\beta}:=\partial^{\alpha_1}_{x_1}\partial^{\alpha_2}_{x_2}\partial^{\alpha_3}_{x_3}\partial^{\beta_1}_{v_1}\partial^{\beta_2}_{v_2}\partial^{\beta_3}_{v_3}$.

%
If each component of $\al'$ is not greater than that of the $\al$'s, we denote by $\al'\le\al$. 
$\al'<\al$ means $\al'\le\al$, and $|\al'|<|\al|$.
%
%
We write $a\lesssim b \ (a\gtrsim b)$ to indicate that there is a uniform constant $C$, which may be different on different lines, such that $a\le Cb \ (a\ge Cb)$. We use the notation $a\sim b$ if $a\lesssim b$ and $b\lesssim a$. We denote $a\gg b$ if $a,b$ are two constants such that $a>b$ and $a$ is sufficiently large. 

We denote $C_{a_1,a_2,\cdots, a_n}$ by a constant depending on parameters $a_1,a_2,\cdots, a_n$. Moreover, we use the parameter $\ve$ to represent different positive numbers much less than 1 and determined in different cases.
%
 We use $(f, g)=(f, g)_{L^2_v}$ to denote the inner product of $f, g$ over velocity variable for short and, $(f, g)_{L^2_{x, v}}$ to represent the inner product over both spatial and velocity variables. Also, we write 
\begin{align*}
(f, g)_{L^2_k}=(\langle v \rangle^{2k}f, g  )_{L^2_v},   \quad 	\|f(\theta)\|_{L^1_\theta} = \int_{\S^2}f(\theta) d\sigma = 2\pi\int_0^{\pi}   f(\theta)  \sin\theta  d\theta,
\end{align*}
and for any $p\in[1,\infty]$,  $\|f(t)\|_{L^p_T} = \|f(t)\|_{L^p([0,T])}$. Define the \textbf{Japanese brackets} by $\langle v\rangle :=(1+|v|^2)^{1/2}$.

%
%
 For linear operator $\L$, we write $S_\L$ as the semigroup generated by $\L$. For real numbers $m, l$, we define the weighted Sobolev norm $\|\cdot\|_{H_l^m}$ by $|f|_{ H^m_l}=|\langle v\rangle^l\langle D_v \rangle^m  f(v)|_{L^2_v}$, and $L^2_l=H^0_l$ for $m=0$.
here $a(D)$ is a pseudo-differential operator with the symbol $a(\xi)$, and is defined by 
\begin{align*}
(a(D)f)(v):= \frac{1} {(2\pi)^3}  \int_{\R^3}\int_{\R^3}e^{i(v-u) \xi }  a(\xi) f(u) du d\xi.
\end{align*}
%
The mixed norm $\|\cdot\|_{H^n_xH^m_l}$ is defined as
$\|f\|_{H^n_xH^m_l}:=\Big(\int_{\Omega}\|\langle D_x\rangle ^n f(x,\cdot)  \|^2_{H^m_l}dx\Big)^{1/2}$, and $H^0_xH^m_l=L^2_xH^m_l$. 
%
 The entropy $L\log L$ space is defined as $L\log L:=\Big\{f(v):\|f\|_{L\log L}=\int_{\R^3}|f|\log(1+|f|)dv\Big\}$.\\
For any $ k \in  \R$, $\gamma \in (-3, 1]$, we define
\[
\Vert f \Vert_{L^2_{k+\gamma/2, *}}^2 :=  \int_{\R^3} \int_{\R^3} \mu (v_*) |v-v_*|^\gamma |f(v)|^2 \langle v \rangle^{2k} dv dv_* \sim\Vert f \Vert_{L^2_{k+\gamma/2}}^2.
\]
For Boltzmann case, we denote the dissipation norm $L^2_{D,k}$ by $$\|f\|_{L^2_{D,k}} = \|f\|_{H^s_{k+\gamma/2}},\,\, \|f\|_{L^2_{D}}=\|f\|_{L^2_{D,0}}.$$
For Landau case, we denote the anisotropic norm $L^2_{D}(m)$ by  
$$\Vert f \Vert_{L^2_{D}(m) }  : = \Vert f \Vert_{L^2(m\langle v \rangle^{\gamma/2} )} +  \Vert \widetilde{\nabla}_v (mf )\Vert_{L^2(\langle v \rangle^{\gamma/2} )}$$
and for brevity, we let $L^2_{D, k} := L^2_{D}(\langle v \rangle^k)$ and $L^2_{D} := L^2_{D, 0}$.
Here $\widetilde{\nabla}_v$ is the anisotropic gradient given by 
\begin{equation*}
	\widetilde{\nabla}_v f :=  P_v \nabla_v f + \langle v \rangle (I-P_v) \nabla_v f, \quad P_v \xi := \left( \xi \cdot \frac {v} {|v|}          \right)\frac {v} {|v|} , \quad \forall~ \xi \in \mathbb R^3.
\end{equation*}

With multi-indices $(\al,\beta)$, we come to define our weight function with some constant $k$. For $-3\le\gamma\le 1$, we choose the weight function $w(\alpha, \beta) $ as
\begin{equation}
\label{functionw}
w(\alpha, \beta)  =  \left\{
\begin{aligned}
&\langle v \rangle^{k-  p|\alpha|  - q|\beta| + r}, 
\quad q = 6s -3(\gamma{{-1}}), \quad p =q + \gamma{{-1}}, r = 2q 
\quad\mbox{for Boltzmann case},
\\
&\langle v \rangle^{k - p|\alpha|  - q |\beta|+r} , \quad q =  3-(\gamma{{-1}}), \quad p =  3, r = 2q , \quad\mbox{for Landau case}.
\end{aligned}
\right. 
\end{equation}
Note that $w(\al,\beta)\ge 1$ for any $|\al|+|\beta|\le 2$ and any $k\ge 0$.
For brevity, we write  $w(|\alpha|, |\beta|)=w(\alpha, \beta) $ throughout the paper. 
Noticing $w^2(\alpha, \beta)$ is still of the form $\langle v \rangle^k$ for some $k>0$, we have 
\begin{align}\label{Cab1}
	\nabla_v (w^2(\alpha, \beta))  = A_{\al,\beta} \frac v {\langle v \rangle^2} w^2(\alpha, \beta),
\end{align}
here $A_{\al,\beta}$ depends on $\al,\beta, k,\gamma, s $. In this work, we will apply a useful space-velocity weight 
\begin{align}\label{eA}
	e^{\frac{\pm A_{\al,\beta}\phi}{\<v\>^2}}
\end{align} to eliminate the dissipation loss term.
Next, we define some useful norms in our analysis. For this, we denote constants by
\begin{align}\label{Cal}
	C_{|\al|,|\beta| } \gg C_{|\al|, |\beta_1|}, \  \hbox{ if }  |\beta_1| < |\beta|, \quad C_{|\al| +1,|\beta|-1 } \gg C_{|\al|, |\beta|},
\end{align}
 for any multi-indices $\al,\beta$.
Then for both the Boltzmann case and Landau case, we denote energy norms as
\begin{equation}\label{E}
	\Vert f \Vert_{X_k}^2 = \sum_{|\alpha| + |\beta| \le  2}C_{|\alpha|, |\beta|}\Vert e^{\frac{\pm A_{\al,\beta}\phi}{\<v\>^2}} w (\alpha, \beta)\partial^{\alpha}_{\beta} f  \Vert_{L^2_xL^2_v}^2,
\end{equation}
\begin{equation}\label{D}
	\Vert f \Vert_{Y_k}^2  = \sum_{|\alpha| + |\beta| \le  2}C_{|\alpha|, |\beta|} \Vert w (\alpha, \beta) \partial^{\alpha}_{\beta} f \Vert_{L^2_xL^2_{D}}^2.  
\end{equation}
We further define the ``instant energy functional" $\E_k(t)$ and ``dissipation energy functional" $\D_k(t)$ as
\begin{equation}
	\label{DefE}
	\E_k(t)\approx \|f\|^2_{X_k}+ \|\na_x\phi\|^2_{H^2_x}, 
\end{equation}
\begin{equation}
	\label{DefD}
	\D_k(t) := \|f\|^2_{Y_k} + \|\na_x\phi\|^2_{H^2_x},  
\end{equation}
where the explicit definition of $\E_k(t)$ is given in \eqref{realE}. \\

We fix the weight index $k_0$ by assuming $k_0\ge 14$ for the Boltzmann case, and $k_0\ge 7$ for the Landau case.

\medskip

\subsection{Main results} We may now state our main results.

\begin{thm}[Global existence, uniqueness and large-time decay]\label{globaldecay}
Consider the  Cauchy problem \eqref{vplocal1} for Vlasov-Poisson-Boltzmann/Landau system. Suppose $\gamma \in (-3, 1]$, $s \in [\frac 1 2, 1)$, $\gamma+2s>-1$ for the Boltzmann case and $\gamma \in [-3, 1]$ for the Landau case. 
Let $l=0$ for the hard potential case and $l>\frac{|\gamma|}{2}$ for the soft potential case. 
Then 
   there exist constants $\lam>0$, $M>0$ (small) such that,  for any 
   \begin{align*}
   k\ge k_0+4+l\  \text{ for  Landau case and }\ \ k\ge k_0+4+l \  \text{ for Boltzmann case.} 
   \end{align*}
  Remind $\E_k(t)$ is given in \eqref{DefE}. If the initial data $f_0$ satisfies $F_0(x,v)=\mu+f_0(x,v)\ge 0$, conservation laws \eqref{conse2.torus} for the case of torus (or \eqref{conservation_bounded} for the case of union of cubes) and 
   \begin{align} \label{small}
   	\E_{k_0+l}(0)\leq M \text{ and } \E_k(0)< + \infty,
   \end{align}
then there exists a unique solution $f(t,x,v)$ to \eqref{vplocal1} satisfying $F(t,x,v)=\mu+f(t,x,v)\ge 0$ such that $\sup_{0\le t\le T}\E_{k}(t)\le 2\E_k(0)$ for any $T>0$. Moreover, we have the following large-time asymptotic behavior: 
\begin{align*}
	\E_{k}(t)\le  e^{-\lam t}\E_k(0)\quad\text{for}\quad\gamma\ge0,  \quad\E_{k-l}(t) \lesssim (1+t)^{-\frac{2l}{|\gamma|}}\E_k(0)\quad\text{for}\quad\gamma<0.
\end{align*}

%

\end{thm}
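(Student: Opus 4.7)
\medskip
\noindent\textbf{Proof proposal.} The plan is to combine a local existence theorem (constructed iteratively via the linear semigroup $S_\L$ with polynomial weights) with a uniform a~priori estimate of the form
\begin{equation*}
\frac{d}{dt}\E_k(t)+\lambda\D_k(t)\le C\sqrt{\E_{k_0+l}(t)}\,\D_k(t),
\end{equation*}
which, under the smallness assumption \eqref{small} on the low-weight norm, yields global existence by a standard continuity argument. One then reads off the large-time behavior from the resulting differential inequality. The two-tiered structure in \eqref{small}—smallness only at index $k_0+l$ while $\|f_0\|_{X_k}$ may be large—enters precisely because the nonlinear bounds $C\sqrt{\E_{k_0+l}}\D_k$ never require smallness at the top weight.

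First I would derive the weighted energy identity for $\partial^\alpha_\beta f_\pm$ with $|\alpha|+|\beta|\le 2$, using the space-velocity weight $e^{\pm A_{\alpha,\beta}\phi/\langle v\rangle^2}w(\alpha,\beta)$. The point of the exponential factor is that, when we integrate against $\partial^\alpha_\beta f_\pm$ and integrate by parts in $v$, the bad contribution from $\mp\nabla_x\phi\cdot\nabla_v\partial^\alpha_\beta f_\pm$ meets $\nabla_v(w^2)=A_{\alpha,\beta}v\langle v\rangle^{-2}w^2$ (from \eqref{Cab1}) and produces a term $\mp A_{\alpha,\beta}\nabla_x\phi\cdot v\,\langle v\rangle^{-2}$, which is cancelled by the contribution of $\partial_t e^{\pm 2A_{\alpha,\beta}\phi/\langle v\rangle^2}$ together with the transport-in-$x$ piece $v\cdot\nabla_x(e^{\pm 2A_{\alpha,\beta}\phi/\langle v\rangle^2})$, after using the continuity equation \eqref{19} to trade $\partial_t\phi$ for $\nabla_x\cdot(\text{flux})$. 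For the collisional piece, I would invoke the coercivity of $L$ modulo $\ker L$ in $L^2_{D,k}$ (available in both the non-cutoff Boltzmann and Landau settings for the weights in \eqref{functionw}) to get $\|\{\I-\P\}\partial^\alpha_\beta f\|_{L^2_{D,k}}^2$ of the right sign, while the trilinear $(Q(f,f),w^2\partial^\alpha_\beta f)$ is handled by the standard Chen–Desvillettes–He / Guo-type weighted upper bounds, producing $\sqrt{\E_{k_0+l}}\D_k$ factors. The ordered choice of large constants $C_{|\alpha|,|\beta|}$ in \eqref{Cal} is needed to absorb $v$-derivative commutators by $x$-derivative dissipation.

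Second, the macroscopic part $\P f$ must be controlled via an estimate on $(a_\pm,b,c)$ in \eqref{abc}. I would expand \eqref{vplocal1} as a system of local conservation laws for these moments, exploit the Poisson equation \eqref{vpb2} together with the conservation laws \eqref{conse2} to recover $\|\nabla_x\phi\|_{H^3_x}$ from $a_+-a_-$, and use the projection $\mathbf\Pi$ to kill constants so that Poincaré is applicable. The resulting macroscopic estimate is of the form
\begin{equation*}
\|\P f\|_{H^2_xL^2_v}^2+\|\nabla_x\phi\|_{H^3_x}^2\lesssim \|\{\I-\P\}f\|_{Y_k}^2+\text{(nonlinear error)},
\end{equation*}
which, combined with the microscopic energy-dissipation relation above, closes on $\E_k\approx X_k+\|\nabla_x\phi\|_{H^3_x}^2$ and $\D_k$.

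Third, for the large-time decay: in the hard-potential case $\gamma\ge 0$ one has $\D_k\gtrsim\E_k$ directly from \eqref{functionw}, yielding Gr\"onwall's exponential decay $\E_k(t)\le e^{-\lambda t}\E_k(0)$. In the soft-potential case $\gamma<0$ the norms $X_k$ and $Y_k$ differ by a factor of $\langle v\rangle^{\gamma/2}$, so one loses weight to gain dissipation; here I would invoke the interpolation $\|f\|_{X_{k-l}}\lesssim\|f\|_{Y_k}^{\theta}\|f\|_{X_k}^{1-\theta}$ with $\theta=\frac{2l}{2l+|\gamma|}$, insert it into the energy inequality at the truncated level $k-l$, and obtain the algebraic rate $(1+t)^{-2l/|\gamma|}$ via a Nash-type ODE argument, using the a~priori bound $\sup_t\E_k(t)\lesssim\E_k(0)$. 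Positivity $F\ge 0$ follows from the usual maximum-principle/limit argument applied to the iteration scheme.

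The main obstacle is the first step: achieving \emph{enough} dissipation when the weight is only polynomial. Unlike the Gaussian setting, the collision operator is not elliptic on the entire polynomial-weighted space, so the coercivity estimate controls only $\|\{\I-\P\}\partial^\alpha_\beta f\|_{L^2_{D,k}}$ after modding out by $\ker L$, and the loss terms generated by $-p|\alpha|$ and $-q|\beta|$ in $w(\alpha,\beta)$ must be delicately absorbed. This is where the extra dissipation produced by the semigroup decomposition $L=A+B$ is essential, and where the carefully tuned exponents $p$, $q$, $r$ in \eqref{functionw} as well as the monotonicity \eqref{Cal} of the coefficients play their role. Handling the nonlinear upper bound for $Q(f,f)$ uniformly in both Boltzmann and Landau cases with the same weight scheme, while simultaneously controlling the electric-field-induced transport with the exponential weight $e^{\pm A_{\alpha,\beta}\phi/\langle v\rangle^2}$, will be the most delicate technical point.
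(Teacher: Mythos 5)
Your outline correctly identifies the three macro-blocks (weighted energy estimate with the exponential correction $e^{\pm A_{\alpha,\beta}\phi/\langle v\rangle^2}$, macroscopic estimate for $(a_\pm,b,c,\nabla_x\phi)$, and $k\mapsto k-l$ interpolation for the algebraic decay rate), and these do match the paper's Sections 3--4. However, there is a genuine gap at the exact point you flag as ``the main obstacle,'' and your proposed resolution is incorrect in one place and missing in another.

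First, the coercivity claim is wrong as stated. In a polynomial-weighted space the linearized operator is \emph{not} coercive modulo $\ker L$: the estimates actually available (Theorem~\ref{T25} for Boltzmann, \eqref{26b} for Landau) are of the form
\begin{equation*}
(Q(\mu,f),f\langle v\rangle^{2k})\le -\gamma_1\|f\|_{L^2_{D,k}}^2 + C_k\|f\|_{L^2_v}^2,
\end{equation*}
i.e.\ there is an \emph{unweighted, non-small} positive remainder $C_k\|f\|_{L^2_v}^2$, even after projecting out $\ker L$. Writing ``$\|\{\I-\P\}\partial^\alpha_\beta f\|_{L^2_{D,k}}^2$ of the right sign'' obscures precisely the term that cannot be absorbed by the dissipation.

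Second, and more importantly, you invoke ``the extra dissipation produced by the semigroup decomposition $L=A+B$'' but never say how to realize it as a dissipation term inside the energy estimate. The paper's key device is the auxiliary functional
\begin{equation*}
\vertiii{f}^2=\int_0^{+\infty}\|S_\L(\tau)\{\I-\P\}f\|_{L^2_xL^2_v}^2\,d\tau,
\end{equation*}
built from the already-known decay $\|S_\L(t)\{\I-\P\}f\|_{L^2_v}\lesssim\theta(t)\|\{\I-\P\}f\|_{L^2_{k}}$ on polynomial spaces (the factorization $L=A+B$ enters only to prove this semigroup decay, which is quoted from \cite{CHJ,CM}). Taking the associated $((\cdot,\cdot))$ inner product of the equation with $\partial^\alpha f$ produces the exact identity $((\L\partial^\alpha f,\partial^\alpha f))=-\|\{\I-\P\}\partial^\alpha f\|_{L^2_{x,v}}^2$, and this is the dissipation term that absorbs the bad $C_k\|\cdot\|_{L^2}^2$ remainder above; this is Lemma~\ref{L44} and it has no counterpart in your outline. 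A second consequence you omit is that when the Poisson term $\nabla_x\phi\cdot\nabla_v f$ is fed into $((\cdot,\cdot))$ one cannot integrate by parts in $v$; one must pay for the derivative through the smoothing bound $\|S_\L(t)g\|_{L^2}\lesssim t^{-1/2}e^{-\lambda t}\|g\|_{H^{-s}}$, and it is this step that forces the hypothesis $s\ge 1/2$ in the Boltzmann case, a constraint whose origin your proposal leaves unexplained. Until the $\vertiii{\cdot}$ functional and its differential inequality are in place, the energy estimate does not close, so the global-existence and decay conclusions do not follow from your argument as written.
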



\begin{rmk}              
  (1) It is crucial to emphasize that for global solutions with polynomial weight, the requirement for smallness of initial data is only in relation to a specific norm. In this context, we solely need $X_{k_0 + l }$ to be sufficiently small, rather than necessitating the general $X_k$ to be small.

(2) Notably, the cases involving the torus and the union of cubes exhibit similarities. In the case of the union of cubes, as defined in \eqref{Omega}, we exercise meticulous care when performing integration by parts. Conversely, integration by parts in the torus case is a straightforward process.

(3) In the scenario of a soft potential with $\gamma < 0$ and considering the non-cutoff Boltzmann equation, the work presented in \cite{CHJ} demonstrates that the optimal decay rate for the non-cutoff Boltzmann equation cannot exceed $k/|\gamma|$. Consequently, when the value of $k$ is sufficiently large, the decay rate articulated in Theorem \ref{globaldecay} can be regarded as approximately optimal.

(4) Our proof methodology can also be effectively employed to establish results for the high regularity space $H^{N}_{x, v} (\langle v \rangle^k)$.
\end{rmk}

\subsection{Strategies and ideas of the proof}  In this subsection, we provide a brief overview of relevant literature and outline the key strategies employed in proving our results. 

We begin by revisiting established findings in the context of the Landau and Boltzmann equations, with a particular focus on the aspects central to this paper—namely, the global existence and long-term behavior of solutions to spatially inhomogeneous equations within the perturbation framework. To shed light on the global solutions of the renormalized equation, particularly in the presence of substantial initial data, we draw attention to classic works such as \cite{DL, DL2, L, V2, V3, DV, AV}.

For insights into the smoothing effects in the Boltzmann equation without cut-off, we refer to \cite{CH, CH2}. Additionally, when considering the stability of vacuum states, we consult references \cite{L2, G5, C2}, which cover scenarios involving the Landau equation, the cutoff Boltzmann equation, and the non-cutoff Boltzmann equation, with a focus on moderate soft potentials.

These references provide the foundation upon which we build our exploration of spatially inhomogeneous equations and perturbation techniques to address the questions at hand.

\smallskip
In the context of the non-cutoff Boltzmann equation, prior research efforts like \cite{IMS, IMS2, IS, IS2, IS3, S} have achieved global regularity and elucidated long-term behavior under remarkably broad assumptions. They have demonstrated that global regularity and long-term behavior can be established by merely assuming uniform bounds in both time ($t$) and space ($x$), characterized by:
\begin{align*}
0<m_0 \le M(t, x) \le M_0, \quad E(t, x) \le E_0, \quad  H(t, x) \le H_0, 
\end{align*}
where $m_0$, $M_0$, $E_0$, and $H_0$ are positive constants, and $M(t, x)$, $E(t, x)$, and $H(t, x)$ are defined as:
\begin{align*}
M(t, x) =\int_{\R^3} f(t, x, v) dv, \, E(t, x) = \int_{\R^3} f(t, x, v) |v|^2 dv, \,  H(t, x) = \int_{\R^3} f(t, x, v) \ln f(t, x ,v) dv.
\end{align*}
Furthermore, in the case of the Landau equation, local Hölder estimates are established in \cite{GIMV}, while higher regularity properties of solutions are explored in \cite{HS} through the application of kinetic variants of Schauder estimates.

\smallskip
Subsequently, we turn our attention to the outcomes achieved within the perturbation framework. In the vicinity of the Maxwellian distribution, we find substantial progress in demonstrating the global existence and long-term behavior of solutions to spatially inhomogeneous equations. These achievements are well-documented, with \cite{G2, G3, SG, SG2} presenting results for the cutoff Boltzmann equation and \cite{G} addressing the Landau equation within this context.

For the non-cutoff Boltzmann equation, substantial contributions can be found in \cite{GS, GS2, AMUXY, AMUXY2, AMUXY3, AMUXY4}. A more recent perspective on this topic can be found in \cite{DLSS}, where recent developments are explored. Additionally, for insights into the theory of existence in bounded domains, references \cite{Guo2009, Guo2016, Guo2020} are valuable resources.

Furthermore, the non-cutoff case within the union of cubes is examined in \cite{Deng2021}. Importantly, all these works are founded on the following decomposition:
\begin{align*}
\partial_t f  + v \cdot \nabla_x f =L_{\mu} f + \Gamma(f, f), \quad L_\mu f =\frac 1 {\sqrt{\mu}}  Q(\sqrt{\mu} f, \mu  ) + \frac 1 {\sqrt{\mu}}  Q(\mu, \sqrt{\mu}  f  ) , 
\end{align*}
where
\begin{align*}
\quad \Gamma(g, f) = \frac 1 2 \frac 1 {\sqrt{\mu}} Q(\sqrt{\mu} g, \sqrt{\mu} f) + \frac 1 2\frac 1 {\sqrt{\mu}}  Q(\sqrt{\mu} f, \sqrt{\mu} g), 
\end{align*}
signifying that the solution is constructed within a $\mu^{-1/2}$ weighted space.

\smallskip
In the realm of inhomogeneous Boltzmann/Landau equations with polynomial weight perturbations near the Maxwellian distribution, Gualdani-Mischler-Mouhot (GMM) laid the foundation by first proving global existence and delineating the large-time behavior of solutions featuring polynomial velocity weights. Their groundbreaking work began with the cutoff Boltzmann equation involving a hard potential, as reported in \cite{GMM}. This method was subsequently extended to encompass the Landau equation, with significant contributions from \cite{CTW, CM}. For the non-cutoff Boltzmann equation, the achievements in the hard potential scenario were documented in \cite{HTT, AMSY}, while the soft potential case was addressed in \cite{CHJ}.

\smallskip
Moreover, it is noteworthy to acknowledge prior research on the Vlasov-Poisson/Maxwell-Boltzmann/Landau systems in proximity to Maxwellian distributions. For the cutoff hard sphere case, the Vlasov-Poisson/Maxwell-Boltzmann system has been proven in \cite{G6, G7, S2}, with \cite{DS} further exploring the optimal convergence rate. Various other cases under the cutoff assumption have been studied and can be found in \cite{DYZ, DYZ2, DLYZ}. For the Landau equation, pioneering results were established in \cite{G8} for the torus case and in \cite{SZ, W} for the entire space scenario, with additional insights provided by \cite{D}.

In the non-cutoff setting, research endeavors have been directed towards the Vlasov-Poisson-Boltzmann system \cite{DLiu, XXZ}, the Vlasov-Maxwell-Boltzmann system \cite{DLYZ2}, and investigations into the regularizing effect \cite{Deng2021b}. The implications of bounded domains are discussed in \cite{Cao2019, Dong2020, Deng2021e}. Importantly, all these endeavors are rooted in the following decomposition:
\begin{align*}
\partial_t f_\pm  + v \cdot \nabla_x f_\pm  \mp \nabla_x \phi \cdot \nabla_v f_\pm \pm \nabla_x \phi \cdot v \sqrt{\mu} \pm \frac 1 2 \nabla_x \phi \cdot v f =L_{\mu,\pm} f + \Gamma_\pm(f, f), 
\end{align*}
where
\begin{align}\label{L mu}
L_{\mu,_\pm} f = \frac 1 {\sqrt{\mu}}  Q(\sqrt{\mu} (f_\pm+f_\mp), \mu  ) + 2\frac 1 {\sqrt{\mu}} Q(\mu, \sqrt{\mu}f_\pm  ),  \,\, 
\end{align}
and
\[
\Gamma_\pm(f, f) = \frac 1 {\sqrt{\mu}}   Q(\sqrt{\mu} f_\pm, \sqrt{\mu} f_\pm) +\frac 1 {\sqrt{\mu}}  Q(\sqrt{\mu} f_\mp, \sqrt{\mu}f_\pm),
\]
demonstrating that these studies are conducted within a $\mu^{-1/2}$ weighted space. To the best of our knowledge, our research represents the inaugural effort to address these questions within the framework of polynomial-weighted spaces.

  \smallskip
The foundation of our results lies in the utilization of the semigroup method. This method, originally introduced by Gualdani-Mischler-Mouhot in \cite{GMM}, forms the basis of our work and has been further developed and expanded upon, particularly with respect to the macroscopic component $\P f$. The central concept of this method can be succinctly summarized as follows: Consider the case where $\gamma =0$ and the function space $H^2_xL^2_k$. Drawing from \cite{MS}, we arrive at the following inequality:
\begin{align*}
\sum_{\pm}(\L_\pm f, f_\pm)_{H^2_xL^2 (\mu^{-1/2})} \le -\lambda \Vert f \Vert_{H^2_x L^2 (\mu^{-1/2})}, \quad\text{ if } \P f =0, 
\end{align*}
where $\lambda>0$ serves as a crucial constant. Together with the macroscopic estimates from \cite{Deng2021e, GS} for $\P f$, we can deduce that  $\Vert S_{\L}(t) f_0 \Vert_{H^2_x L^2 (\mu^{-1/2})} \le e^{-\lambda t} \Vert f_0 \Vert_{H^2_x L^2 (\mu^{-1/2})}^2$. Next, for some $M, R>0$, define 
\begin{align*}
	&A_\pm = -v\cdot\na_x+L_\pm  -M\chi_R,\quad
	K_1  = M\chi_R, \quad B_\pm = -v\cdot\na_x+L_\pm,\quad
	K_2 = \pm\mu v\cdot\na_x\phi,
\end{align*}
where $\chi \in D(\R)$ is the truncation function  satisfying $1_{[-1,1]} \le \chi \le 1_{[-2,2]}$ and we denote $\chi_R(\cdot) := \chi(\cdot/R)$ for $R > 0$.
By the results in \cite{CHJ} we have
\begin{align*}
\sum_{\pm}(L_\pm f, f_\pm)_{H^2_x L^2_k} \le -C \Vert f \Vert_{H^2_x H^s_k}^2 +  C_k \Vert f \Vert_{H^2_x L^2_v}^2,
\end{align*}
Take $M, R>0$ large, we have $(Af, f)_{H^2_x L^2_k} \le -C \Vert f \Vert_{H^2_x L^2_k}^2$, which implies 
$\Vert S_A(t) f \Vert_{H^2_x L^2_k} \le e^{-\lambda t} \Vert f_0 \Vert_{H^2_x L^2_k }^2$. By Duhamel's formula $S_B =S_A +S_B * K_1 S_A$ and $S_\L =S_B +S_\L * K_2 S_B$, roughly speaking, we have
\begin{multline*}
\Vert S_{B}(t) \Vert_{H^2_x L^2_{k} \to H^2_x L^2_k }   \le \Vert S_A(t) \Vert_{H^2_x L^2_k \to H^2_x L^2_k} + \int_0^t \Vert S_B(s)  \Vert_{H^2_x L^2 (\mu^{-1/2}) \to H^2_x L^2 (\mu^{-1/2})} \\\times\Vert K_1 \Vert_{H^2_x L^2_k \to H^2_x L^2(\mu^{-1/2} ) }\Vert S_A(t-s)\Vert_{H^2_x L^2_k \to H^2_x L^2_k}ds \le C e^{-\lambda t}. 
\end{multline*}
and hence
\begin{multline*}
\Vert S_{\L}(t) \Vert_{H^2_x L^2_{k} \to H^2_x  L^2_k }   \le \Vert S_B(t) \Vert_{H^2_x L^2_k \to H^2_x L^2_k} + \int_0^t \Vert S_L(s)  \Vert_{H^2_x L^2 (\mu^{-1/2}) \to H^2_x L^2 (\mu^{-1/2})} \\\times\Vert K_2 \Vert_{H^2_x L^2_k \to H^2_x L^2(\mu^{-1/2} ) }\Vert S_B(t-s)\Vert_{H^2_x L^2_k \to H^2_x L^2_k}ds \le C e^{-\lambda t}. 
\end{multline*}
Thus, the rate of convergence for the linear operator $\L$ is established. To estimate the nonlinear part, we need to define a scalar product by
\[
((f, g))_k := (f, g)_{L^2_k} + \eta\int_0^{+\infty} (S_\L(\tau )f, S_\L(\tau) g )_{L^2_v} d\tau.
\] 
Due to the fact that
\begin{align*}
\int_0^{+\infty} (S_{\L}(\tau) \L f, S_{\L}(\tau) f ) d\tau = \int_0^{+\infty} \frac d {d\tau} \Vert S_{\L}(\tau) f \Vert_{L^2}^2 d\tau 
= - \Vert f \Vert_{L^2}^2, 
\end{align*}
so after choosing  proper $\eta$, we deduce that
\[
((\L f, f))_k = ( \L f, f)_{L^2_k} + \eta\int_0^\infty (S_\L (\tau ) L f, S_\L(\tau) f)_{L^2_v} d\tau \sim \|f\|_{H^s_{k}}^2.
\]
Therefore, it is important to note that the linear operator $\L$ can exhibit non-negativity properties within an appropriate function space. The estimate of $\L$ within this specific function space empowers us to establish global well-posedness by combining the nonlinear estimates.


The semigroup method serves as a valuable tool in addressing lower order terms. However, it introduces new challenging terms that require estimation, and this approach needs an assumption of $s \ge 1/2$. Precisely, when applying the semigroup method to analyze the Vlasov-Poisson term $\nabla_x \phi(x) \cdot \nabla_v f$, we encounter a departure from the usual case, where integration by parts yields the expected result
\begin{align*}
(\nabla_x \phi(x) \cdot \nabla_v f, f )=0.
\end{align*}
For the semigroup term, we have
\begin{align*}
	\int_0^\infty (S_{\L}(\tau)\nabla_x \phi(x) \cdot \nabla_v f , S_{\L}(\tau) f ) d\tau.
\end{align*} 
Since the operators $S_{\L}(t) $ and $\nabla_v $ are non-commutative,  integration by parts with respect to $\nabla_v$ is not allowed and we must use upper bound to constrain this term. By  \cite{CHJ}  and Duhamel's formula, we have
\begin{align*}
\Vert S_{B}(t) f \Vert_{H^2_xL^2_v} \le t^{-1/2}e^{-\lambda t}\Vert f \Vert_{H^2_xH^{-s}_{12}},\quad\text{and}\quad \Vert S_{\L}(t) f \Vert_{H^2_x L^2_v} \le t^{-1/2}e^{-\lambda t}\Vert f \Vert_{H^2_xH^{-s}_{12}}.
\end{align*}
Due to the extra $\nabla_v $ term, we need to use
\begin{align*}
\Vert S_{\L}(t) \nabla_x \phi(x) \cdot \nabla_v f \Vert_{H^2_xL^2_v} &\lesssim t^{-1/2}e^{-\lambda t} \Vert\nabla_x \phi(x) \Vert_{H^2_x} \Vert \nabla_v f \Vert_{H^2_x H^{-s}_{12}} \lesssim t^{-1/2}e^{-\lambda t} \Vert\nabla_x \phi(x) \Vert_{H^2_x}  \Vert f \Vert_{H^2_x H^s_{12}},
\end{align*}
which requires $s \ge 1/2$.


\smallskip
\subsubsection{Comments on the weight \eqref{eA}. }
The polynomial perturbation can be regarded as a change of variable $F=\mu+\langle  v \rangle^{-k} f_\pm$ for \eqref{vpb1} for some constant $k >0$. Then we have  
\begin{align*}
\partial_t f_\pm  + v \cdot \nabla_x f_\pm  \mp \nabla_x \phi \cdot \nabla_v f_\pm \pm  \nabla_x \phi \cdot v \mu \langle v \rangle^{2k}  \pm k  \nabla_x \phi \cdot \frac  v {\langle v \rangle^2 }f =L_{k,\pm} f + \Gamma_{k,\pm} (f, f), 
\end{align*}
with
\begin{align*}
L_{k,\pm} f = \langle v \rangle^k Q(\langle v \rangle^{-k} (f_\pm+f_\mp), \mu ) + 2\langle v \rangle^{k}  Q(\mu, \langle v \rangle^{-k} f_\pm), \quad\Gamma_{k,\pm}(f, f) = \langle v \rangle^{k}  Q(\langle v \rangle^{-k}(f_\pm+f_\mp), \langle v \rangle^{-k} f_\pm). 
\end{align*}
In order to deal with the extra term $\pm k  \nabla_x \phi \cdot \frac  v {\langle v \rangle^2 }f$, we follow the idea of \cite{G8} and introduce the corresponding weight function 
\begin{equation}
\label{weight function exponential k}
	\exp\Big\{\frac{\pm k\phi}{\<v\>^2}\Big\},
\end{equation}
which satisfies 
\[
v \cdot \nabla_x (  e^{\frac{\pm k\phi}{\<v\>^2}} f   ) =e^{\frac{\pm k\phi}{\<v\>^2}} (v \cdot \nabla_x  f  \pm k  \nabla_x \phi \cdot\frac{v}{\<v\>^2} f).
\]
Such choice of weight function allows us to absorb the extra term by taking integration by parts as  
\begin{align*}
	\Big(\pm k  \nabla_x \phi \cdot \frac  v {\langle v \rangle^2 }f, e^{\frac{\pm k\phi}{\<v\>^2}}f\Big)_{L^2_x L^2_v}
	+ \Big(v \cdot \nabla_x f_\pm, e^{\frac{\pm k\phi}{\<v\>^2}}f\Big)_{L^2_x L^2_v} = 0.
\end{align*}
In contrast to the approach in \cite{G8}, where the weight function was defined solely as $e^{-\phi}$, our weight function potential \eqref{weight function exponential k} exhibits dependence on the velocity variable as well. Fortunately, the newly introduced error contributions take the form of $ (\nabla_x \phi \cdot v \mu, (e^{\frac{\pm k\phi}{\<v\>^2}}-1) \langle v \rangle^{2k} f )$. These contributions can be effectively managed when $\phi$ is sufficiently  small, and the velocity derivative of $\phi$ remains under control, particularly if $\phi$ is bounded.

\subsubsection{Difference between the polynomial  case and the exponential weight case.} 
The primary distinction lies in the symmetry properties of  $L_\mu f$ and $\Gamma(f, f)$  as defined in \eqref{L mu}. In the exponential case, these operators exhibit symmetry, whereas in the polynomial case, they do not. This lack of symmetry gives rise to two significant challenges.
The first issue arises due to the introduction of an additional $\P f$ term in the linearized equation. To elaborate further, if we denote
\begin{align*}
\P_\mu f = a_\pm(t, x) \sqrt{\mu} +  b(t, x) \cdot v \sqrt{\mu}   +c(t, x) |v|^2  \sqrt{\mu}.
\end{align*}
Then by $L_\mu\P_{\mu}f = 0$ and $L_\mu$ is symmetric, we have
\begin{align*}
(L_\mu f , f ) = (L_{\mu} (\P_{\mu} f +\{\I-\P_{\mu}\} f ) , \P_{\mu} f + \{\I-\P_{\mu}\} f) = (L_{\mu}( \{\I-\P_{\mu}\}f) , \{\I-\P_{\mu}\} f). 
\end{align*}
Only the term with $\{\I-\P_{\mu}\} f$ remains. While for the polynomial weight case, since $L$ is not symmetric, one merely has 
\begin{align*}
(L f, f)   = (L \{\I-\P\} f, f) =  (L \{\I-\P\} f, \P f) +  (L \{\I-\P\} f, \{\I-\P\} f).
\end{align*}
An extra term $(L \{\I-\P\} f, \P f)$ occurs. Fortunately, we can bound it by
\begin{align*}
(L \{\I-\P\} f, \P f)  \le \epsilon \Vert \P f \Vert_{L^2_{x, v}}^2 +  \epsilon \Vert \{\I-\P\} f \Vert_{L^2_xL^2_k}^2 +     C_\epsilon  \Vert \{\I-\P\} f \Vert_{L^2_{x, v}} .
\end{align*}
Estimating the $\P f$ term can be accomplished through macroscopic estimates, while the last term can be assessed using the semigroup method. 

However, the second challenge arises when we attempt to extend our results to $x \in \R^3$ using existing techniques. In previous works such as \cite{S2, SZ}, applied to the entire space $x \in \R^3$, the following fact is relied upon: $(\Gamma(g, f), \P_\mu h) = 0$ due to the symmetry of $\Gamma$. Consequently, in \cite{S2, SZ}, the dissipation rate functional $D(t)$ is defined as follows:
\[
D(t) =   \sum_{ |\alpha| \le K} \Vert \nabla_x \partial^{\alpha} \phi \Vert_{L^2_x}^2  +  \sum_{1 \le |\alpha| \le K} \Vert  \P_\mu \partial^\alpha f\Vert_{L^2_{x, v}}^2 +  \sum_{0 \le |\alpha| +|\beta| \le K} \Vert w(\alpha, \beta)  \{\I-\P_\mu \} \partial^\alpha_\beta f  \partial^\alpha f\Vert_{Y_k}^2, 
\]
In the exponential case, the estimation of $\P_{\mu} f$ conveniently avoids complications. However, in the polynomial case, a departure from this simplicity occurs. This is because, in the polynomial case, the relationship $(Q(f, g), \mu \phi(v)) \neq 0$ holds for functions $\phi(v)$ such as $\phi(v) = 1, v, |v|^2$. Consequently, the methods employed in \cite{S2, SZ} are no longer applicable in the polynomial scenario. To address this, we necessitate the assumption that $\Omega$ represents a bounded domain and  Poincar\'e inequality to estimate $\P f$ via $\nabla_x (\P f)$. However, extending our results to the entire space case remains an open challenge.

\subsection{Organization of the paper}
In Section \ref{sec2}, we first recall some basic properties of the non-cutoff Boltzmann/Landau collision operator and then compute the basic estimates for the VPB/VPL system. We obtain corresponding global results and decay of solutions in Section \ref{sec4}.

\section{Basic estimates }
\label{sec2}

In this section, we recall some critical results on the non-cutoff Boltzmann equation and Landau equation and prove several weighted estimates for the VPB/VPL equation. 

\subsection{Preliminaries  on the Boltzmann/Landau equation} 

Before introducing the upper and lower bound for the non-cutoff  Boltzmann and Landau collision operator, we first state the following lemmas. 

\begin{lem}(\cite{C}, Lemma 2.5) \label{L53}
	  Let $\gamma \in (-2, 0)$. Then for any function $f$, we have
\begin{align}\label{21}
\sup_{v \in \R^3}\int_{\R^3} |v-v_*|^{\gamma} |f(v_*)|  dv_* \lesssim \Vert f \Vert_{L^1}^{1 + \frac { 2 \gamma} 3} \Vert f \Vert_{L^2}^{-\frac  { 2 \gamma} 3}\quad
\mbox{when}\quad \gamma \in (- \frac 3 2, 0),
\end{align}
and 
\begin{align}\label{22}
\sup_{v \in \R^3}\int_{\R^3} |v-v_*|^{\gamma} |f(v_*)| dv_* \lesssim \Vert f \Vert_{L^1}^{1 + \frac { \gamma} 2} \Vert f \Vert_{L^3}^{-\frac  {\gamma} 2} \quad
\mbox{when}\quad \gamma \in (-  2, 0).
\end{align}
\end{lem}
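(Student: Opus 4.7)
The plan is to establish both inequalities by the classical dyadic/thresholding technique: split the integral at a radius $R$ that will be optimized at the end. Write
\begin{equation*}
\int_{\R^3}|v-v_*|^{-\nu}|f(v_*)|\,dv_* = I_{\le R} + I_{> R},
\end{equation*}
where $I_{\le R}$ is the integral over $\{|v-v_*|\le R\}$ and $I_{>R}$ over its complement. The far part is handled trivially by
\begin{equation*}
I_{>R} \le R^{-\nu}\Vert f\Vert_{L^1},
\end{equation*}
for any $\nu>0$, while the near part is estimated by H\"older's inequality.

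For the bound \eqref{21} with $L^2$, I would pair $L^2$ with $L^2$, giving
\begin{equation*}
I_{\le R} \le \Bigl(\int_{|v-v_*|\le R}|v-v_*|^{-2\nu}\,dv_*\Bigr)^{1/2}\Vert f\Vert_{L^2}\lesssim R^{(3-2\nu)/2}\Vert f\Vert_{L^2},
\end{equation*}
valid precisely when $2\nu<3$, i.e.\ $\nu\in(0,3/2)$. Optimizing the sum $R^{(3-2\nu)/2}\Vert f\Vert_{L^2}+R^{-\nu}\Vert f\Vert_{L^1}$ in $R$ (the two terms balance when $R=(\Vert f\Vert_{L^1}/\Vert f\Vert_{L^2})^{2/3}$) produces the announced exponents $1-2\nu/3$ and $2\nu/3$.

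For \eqref{22} with $L^3$, I would use H\"older with exponents $(3,3/2)$:
\begin{equation*}
I_{\le R} \le \Bigl(\int_{|v-v_*|\le R}|v-v_*|^{-3\nu/2}\,dv_*\Bigr)^{2/3}\Vert f\Vert_{L^3}\lesssim R^{2-\nu}\Vert f\Vert_{L^3},
\end{equation*}
which is now admissible up to $3\nu/2<3$, i.e.\ $\nu\in(0,2)$. Balancing again with $R^{-\nu}\Vert f\Vert_{L^1}$ via $R=(\Vert f\Vert_{L^1}/\Vert f\Vert_{L^3})^{1/2}$ yields the exponents $1-\nu/2$ and $\nu/2$. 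The calculations are routine; the only subtle point is the sharp range of admissible $\nu$, which is dictated precisely by the integrability threshold of $|v-v_*|^{-\nu p'}$ near the origin, so the restriction $\nu<3/2$ in \eqref{21} (resp.\ $\nu<2$ in \eqref{22}) is structural and cannot be weakened by this argument. Since the final bound is independent of the center $v$, taking the supremum in $v$ is immediate.
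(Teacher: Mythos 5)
Your proof is correct. Note that the paper does not actually prove this lemma---it is imported from reference \cite{C}---so there is no in-paper argument to compare against. The method you use (split the integral at radius $R$, bound the far region by $R^{-\nu}\|f\|_{L^1}$, bound the near region by H\"older pairing $|v-v_*|^{-\nu}\in L^2$ with $f\in L^2$ in the first case and $|v-v_*|^{-\nu}\in L^{3/2}$ with $f\in L^3$ in the second, then optimize over $R$) is the standard proof of such Riesz-potential interpolation bounds. The exponent bookkeeping is right in both cases: the near-field power $R^{(3-2\nu)/2}$ requires $\nu<3/2$ and balancing yields $R=(\|f\|_{L^1}/\|f\|_{L^2})^{2/3}$ hence exponents $1-2\nu/3$, $2\nu/3$; the near-field power $R^{2-\nu}$ requires $\nu<2$ and balancing yields $R=(\|f\|_{L^1}/\|f\|_{L^3})^{1/2}$ hence exponents $1-\nu/2$, $\nu/2$. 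The uniformity in the center $v$ is clear since neither bound depends on $v$.
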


\begin{lem}\label{L54}
For any $\gamma \in [-3, 1]$ and any functions $f, g, h$, we have
\begin{align}\label{23}
\int_{\R^3}\int_{\R^3} |v-v_*|^{\gamma+2} f (v_*) g(v) h(v)dv_* dv\lesssim  \Vert f \Vert_{L^2_5}  \Vert g \Vert_{L^2}  \Vert h \Vert_{L^2_3},
\end{align}
\begin{align}\label{24}
\int_{\R^3} \int_{\R^3} |v-v_*|^{\gamma+1} f (v_*) g(v) h(v)dv_* dv\lesssim  \Vert f \Vert_{L^2_5}  \Vert g \Vert_{L^2}  \Vert h \Vert_{H^1_3},
\end{align}
and 
\begin{align}\label{25}
\int_{\R^3} \int_{\R^3} |v-v_*|^{\gamma+1} f (v_*) g(v) h(v)dv_* dv   \lesssim  \Vert f \Vert_{L^2_5}  \Vert g \Vert_{H^1_{\gamma/2 +1}}  \Vert h \Vert_{L^2_{\gamma/2}}.
\end{align}
If $\gamma \in (-3, 1]$, then we have
\begin{align}\label{26}
\int_{\R^3} \int_{\R^3} |v-v_*|^{\gamma} f (v_*) g^2 (v)dv_* dv   \lesssim  \Vert f \Vert_{L^2_5}  \Vert g \Vert_{H^1_{\gamma/2 }}^2.
\end{align}
\end{lem}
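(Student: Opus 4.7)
The plan is to establish each of (\ref{23})--(\ref{26}) by a common template: split the kernel $|v-v_*|^{\gamma+k}$ via the dichotomy $|v-v_*|\ge 1$ versus $|v-v_*|\le 1$, and treat the resulting two pieces with different tools. On the \emph{far} region, polynomial bounds and Cauchy--Schwarz suffice; on the \emph{near} region, one invokes the singular-kernel estimates of Lemma \ref{L53} together with Hardy's inequality or a Hardy--Littlewood--Sobolev (HLS) estimate.

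On the far region $\{|v-v_*|\ge 1\}$ the exponent is bounded above by $3$, so $|v-v_*|^{\gamma+k}\lesssim\langle v\rangle^3\langle v_*\rangle^3$. A Cauchy--Schwarz then separates the $v$- and $v_*$-variables, and the resulting $L^1_{v_*}$-weight on $f$ is absorbed into $\|f\|_{L^2_5}$ via $\|\langle v_*\rangle^3 f\|_{L^1}\lesssim \|f\|_{L^2_5}\|\langle v_*\rangle^{-2}\|_{L^2_{v_*}}$, the latter factor being finite in $\R^3$. On the near region $\{|v-v_*|\le 1\}$ one has $\langle v\rangle\sim\langle v_*\rangle$, so polynomial weights transfer freely between the two variables, and only the uniform control of the singular convolution remains.

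For (\ref{23}) the exponent $\gamma+2\ge-1$ is only mildly singular, and Cauchy--Schwarz in $v_*$ combined with (\ref{22}) of Lemma \ref{L53} (applied with $\nu=-(\gamma+2)<2$) closes the estimate in terms of $\|g\|_{L^2}\|h\|_{L^2_3}$. For (\ref{24}) the exponent can reach $-2$ at $\gamma=-3$, so I would Cauchy--Schwarz out the $g$-factor in $L^2_v$ and then bound the Riesz-type potential $|\cdot|^{\gamma+1}\mathbf{1}_{|\cdot|\le 1}*f$ in $L^3_v$ by $\|f\|_{L^{3/2}}$ via HLS, while using Sobolev embedding $H^1\hookrightarrow L^6$ to handle $h$; the weighted $L^{3/2}$-norm of $f$ is in turn dominated by $\|f\|_{L^2_5}$ through a further Hölder against $\langle v\rangle^{-\alpha}$ with $\alpha$ sufficiently large. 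Estimate (\ref{25}) is handled symmetrically, with the $H^1_{\gamma/2+1}$ regularity now residing on $g$ and Hardy's inequality $\int|v-v_*|^{-2}|g|^2\,dv\lesssim\|\nabla_v g\|_{L^2}^2$ absorbing the singular factor after the split $|v-v_*|^{\gamma+1}=|v-v_*|^{\gamma/2+1}\cdot|v-v_*|^{\gamma/2}$. Finally, (\ref{26}) is the classical Landau-type bound: since $\gamma>-3$, Lemma \ref{L53} (via (\ref{22}) with $\nu=-\gamma$) controls $\sup_v\int|v-v_*|^\gamma|f(v_*)|\,dv_*$, reducing matters to $\int\langle v\rangle^\gamma g(v)^2\,dv$, which is absorbed into $\|g\|_{H^1_{\gamma/2}}^2$ using Hardy on the singular piece.

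The main technical obstacle lies in the strong-soft-potential regime $\gamma\in[-3,-2]$ of (\ref{24})--(\ref{26}), where direct Cauchy--Schwarz fails and one must carefully orchestrate HLS, Hardy, and Sobolev with the truncation $\mathbf{1}_{|v-v_*|\le 1}$. Closing the argument there requires verifying that every auxiliary $L^p$-norm of $f$ with $p\in[1,3/2]$ arising in the estimates is dominated by $\|f\|_{L^2_5}$, which holds thanks to the integrability $\int_{\R^3}\langle v\rangle^{-\alpha}\,dv<\infty$ for $\alpha>3$; matching $p$ and $\alpha$ in each case is a bookkeeping exercise but is the only place where the precise weight $\langle v\rangle^5$ is actually used.
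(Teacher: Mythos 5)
Your near/far splitting with the weight transfer $\langle v\rangle\sim\langle v_*\rangle$ on the near region is a legitimate reorganization of the paper's argument, which instead splits on $\gamma$ and, in the difficult regime, factors the weight via $\langle v\rangle^{-(\gamma+1)}\lesssim\langle v_*\rangle^{-(\gamma+1)}\langle v-v_*\rangle^{-(\gamma+1)}$ before separating the singular part from the $O(1)$ part. Both organizations ultimately need the same tools (a uniform convolution bound on $f$, HLS, Hölder, Sobolev on the $H^1$-factor). However two specific steps in your sketch fail.

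First, for \eqref{23} you invoke \eqref{22}, which produces a $\|f\|_{L^3}$ factor; $L^3$ is a \emph{higher}-integrability norm and is not dominated by $\|f\|_{L^2_5}$ (concentrate $f$ on a small ball: the $L^3$ norm blows up while $\|f\|_{L^2_5}$ can stay small). The correct choice is \eqref{21}, which involves only $\|f\|_{L^1}$ and $\|f\|_{L^2}$ and applies since $\nu=-(\gamma+2)\in(0,1]\subset(0,3/2)$; this is what the paper uses. Second, for \eqref{25} the Cauchy--Schwarz split $|v-v_*|^{\gamma+1}=|v-v_*|^{\gamma/2+1}\cdot|v-v_*|^{\gamma/2}$ followed by Hardy on $g$ does not close: after Cauchy--Schwarz one is left with $\big(\iint|v-v_*|^{\gamma}|f||h|^2\big)^{1/2}$, which is exactly the left-hand side of \eqref{26} and therefore costs $\|h\|_{H^1_{\gamma/2}}$, whereas \eqref{25} only allows $\|h\|_{L^2_{\gamma/2}}$. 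The singularity cannot be shared with $h$ at all. The working route is the asymmetric one you already described for \eqref{24}: put the entire singular convolution onto $f$ via HLS (giving the convolution in $L^6$ from $\|f\|_{L^p}$ with $p<2$, then $\lesssim\|f\|_{L^2_5}$ by a weighted Hölder), use Sobolev $H^1_{\gamma/2+1}\hookrightarrow L^3$ on $g$, and keep $h$ in $L^2_{\gamma/2}$. Hardy on $g$ never enters — the $H^1$ regularity of $g$ is spent on Sobolev embedding, not on absorbing a $|v-v_*|^{-2}$ weight. This is precisely the paper's treatment of $T_2$ for $\gamma\in[-3,-2)$.
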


\begin{proof}
If $-2\le\gamma\le 1$, we deduce that
\begin{align*}
\int_{\R^3} |v-v_*|^{\gamma+2} f (v_*)  g (v) h(v)dv_* dv\lesssim  \Vert f \Vert_{L^1_3}  \Vert g \Vert_{L^2}  \Vert h \Vert_{L^2_3} \lesssim  \Vert f \Vert_{L^2_5}  \Vert g \Vert_{L^2}  \Vert h \Vert_{L^2_3}.
\end{align*}
If $-3\le\gamma<-2$, by \eqref{21}, we have $\sup_{v \in \R^3} \int_{\R^3} |v-v_*|^{\gamma+2} f (v_*) dv_*\lesssim  \Vert f \Vert_{L^2_2}$, which implies that 
\begin{align*}
\int_{\R^3} |v-v_*|^{\gamma+2} f (v_*) g (v) h(v)dv_* dv\lesssim   \Vert f \Vert_{L^2_2}   \int_{\R^3} |g ( v) h ( v)| dv  \lesssim   \Vert f \Vert_{L^2_2}  \Vert g \Vert_{L^2}  \Vert h \Vert_{L^2}. 
\end{align*}
This proves \eqref{23}.
 For \eqref{24},
 if $\gamma\ge -2$, then it reduce to the \eqref{23}. So we only need to consider the case $\gamma \in [-3, -2)$, in which case we have $- 3< \gamma + \frac 1 2 <0$ and $-2<\gamma + \frac 3 2  < 0$. Then we have 
 \begin{align*}
 	\int_{\R^3} \int_{\R^3} |v - v_*|^{\gamma +\frac 1 2} \frac {1} { \langle v_* \rangle^4}  |g(v)|^2  dv dv_*  \lesssim \Vert g \Vert_{L^2}^2.
 \end{align*}
Applying \eqref{22} to $h$, we have 
\begin{multline*}
	\int_{\R^3} \int_{\R^3} |v - v_*|^{\gamma+\frac 3 2}  \langle v_* \rangle^4     |f(v_*)|^2 | h(v)|^2 dv dv_*  
	\lesssim \Vert f \Vert_{L^2_2}^2  \||h|^2\|_{L^1}^{1-\frac{\gamma+3/2}{2}}\Vert |h|^2 \Vert^{\frac{\gamma+3/2}{2}}_{L^3 }\lesssim \Vert f\Vert_{L^2_2}^2  \Vert h \Vert_{H^1 }^2. 
\end{multline*}
Here we apply Sobolev embedding $\|\cdot\|_{L^6}\lesssim \|\cdot\|_{H^1}$. 
Thus, 
\begin{align*}
 &\quad\,   \int_{\R^3} \int_{\R^3} |v-v_*|^{\gamma+1} | f(v_*) | |g(v)| |h(v) |   dv dv_* 
\\
&\le  \Big(\int_{\R^3} \int_{\R^3} |v - v_*|^{\gamma+\frac 3 2}  \langle v_* \rangle^4     |f(v_*)|^2 | h(v)|^2 dv dv_*   \Big)^{\frac{1}{2}}\Big(\int_{\R^3} \int_{\R^3}   \frac {|v - v_*|^{\gamma + \frac 1 2}} { \langle v_* \rangle^4}  |g(v)|^2  dv dv_* \Big)^{\frac{1}{2}}\\
&\le \Vert f \Vert_{L^2_2}  \Vert g \Vert_{L^2}  \Vert h \Vert_{H^1}. 
\end{align*}
The proof of \eqref{24} is then finished. Now we come to \eqref{25}. If $-1\le\gamma \le 1$, we deduce that 
\begin{align*}
\int_{\R^3}\int_{\R^3} |v-v_*|^{\gamma + 1} f (v_*)  g (v) h(v)dv_* dv&\lesssim  \Vert f \Vert_{L^1_{\gamma+1}} \int_{\R^3} \langle v \rangle^ {\gamma + 1} | g (v) h(v)| dv\lesssim  \Vert f \Vert_{L^2_{4}}   \Vert g \Vert_{L^2_{\gamma/2+1}}  \Vert h \Vert_{L^2_{\gamma/2}}.
\end{align*}
If $-3\le\gamma<-1$, we have
$
\langle v \rangle^{ - (\gamma + 1)} \lesssim \langle v_* \rangle^{ - (\gamma + 1)} \langle v - v_* \rangle^{ - (\gamma + 1)} ,
$
which implies
\begin{equation*}\label{2t}
\begin{aligned}
&\quad\,\int_{\R^3} \int_{\R^3} |v-v_*|^{\gamma+1} f (v_*) g(v) h(v)dv_* dv   
\\
&\lesssim   \int_{\R^3} \int_{\R^3} \langle v-v_*\rangle^{-(\gamma + 1)}  |v-v_*|^{\gamma+1} f (v_*)  \langle v_*\rangle^{- (\gamma + 1 ) } |g(v) h(v)|    \langle v \rangle^{\gamma + 1  } dv_* dv 
\\
&\lesssim   \int_{\R^3} \int_{\R^3} ( 1+   |v-v_*|^{\gamma+1} ) f (v_*)  \langle v_*\rangle^{- (\gamma + 1 ) } |g(v) h(v) |   \langle v \rangle^{\gamma + 1  } dv_* dv  := T_1 +  T_2.
\end{aligned}
\end{equation*}
It is easily seen that $T_1 \lesssim  \Vert f \Vert_{ L^1_{-(\gamma+1)} }  \Vert g \Vert_{L^2_{\gamma/2 +1}}  \Vert h \Vert_{L^2_{\gamma/2}}  \lesssim  \Vert f \Vert_{ L^2_5   }  \Vert g \Vert_{L^2_{\gamma/2 +1}}  \Vert h \Vert_{L^2_{\gamma/2}}$. If $\gamma \in [-2, -1)$, we have $\gamma +1 \in(-\frac 3 2,0)$ and by \eqref{21}, 
\begin{equation*}
\begin{aligned}
T_2 &\lesssim \sup_{v \in \R^d} \int_{\R^3}   |v-v_*|^{\gamma+1}| f (v_*) | \langle v_*  \rangle^{ -(\gamma + 1 )} dv_*   \int_{\R^3}|g(v) h(v)|    \langle v \rangle^{\gamma + 1  } dv \lesssim  \Vert f \Vert_{L^2_5}  \Vert g \Vert_{H^1_{\gamma/2 +1}}  \Vert h \Vert_{L^2_{\gamma/2}}.
\end{aligned}
\end{equation*}
If $\gamma \in [-3, -2)$, by Hardy-Littlewood-Sobolev (HLS) inequality (cf. \cite[Theorem 1.1, pp. 119]{Stein1971}) and H\"{o}lder's inequality, we have
\begin{multline*}
T_2 \lesssim \Big\|\int_{\R^3}   |v-v_*|^{\gamma+1} f (v_*)  \langle v_*  \rangle^{ -(\gamma + 1 )} dv_*\Big\|_{L^6_v}\|h g \langle  v \rangle^{\gamma+1} \Vert_{L^{\frac 6 5}}\\
\lesssim  \Vert f \Vert_{L^p_{- (\gamma + 1 ) }}  \Vert g \Vert_{L^3_{\gamma/2 +1}}  \Vert h \Vert_{L^2_{\gamma/2}}\lesssim \Vert f \Vert_{L^2_5}  \Vert g \Vert_{H^1_{\gamma/2 +1}}  \Vert h \Vert_{L^2_{\gamma/2}},
\end{multline*}
with  $p  = \frac {6} {9+2\gamma} \in [1, 2]$, and \eqref{25} is proved. Finally, we come to prove \eqref{26}. For $\gamma  \in [-2, 1]$, \eqref{26} can be seen as a special case of \eqref{25}. We focus on the case $\gamma \in (-3, -2)$. Similarly to \eqref{2t}, we have
\begin{equation*}
\begin{aligned}
&\quad\,\int_{\R^3} \int_{\R^3} |v-v_*|^{\gamma} |f (v_*)| |g(v)|^2 dv_* dv   
\lesssim   \int_{\R^3} \int_{\R^3} ( 1+   |v-v_*|^{\gamma} ) |f (v_*)|  \langle v_*\rangle^{- \gamma   } |g(v)|^2   \langle v \rangle^{\gamma  } dv_* dv  := I_1 +  I_2.
\end{aligned}
\end{equation*}
It is seen that $I_1 \lesssim  \Vert f \Vert_{ L^1_{-\gamma} }  \Vert g \Vert_{L^2_{\gamma/2 }}^2   \lesssim  \Vert f \Vert_{ L^2_5   }  \Vert g \Vert_{L^2_{\gamma/2 }}^2$. For $I_2$, by HLS inequality as above, we have
\begin{equation*}
\begin{aligned}
I_2 &\lesssim \Vert f \Vert_{L^2_{- \gamma   }}\Big\|\int_{\R^3}  |v-v_*|^{\gamma}  |g(v)|^2 \langle v \rangle^{\gamma }  dv \Big\|_{L^2_{v_*}}
\lesssim \Vert f \Vert_{L^2_{- \gamma   }}  \Vert |g|^2 \langle  v \rangle^{\gamma} \Vert_{L^{p}}
\lesssim  \Vert f \Vert_{L^2_5}  \Vert g \Vert_{L^{2p}_{\gamma/2 }}^2 \lesssim \Vert f \Vert_{L^2_5}  \Vert g \Vert_{H^1_{\gamma/2}}^2,
\end{aligned}
\end{equation*}
where $p = \frac {6} {9+2\gamma}$ satisfies $2p \in [2, 6]$. This gives \eqref{26} and the proof of Theorem \ref{L54} is finished. 
\end{proof}

We also recall some basic interpolation on $x$. 
\begin{lem}
For any function $f$ and constant $ k \in \R$, we have $
\Vert f g \Vert_{H^2_x} \lesssim \min_{c + d = 2} \{ \Vert f \Vert_{H^c_x} \Vert g \Vert_{H^d_x}  \}.
$ More precisely, 
\begin{align}
\label{Linfty}
\Vert f \Vert_{L^\infty_x L^2_v} \lesssim \Vert f \langle v \rangle^{k} \Vert_{H^1_{x}L^2_v}+ \Vert f \langle v \rangle^{-k}  \Vert_{H^2_xL^2_v},
\end{align}
\begin{equation}
\label{L3}
\Vert f  \Vert_{L^3_xL^2_v}  \lesssim\Vert f \langle v \rangle^{k} \Vert_{L^2_{x}L^2_v}+ \Vert f \langle v \rangle^{-k} \Vert_{H^1_xL^2_v},
\end{equation}
and for any constant $s \in (0, 1)$
\begin{equation}
\label{Ls}
\Vert f   \Vert_{H^1_v}
 \lesssim  \Vert f \langle v \rangle^{k}  \Vert_{H^s_v} + \Vert f \langle v \rangle^{ -k s /(1-s)} \Vert_{H^{1+s}_v}. 
\end{equation}
\end{lem}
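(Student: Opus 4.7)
The plan is to treat the four sub-estimates separately; each reduces to a classical Sobolev or Gagliardo--Nirenberg embedding on $\T^3$ combined with a Hölder-type interpolation in the velocity weight.

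For the product estimate $\|fg\|_{H^2_x}\lesssim\min_{c+d=2}\|f\|_{H^c_x}\|g\|_{H^d_x}$, I would expand via the Leibniz rule, $\partial^{\alpha}_x(fg)=\sum_{\alpha'\le\alpha}\binom{\alpha}{\alpha'}\partial^{\alpha'}_x f\,\partial^{\alpha-\alpha'}_x g$, and for each splitting $(c,d)\in\{(0,2),(1,1),(2,0)\}$ estimate every summand by Hölder together with the three-dimensional embeddings $H^2(\T^3)\hookrightarrow L^\infty$ and $H^1(\T^3)\hookrightarrow L^6$, so that derivatives of total order two are distributed as $c$ on one factor and $d$ on the other while any leftover factor sits in $L^\infty_x$ or $L^6_x$.

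For the mixed weighted Sobolev estimates \eqref{Linfty} and \eqref{L3}, setting $u(x):=\|f(x,\cdot)\|_{L^2_v}$ and using the pointwise differentiation bound $|\nabla^j_x u(x)|\le\|\nabla^j_x f(x,\cdot)\|_{L^2_v}$, I can pull Sobolev inequalities in $x$ through the $L^2_v$-norm. The Gagliardo--Nirenberg inequalities in dimension three, namely $\|u\|_{L^\infty_x}^2\lesssim\|u\|_{H^1_x}\|u\|_{H^2_x}$ for \eqref{Linfty} and $\|u\|_{L^3_x}\lesssim\|u\|_{L^2_x}^{1/2}\|u\|_{H^1_x}^{1/2}$ for \eqref{L3}, yield
\[\|f\|_{L^\infty_xL^2_v}^2\lesssim\|f\|_{H^1_xL^2_v}\|f\|_{H^2_xL^2_v},\qquad \|f\|_{L^3_xL^2_v}\lesssim\|f\|_{L^2_xL^2_v}^{1/2}\|f\|_{H^1_xL^2_v}^{1/2}.\]
The weights in $v$ are then introduced by a Hölder-in-$v$ interpolation of the form $\|f\|_{L^2_v}^{2}\le\|f\langle v\rangle^{a}\|_{L^2_v}^{2\theta}\|f\langle v\rangle^{b}\|_{L^2_v}^{2(1-\theta)}$ whenever $a\theta+b(1-\theta)=0$, applied to $\partial^{\alpha}_x f$ for each $|\alpha|\le 2$ and followed by Hölder in $x$. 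For \eqref{L3} the symmetric choice $(a,b,\theta)=(k,-k,\tfrac12)$ combined with AM--GM produces the bound $\|f\langle v\rangle^k\|_{L^2_xL^2_v}+\|f\langle v\rangle^{-k}\|_{H^1_xL^2_v}$; for \eqref{Linfty} the asymmetric choice $(a,b,\theta)=(3k/2,-k,2/5)$ makes the $v$-weights cancel (indeed $(3k/2)(2/5)+(-k)(3/5)=0$) and then Young's inequality with conjugate exponents $5/2$ and $5/3$ converts the resulting geometric mean into the asymmetric sum on the right-hand side.

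For \eqref{Ls}, Plancherel's theorem together with the elementary identity $\langle\xi\rangle^2=(\langle\xi\rangle^{2s})^s\cdot(\langle\xi\rangle^{2(1+s)})^{1-s}$ and Hölder in $\xi$ gives the basic interpolation $\|f\|_{H^1_v}\le\|f\|_{H^s_v}^{s}\|f\|_{H^{1+s}_v}^{1-s}$. To upgrade this to the stated weighted version, I would write $\langle D_v\rangle^{s}f=\langle v\rangle^{-k}\langle D_v\rangle^{s}(\langle v\rangle^{k}f)-\langle v\rangle^{-k}[\langle D_v\rangle^{s},\langle v\rangle^{k}]f$, and analogously for $\langle D_v\rangle^{1+s}$ with the weight $\langle v\rangle^{-ks/(1-s)}$, the commutator being a pseudodifferential operator of reduced order that can be absorbed into the $H^{1+s}$ piece by standard symbolic calculus. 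A final application of the weighted AM--GM inequality $a^{s}b^{1-s}\le sa+(1-s)b$ then splits the geometric mean into the stated sum. The main obstacle I expect is precisely this commutator analysis $[\langle D_v\rangle^{s},\langle v\rangle^{m}]$ in the negative-weight regime, though it remains within the scope of standard symbolic computations.
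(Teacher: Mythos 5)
Your plan for \eqref{Linfty} and \eqref{L3} contains a genuine gap. You propose first to apply a Gagliardo--Nirenberg inequality to $u(x)=\|f(x,\cdot)\|_{L^2_v}$ (giving, say, $\|f\|_{L^3_xL^2_v}\lesssim\|f\|_{L^2_xL^2_v}^{1/2}\|f\|_{H^1_xL^2_v}^{1/2}$), and then to insert the velocity weight by the interpolation $\|\partial^\alpha_x f\|_{L^2_v}^2\le\|\partial^\alpha_x f\<v\>^{k}\|_{L^2_v}\|\partial^\alpha_x f\<v\>^{-k}\|_{L^2_v}$ separately at each derivative order. Carrying this through produces a four-factor geometric mean,
\[
\|f\|_{L^3_xL^2_v}\lesssim\|f\<v\>^{k}\|_{L^2_xL^2_v}^{1/4}\,\|f\<v\>^{-k}\|_{L^2_xL^2_v}^{1/4}\,\|f\<v\>^{k}\|_{H^1_xL^2_v}^{1/4}\,\|f\<v\>^{-k}\|_{H^1_xL^2_v}^{1/4},
\]
and after AM--GM the right-hand side contains the term $\|f\<v\>^{k}\|_{H^1_xL^2_v}$, which is \emph{not} controlled by the right-hand side of \eqref{L3}. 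The same obstruction appears for \eqref{Linfty}, where the unwanted term $\|f\<v\>^{3k/2}\|_{H^2_xL^2_v}$ survives. The whole content of these estimates is that the velocity weight must \emph{decrease} as the order of the $x$-derivative \emph{increases}; a sequential ``Gagliardo--Nirenberg in $x$, then interpolate in $v$'' scheme cannot enforce this anti-coupling, precisely because the two steps are decoupled.

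The paper therefore works on the Fourier side in $x$ and performs a single joint H\"older inequality over the pair $(y,v)$. With $\wh f(y,v)$ the Fourier coefficients in $x$, one has $\|f\|_{L^\infty_xL^2_v}\lesssim\|\<y\>^{8/5}\wh f\|_{L^2_yL^2_v}$ (and $\|f\|_{L^3_xL^2_v}\lesssim\|\<y\>^{2/3}\wh f\|_{L^2_yL^2_v}$); then one factors the \emph{symbol} as $\<y\>^{16/5}=(\<y\>^2\<v\>^{3k})^{2/5}(\<y\>^4\<v\>^{-2k})^{3/5}$ (the $v$-powers cancelling) and applies H\"older in $(y,v)$ to land directly on the two-factor geometric mean $\|\<y\>\<v\>^{3k/2}\wh f\|_{L^2}^{2/5}\|\<y\>^2\<v\>^{-k}\wh f\|_{L^2}^{3/5}$, after which Young's inequality finishes. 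The key idea you are missing is this simultaneous factorization of the $(y,v)$-symbol. If you insist on staying in physical space, \eqref{L3} can still be rescued: interpolate the weight \emph{first}, $u(x)^2\le u_1(x)u_2(x)$ with $u_1=\|f\<v\>^k\|_{L^2_v}$ and $u_2=\|f\<v\>^{-k}\|_{L^2_v}$, so that $\|u\|_{L^3_x}^2=\|u^2\|_{L^{3/2}_x}\le\|u_1\|_{L^2_x}\|u_2\|_{L^6_x}\lesssim\|u_1\|_{L^2_x}\|u_2\|_{H^1_x}$; but no analogous elementary trick is available for $L^\infty_x$, which is why the Fourier route is preferable.

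For \eqref{Ls} your commutator decomposition $\<D_v\>^s f=\<v\>^{-k}\<D_v\>^s(\<v\>^k f)-\<v\>^{-k}[\<D_v\>^s,\<v\>^k]f$ is plausible but generates lower-order error terms whose absorption depends delicately on the sign of $k$ and the two specific weight exponents; the paper instead notes the pointwise symbol bound $\<\eta\>\lesssim\<\eta\>^s\<v\>^k+\<\eta\>^{1+s}\<v\>^{-ks/(1-s)}$ (Young, with $\theta=s$) and invokes the anisotropic pseudo-differential calculus of \cite{Deng2020a} to convert this into the operator bound in one step, avoiding any commutator bookkeeping. Finally, for the product estimate your Leibniz-plus-embedding strategy is the standard one (the paper offers no proof of that part), but be aware that in three dimensions the extreme splittings $(c,d)\in\{(0,2),(2,0)\}$ of the claimed minimum fail as written (e.g.\ $f\nabla_x^2 g$ is not bounded by $\|f\|_{L^2_x}\|g\|_{H^2_x}$), so both the statement and any proof of it must be understood with the range of $(c,d)$ suitably restricted.
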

\begin{proof}
	Firstly, we use extension theorem \cite[Thoerem VI.5, pp. 181]{Stein1971} to extend function $f$ in domain $\Omega$ with Lipschitz boundary to a function $Ef$ in $\R^3$ such that $Ef=f$ in $\Omega$ and $\|Ef\|_{H^k(\R^3)}\lesssim \|f\|_{H^k(\Omega)}$
	for any $k\ge 0$. 
	By Gagliardo-Nirenberg interpolation inequality on $\R$ and $\R^2$
	(cf. \cite[Theorem 12.83]{Leoni2017} and \cite[Page 125]{Nirenberg1959}), we obtain 
	\begin{align*}
		\|Ef\|_{L^\infty_{x_1}(\R)}\lesssim \|\pa_{x_1}Ef\|_{L^2_{x_1}(\R)}^{1/2}\|Ef\|_{L^2_{x_1}(\R)}^{1/2},\quad\text{and}\quad \|Ef\|_{L^\infty_{x_2,x_3}(\R^2)}\lesssim \|\na_{x_2,x_3}Ef\|_{L^2_{x_2,x_3}(\R^2)}. 
	\end{align*}
	Combining the above estimates, we have 
		$\|f\|_{L^\infty_x(\Omega)}\lesssim \big\|\|Ef\|_{L^\infty_{x_1}(\R)}\big\|_{L^\infty_{x_2,x_3}(\R^2)}\lesssim \|f\|_{H^2_{x}(\Omega)}^{1/2}\|f\|_{H^1_{x}(\Omega)}^{1/2}$,  
which implies \eqref{Linfty}. Also, by Gagliardo-Nirenberg interpolation inequality on $\R^3$, 
we have $$\|Ef\|_{L^3_x(\R^3)}\lesssim \|Ef\|_{L^2_x(\R^3)}^{1/2}\|\na_xEf\|_{L^2_x(\R^3)}^{1/2}\lesssim\|f\|_{L^2_x(\Omega)}^{1/2}\|f\|_{H^1_x(\Omega)}^{1/2},$$  which gives \eqref{L3}. For \eqref{Ls}, by Young's inequality, $\<\eta\>\lesssim \<\eta\>^s\<v\>^k + \<\eta\>^{1+s}\<v\>^{-\frac{ks}{1-s}} $. Hence $\<\eta\>$ is a symbol in $S(\<\eta\>^s\<v\>^k + \<\eta\>^{1+s}\<v\>^{-\frac{ks}{1-s}})$ (cf. \cite{Deng2020a}), where $\eta$ is the Fourier variable of $v$. Then by \cite[Lemma 2.3 and Corollary 2.5]{Deng2020a}, we have  $\Vert f   \Vert_{H^1_v}\lesssim  \Vert f \langle v \rangle^{k}  \Vert_{H^s} + \Vert f \langle v \rangle^{ -k s /(1-s)} \Vert_{H^{1+s}}$. This completes the Lemma. 
\end{proof}

\subsubsection{Upper and lower bound for the non-cutoff Boltzmann operator.} 

In this subsection, we consider the Boltzmann collision operator $Q(f, g)$. 
\begin{lem}[\cite{H}, Theorem 1.1]\label{L21} Suppose $\gamma \in (-3, 1], s \in (0, 1), \gamma+2s >-1$. 
Let $w_1, w_2 \in \R$, $a, b \in[0, 2s]$ with $w_1+w_2 =\gamma+2s$  and $a+b =2s$. Then for any functions $f, g, h$ we have \\
(1) if $\gamma + 2s >0$, then $|(Q(g, h), f)_{L^2_v}| \lesssim (\Vert g \Vert_{L^1_{\gamma+2s +(-w_1)^++(-w_2)^+}}  +\Vert g \Vert_{L^2} ) \Vert h \Vert_{H^a_{w_1}}   \Vert f \Vert_{H^b_{w_2}} $,\\
(2) if $\gamma + 2s = 0$, then $|(Q(g, h), f)_{L^2_v}|\lesssim (\Vert g \Vert_{L^1_{w_3}} + \Vert g \Vert_{L^2}) \Vert h \Vert_{H^a_{w_1}}\Vert f \Vert_{H^b_{w_2}} $,
where $w_3 = \max\{\delta,(-w_1)^+ +(-w_2)^+ \}$, with $\delta>0$ sufficiently small.\\
(3) if $-1< \gamma + 2s < 0$, then $|(Q(g, h), f)_{L^2_v}|\lesssim  (\Vert g \Vert_{L^1_{w_4}} + \Vert g \Vert_{L^2_{-(\gamma+2s)}}) \Vert h \Vert_{H^a_{w_1}}   \Vert f \Vert_{H^b_{w_2}} $, where $w_4= \max\{-(\gamma+2s), \gamma+2s +(-w_1)^+ +(-w_2)^+ \}$.
\end{lem}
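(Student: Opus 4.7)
The statement is exactly Theorem 1.1 of \cite{H}, so for the purposes of this paper it can be invoked directly. To sketch a self-contained proof, I would follow the pseudo-differential / dyadic decomposition approach of Alexandre--Desvillettes--Villani--Wennberg as refined by He.

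First I would decompose the angular kernel dyadically, $b(\cos\theta) = \sum_{k\ge 0} b_k(\cos\theta)$ with $b_k$ supported in $\theta \sim 2^{-k}$ and $\|b_k \sin\theta\|_{L^1_\theta}\lesssim 2^{(2s-1)k}$, inducing the splitting $Q = \sum_k Q_k$. At each scale, use the Alexandre--Desvillettes--Villani--Wennberg cancellation identity to rewrite
\begin{equation*}
(Q_k(g,h),f)_{L^2_v} = \int B_k\, g_*\bigl[h'(f'-f)+(h'-h)f\bigr]\,d\sigma\,dv_*\,dv - R_k(g,h,f),
\end{equation*}
where the cancellation remainder $R_k$ contains no derivatives and is easily controlled by $\|g\|_{L^1_\ast}\|h\|_{L^2_\ast}\|f\|_{L^2_\ast}$ after evaluation of the angular integral. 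For the principal term, a Bobylev-type Fourier identity applied to the frozen-$v_*$ operator $h\mapsto \int b_k(h'-h)\,d\sigma$ reveals it as a pseudo-differential operator of order $\sim 2^{(2s)k}$ in the transverse direction to $v-v_*$. Summing in $k$ by Littlewood--Paley duality distributes the net order $2s$ into two pieces $H^a$ on $h$ and $H^b$ on $f$ with $a+b=2s$, which is exactly the regularity split in the lemma.

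The weight split $w_1+w_2=\gamma+2s$ is then obtained from the elementary inequality $\langle v'\rangle\leq \langle v\rangle\langle v_*\rangle$ and its companion $\langle v'\rangle\sim \langle v\rangle$ in the regime $|v_*|\ll |v|$, together with the corresponding bound for $v_*'$. These allow me to slide a weight $\langle v\rangle^{w_1}$ from $h$ and $\langle v\rangle^{w_2}$ from $f$ through the collisional change of variables, at the price of a factor $\langle v_*\rangle^{(-w_1)^+ + (-w_2)^+}$ attached to $g_*$. Combined with the kinetic factor $|v-v_*|^\gamma$ and the scale factor $2^{2sk}$ absorbed by the fractional derivatives, this produces the extra weight on $g$ in each of the three cases.

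Finally, the split into the three cases (1)--(3) reflects the way the merged kernel $|v-v_*|^\gamma \langle v_*\rangle^{\gamma+2s+(-w_1)^+ + (-w_2)^+}$ must be estimated: for $\gamma+2s>0$ it is pointwise controlled by $\langle v_*\rangle^{\gamma+2s+(-w_1)^++(-w_2)^+}$ modulo a smoother kernel, giving the $L^1$ norm in case (1); at the critical index $\gamma+2s=0$ a logarithmic correction forces the $\delta$-shift defining $w_3$; and for $-1<\gamma+2s<0$ the local singularity at $v=v_*$ can no longer be absorbed by $L^1$ and must be handled by a Hardy--Littlewood--Sobolev estimate of the type used in Lemma \ref{L53}, which produces the $\|g\|_{L^2_{-(\gamma+2s)}}$ term in case (3). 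The main obstacle I would anticipate is the sharp weight bookkeeping in the endpoint regimes $a\in\{0,2s\}$ with $w_1$ or $w_2$ negative, where the transverse-derivative structure of the non-cutoff kernel must be exploited very precisely; this is where the actual work in \cite{H} is concentrated.
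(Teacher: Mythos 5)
The paper does not give a proof of this lemma: it is stated and used as a direct citation of Theorem~1.1 of \cite{H}, so your opening observation that it can be invoked directly is precisely what the paper does, and on that level your proposal matches. Your supplementary sketch is reasonable as a heuristic but it more closely describes the ADVW-cancellation-plus-dyadic framework in the style of Gressman--Strain or AMUXY than He's actual argument, which is organized around a geometric decomposition of phase space in $(v,v_*,\sigma)$ and symbolic calculus for the collision operator rather than a single angular Littlewood--Paley decomposition; also note the small slip that under assumption (A2) one has $\Vert b_k \sin\theta\Vert_{L^1_\theta}\sim 2^{2sk}$, not $2^{(2s-1)k}$. None of this affects correctness relative to the paper, since the paper itself offers no proof to compare against.
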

As a first application, we have the following corollaries. 
\begin{cor}\label{C22}
Suppose $\gamma \in (-3, 1], s \in (0, 1), \gamma+2s >-1$.  For any multi-indices $|\beta | \le 2$,  any constant $k \ge 0$ and any functions $f, g$,  there exists some constant $C_k >0$, such that
\begin{align*}
(Q(\partial_\beta \mu, f  ), g \langle  v \rangle^{2k} )\le C_k \Vert f \Vert_{H^s_{k + \gamma/2 + 2s}}  \Vert g \Vert_{H^s_{k+\gamma/2}}.
\end{align*}
\end{cor}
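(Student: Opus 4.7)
The plan is to invoke Lemma \ref{L21} directly, with the first argument set to $\partial_\beta \mu$, and then to absorb the polynomial weight $\langle v \rangle^{2k}$ sitting on the third slot into the Sobolev norm of $g$. Concretely, I would apply Lemma \ref{L21} with the parameters $a=b=s$ (so $a+b=2s$) and
\begin{equation*}
w_1 = k + \gamma/2 + 2s, \qquad w_2 = \gamma/2 - k,
\end{equation*}
which satisfies $w_1+w_2=\gamma+2s$. Since $|\beta|\le 2$ and $\mu$ is Gaussian, $\partial_\beta \mu$ is a Schwartz function, so any weighted $L^1_{w}$ and $L^2_{w}$ norm of $\partial_\beta \mu$ appearing on the right-hand side of Lemma \ref{L21} (regardless of whether one is in case (1), (2), or (3)) is bounded by a constant depending only on $k$ and $\beta$. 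Thus, in all three regimes $\gamma+2s>0$, $\gamma+2s=0$, $-1<\gamma+2s<0$, one obtains uniformly
\begin{equation*}
	\bigl|(Q(\partial_\beta \mu, f), g \langle v \rangle^{2k})_{L^2_v}\bigr| \le C_k \|f\|_{H^s_{k+\gamma/2+2s}} \|g\langle v\rangle^{2k}\|_{H^s_{\gamma/2-k}}.
\end{equation*}

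What remains is to check the weight identity
\begin{equation*}
\|g\langle v\rangle^{2k}\|_{H^s_{\gamma/2-k}} \le C_k \|g\|_{H^s_{k+\gamma/2}}.
\end{equation*}
Unpacking the definition, the left-hand side equals $\|\langle v\rangle^{\gamma/2-k} \langle D_v\rangle^s (\langle v\rangle^{2k} g)\|_{L^2}$. I would decompose
\begin{equation*}
\langle D_v\rangle^s (\langle v\rangle^{2k} g) = \langle v\rangle^{2k}\langle D_v\rangle^s g + \bigl[\langle D_v\rangle^s, \langle v\rangle^{2k}\bigr] g,
\end{equation*}
where the first piece contributes exactly $\|g\|_{H^s_{k+\gamma/2}}$ after multiplying by $\langle v\rangle^{\gamma/2-k}$. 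By the symbolic calculus for weighted pseudo-differential operators (as used in the argument right after \eqref{Ls} and cited from [Deng2020a]), the commutator has symbol of order $\langle v\rangle^{2k-1}\langle \xi\rangle^{s-1}$, and therefore
\begin{equation*}
\bigl\|\langle v\rangle^{\gamma/2-k}[\langle D_v\rangle^s, \langle v\rangle^{2k}] g\bigr\|_{L^2} \le C_k \|g\|_{H^{s-1}_{k+\gamma/2-1}} \le C_k \|g\|_{H^s_{k+\gamma/2}}.
\end{equation*}

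Combining the two estimates yields the desired bound. The only potentially delicate point is the commutator step: if one wishes to avoid invoking symbolic calculus explicitly, one can instead select a slightly asymmetric choice of parameters in Lemma \ref{L21} (e.g.\ $a=2s$, $b=0$ or $a=0$, $b=2s$) to interpolate between the $L^2$ weight being placed directly on $g$ and on $g\langle v\rangle^{2k}$, but this would trade one technicality for another. I expect the weight-commutation step to be the only non-cosmetic obstacle; once it is settled, the corollary is essentially an unpacking of Lemma \ref{L21}.
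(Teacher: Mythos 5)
Your proof is correct and takes the same route as the paper, which presents the corollary as an immediate consequence of Lemma \ref{L21} without spelling out the details. Your parameter choice $a=b=s$, $w_1=k+\gamma/2+2s$, $w_2=\gamma/2-k$ (so $w_1+w_2=\gamma+2s$), together with the boundedness of all weighted norms of the Schwartz function $\partial_\beta\mu$ and the standard equivalence $\|\langle v\rangle^{2k}g\|_{H^s_{\gamma/2-k}}\lesssim\|g\|_{H^s_{k+\gamma/2}}$ coming from commuting the polynomial weight past $\langle D_v\rangle^s$, is exactly how one unpacks the lemma.
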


\begin{lem} [\cite{CHJ}, Lemma 3.3]\label{L23}
Suppose that $-3 <\gamma \le 1$. For any $k \ge 14$, and functions $g, h$, we have
\begin{equation}\label{25a}
\begin{aligned}
\quad\,|(Q (h, \mu), g \langle v \rangle^{2k}) |
 &\le   \Vert b(\cos\theta) \sin^{k-\frac {3+\gamma} 2} \frac \theta 2 \Vert_{L^1_\theta}    \Vert h \Vert_{L^2_{k+\gamma/2, *}}\Vert g \Vert_{L^2_{k+\gamma/2, *}} + C_k \Vert h \Vert_{L^2_{k+\gamma/2-1/2}}\Vert g \Vert_{L^2_{k+\gamma/2-1/2}}
\\
&\le   \Vert b(\cos\theta) \sin^{k- 2} \frac \theta 2 \Vert_{L^1_\theta}    \Vert h \Vert_{L^2_{k+\gamma/2, *}}\Vert g \Vert_{L^2_{k+\gamma/2, *}}  + C_k \Vert h \Vert_{L^2_{k+\gamma/2-1/2}}\Vert g \Vert_{L^2_{k+\gamma/2-1/2}},
\end{aligned}
\end{equation}
for some constant $C_k>0$.  Moreover, for any $|\beta| \le 2$ we have
\begin{align}\label{25b}
|(Q (h, \partial_\beta\mu), g \langle v \rangle^{2k}) | \le  C_k \Vert h \Vert_{L^2_{k+\gamma/2}}\Vert g \Vert_{L^2_{k+\gamma/2}} .
\end{align}
\end{lem}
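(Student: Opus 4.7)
The plan is to establish the estimate by analyzing the collision integral via the classical pre-post collisional change of variables. Applying this substitution to the ``gain'' portion of $Q(h,\mu)$ allows us to rewrite
\begin{equation*}
(Q(h, \mu), g\langle v \rangle^{2k})_{L^2_v} = \int_{\R^3}\!\int_{\R^3}\!\int_{\S^2} B(v-v_*, \sigma)\, h_* \mu \,\big[g' \langle v' \rangle^{2k} - g \langle v \rangle^{2k}\big] \,d\sigma\, dv_*\, dv,
\end{equation*}
so the entire analysis reduces to estimating the double difference in brackets.

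My decomposition would be
\begin{equation*}
g'\langle v'\rangle^{2k} - g \langle v \rangle^{2k} = g'\big(\langle v' \rangle^{2k} - \langle v \rangle^{2k}\big) + (g' - g)\, \langle v \rangle^{2k}.
\end{equation*}
For the second summand, I would apply a standard cancellation argument together with a Taylor expansion of $g'$ around $v$ in the angular variable. Using the symmetrized form $\overline{B}$ from $\mathbf{(A4)}$, the order-$(v'-v)$ piece vanishes upon integrating $\sigma$ over $\S^2$, while the quadratic remainder picks up an additional $\sin^2(\theta/2)$ factor, taming the grazing singularity of $b$ and producing, after Cauchy--Schwarz in $(v,v_*)$, the contribution $C_k\|h\|_{L^2_{k+\gamma/2-1/2}}\|g\|_{L^2_{k+\gamma/2-1/2}}$.

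The principal term is the first summand, which retains all the weight of the Maxwellian $\mu$. Using the explicit collision identity $|v'-v|=|v-v_*|\sin(\theta/2)$ together with a binomial/telescoping rewriting of $\langle v'\rangle^{2k}-\langle v\rangle^{2k}$, the leading piece can be bounded pointwise (in the regime $|v_*| \lesssim 1$ selected by $\mu$) by
\begin{equation*}
C\,\sin^{k}(\theta/2)\,\mu(v_*)\,\langle v-v_*\rangle^{2k}\,|h(v_*)|\,|g'(v)|,
\end{equation*}
plus strictly lower-order corrections that are absorbed into $C_k$. Applying Cauchy--Schwarz in $(v,v_*)$, performing the regular change of variable $v\mapsto v'$ whose Jacobian contributes $\cos^{-3}(\theta/2)$, and absorbing the $|v-v_*|^\gamma\mu(v_*)$ factor into the norm $\|\cdot\|_{L^2_{k+\gamma/2,*}}$, one ends with the $\sigma$-integral $\|b(\cos\theta)\sin^{k-(3+\gamma)/2}(\theta/2)\|_{L^1_\theta}$ claimed. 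The assumption $k\ge 14$ is exactly what is needed to keep this angular integral finite against the singular $b(\cos\theta)\sim\theta^{-1-2s}$, and to control the polynomial corrections arising from the pointwise bounds.

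The estimate \eqref{25b} for $\partial_\beta\mu$ follows an identical strategy but is strictly easier: since $\partial_\beta\mu$ still has Gaussian decay for $|\beta|\le 2$, no singular angular weight is necessary and the main term can simply be absorbed into $C_k$ alongside the cancellation remainder. The main technical obstacle throughout will be the careful tracking of the pointwise bound on $\langle v'\rangle^{2k}-\langle v\rangle^{2k}$: one must justify that after extracting the factor $\sin^{k-(3+\gamma)/2}(\theta/2)$ the residual angular integrand is uniformly integrable and that the residual $v$-dependence is exactly of the type $\|\cdot\|_{L^2_{k+\gamma/2,*}}$. This is a delicate but essentially combinatorial bookkeeping within the framework set up in \cite{CHJ}.
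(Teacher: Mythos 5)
The paper does not prove this lemma: it cites it verbatim from \cite{CHJ}, Lemma 3.3, and the only original content is the remark that \eqref{25b} follows by replacing $\mu$ with $\partial_\beta\mu$. So there is no paper proof to compare against, and your proposal has to be judged on its own internal correctness. The overall framework you sketch — applying the pre/post collisional change of variables to reduce everything to the weight commutator $g'\langle v'\rangle^{2k}-g\langle v\rangle^{2k}$ and then separating a singular ``weight loss'' piece from a cancellation piece — is indeed the standard engine behind such polynomial-weighted estimates (cf.\ He's sharp bounds and the coercivity Theorem~\ref{T25}). But there are two genuine gaps in the execution.

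First, in the second summand $(g'-g)\langle v\rangle^{2k}$ you propose ``a Taylor expansion of $g'$ around $v$'' to invoke cancellation. That step is not available: $g$ is only assumed to be in $L^2$, so $g'-g$ has no pointwise expansion, and the cancellation lemma cannot be applied to $g$. The standard fix is to undo the pre/post change of variables on this piece, so it becomes $\int BB\,g(v)\bigl[h'_*\mu(v')\langle v'\rangle^{2k}-h_*\mu(v)\langle v\rangle^{2k}\bigr]$ — i.e.\ $(Q(h,\mu\langle\cdot\rangle^{2k}),g)$ — and then the smoothness of the Gaussian weight $\mu\langle\cdot\rangle^{2k}$ (not of $g$) is what kills the angular singularity, producing the $C_k\|h\|_{L^2_{k+\gamma/2-1/2}}\|g\|_{L^2_{k+\gamma/2-1/2}}$ tail. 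As written, your cancellation step is circular for rough $g$.

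Second, the bookkeeping around the Maxwellian is inconsistent. With the paper's convention $Q(G,F)=\int B(G'_*F'-G_*F)$, the second argument $\mu$ of $Q(h,\mu)$ sits at $v$, and the pre/post change of variables leaves you with $h_*\mu(v)$ in the integrand, not $h_*\mu(v_*)$. Your pointwise bound $C\sin^k(\theta/2)\,\mu(v_*)\langle v-v_*\rangle^{2k}|h_*||g'|$ (and the phrase ``in the regime $|v_*|\lesssim1$ selected by $\mu$'') has $\mu$ in the wrong slot; the Gaussian localizes $v$, not $v_*$, which changes the subsequent weight exchange. Relatedly, the claimed leading exponent $\sin^k(\theta/2)$ for $\langle v'\rangle^{2k}-\langle v\rangle^{2k}$ is not correct as a single pointwise bound: a binomial expansion produces all powers $\sin^j(\theta/2)$, $1\le j\le k$, and only the top power carries the full weight. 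The lower-$j$ terms either need the angular cancellation upon $\sigma$-integration or a weight-gain trade into the regular $C_k$ term; they cannot simply be majorized by $\sin^k$. Tracking which $j$ contributes the stated $\sin^{k-(3+\gamma)/2}(\theta/2)$ (after the Jacobian $\cos^{-3}(\theta/2)$ and the $|v-v_*|^\gamma$ factor) is precisely the delicate combinatorial step that your sketch acknowledges but does not carry out.

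Your observation about \eqref{25b} — that $\partial_\beta\mu$ still has Gaussian decay so the singular angular weight is unnecessary — matches the paper's remark and is fine.
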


\begin{rmk}
In \cite[Lemma 3.3]{CHJ},  the authors only prove the first statement of Lemma \ref{L23}, but the second statement can be proved in the same way by just replacing $\mu$ by $\partial_\beta \mu$.
\end{rmk}
\begin{thm}[\cite{CHJ}, Theorem 3.1] \label{T25}
Suppose that $-3<\gamma\le 1, s \in (0, 1), \gamma+2s>-1$, $k\ge 14$ and $G= \mu +g \ge 0$. If there exists $A_1,A_2>0$ such that 
\begin{align*}
G \ge 0,\quad  \Vert G \Vert_{L^1} \ge A_1, \quad \Vert G \Vert_{L^1_2} +\Vert G \Vert_{L \log L} \le A_2,
\end{align*}
then there exist some constants $\gamma_1, C_k>0$, such that
\begin{equation}
\label{ineq1}
\begin{aligned}
(Q(G, f), f \langle v \rangle^{2k} )
&\le   - \frac {1} {8} \Vert  b(\cos \theta) \sin^2 \frac \theta 2\Vert_{L^1_\theta}\Vert f \Vert_{L^2_{k+\gamma/2, *}}^2  - \gamma_1 \Vert f \Vert_{H^s_{k+\gamma/2}}^2 + C_k  \Vert f \Vert_{L^2}^2
\\
&+C_k\Vert f \Vert_{L^2_{14}} \Vert g \Vert_{H^s_{ k+\gamma/2 }}\Vert f \Vert_{H^s_{ k + \gamma/2}} +C_k\Vert g \Vert_{L^2_{14} } \Vert f \Vert_{H^s_{ k + \gamma/2}}^2.
\end{aligned}
\end{equation}
\end{thm}
\begin{rmk}In \cite[Theorem 3.1]{CHJ}, the authors explored both exponential weight perturbation and polynomial weight perturbation cases. In the exponential weight scenario, the estimation involves an infinite sum, necessitating $k\ge 22$ for convergence. However, for the polynomial case, it has been demonstrated that $k\ge 14$ is sufficient, employing the same technique. Note that whenever $\|g\|_{L^1_v}$ is small, we have $\|\mu+g\|_{L^1_v}\ge 1-\|g\|_{L^1_v}>A_1$. Moreover, we couldn't obtain a better estimate for the term $(Q(f,g),g\<v\>^{2k})$ as the Landau case (see \eqref{28}) and thus we should put the term $\mu+g\ge 0$ together.  
\end{rmk}
\begin{lem}[\cite{CHJ}, Lemma 2.4 + Lemma \ref{L21}] \label{L26}
Suppose $\gamma \in (-3, 1], s \in (0, 1), \gamma+2s >-1$. For any functions $f, g, h$ and $k \ge 14$, we have
\begin{multline}\label{213a}
(Q(f, g), h \langle v \rangle^{2k} ) \lesssim  \Vert f \Vert_{L^2_{14}} \min\{\|g\|_{H^s_{k+\gamma/2}}\|h\|_{H^s_{k+\gamma/2+2s}},\,\Vert g \Vert_{H^s_{k+\gamma/2+2s }}  \Vert h \Vert_{H^s_{k+\gamma/2}}\}\\ +  \Vert g \Vert_{L^2_{14}}  \Vert f \Vert_{H^s_{k+\gamma/2}}  \Vert h \Vert_{H^s_{k+\gamma/2}} .
\end{multline}
In particular, by duality we have
\begin{align}\label{2134}
\Vert Q(f, g) \Vert_{H^{-s}_{k-\gamma/2}} \lesssim  \Vert f \Vert_{L^2_{14}} \Vert g \Vert_{H^s_{k+\gamma/2+2s }}  +  \Vert g \Vert_{L^2_{14}}  \Vert f \Vert_{H^s_{k+\gamma/2}} .
\end{align}
\end{lem}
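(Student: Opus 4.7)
The plan is to prove \eqref{213a} by combining a symmetric-weight decomposition of the trilinear form handled by Lemma \ref{L21} with the commutator estimate from \cite[Lemma 3.4]{CHJ}, and then to extract \eqref{2134} via $L^2$ duality.

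\textbf{Main decomposition.} I would start by writing
\begin{align*}
(Q(f,g), h\langle v\rangle^{2k}) = \int B(v-v_*,\sigma) f_* g\big[h'\langle v'\rangle^{2k} - h\langle v\rangle^{2k}\big]\,d\sigma\, dv_*\, dv,
\end{align*}
and then split the weighted increment as
\begin{align*}
h'\langle v'\rangle^{2k} - h\langle v\rangle^{2k} = \big((h\langle v\rangle^k)' - h\langle v\rangle^k\big)\langle v\rangle^k \;+\; h'\big[\langle v'\rangle^{2k} - \langle v'\rangle^k\langle v\rangle^k\big].
\end{align*}
The first piece reassembles a standard bilinear form with the weight $\langle v\rangle^k$ symmetrically distributed onto $g$ and onto the test function $h$, while the second is a ``commutator'' controlled by $|\langle v'\rangle^k - \langle v\rangle^k|\lesssim \sin(\theta/2)\,|v-v_*|\,\langle v\rangle^{k-1}$ (and the analogous bound on the $v_*$ side).

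\textbf{Treatment of the main symmetric piece.} For the first piece, apply Lemma \ref{L21} with the symmetric choice $w_1 = w_2 = \gamma/2$ and derivative split $a=b=s$ to obtain the symmetric bound
\begin{align*}
\big|(\text{main})\big| \lesssim \big(\Vert f\Vert_{L^1_{\gamma+2s}}+\Vert f\Vert_{L^2}\big)\Vert g\Vert_{H^s_{k+\gamma/2}}\Vert h\Vert_{H^s_{k+\gamma/2}},
\end{align*}
where the weighted $L^1$ and $L^2$ norms of $f$ are absorbed into $\Vert f\Vert_{L^2_{14}}$ via standard polynomial-moment interpolation; this contribution is even stronger than either side of the minimum, and in particular is dominated by both.

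\textbf{Treatment of the commutator.} For the commutator piece, the factor $\sin(\theta/2)$ pairs with the non-integrable angular weight $\theta^{-1-2s}$ to leave a net $\theta^{-2s}$-type singularity, which forces us to place $2s$ extra derivatives on either $g$ or $h$. One assignment produces $\Vert g\Vert_{H^s_{k+\gamma/2+2s}}\Vert h\Vert_{H^s_{k+\gamma/2}}$ (this is the output of \cite[Lemma 3.4]{CHJ}), the other produces $\Vert g\Vert_{H^s_{k+\gamma/2}}\Vert h\Vert_{H^s_{k+\gamma/2+2s}}$ via the dual symmetric choice $(w_1,w_2)=(k+\gamma/2,-k+\gamma/2+2s)$ in Lemma \ref{L21}; the $f$-prefactor in the latter, of the form $\Vert f\Vert_{L^1_*}+\Vert f\Vert_{L^2}$, is again absorbed into $\Vert f\Vert_{L^2_{14}}$ using $k\ge 14$ to supply enough moments. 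The lower-order term $\Vert g\Vert_{L^2_{14}}\Vert f\Vert_{H^s_{k+\gamma/2}}\Vert h\Vert_{H^s_{k+\gamma/2}}$ arises as an exchange of roles when handling the $f_*$ variable (via a further change of variables $v_*\leftrightarrow v$ within the commutator or from rearranging the CHJ estimate).

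\textbf{Duality for \eqref{2134}.} Since $H^{-s}_{k-\gamma/2}$ is the $L^2$-dual of $H^s_{-(k-\gamma/2)}$, we have
\begin{align*}
\Vert Q(f,g)\Vert_{H^{-s}_{k-\gamma/2}} = \sup_{\Vert\phi\Vert_{H^s_{-(k-\gamma/2)}}\le 1}\big|(Q(f,g),\phi)\big|.
\end{align*}
Substituting $\phi = h\langle v\rangle^{2k}$ gives $\Vert h\Vert_{H^s_{k+\gamma/2}}\le 1$; applying \eqref{213a} (with the first factorization inside the min) and taking the supremum yields \eqref{2134}. The main subtlety will be ensuring the commutator term's weight loss really is absorbed by the angular singularity and that the moment threshold $k\ge 14$ provides enough room in both the $L^1$-to-$L^2$ conversions on $f$ and in the polynomial dominations implicit in \cite[Lemma 3.4]{CHJ}.
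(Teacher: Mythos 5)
The paper gives no proof of this lemma; it is stated as a direct combination of \cite[Lemma 3.4]{CHJ} with Lemma \ref{L21}, so there is no internal argument to compare against. Your reconstruction is a plausible sketch of what such a proof would look like — peeling the weight into a main piece $(Q(f,g\langle v\rangle^k),\,h\langle v\rangle^k)$ handled by Lemma \ref{L21} with the symmetric choice $w_1=w_2=\gamma/2$, $a=b=s$, plus a commutator piece whose loss of $2s$ is distributed onto $g$ or $h$. This is, in spirit, the structure of the cited CHJ bound, and the algebraic decomposition you write for $h'\langle v'\rangle^{2k}-h\langle v\rangle^{2k}$ is correct.

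Two points, however. First, a genuine error in the duality step: you say to apply \eqref{213a} ``with the first factorization inside the min,'' but the first option is $\|g\|_{H^s_{k+\gamma/2}}\|h\|_{H^s_{k+\gamma/2+2s}}$, which puts the stronger norm on $h$; the constraint from duality only gives $\|h\|_{H^s_{k+\gamma/2}}\lesssim 1$, which does not control $\|h\|_{H^s_{k+\gamma/2+2s}}$. You must take the \emph{second} option in the min, $\|g\|_{H^s_{k+\gamma/2+2s}}\|h\|_{H^s_{k+\gamma/2}}$, to land on \eqref{2134}. Second, the commutator treatment is hand-waved at exactly the delicate point: a first-order bound $|\langle v'\rangle^k-\langle v\rangle^k|\lesssim \sin(\theta/2)|v-v_*|\langle v\rangle^{k-1}$ only reduces the angular singularity to $\theta^{-2s}$, which is still non-integrable for $s\ge 1/2$ (the regime this paper actually works in). To close this you need either a second-order cancellation (yielding $\sin^2(\theta/2)$) or an anisotropic/Sobolev trade, and it is precisely this step that \cite[Lemma 3.4]{CHJ} supplies; your sketch asserts the outcome rather than demonstrating it. Since the lemma is meant to be cited rather than proved, this gap is acceptable at the level of a summary, but it should be flagged as deferring the hard estimate to CHJ rather than deriving it.
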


\subsubsection{Upper and lower bound for Landau operator.}
Next, we give some results about the Landau collision operator $Q(f,g)$.  
\begin{lem}\label{L51}
  Denote $m = \langle v \rangle^k$ with $k \ge 7$. For any $-3 \le \gamma \le 1, |\beta| \le 2$ and functions $f, g$, we have
\begin{align}\label{26a}
|(Q(f, \partial_\beta \mu),  g \langle v \rangle^{2k} )| \le C_k \Vert f  \Vert_{L^2_5} \Vert g \Vert_{L^2_5},
\end{align}
and 
\begin{align}\label{26b}
(Q(\mu , f) , f \langle v \rangle^{2k}) \le -\gamma_1 \Vert f \Vert_{L^2_{D}(m)}^2 + C_k \Vert f \Vert_{L^2}^2,
\end{align}
for some constants $\gamma_1, C_k>0$.
\end{lem}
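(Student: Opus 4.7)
The plan rests on the algebraic reformulation of the Landau operator,
\begin{align*}
Q(g,f)(v) = \bar a_g^{ij}(v)\,\pa_{ij} f(v) - \bar c_g(v)\, f(v),
\end{align*}
where $\bar a_g^{ij} := \phi^{ij}*g$ and $\bar c_g := (\pa_{ij}\phi^{ij})*g$. This follows by integrating by parts in $v_*$ inside the $\pa_j g_*$ half of $Q(g,f)$ and using the identities $\pa_j \phi^{ij}(v)=-2|v|^\ga v_i$ and the formulae for $\pa_{ij}\phi^{ij}$ recalled in the Introduction. Both estimates then reduce to pointwise control of the convolutions combined with weighted integration by parts.

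For \eqref{26a}, this substitution gives
\begin{align*}
(Q(f,\pa_\beta\mu),\,g\<v\>^{2k}) = \int_{\Rd}\bar a_f^{ij}\,\pa_{ij}\pa_\beta\mu\,g\<v\>^{2k}\,dv - \int_{\Rd}\bar c_f\,\pa_\beta\mu\,g\<v\>^{2k}\,dv.
\end{align*}
The Gaussian decay of $\pa_{ij}\pa_\beta\mu$ and $\pa_\beta\mu$ guarantees that $|\pa_{ij}\pa_\beta\mu(v)|\<v\>^{2k}$ and $|\pa_\beta\mu(v)|\<v\>^{2k}$ are dominated by $C_k \mu^{1/4}$. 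Combining this with the pointwise bounds $|\bar a_f^{ij}(v)|\lesssim \int_{\Rd}|v-v_*|^{\ga+2}|f(v_*)|\,dv_*$ and $|\bar c_f(v)|\lesssim \int_{\Rd}|v-v_*|^\ga|f(v_*)|\,dv_*$ (with the Coulomb case $\ga=-3$ reducing to $\bar c_f=-8\pi f$), the convolution estimates \eqref{23} and \eqref{26} of Lemma \ref{L54} applied with the Gaussian test function bound the right-hand side by $C_k\|f\|_{L^2_5}\|g\|_{L^2_5}$.

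For \eqref{26b}, write $Q(\mu,f)=A^{ij}(v)\pa_{ij}f - C(v)f$ with $A^{ij}=\phi^{ij}*\mu$ and $C=(\pa_{ij}\phi^{ij})*\mu$. Classical convolution computations (cf. Guo \cite{G8}) give
\begin{align*}
A^{ij}(v)\xi_i\xi_j\,\sim\,\<v\>^\ga|P_v\xi|^2+\<v\>^{\ga+2}|(I-P_v)\xi|^2,\qquad C(v)\sim \<v\>^\ga,
\end{align*}
matching exactly the anisotropic norm $L^2_D(m)$. Setting $u=mf$ with $m=\<v\>^k$, integration by parts against $fm^2$ recasts the diffusion into $-\int A^{ij}\pa_i u\,\pa_j u\,dv$ plus commutators proportional to $m^{-1}\pa m\sim \<v\>^{-1}$ and to $\pa A\sim \<v\>^{\ga+1}$; these are either absorbed into a small fraction of the dissipation by Young's inequality or, where $\<v\>^\ga$ degenerates, controlled by $C_k\|f\|_{L^2}^2$ via truncation to $\{|v|\le R\}$. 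Together with $-(Cf,fm^2)\sim -\|mf\|^2_{L^2(\<v\>^{\ga/2})}$, this yields the full dissipation $-\gamma_1\|f\|^2_{L^2_D(m)}$ up to the error $C_k\|f\|^2_{L^2}$.

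The main technical difficulty lies in the soft-potential regime $\ga\in[-3,0)$ and especially the Coulomb case $\ga=-3$, where the weights $\<v\>^\ga$ and $\<v\>^{\ga+2}$ degenerate at infinity and the commutator terms compete more tightly with the dissipation. In the Coulomb case, the identity $\pa_{ij}\phi^{ij}=-8\pi\delta_0$ localizes $C$ into $8\pi\mu(v)$, so the $L^2(\<v\>^{\ga/2})$ part of the dissipation must be recovered entirely from the anisotropic diffusion rather than from $-Cf^2$. These subtleties are by now standard (see \cite{G, CTW, CM}), and we adapt the usual framework to our polynomial weight $m=\<v\>^k$.
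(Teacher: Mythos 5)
The paper dispatches both estimates by citing \cite{CM} (Lemmas 2.3, 2.5, covering $\gamma\in[-3,-2)$) and \cite{CTW} (Lemmas 2.7, 2.12, covering $\gamma\in[-2,1]$); you instead attempt a direct proof via the reformulation $Q(g,f)=\bar a_g^{ij}\partial_{ij}f-\bar c_g f$. That is a legitimate alternative route, but as written it contains a genuine sign error in the proof of \eqref{26b}.

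Since $\partial_{ij}\phi^{ij}(v)=-2(\gamma+3)|v|^\gamma<0$ for $\gamma>-3$, the coefficient $C=(\partial_{ij}\phi^{ij})*\mu$ is \emph{negative}, so $-(Cf,fm^2)=\int|C|f^2m^2\,dv$ is a \emph{positive} contribution to $(Q(\mu,f),fm^2)$, not the dissipative term $-\|mf\|^2_{L^2(\<v\>^{\gamma/2})}$ you claim. The same holds in the Coulomb case, where $C=-8\pi\mu$ and $-(Cf,fm^2)=8\pi\int\mu f^2m^2>0$. The $L^2(\<v\>^{\gamma/2})$ part of the dissipation in the polynomially weighted estimate actually comes from the commutator you propose to absorb: with $B_i=(\partial_j\phi^{ij})*\mu\approx-2\<v\>^\gamma v_i$ and $\partial_i(m^2)=2kv_i\<v\>^{2k-2}$, the term $\int B_i\,\partial_i(m^2)\,f^2\,dv\approx-4k\int\<v\>^\gamma m^2f^2\,dv$ is the dominant \emph{good} term for $k$ large, and it is this Guo-style mechanism \cite{G8} that beats the competing positive contributions $-\tfrac12\int Cm^2f^2$ and $\tfrac12\int A^{ij}\partial_{ij}(m^2)f^2$. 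Your argument would need to be reorganized around that observation; as it stands, the source of the $L^2$ coercivity is misidentified and the term you rely on has the opposite sign.

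A secondary imprecision in \eqref{26a}: estimate \eqref{26} of Lemma \ref{L54} bounds the diagonal integral $\int|v-v_*|^\gamma f_*\,g^2$ by $\|f\|_{L^2_5}\|g\|^2_{H^1_{\gamma/2}}$, so it neither covers the off-diagonal product $|\partial_\beta\mu|\<v\>^{2k}\cdot g$ directly, nor does polarizing it give a bound in $\|g\|_{L^2_5}$ (one picks up $\|g\|_{H^1_{\gamma/2}}$). The $\bar c_f$ contribution for $\gamma\in(-3,0)$ is easily handled, but by a different route than the lemma you cite — e.g.\ split $\{|v-v_*|<1\}$ where $\mu^{1/4}(v)\lesssim\mu^{1/8}(v)\mu^{1/8}(v_*)$ and Cauchy--Schwarz with $\int_{|u|<1}|u|^\gamma du<\infty$, from $\{|v-v_*|\ge1\}$ which is elementary.
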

\begin{proof}
The first estimate follows from \cite[Lemma 2.5]{CM} and  in \cite[Lemma 2.12]{CTW} with $\mu$ replacing by $\partial_\beta \mu$ if necessary. The second estimate follows from \cite[Lemma 2.3]{CM} and \cite[Lemma 2.7]{CTW}. Note that the work \cite{CTW} is for the case $\gamma\in[-2,1]$ and \cite{CM} is for $\gamma\in[-3,-2)$. 
\end{proof}

\begin{lem}[\cite{CTW}, Lemma 3.5 and \cite{CM}, Lemma 4.3]\label{L52}
For any $-3 \le \gamma \le 1$, $k \ge 7$ and any functions $f, g, h$, we have 
\begin{align}\label{27}
|(Q(f, g) , h \langle v \rangle^{2k} )| \le C \Vert f \Vert_{L^2_7} \min\{ \Vert g \Vert_{L^2_{D, k+1} }\Vert h \Vert_{L^2_{D,k} },  \, \Vert g \Vert_{L^2_{D,k} }\Vert h \Vert_{L^2_{D, k+1} }\}.
\end{align}
By duality, we have 
\begin{align}
	\label{27a}
	\|Q(f,g)\|_{H^{-1}_{k-\gamma/2}}\lesssim \|f\|_{L^2_7}\|g\|_{L^2_{D,k+1}}. 
\end{align}
When $h=g$, we have a better estimate: 
\begin{align}\label{28}
(Q(f, g) , g \langle v \rangle^{2k} ) \le C \Vert f \Vert_{L^2_7} \Vert g \Vert_{L^2_{D,k} }^2.
\end{align}
\end{lem}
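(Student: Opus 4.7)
The plan is to expand $(Q(f,g), h\<v\>^{2k})_{L^2_v}$ using the divergence form of the Landau operator, transfer one $v$-derivative onto $h\<v\>^{2k}$ by integration by parts, and then apply Lemma \ref{L54} term by term. Concretely, I would start from
\begin{equation*}
(Q(f,g), h\<v\>^{2k})_{L^2_v} = -\int_{\R^3}\int_{\R^3} \phi^{ij}(v-v_*)\bigl[f_*\,\partial_j g - g\,\partial_j f_*\bigr]\,\partial_i\bigl(h\<v\>^{2k}\bigr)\,dv_*\,dv,
\end{equation*}
and then integrate by parts in $v_*$ in the second bracket using $\partial_{v_{*j}}\phi^{ij}(v-v_*) = 2|v-v_*|^\gamma (v-v_*)_i$. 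Expanding $\partial_i(h\<v\>^{2k}) = \<v\>^{2k}\partial_i h + 2k\,v_i\<v\>^{2k-2}h$ then yields a finite sum of integrals whose kernels are $\phi^{ij}(v-v_*)$ (order $|v-v_*|^{\gamma+2}$), $|v-v_*|^\gamma(v-v_*)$ (order $|v-v_*|^{\gamma+1}$), or $|v-v_*|^\gamma$, paired with various combinations of $f_*,\nabla f_*,g,\nabla g,h,\nabla h$.

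Next I would redistribute the weight $\<v\>^{2k}$ between $v$ and $v_*$ through $\<v\>^{2k}\lesssim \<v_*\>^{2k}+\<v-v_*\>^{2k}$, placing only a bounded polynomial weight on $f_*$ so it is absorbed by $\|f\|_{L^2_7}$ (which uses $k\ge 7$). Because $\phi^{ij}$ annihilates the direction $v-v_*$, the bilinear form $\phi^{ij}\partial_j g\,\partial_i h$ only couples components of $\nabla g,\nabla h$ orthogonal to $v-v_*$; after exchanging the reference direction from $v-v_*$ to $v$, these orthogonal components match the non-radial part of the anisotropic gradient $\wt\nabla_v$, which is how $L^2_D(m)$ norms emerge on the right. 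The top-order piece is then bounded by \eqref{23}, the intermediate pieces by \eqref{24} and \eqref{25}, and the zeroth-order piece by \eqref{26}. Collecting terms yields $\|f\|_{L^2_7}\|g\|_{L^2_{D,k+1}}\|h\|_{L^2_{D,k}}$, and swapping which factor carries the extra $\<v\>$ when distributing the weight gives the symmetric alternative $\|g\|_{L^2_{D,k}}\|h\|_{L^2_{D,k+1}}$, proving \eqref{27}. The duality bound \eqref{27a} then follows by testing against $h$ in the unit ball of $H^1_{k-\gamma/2}$ and noting $\|h\|_{L^2_{D,k}}\lesssim \|h\|_{H^1_{k-\gamma/2}}$ once weights are shifted.

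For the sharper inequality \eqref{28} with $h=g$, I would exploit the quadratic structure of the top-order term: on the diagonal,
\begin{equation*}
\phi^{ij}(v-v_*)\partial_j g\,\partial_i g = |v-v_*|^{\gamma+2}\bigl(|\nabla g|^2-(\nabla g\cdot\widehat{v-v_*})^2\bigr)
\end{equation*}
is a positive quadratic form equal to $|v-v_*|^{\gamma+2}$ times the square of the component of $\nabla g$ orthogonal to $v-v_*$. After converting the orthogonal decomposition from reference $v-v_*$ to reference $v$, this pairs exactly with $\|g\|_{L^2_{D,k}}^2$ without the need to trade one $\<v\>$ from $g$ onto $h$. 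The remaining lower-order commutator terms arising from $\partial_i\<v\>^{2k}=2kv_i\<v\>^{2k-2}$ and from the integration by parts in $v_*$ are again controlled by Lemma \ref{L54} and absorbed into $\|f\|_{L^2_7}\|g\|_{L^2_{D,k}}^2$.

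The main obstacle I anticipate is the anisotropic bookkeeping in the second step: the directional decomposition of $\nabla g,\nabla h$ produced naturally by $\phi^{ij}(v-v_*)$ is relative to $v-v_*$, whereas the norm $L^2_D(m)$ uses the decomposition relative to $v$. Effecting this change of reference cleanly, and controlling the commutator errors by lower-order integrals that still fit into Lemma \ref{L54} (and, in the $h=g$ case, preserving nonnegativity of the top-order quadratic form), is what produces the sharp anisotropic bound rather than a cruder isotropic weighted $H^1$ estimate, and is the technical heart of the arguments in \cite{CTW,CM}.
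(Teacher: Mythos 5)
Your expansion into $T_1 = -\int(\phi^{ij}*f)\partial_i g\,\partial_j h\,m^2$, $T_2$, $T_3 = \int(\partial_i\phi^{ij}*f)g\,\partial_j h\,m^2$, $T_4$ matches the paper, and the anisotropic bookkeeping you describe is indeed how the top-order piece $T_1$ is handled with \emph{no} weight loss --- but that is already true for arbitrary $h$, so it cannot be what distinguishes \eqref{28} from \eqref{27}. The extra $\langle v\rangle$ in \eqref{27} comes from $T_3$: there the kernel has order $|v-v_*|^{\gamma+1}$ and the derivative falls on only one of the two outer factors, so \eqref{25} forces an $H^1_{\gamma/2+1}$ norm on one factor and an $L^2_{\gamma/2}$ norm on the other, hence the $\min\{\|g\|_{L^2_{D,k+1}}\|h\|_{L^2_{D,k}},\,\|g\|_{L^2_{D,k}}\|h\|_{L^2_{D,k+1}}\}$. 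Your proposal misidentifies this: you attribute the weight trade to the top-order quadratic form and then dismiss the $T_3$-type contribution as a ``lower-order commutator term,'' which it is not --- it carries the full weight $m^2$ and cannot be absorbed by the anisotropic decomposition alone, because the radial component of $\nabla h$ is given only $\langle v\rangle^{\gamma/2}$ in $L^2_D$, whereas $T_3$ effectively multiplies it by $|v-v_*|^{\gamma+1}\sim\langle v\rangle^{\gamma+1}$.

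The ingredient your proposal is missing is the second integration by parts that the paper performs on $T_3$ when $h=g$: writing $g\,\partial_j g = \tfrac12\partial_j(g^2)$ and integrating by parts once more gives
\begin{equation*}
T_3 = -\tfrac12\int(\partial_{ij}\phi^{ij}*f)\,g^2\,m^2\,dv \;-\; \tfrac12\int(\partial_i\phi^{ij}*f)\,g^2\,\partial_j m^2\,dv =: T_{31}+T_{32},
\end{equation*}
which eliminates all $v$-derivatives on $g$ and the full-weight $|v-v_*|^{\gamma+1}$ kernel at once: $T_{32}$ now carries the extra $\partial_j m^2\lesssim\langle v\rangle^{-1}m^2$ and is handled by \eqref{25}, while $T_{31}$ involves $\partial_{ij}\phi^{ij}\sim|v-v_*|^\gamma$ paired with $g^2$ and is handled by \eqref{26} (or, in the Coulomb case $\gamma=-3$, by $\partial_{ij}\phi^{ij}=-8\pi\delta_0$, which you also do not treat). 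This algebraic identity, not any sharpening of the anisotropic matching, is what removes the extra $\langle v\rangle$ in \eqref{28}; without it your argument would only recover \eqref{27} with $g=h$, which is strictly weaker. As a secondary point, your description of \eqref{27} is really a sketch of the internal arguments of \cite{CTW,CM}, which the paper invokes as a black box for $T_1,T_2$; that is a legitimate alternative route but the paper itself does not reprove that estimate.
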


\begin{proof} The proof is similar to \cite[Lemma 3.5]{CTW}  and \cite[Lemma 4.3]{CM}  with a little modification. Denoting $m  =\langle v \rangle^{k}$, we have
\begin{equation*}
\begin{aligned} 
(Q(f, g), h m^2) &= \int_{\R^3} \partial_j  \{(\phi^{ij} * f) \partial_i g - (\partial_{i}\phi^{ij} *f) g \} h m^2 dv
\\
&=  - \int_{\R^3} (\phi^{ij} *f) \partial_i g \partial_j h m^2 dv - \int_{\R^3}  (\phi^{ij} *f )  \partial_i g  h \partial_j m^2 dv      
\\
&+\int_{\R^3}  (\partial_{i}\phi^{ij} *f) g \partial_j h m^2 dv   +    \int_{\R^3} (\partial_{i}\phi^{ij} *f) g h \partial_j m^2  dv : =T_1 +T_2 +T_3 +T_4.
\end{aligned}
\end{equation*}
For the $T_1, T_2$ term, by \cite[Lemma 3.5]{CTW}  and \cite[Lemma 4.3]{CM},  we have
$|T_1| +|T_2| \le C \Vert f \Vert_{L^2_7} \Vert g \Vert_{L^2_{D,k} }\Vert h \Vert_{L^2_{D,k} }$.
Now we give a better estimate for the $T_3, T_4$ term. Since $\pa_{i}\phi^{ij} (v-v_*) \lesssim |v-v_*|^{\gamma+1}$,  $\partial_j  m^2 \le C\langle v \rangle^{-1} m^2$, for the term $T_3, T_4$, by \eqref{25}, we have 
\begin{align*}
|T_3|& \le C \Vert f \Vert_{L^2_7}\min\{ \Vert g \Vert_{L^2_{D, k+1 } }\Vert h \Vert_{L^2_{D,k} },\, \Vert g \Vert_{L^2_{D,k} }\Vert h \Vert_{L^2_{D, k+1} }\}, \quad|T_4| \le C \Vert f \Vert_{L^2_7} \Vert g \Vert_{L^2_{D,k} }\Vert h \Vert_{L^2_{D,k} },
\end{align*}
and \eqref{27} is thus proved. For the case $g =h$, we give a better estimate for $T_3$. In fact, 
\begin{multline*}
T_3 = \int_{\R^3}  (\partial_{i}\phi^{ij} *f) g \partial_j g m^2 dv = -\frac 1 2 \int_{\R^3} (\partial_{ij}\phi^{ij} *f) g^2m^2 dv - \frac 1 2 \int_{\R^3} (\partial_{i}\phi^{ij} * f )  \partial_j m^2 g^2 dv :=T_{31} +T_{32}. 
\end{multline*}
For the $T_{32}$ term, similar to the $T_4$ term, by \eqref{25}, we have $T_{32} \le C \Vert f \Vert_{L^2_7} \Vert g \Vert_{L^2_{D,k} }^2$.
For the $T_{31}$ term, if $\gamma =-3$, then $\partial_{ij}\phi^{ij} =  -8\pi \delta_0$. It is easily seen that 
\begin{align*}
T_{31} \le C \int_{\R^3} |f| g^2 m^2 dv \le C  \Vert f \Vert_{L^2_{-\gamma}} \Vert g^2 \Vert_{L^2_{2k+\gamma}}  \le C  \Vert f \Vert_{L^2_{5}} \Vert g \Vert_{L^4_{k+\gamma/2}}^2\le C  \Vert f \Vert_{L^2_{5}} \Vert g \Vert_{L^2_{D,k}}^2.
\end{align*}
For the case $\gamma \in [0, 1]$,  we have
\begin{align*}
T_{31} \le C \int_{\R^3} |v-v_*|^\gamma |f_*| g^2 m^2 dv dv_* \le C  \Vert f \Vert_{L^1_{\gamma}} \Vert g \Vert_{L^2_{k+\gamma/2}}^2\le C  \Vert f \Vert_{L^2_{5}} \Vert g \Vert_{L^2_{D,k}}^2.
\end{align*}
The case $\gamma \in (-3, 1 ]$ follows from \eqref{26}. Collecting the above estimates on $T_{31}$ and $T_{32}$, we obtain \eqref{28} and complete the proof of Lemma \ref{L52}. 
\end{proof}
For the Landau operator, we need another upper bound which writes

\begin{lem}\label{L55}
For the Landau operator $Q$, for any $\gamma \in [-3, 1]$ and any function $f, g, h$, we have
\begin{align*}
|(Q(f, g), h)| \le \Vert f \Vert_{L^2_5}\Vert g \Vert_{L^2} \Vert h \Vert_{H^2_{5}}, \quad |(Q(f, g), h)| \le \Vert f \Vert_{L^2_5}\Vert g \Vert_{L^2_{10}} \Vert h \Vert_{H^2_{-5}},
\end{align*}
\end{lem}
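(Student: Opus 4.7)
The plan is to reduce everything to the weighted convolution bounds already proved in Lemma~\ref{L54} by shifting all derivatives off of $Q(f,g)$ and onto $h$ through integration by parts. Since
\[
Q(f,g)(v)=\partial_i\int_{\R^3}\phi^{ij}(v-v_*)\bigl(f_*\partial_j g-g\,\partial_j f_*\bigr)\,dv_*,
\]
I would first pair with $h$ and integrate $\partial_i$ by parts in $v$; then in the resulting first piece I integrate $\partial_j$ by parts in $v$ (producing one interior term with $\partial_j\phi^{ij}$ and one with $\phi^{ij}\partial_{ij}h$), while in the second piece I integrate $\partial_{v_{*j}}$ by parts in $v_*$, using $\partial_{v_{*j}}\phi^{ij}(v-v_*)=-(\partial_j\phi^{ij})(v-v_*)$ so that the derivatives $f_*$ is exchanged for the same type of kernel factor. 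Collecting signs and invoking the identity $\partial_j\phi^{ij}(u)=-2|u|^\gamma u_i$, the end result is the weak-form identity
\[
(Q(f,g),h)=-4\!\int\!\!\int |v-v_*|^\gamma (v-v_*)_i\,f_* g\,\partial_i h\,dv_*dv+\int\!\!\int \phi^{ij}(v-v_*)\,f_* g\,\partial_{ij}h\,dv_*dv,
\]
so that absolute values yield two convolution integrals with kernels $|v-v_*|^{\gamma+1}$ and $|v-v_*|^{\gamma+2}$ (using $|\phi^{ij}(u)|\le|u|^{\gamma+2}$).

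The first bound then follows immediately by applying \eqref{24} to the $\nabla h$ term and \eqref{23} to the $D^2 h$ term: both yield $\|f\|_{L^2_5}\|g\|_{L^2}$ times a norm of $\nabla h$ or $D^2 h$ controlled by $\|h\|_{H^2_5}$. For the second inequality, my plan is simply to redistribute the polynomial weight between $g$ and $h$ before invoking Lemma~\ref{L54}: write $g\,h$ as $(g\langle v\rangle^{10})(h\langle v\rangle^{-10})$ inside each integral, and then appeal to \eqref{23}, \eqref{24} with $\tilde g=g\langle v\rangle^{10}$ and $\tilde h=h\langle v\rangle^{-10}$. This produces $\|f\|_{L^2_5}\|g\|_{L^2_{10}}$ multiplied by weighted Sobolev norms of $h$ with a weight of $\langle v\rangle^{-7}$, which are dominated by $\|h\|_{H^2_{-5}}$ via the trivial embedding $H^2_{-5}\hookrightarrow H^2_{-7}$.

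The only step that is not entirely mechanical is the integration by parts itself, since one must track carefully the cancellations/additions of signs arising from differentiating $\phi^{ij}(v-v_*)$ in the $v$ variable versus the $v_*$ variable, and confirm that no boundary contributions arise (which is standard for Schwartz-class approximations and then extended by density, given that $\gamma+2>-1$ so the kernel is locally integrable). Once the weak-form identity is in hand, the rest of the proof is a direct application of the estimates already established in Lemma~\ref{L54}, and no further harmonic analysis is needed.
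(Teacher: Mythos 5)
Your proof is correct and follows essentially the same route as the paper: both integrate by parts twice to arrive at the weak-form identity $(Q(f,g),h)=\int (\phi^{ij}*f)\,g\,\partial_{ij}h\,dv+2\int(\partial_j\phi^{ij}*f)\,g\,\partial_i h\,dv$ and then invoke \eqref{23} and \eqref{24} with the appropriate kernel bounds $|\phi^{ij}(u)|\lesssim|u|^{\gamma+2}$, $|\partial_j\phi^{ij}(u)|\lesssim|u|^{\gamma+1}$; your version just makes explicit the weight redistribution $g\,h=(g\langle v\rangle^{10})(h\langle v\rangle^{-10})$ needed for the second estimate, a step the paper leaves implicit. (One small slip: your justification ``$\gamma+2>-1$'' should read $\gamma+2>-3$ for local integrability in $\R^3$, which holds throughout $\gamma\in[-3,1]$, so the conclusion stands.)
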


\begin{proof}
By using integration by parts, we easily have
\begin{equation*}
\begin{aligned} 
(Q(f, g), h)  &= \int_{\R^3} \Big(- (\phi^{ij} *f )\partial_j g   \partial_i  h  +  (\pa_j\phi^{ij} * f)  g    \partial_i  h\Big)   dv= \int_{\R^3} \Big((  \phi^{ij} *f  )    g \partial_i  \partial_j h + 2 (\pa_j\phi^{ij} * f)  g    \partial_i  h  \Big) dv,
\end{aligned}
\end{equation*}
then by the homogeneity of $\phi^{ij}$ and $\pa_j\phi^{ij}$, we have $|\phi^{ij} (v-v_*)| \lesssim |v-v_*|^{\gamma+2}, \quad |\pa_j\phi^{ij}(v-v_*)| \lesssim |v-v_*|^{\gamma+1}$. So the theorem follows directly from \eqref{23} and \eqref{24}. 
\end{proof}

\subsection{High-order specular boundary conditions}
When $\Omega$ is the union of cubes given by \eqref{Omega}, we give the high-order compatible specular boundary condition from \cite[Lemma 3.1 and 3.2]{Deng2021e}. Although the proof in \cite{Deng2021e} is given for exponential decay perturbations $F=\mu+\mu^{1/2}f$, similar calculations can be applied to polynomial perturbations $F=\mu+f$. 
\begin{lem}[{\cite[Lemma 3.1]{Deng2021e}}] \label{highspecular}
	Let $(f,\phi)$ be the solution to \eqref{vplocal1} with boundary conditions \eqref{specular} and \eqref{Neumann}. Fix $i\in\{1,2,3\}$, $x\in \Gamma_i$ and $v\cdot n(x)\neq 0$. Then we have the following identities on boundary $\Gamma_i$:
	\begin{equation}\label{33a}
		\pa^\al f(x,v)=(-1)^{\al_i}\pa^\al f(x,R_xv),
	\end{equation}
for any $\al=(\al_1,\al_2,\al_3)\in\mathbb{N}^3$. 
\end{lem}

As a corollary, by definition \eqref{abc}, we have the following boundary values for $[a_\pm,b,c]$.
\begin{lem}[{\cite[Lemma 3.2]{Deng2021e}}]
	Let $(f,\phi)$ be the solution to \eqref{vplocal1} with boundary conditions \eqref{specular} and \eqref{Neumann}. Define $[a_\pm,b,c]$ by \eqref{abc}. 
	For $i=1,2,3$, $j\neq i$ and any $x\in\Gamma_i$, we have 
	\begin{align}\label{boundaryabc}
		\partial_{x_i}c(x) =\partial_{x_i}a_\pm(x)=\partial_{x_i}b_j(x)=\pa_{x_ix_i}b_i(x)=	b_i(x) = 0.
	\end{align}
\end{lem}

\subsection{Estimates on the weight}
 Recall the weight function $w=w(|\al|,|\beta|)=w(\al,\beta)$ is defined in \eqref{functionw}. We have the following properties of $w$.


\begin{lem}\label{L27}
	Assume $-3<\gamma\le 1$, $1/2\le s<1, \gamma+2s >-1$ for Boltzmann case and $-3\le\gamma\le 1$ for Landau case. 
For any multi-indices $\al,\beta$ and $k\in\mathbb N$, 
   $w(\alpha, \beta) $ satisfies the following properties.

\noindent  $\bullet$ For $ |\alpha_1| < |\alpha|,~ |\beta_1 | < |\beta|$, we have 
\begin{align}\label{29}
w(\alpha, \beta) \langle v \rangle^{6s}\le  w(\alpha, \beta_1), \quad w(\alpha, \beta) \langle v \rangle^{6s}  \le  w(\alpha_1, \beta) \quad\text{for the Boltzmann case}, 
\end{align}
\begin{align}\label{210}
w( \alpha, \beta)\langle v \rangle^3 \le  w( \alpha, \beta_1), \quad w( \alpha, \beta)\langle v \rangle^{3}\le  w( \alpha_1, \beta)\quad \text{for the Landau case}. 
\end{align}

\noindent $\bullet$ For both Boltzmann and Landau case, and any  $|\alpha| \ge 0$,  $|\beta| \ge 1$, we have 
\begin{align}\label{211}
w(\alpha, \beta) \le \langle v \rangle^{\gamma{{-1}} } w (|\alpha|+1, |\beta|-1 ).
\end{align}

\noindent $\bullet$ For the Boltzmann case and any $|\alpha|\ge 1$, $|\beta|\ge 0$, we have 
\begin{align}\label{213}
w(\alpha, \beta) \le  w(|\alpha|-1, |\beta|)^s w(|\alpha|-1,|\beta|+1)^{1-s} \langle v \rangle^{\gamma}.
\end{align}
\end{lem}

\begin{proof}
%
	Recall that $q = 6s -3(\gamma{{-1}}) $, $  p =q + \gamma{{-1}}$, $r = 2q + 6$, {for Boltzmann case}, and $q =  3-(\gamma{{-1}})$, $ p = 3$, $r = 2q + 6$ for the Landau case. 
\eqref{29} and \eqref{210} are just from the fact that $-3\le\gamma\le 1$ and 
\begin{align*}
p = -3(\gamma{{-1}})+ 6s + \gamma{{-1}} \ge 6s  , \quad q \ge 6s,
\end{align*}
for the Boltzmann case and $p, q\ge 3$ for the Landau case.
The estimate \eqref{211} follows from the fact that
\begin{multline*}
\quad\,k- p|\alpha|  - q|\beta| + r \le k - p(|\alpha| + 1)  - q ( |\beta| - 1)  +  r  +  \gamma{{-1}}   \quad
 \Longleftrightarrow  \quad  q -p +\gamma{{-1}} \ge 0,
\end{multline*}
and we conclude \eqref{211} from the definition of $p, q$. \eqref{213} is equivalent to 
\begin{multline*}
k- p|\alpha|  - q|\beta| + r\le (k - p(|\alpha| -  1)  - q  |\beta|   +  r )s  + (k - p(|\alpha| - 1)  - q ( |\beta| + 1)  +  r )(1-s) +\gamma,
\end{multline*}
which is equivalent to 
\begin{multline*}
0 \le p s + p (1-s) - q (1-s) +\gamma  \ \Longleftrightarrow \     0 \le q s + (p-q)  + \gamma  = (6s -3(\gamma{{-1}})) s  +   \gamma{{-1}}  + \gamma.
\end{multline*}
We thus conclude \eqref{213} from the definition of $q$, $1/2\le s<1$ and $-3<\gamma\le 1$. 
\end{proof}

We can directly deduce the following Corollary from Lemma \ref{L27}.
\begin{cor}
Assume the same conditions as in Lemma \ref{L27}. Then we have
\begin{equation}
\label{l31}
\max_{|\alpha|+|\beta| =1}w^2 (\alpha, \beta) \langle v \rangle^{\kappa} \le w(0, 0) w(1, 0) ,
\end{equation}
\begin{equation}
\label{l32}
\max_{|\alpha|+|\beta| =2}w^2 (\alpha, \beta) \langle v \rangle^{\kappa} \le w(1, 0) w(2, 0) , \quad \max_{\substack{|\alpha|+|\beta| =2\\|\beta|\ge 1}}w^2 (\alpha, \beta) \langle v \rangle^{\kappa} \le w( 0, 1) w(1, 1),
\end{equation}
\begin{equation}
\label{linfty2}
\max_{|\alpha|+|\beta| =2}w^2 (\alpha, \beta) \langle v \rangle^{\kappa}\le  w^2(2, 0) \langle v \rangle^{\kappa} \le  w(1, 0)^{4/5} w(2, 0)^{6/5},
\end{equation}
where $\kappa=4s$ for Boltzmann case,  $\kappa=2$ for Landau case. 
\end{cor}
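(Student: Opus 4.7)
The key observation is that $w(\alpha,\beta)$ is a pure power of $\langle v\rangle$. Writing $w(\alpha,\beta) = \langle v\rangle^{E(|\alpha|,|\beta|)}$ with $E(a,b) = k - pa - qb + r$, every inequality in the statement reduces to a linear inequality between the integer exponents on the two sides, so the proof is purely arithmetic once $p$, $q$, $\kappa$ are substituted. My plan is to verify each of the four claims by checking the finitely many cases of $(|\alpha|,|\beta|)$ achieving the maximum, and in each case reducing to a single numerical inequality in $p,q,\kappa$ that follows from the constraints $-3<\gamma\le 1$, $1/2\le s<1$ (Boltzmann) or $-3\le\gamma\le 1$ (Landau).

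Concretely, for \eqref{l31} I would examine the two cases $(|\alpha|,|\beta|)=(1,0)$ and $(0,1)$. The first reduces to $\kappa\le p$, and the second to $p+\kappa\le 2q$. For \eqref{l32} I examine $(2,0),(1,1),(0,2)$: the first gives $\kappa\le p$, the second $p+\kappa\le 2q$ again (so already established), and the third gives the new condition $3p+\kappa\le 4q$. The second inequality in \eqref{l32}, restricted to $|\beta|\ge 1$, amounts to the same pair $\kappa\le p$ and $p+\kappa\le 2q$. For \eqref{linfty2}, the first step $\max_{|\alpha|+|\beta|=2}w^2(\alpha,\beta)\le w^2(2,0)$ requires $p\le q$, while the interpolation $w^2(2,0)\langle v\rangle^\kappa\le w(1,0)^{4/5}w(2,0)^{6/5}$ reduces (after cancelling $k$ and $r$) to $5\kappa\le 4p$.

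It then remains to plug in the explicit values. For Boltzmann, $p=6s-2\gamma+2$, $q=6s-3\gamma+3$, $\kappa=4s$, so $\kappa\le p$ becomes $\gamma\le s+1$ (true since $\gamma\le 1\le s+1$); $p+\kappa\le 2q$ becomes $2\gamma-2\le s$ (true since the left side is $\le 0$ and $s\ge 1/2$); $3p+\kappa\le 4q$ becomes $3\gamma\le s+3$ (true since $\gamma\le 1$); $p\le q$ is $\gamma\le 1$; and $5\kappa\le 4p$ becomes $2\gamma\le s+2$ (true). For Landau, $p=3$, $q=4-\gamma$, $\kappa=2$, so $\kappa\le p$ is $2\le 3$; $p+\kappa\le 2q$ is $2\gamma\le 3$; $3p+\kappa\le 4q$ is $4\gamma\le 5$; $p\le q$ is $\gamma\le 1$; and $5\kappa\le 4p$ is $10\le 12$. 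All hold under the stated hypotheses.

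There is no real obstacle here beyond careful bookkeeping: the main points to be attentive to are (i) correctly identifying which pairs $(|\alpha|,|\beta|)$ can achieve the maxima on the left-hand sides, (ii) keeping track of signs when converting the $\le$ between $\langle v\rangle$-powers into inequalities between exponents, and (iii) treating the Boltzmann and Landau cases in parallel since the definitions of $p,q$ differ but the structural estimates are identical. Once these are done, the corollary follows directly from Lemma \ref{L27} combined with the explicit formulas in \eqref{functionw}.
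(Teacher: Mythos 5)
Your proposal is correct and follows the approach the paper implicitly intends: since $w(\alpha,\beta)$ is a pure power of $\langle v\rangle$, each inequality reduces to a linear constraint on $p$, $q$, $\kappa$, which you then verify case by case from the explicit formulas in \eqref{functionw}. The arithmetic checks out (in particular $\kappa\le p$, $p+\kappa\le 2q$, $3p+\kappa\le 4q$, $p\le q$, and $5\kappa\le 4p$ are exactly the right list of reduced inequalities, and all hold under $-3<\gamma\le 1$, $1/2\le s<1$ for Boltzmann and $-3\le\gamma\le 1$ for Landau); note also that these all follow at once from the three facts $p\le q$ and $\kappa<\min(p,q)$ already established in the proof of Lemma \ref{L27} (via $p,q\ge 6s>4s=\kappa$ for Boltzmann, $p,q\ge 3>2=\kappa$ for Landau), which is presumably what the paper means by ``directly deduce.''
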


\subsection{Weighted estimates}\label{secpri}
Next, we come to the nonlinear term for the Vlasov-Poisson term. Noticing $w^2(\alpha, \beta)$ is still of the form $\langle v \rangle^k$ for some $k>0$, we have 
\begin{align}\label{Cab}
	\nabla_v (w^2(\alpha, \beta))  = A_{\al,\beta} \frac v {\langle v \rangle^2} w^2(\alpha, \beta),
\end{align}
for some constant $A_{\al,\beta}$ depending on $\al,\beta,k,\gamma$. 
Then we can apply the space-velocity weight $e^{\frac{\pm A_{\al,\beta}\phi}{\<v\>^2}}$.  To the end of this section, we will assume 
\begin{align}
	\label{214c}
	\|\phi\|_{L^\infty_x}\le C< + \infty, \quad \| \psi \|_{L^\infty_x}\le C< + \infty. 
\end{align}
and hence, 
\begin{align}\label{214d}
	\Big|e^{\frac{\pm A_{\al,\beta}\phi}{\<v\>^2}}\Big|\le C<\infty, \quad \Big|e^{\frac{\pm A_{\al,\beta}\psi}{\<v\>^2}}\Big|\le C<\infty. 
\end{align}

\begin{lem}\label{L212}
	Suppose that $-3 \le\gamma\le 1$ for Landau case and $-3<\gamma\le 1$ for Boltzmann case. For $|\alpha| + |\beta| \le 2$ and any functions $f_\pm$, for any function $\phi$ satisfies \eqref{214c}, 
	we have 
	\begin{equation}
		\label{214a}
		\Big|\big(v\cdot\na_x\partial_{\beta}^{\alpha}f_\pm,e^{\frac{\pm A_{\al,\beta}\phi}{\<v\>^2}} \partial^\alpha_{\beta}f_\pm w^2(\alpha, \beta) \big)_{L^2_{x, v}} 
		\mp( \nabla_x \phi \cdot \nabla_v \partial_{\beta}^{\alpha}f_\pm,e^{\frac{\pm A_{\al,\beta}\phi}{\<v\>^2}} \partial^\alpha_{\beta}f_\pm w^2(\alpha, \beta) )_{L^2_{x, v}}\Big|\
		\lesssim \|\na_x\phi\|_{H^2_x}\|f\|^2_{Y_k},
	\end{equation}
	and for $|\beta|>0$, 
	\begin{equation}
		\label{estiv3}
		\sum_{|\beta_1|=1}|( \pa_{\beta_1}v \cdot \nabla_x\partial^\alpha_{\beta-\beta_1} f  ,e^{\frac{\pm A_{\al,\beta}\phi}{\<v\>^2}}\partial^\alpha_{\beta}f  w^2(\alpha, \beta)  )_{L^2_{x, v}}|\le C\sum_{\substack{|\beta_2|=|\beta|-1 \\ |\alpha_2| =|\alpha|+1}} \Vert \partial^{\alpha_2}_{\beta_2}  f w(\alpha_2, \beta_2)\Vert_{L^2_xL^2_{\gamma/2}} \Vert \partial^\alpha_{\beta} f w(\alpha, \beta) \Vert_{L^2_{x}L^2_{\gamma/2} }.
	\end{equation}
%
%
%
%
%
	For $|\al_1|\ge 1$, we have 
	\begin{align}
		\label{estiv5}
		| ( \nabla_x \partial^{\alpha_1} \phi \cdot \nabla_v \partial_{\beta}^{\alpha-\alpha_1}f,e^{\frac{\pm A_{\al,\beta}\phi}{\<v\>^2}} \partial^\alpha_{\beta}f w^2(\alpha, \beta) )_{L^2_{x, v}}|\lesssim \Vert \na_x\phi \Vert_{H^3_x }   \Vert f \Vert_{Y_k}^2,
	\end{align}
	where $A_{\al,\beta}$ is given by \eqref{Cab},
	for some constant $C_k>0$ for both Landau and Boltzmann case. 
\end{lem}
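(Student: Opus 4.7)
The proof splits into the three displayed estimates.

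For \eqref{214a}, the exponential weight $e^{\pm A_{\al,\beta}\phi/\<v\>^2}$ is engineered precisely so that the transport and Vlasov contributions cancel up to a harmless lower-order piece. Let $\mathcal T$ and $\mathcal E$ denote the two inner products. An integration by parts in $x$ in $\mathcal T$ yields
\[
	\mathcal T = \mp \tfrac{A_{\al,\beta}}{2}\int \tfrac{v\cdot\nabla_x\phi}{\<v\>^2}|\pa^\al_\beta f_\pm|^2 e^{\pm A_{\al,\beta}\phi/\<v\>^2}w^2\,dxdv,
\]
while an integration by parts in $v$ in $\mathcal E$ produces three pieces, arising from $\na_v$ hitting respectively the exponential (contributing $\mp 2A_{\al,\beta}\phi v/\<v\>^4$), the weight $w^2$ (contributing $A_{\al,\beta}v/\<v\>^2\,w^2$ by \eqref{Cab}), and $\pa^\al_\beta f_\pm$ itself (giving $-\mathcal E$ by symmetry). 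Forming $\mathcal T\mp\mathcal E$ exactly cancels the two $\<v\>^{-2}$-contributions--this is the whole point of the choice of $A_{\al,\beta}$--leaving only
\[
	\mathcal T\mp\mathcal E = -A_{\al,\beta}\int \tfrac{\phi\,v\cdot\nabla_x\phi}{\<v\>^4}|\pa^\al_\beta f_\pm|^2 e^{\pm A_{\al,\beta}\phi/\<v\>^2}w^2\,dxdv.
\]
One then bounds this remainder using $\|\phi\|_{L^\infty_x}\le C$ from \eqref{214c}, $\|\nabla_x\phi\|_{L^\infty_x}\lesssim \|\nabla_x\phi\|_{H^2_x}$ by 3D Sobolev embedding, and $|v|/\<v\>^4\le \<v\>^{-3}\le \<v\>^\gamma$, which is absorbed by the $\<v\>^{\gamma/2}$-factor built into $L^2_D$.

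For \eqref{estiv3}, observe that $\pa_{\beta_1}v$ is a constant unit vector (since $|\beta_1|=1$), so the integrand collapses to a product $\pa_{x_j}\pa^\al_{\beta-\beta_1}f\cdot\pa^\al_\beta f$ weighted by $w^2(\al,\beta)$ and a bounded exponential. Splitting $w(\al,\beta)^2=(w(\al,\beta)\<v\>^{\gamma/2})^2\<v\>^{-\gamma}$ and applying Cauchy--Schwarz produces two weighted $L^2_{\gamma/2}$-norms, and the pointwise bound $w(\al,\beta)\le \<v\>^{\gamma-1}w(|\al|+1,|\beta|-1)\le \<v\>^\gamma w(|\al|+1,|\beta|-1)$ supplied by \eqref{211} absorbs the leftover $\<v\>^{-\gamma}$ factor and matches each factor to its natural $L^2_{\gamma/2}$-weight, immediately yielding the claim.

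For \eqref{estiv5}, I would run a case analysis on $|\al_1|\in\{1,2\}$. When $|\al_1|=1$, $\nabla_x\pa^{\al_1}\phi$ carries two $x$-derivatives of $\phi$ and is controlled in $L^\infty_x$ by $\|\nabla_x\phi\|_{H^3_x}$ via the 3D Sobolev embedding $H^2_x\hookrightarrow L^\infty_x$; I would then apply H\"older $L^\infty_x\cdot L^2_{x,v}\cdot L^2_{x,v}$ with the weight-split $w^2=w\<v\>^{-\gamma/2}\cdot w\<v\>^{\gamma/2}$. When $|\al_1|=2$, $\nabla_x\pa^{\al_1}\phi$ is only in $L^2_x$, so I would put it instead in $L^3_x$ via $H^{1/2}_x\hookrightarrow L^3_x$ and apply H\"older $L^3_x\cdot L^6_xL^2_v\cdot L^2_{x,v}$, bounding the $L^6_xL^2_v$-factor through $H^1_x\hookrightarrow L^6_x$--this keeps the total differentiation count within the $Y_k$-budget of two. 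The main obstacle is the weight-matching: the natural $Y_k$-weight for $\nabla_v\pa^{\al-\al_1}_\beta f=\pa^{\al-\al_1}_{\beta+e_i}f$ is $w(|\al|-1,|\beta|+1)$, which is strictly smaller than $w(\al,\beta)$, so the naive Cauchy--Schwarz pairing fails. The remedy is to distribute the $\<v\>^{\gamma/2}$-decay from $L^2_D$ asymmetrically between the two factors and invoke either the anisotropic $\wt\nabla_v$-control built into $L^2_D$ in the Landau case or the interpolation inequality \eqref{213} together with the fractional $H^s$-regularity of $L^2_D=H^s_{\gamma/2}$ in the Boltzmann case, thereby converting the $\nabla_v$-derivative into a bona fide $Y_k$-contribution of lower differentiation order.
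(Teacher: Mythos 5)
Your proposal is correct and follows essentially the same route as the paper: for \eqref{214a} you perform the $x$- and $v$-integrations by parts, observe that the two $v\cdot\nabla_x\phi/\<v\>^2$ contributions cancel by the design of $A_{\al,\beta}$, and bound the surviving $\phi\,v\cdot\nabla_x\phi/\<v\>^4$ remainder using $|v|/\<v\>^4\le\<v\>^\gamma$; for \eqref{estiv3} you identify $\pa_{\beta_1}v\cdot\nabla_x\pa^\al_{\beta-\beta_1}f=\pa^{\al+e_i}_{\beta-e_i}f$ and close with \eqref{211} and Cauchy--Schwarz; and for \eqref{estiv5} you carry out the same $|\al_1|\in\{1,2\}$ case split with $L^\infty$--$L^2$ and $L^3$--$L^6$ H\"older, then trade the $\nabla_v$-derivative for $Y_k$-control via \eqref{210} (Landau) or \eqref{213} plus \eqref{Ls} (Boltzmann). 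These are exactly the ingredients used in the paper's proof of Lemma~\ref{L212}.
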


\begin{proof}
	For \eqref{214a}, taking integration by parts with respect to $\na_x$, we have
	\begin{multline*}
		\big(v\cdot\na_x\partial_{\beta}^{\alpha}f_\pm,e^{\frac{\pm A_{\al,\beta}\phi}{\<v\>^2}} \partial^\alpha_{\beta}f_\pm w^2(\alpha, \beta) \big)_{L^2_{x, v}} 
		\pm \frac{A_{\al,\beta}}{2}\big(\frac{v\cdot\na_x\phi}{\<v\>^2}\partial_{\beta}^{\alpha}f_\pm,e^{\frac{\pm A_{\al,\beta}\phi}{\<v\>^2}} \partial^\alpha_{\beta}f_\pm w^2(\alpha, \beta) \big)_{L^2_{x, v}}\\
		=\int_{\pa\Omega}\int_{\R^3}v\cdot n(x)|e^{\frac{\pm A_{\al,\beta}\phi}{2\<v\>^2}}\partial_{\beta}^{\alpha}f_\pm w(\alpha, \beta)|^2\,dvdS(x).  
	\end{multline*}
For the torus case, the right hand side is equal to zero. For the case of union of cubes, we apply Lemma \ref{highspecular} to obtain $|\partial_{\beta}^{\alpha}f_\pm(v)|=|\partial_{\beta}^{\alpha}f_\pm(R_xv)|$ for $x\in\pa\Omega$ and $R_x$ is given by \eqref{Rx}. Thus, by change of variable $v\mapsto R_xv$, 
\begin{align}\label{222}\notag
	&\int_{\pa\Omega}\int_{\R^3}v\cdot n(x)|e^{\frac{\pm A_{\al,\beta}\phi}{2\<v\>^2}}\partial_{\beta}^{\alpha}f_\pm(v) w(\alpha, \beta)|^2\,dvdS(x)\\ \nonumber
	=&\int_{\pa\Omega}\int_{\R^3}R_xv\cdot n(x)|e^{\frac{\pm A_{\al,\beta}\phi}{2\<R_xv\>^2}}\partial_{\beta}^{\alpha}f_\pm(R_xv) w(\alpha, \beta)|^2\,dvdS(x)\\
	=&-\int_{\pa\Omega}\int_{\R^3}v\cdot n(x)|e^{\frac{\pm A_{\al,\beta}\phi}{2\<v\>^2}}\partial_{\beta}^{\alpha}f_\pm(v) w(\alpha, \beta)|^2\,dvdS(x)=0,
\end{align}
where $R_xv\cdot n(x)=-v\cdot n(x)$. 
	Taking integration by parts with respect to $\na_v$ and using \eqref{Cab}, we have 
	\begin{align*}
		\quad\,\mp( \nabla_x \phi \cdot \nabla_v \partial_{\beta}^{\alpha}f_\pm,e^{\frac{\pm A_{\al,\beta}\phi}{\<v\>^2}} \partial^\alpha_{\beta}f_\pm w^2(\alpha, \beta) )_{L^2_{x, v}} 
		&= \mp( \frac{ A_{\al,\beta} v\cdot\na_x\phi\, \phi}{\<v\>^4}  \partial_{\beta}^{\alpha}f_\pm,e^{\frac{\pm A_{\al,\beta}\phi}{\<v\>^2}} \partial^\alpha_{\beta}f_\pm w^2(\alpha, \beta) )_{L^2_{x, v}}\\
		&\quad \pm \frac{1}{2} ( \frac{A_{\al,\beta} v\cdot\nabla_x \phi}{\<v\>^2}  \partial_{\beta}^{\alpha}f_\pm,e^{\frac{\pm A_{\al,\beta}\phi}{\<v\>^2}} \partial^\alpha_{\beta}f_\pm w^2(\alpha, \beta) )_{L^2_{x, v}}.
	\end{align*}
	Noticing $|\frac{v}{\<v\>^4}|\le \<v\>^{\gamma}$ for $-3\le\gamma\le 1$ and taking summation of the above two estimates, we have 
	\begin{align*}
		&\quad\,\Big|\big(v\cdot\na_x\partial_{\beta}^{\alpha}f_\pm,e^{\frac{\pm A_{\al,\beta}\phi}{\<v\>^2}} \partial^\alpha_{\beta}f_\pm w^2(\alpha, \beta) \big)_{L^2_{x, v}} 
		\mp( \nabla_x \phi \cdot \nabla_v \partial_{\beta}^{\alpha}f_\pm,e^{\frac{\pm A_{\al,\beta}\phi}{\<v\>^2}} \partial^\alpha_{\beta}f_\pm w^2(\alpha, \beta) )_{L^2_{x, v}}\Big|
		\\
		&\lesssim \|\nabla_x \phi\|_{L^\infty_x}\|\<v\>^{\frac{\gamma}{2}}  \partial_{\beta}^{\alpha}f_\pm w(\alpha, \beta)\|_{L^2_{x,v}}\|\<v\>^{\frac{\gamma}{2}}  \partial^\alpha_{\beta}f_\pm w(\alpha, \beta)\|_{L^2_{x, v}}\lesssim \|\na_x\phi\|_{H^2_x}\|f\|^2_{Y_k}. 
	\end{align*}
	This yields \eqref{214a}. 
	Here we used the fact \eqref{214c} and its consequence \eqref{214d}.   
	
	\smallskip
		Notice that 
$
		\pa_{i}v \cdot \nabla_x\partial^\alpha_{\beta-e_i} f =\partial^{\alpha+e_i}_{\beta-e_i}f, \quad \forall~ i =1, 2, 3,
$
	where $e_i$ is the unit vector with $i$-th component being $1$. 
	Then \eqref{estiv3} follows from  \eqref{211} and \eqref{214d}. 
	 \smallskip

	When $1\le |\alpha_1| \le 2$, we have
	\begin{align}\label{215a}\notag
		&\quad\,( \nabla_x \partial^{\alpha_1} \phi \cdot \nabla_v \partial_{\beta}^{\alpha-\alpha_1}f,e^{\frac{\pm A_{\al,\beta}\phi}{\<v\>^2}} \partial^\alpha_{\beta}f w^2(\alpha, \beta) )_{L^2_{x, v}} 
		\notag\lesssim \Big(\sum_{|\al_1|=1} \Vert \nabla_x \partial^{\alpha_1} \phi \Vert_{L^\infty_x} \Vert\<v\>^{-\frac{\gamma}{2}} \nabla_v \partial_{\beta}^{\alpha-\alpha_1}f w(\alpha, \beta )\Vert_{L^2_xL^2_v} \\
		&\notag\quad+\sum_{|\al_1|=2}\Vert \nabla_x \partial^{\alpha_1} \phi \Vert_{L^3_x} \Vert\<v\>^{-\frac{\gamma}{2}} \nabla_v \partial_{\beta}^{\alpha-\alpha_1}f w(\alpha, \beta )\Vert_{L^6_xL^2_v} \Big)
		\Vert\<v\>^{\frac{\gamma}{2}} \partial^\alpha_{\beta}f w (\alpha, \beta)\Vert_{L^2_xL^2_v}
		\\
		&\lesssim  \Vert \na_x\phi \Vert_{H^3_x }  \sum_{|\al_1|=1} \Vert\<v\>^{-\frac{\gamma}{2}} \nabla_v \partial_{\beta}^{\alpha-\alpha_1}f w(\alpha, \beta )\Vert_{L^2_xL^2_v} \Vert f \Vert_{Y_k}.
	\end{align}
	For the Boltzmann case, by \eqref{213} and \eqref{Ls}, we have for $|\al_1|=1$ that 
	\begin{align*}
		&\quad\,\Vert \<v\>^{-\frac{\gamma}{2}}\nabla_v \partial_{\beta}^{\alpha-\alpha_1}f w(\alpha, \beta )\Vert_{L^2_xL^2_v}\le  \Vert \<v\>^{-\frac{\gamma}{2}} \partial_{\beta}^{\alpha-\alpha_1}f w(\alpha, \beta )\Vert_{L^2_xH^1_v}
		\\
		&\lesssim \Vert   \partial_{\beta}^{\alpha-\alpha_1}f w(|\alpha|-1, \beta ) \Vert_{L^2_xH^s_{\gamma/2}}   + \Vert   \partial_{\beta}^{\alpha-\alpha_1}f w(|\alpha|-1, |\beta| +1) \Vert_{L^2_xH^{1+s}_{\gamma/2}} 
		\lesssim \Vert f \Vert_{Y_k}.
	\end{align*}
	For the Landau case, by \eqref{210} we have
	\[
		\Vert \<v\>^{-\frac{\gamma}{2}}\nabla_v \partial_{\beta}^{\alpha-\alpha_1}f w(\alpha, \beta )\Vert_{L^2_xL^2_v} \le  \Vert \<v\>^{-\frac{\gamma}{2}} \partial_{\beta}^{\alpha-\alpha_1}f w(\alpha, \beta )\Vert_{L^2_xH^1_v}
		\lesssim \Vert   \partial_{\beta}^{\alpha-\alpha_1}f w(|\alpha|-1, \beta ) \Vert_{L^2_xL^2_{D}}  
		\lesssim \Vert f \Vert_{Y_k}.
	\]
	The proof of \eqref{estiv5} is finished by plugging the above estimates into \eqref{215a}.
\end{proof}

 Next, we compute the linear part for the Poisson term. 
\begin{lem}\label{L211}
	For $|\alpha| + |\beta| \le 2$, any functions $f_\pm, \phi$ satisfying \eqref{vplocal1} and any $\psi$ satisfies \eqref{214c}, 
	there exists $C_k>0$  such that 
	\begin{align}
		\label{estiv4}
		|( \partial^\alpha_{\beta} (\nabla_x \phi \cdot v \mu),e^{\frac{\pm A_{\al,\beta}\psi}{\<v\>^2}} \partial^\alpha_{\beta}f_\pm w^2(\alpha, \beta) )_{L^2_{x, v}}| \le C_k \Vert \na_x\phi \Vert_{H^{2}_x }   \Vert f \Vert_{H^{2}_x L^2_{v}  }.
	\end{align}
When $|\beta|=0$, there exists some $C_\al>0$, such that
\begin{multline}
	\label{231}
	\sum_\pm\pm\big( \partial^\alpha \nabla_x \phi \cdot v \mu, e^{\frac{\pm A_{\al,\beta}\psi}{\<v\>^2}}\partial^\alpha f_\pm w^2(\alpha, \beta)\big)_{L^2_{x, v}}\ge
	C_\al\pa_t\|\partial^\alpha\na_x\phi\|_{L^2_x}^2\\- C\|\pa^\al\na_x\phi\|_{L^2_x}\|\pa^\al\{\I-\P\}f\|_{L^2_xL^2_5}-C\|\partial^\alpha \nabla_x \phi\|_{L^2_x}\|\na_x\psi\|_{H^1_x}\|f\|_{Y_k}.
\end{multline}

\end{lem}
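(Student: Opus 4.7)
The plan is to treat the two estimates separately. For the first estimate \eqref{estiv4}, since $\nabla_x\phi$ depends only on $x$ while $v\mu$ depends only on $v$, Leibniz yields $\partial^\alpha_\beta(\nabla_x\phi\cdot v\mu)=\partial^\alpha\nabla_x\phi\cdot\partial_\beta(v\mu)$. The exponential $e^{\pm A_{\alpha,\beta}\psi/\langle v\rangle^2}$ is uniformly bounded by \eqref{214d}, and $\partial_\beta(v\mu)w^2(\alpha,\beta)$ is Schwartz in $v$ thanks to the exponential decay of $\mu$. A Cauchy--Schwarz applied first in $v$ and then in $x$, summed over $|\alpha|\le 2$, delivers \eqref{estiv4} with constant depending only on $k$.

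For \eqref{231} I will first isolate the $\psi$-dependence by writing $e^{\pm A_{\alpha,0}\psi/\langle v\rangle^2}=1+R_{\pm}$ with $|R_{\pm}|\lesssim|\psi|/\langle v\rangle^2$, which follows from the mean-value theorem together with \eqref{214d}. The $R_{\pm}$-contribution is then controlled by Cauchy--Schwarz using that $v\mu\langle v\rangle^{-2}w^2(\alpha,0)$ is Schwartz in $v$, that $\|\psi\|_{L^\infty_x}\lesssim\|\nabla_x\psi\|_{H^1_x}$ (Sobolev embedding combined with Poincar\'e), and that \eqref{212} lets us dominate $\|\partial^\alpha f_{\pm}\|_{L^2_xL^2_v}$ by $\|f\|_{Y_k}$. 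This produces the term $\|\partial^\alpha\nabla_x\phi\|_{L^2_x}\|\nabla_x\psi\|_{H^1_x}\|f\|_{Y_k}$ on the right-hand side.

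The heart of the argument is the ``$\psi=0$'' piece
\begin{align*}
I:=\int_{\T^3}\partial^\alpha\nabla_x\phi\cdot\int_{\R^3} v\mu\, w^2(\alpha,0)\,\partial^\alpha(f_+-f_-)\,dv\,dx.
\end{align*}
I decompose $\partial^\alpha(f_+-f_-)=\partial^\alpha(a_+-a_-)\mu+\partial^\alpha\{\I-\P\}(f_+-f_-)$. The hydrodynamic piece contributes nothing since $v\mu^2 w^2(\alpha,0)$ is odd in $v$. For the rest I further split the velocity weight as $v\mu w^2(\alpha,0)=v+v(\mu w^2(\alpha,0)-1)$. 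The ``$v$'' part integrates to $\partial^\alpha J$ with $J:=\int v(f_+-f_-)dv$; combining the Poisson relation $-\Delta_x\phi=a_+-a_-$ with the continuity equation \eqref{19} (whose difference gives $\partial_t(a_+-a_-)+\nabla_x\cdot J=0$) and integrating by parts in $x$ yields
\begin{align*}
\int_{\T^3}\partial^\alpha\nabla_x\phi\cdot\partial^\alpha J\,dx=\tfrac{1}{2}\,\partial_t\|\partial^\alpha\nabla_x\phi\|_{L^2_x}^2,
\end{align*}
which is the leading term with $C_\alpha=1/2$.

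The main obstacle is the residual piece involving $v(\mu w^2(\alpha,0)-1)$: unlike $v\mu w^2(\alpha,0)$, this weight does not decay at infinity but grows like $|v|$, so a Schwartz-type bound fails. The key observation is that $\|v(\mu w^2(\alpha,0)-1)\|_{L^2_{-5}}^2=\int|v|^2|\mu w^2(\alpha,0)-1|^2\langle v\rangle^{-10}dv$ is nevertheless finite since the integrand is $O(\langle v\rangle^{-8})$ at infinity. Thus Cauchy--Schwarz in $v$ pairing this residual against $\partial^\alpha\{\I-\P\}(f_+-f_-)$ in the $L^2_5$ norm, followed by Cauchy--Schwarz in $x$, produces exactly the error term $\|\partial^\alpha\nabla_x\phi\|_{L^2_x}\|\partial^\alpha\{\I-\P\}f\|_{L^2_xL^2_5}$ appearing in \eqref{231}, which completes the lower bound.
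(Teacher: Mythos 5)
Your proof of \eqref{231} is correct, and it takes a genuinely different route from the paper's. Instead of introducing the auxiliary single-component projection $P$ onto $\mathrm{span}\{\mu,v\mu,(|v|^2-3)\mu\}$ (which the paper uses to isolate the $v\cdot J\,\mu$ piece), you split the velocity weight as $v\mu\, w^2(\alpha,0)=v+v(\mu\, w^2(\alpha,0)-1)$ after peeling off the $(a_+-a_-)\mu$ part. The key observation---that $\int v\,\{\I-\P\}(f_+-f_-)\,dv=\int v(f_+-f_-)\,dv=J$ because the paper's $\P$ removes only the $(a_+-a_-)\mu$ piece from the difference---is used correctly, and the residual weight, though it grows like $|v|$, lands in $L^2_{-5}$ and so pairs against $\{\I-\P\}f$ in $L^2_5$ exactly as required. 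This is a clean alternative that avoids the auxiliary projection.

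Your proof of \eqref{estiv4}, however, has a real gap: you never remove $\partial_\beta$ from $f$. After the Leibniz factorization, Cauchy--Schwarz pairing the weight $\partial_\beta(v\mu)\,e^{\pm A_{\alpha,\beta}\psi/\langle v\rangle^2}\,w^2(\alpha,\beta)$ against $\partial^\alpha_\beta f$ produces $\|\partial^\alpha_\beta f\|_{L^2_{x,v}}$ on the right, which for $|\beta|\ge 1$ still carries $v$-derivatives of $f$ and is therefore \emph{not} dominated by $\|f\|_{H^2_xL^2_v}$---the right-hand side of \eqref{estiv4} has no $v$-regularity on $f$. The missing step is integration by parts in $v$ ($|\beta|$ times) \emph{before} Cauchy--Schwarz, transferring $\partial_\beta$ onto the fast-decaying weight:
\[
\big((-1)^{|\beta|}\,\partial^\alpha\nabla_x\phi\cdot\partial_\beta\big(\partial_\beta(v\mu)\,e^{\pm A_{\alpha,\beta}\psi/\langle v\rangle^2}\,w^2(\alpha,\beta)\big),\ \partial^\alpha f\big)_{L^2_{x,v}},
\]
from which Cauchy--Schwarz in $v$ and then $x$ gives $\lesssim\|\nabla_x\phi\|_{H^2_x}\|f\|_{H^2_xL^2_v}$. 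This sharpness is not cosmetic: in Lemma~\ref{L43} the $|\beta|\ge 1$ contribution of the Poisson term is controlled precisely because the bound involves only $\|f\|_{H^2_xL^2_v}$, the quantity then handled by the semigroup estimate of Lemma~\ref{L44}.
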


\begin{proof}
	From integration by parts about $\pa_\beta$, we have
	\begin{align}\label{232a}\notag
			\quad\,( \partial^\alpha_{\beta} (\nabla_x \phi \cdot v \mu),e^{\frac{\pm A_{\al,\beta}\psi}{\<v\>^2}} \partial^\alpha_{\beta}f w^2(\alpha, \beta) )_{L^2_{x, v}}
			\notag  &= ( \nabla_x \partial^\alpha \phi \cdot \partial_\beta( v \mu) w^2(\alpha, \beta) ,e^{\frac{\pm A_{\al,\beta}\psi}{\<v\>^2}} \partial^\alpha_{\beta}f )_{L^2_{x, v}}
			\\
			&=\big((-1)^{|\beta|} \nabla_x \partial^\alpha \phi \cdot  \partial_\beta \big(\partial_\beta( v \mu) e^{\frac{\pm A_{\al,\beta}\psi}{\<v\>^2}}w^2(\alpha, \beta) \big), \partial^\alpha f \big)_{L^2_{x, v}}.
	\end{align}
	By Fourier transform, Cauchy-Schwarz inequality, and using the exponential decay of $\mu$, it is direct to show that \eqref{232a} is bounded by 
	\begin{align*}
		\Vert \nabla_x \partial^\alpha \phi \Vert_{L^{2}_x}  \Big \Vert \int_{\R^d}   \partial^\alpha f  \partial_\beta \big(\partial_\beta( v \mu)e^{\frac{\pm A_{\al,\beta}\psi}{\<v\>^2}} w^2(\alpha, \beta) \big) dv  \Big \Vert_{L^{2}_x}
		\lesssim \|\na_x\phi\|_{H^2_x}\Vert f \Vert_{H^{2}_x L^2_v}.
	\end{align*}
When $\beta=0$, we temporarily define the projection 
\begin{align*}
	P f = \Big(\int_{\R^3}f(u)\,du+v\cdot\int_{\R^3} {u}f(u) du
	+(|v|^2-3)\int_{\R^3} \frac {|u|^2-3}{6}f(u) du\Big)\mu. 
\end{align*} 
Then we split $e^{\frac{\pm A_{\al,\beta}\psi}{\<v\>^2}}=1+e^{\frac{\pm A_{\al,\beta}\psi}{\<v\>^2}}-1$ and $f_\pm=P f + \{I-P\}f$ to obtain  
\begin{multline}\label{220a}
	\pm\big( \partial^\alpha \nabla_x \phi \cdot v \mu, e^{\frac{\pm A_{\al,\beta}\psi}{\<v\>^2}} w^2(\alpha, 0)\partial^\alpha f_\pm \big)_{L^2_{x, v}}
	= \pm\big( \partial^\alpha \nabla_x \phi \cdot v \mu, w^2(\alpha, 0)P \partial^\alpha  f_\pm \big)_{L^2_{x, v}}\\
	\pm\big( \partial^\alpha \nabla_x \phi \cdot v \mu, w^2(\alpha, 0)   \{I-P\}\partial^\alpha f_\pm \big)_{L^2_{x, v}} \pm\big( \partial^\alpha \nabla_x \phi \cdot v \mu, (e^{\frac{\pm A_{\al,\beta}\psi}{\<v\>^2}}-1) w^2(\alpha, 0)\partial^\alpha f_\pm \big)_{L^2_{x, v}}. 
\end{multline}
Then one can check that 
\begin{align}\label{240}
	\{I-P\}f_\pm=\{I-P\}\{\II-\PP\}f.
\end{align} 
For the first right hand term of \eqref{220a}, we have from \eqref{19} and \eqref{vplocal1}$_2$ that 
\begin{align}\label{242}\notag
	&\sum_\pm\pm\big( \partial^\alpha \nabla_x \phi \cdot v \mu, w^2(\alpha, 0) P \partial^\alpha  f_\pm \big)_{L^2_{x, v}}
	 \notag= \sum_{i=1}^3\Big( \partial^\alpha \nabla_x \phi \cdot v \mu,   w^2(\alpha, 0)v_i\mu\int_{\R^3}\frac{1}{2} v'_i\partial^\alpha (f_+-f_-)(v')\,dv' \Big)_{L^2_{x,v}}\notag\\
	 &= 2C_{\al}\int_{\Omega}  \int_{\R^3} \na_x\partial^\alpha \phi\cdot v\partial^\alpha  (f_+-f_-)(v)\,dvdx\notag= -2C_{\al}\int_{\Omega}   \partial^\alpha \phi\, \na_x\cdot\int_{\R^3} v\partial^\alpha  (f_+-f_-)(v)\,dvdx\\
		&= -2C_{\al}\int_{\Omega}   \partial^\alpha \phi\, \pa_t\Delta_x\phi dx
		= C_{\al}\,\pa_t\|\partial^\alpha\na_x\phi\|_{L^2_x}^2,
\end{align}
for some constant $C_{\al,\beta}$ depending only on $\al,\beta$. 
Here we can directly take integration by parts for the case of torus. When $\Omega$ is the union of cubes, we need to verify the zero boundary values as the following. 
Fix $i=1,2,3$. If $\al_i=1$, then $\pa_{x_i}\phi=0$ on $\Gamma_i$. If $\al_i=0,2$, then by Lemma \ref{highspecular} and change of variable $v\mapsto R_xv$, we have on $\Gamma_i$ that 
\begin{align*}
	\int_{\R^3} v_i\partial^\alpha  (f_+-f_-)(v)\,dv = \int_{\R^3} (R_xv)_i\partial^\alpha  (f_+-f_-)(R_xv)\,dv
	= -\int_{\R^3} v_i\partial^\alpha  (f_+-f_-)(v)\,dv =0. 
\end{align*}
Note that tangent derivatives don't affect the zero boundary values. This completes the integration by parts in \eqref{242}. 
For the second right hand term of \eqref{220a}, noticing exponentially velocity decay in $\mu$ and using \eqref{240}, we estimate it by 
\begin{align*}
	\big|\big( \partial^\alpha \nabla_x \phi \cdot v \mu, w^2(\alpha, \beta)\{I-P\}\partial^\alpha f_\pm \big)_{L^2_{x, v}}\big|&\lesssim \|\pa^\al\na_x\phi\|_{L^2_x}\|\pa^\al\{\II-\PP\}f\|_{L^2_xL^2_5}. 
\end{align*}
For the third term in \eqref{220a}, noticing 
$\big|e^{\frac{\pm A_{\al,\beta}\psi}{\<v\>^2}}-1\big|\lesssim A_{\al,\beta}\|\psi\|_{L^\infty}\lesssim A_{\al,\beta}\|\na_x\psi\|_{H^1_x}$, we have 
\[\big|\big( \partial^\alpha \nabla_x \phi \cdot v \mu, (e^{\frac{\pm A_{\al,\beta}\psi}{\<v\>^2}}-1) w^2(\alpha, \beta)\partial^\alpha f_\pm \big)_{L^2_{x, v}}\big|\lesssim \|\partial^\alpha \nabla_x \phi\|_{L^2_x}\|\na_x\psi\|_{H^1_x}\|f\|_{Y_k}.
\]
The above four estimates imply \eqref{231} and we conclude Lemma \ref{L211}.  

\end{proof}

In \cite{CHJ}, one only needs to compute $x$ derivative term, but for our Vlasov-Poisson system we also need estimates  about $v$ derivative term.

\begin{lem}\label{L29} Suppose that $\gamma \in (-3, 1], s \in (0, 1), \gamma+2s >-1$ for Boltzmann case and $\gamma\in[-3,1]$ for Landau case. For any $|\alpha| \ge 0$, $|\beta| \le 2$, $k \ge 7$, there exist constants $C_k >0$ such that, for any functions $f, g,\psi $, we have for the Boltzmann case, 
%
\begin{equation*}
\begin{aligned} 
&\quad\,|(\partial^\alpha_\beta Q (f, \mu),e^{\frac{\pm A_{\al,\beta}\psi}{\<v\>^2}}  \partial^\alpha_\beta g w^2(\alpha, \beta)  )_{L^2_{x, v} }   |\le  \Vert b(\cos\theta) \sin^{k-2 } \frac \theta 2 \Vert_{L^1_\theta}   \Vert \partial^\alpha_\beta f w(\alpha, \beta) \Vert_{L^2_x L^2_{\gamma/2, *}}\Vert \partial^\alpha_\beta g  w(\alpha, \beta) \Vert_{L^2_x L^2_{\gamma/2, *}} 
\\
&\quad+ C_k \Vert \partial^\alpha_\beta f w(\alpha, \beta) \Vert_{L^2_{x} L^2_{\gamma/2-1/2}}   \Vert \partial^\alpha_\beta g w(\alpha, \beta)  \Vert_{L^2_{x}L^2_{\gamma/2-1/2}} 
+ C_k \sum_{\beta_1 < \beta} \Vert \partial^{\alpha}_{\beta_1} f w(\alpha, \beta_1) \Vert_{L^2_x L^2_{\gamma/2}}\Vert \partial^\alpha_\beta g w(\alpha, \beta) \Vert_{L^2_x L^2_{\gamma/2  }},
\end{aligned}
\end{equation*}
and for the Landau case,
\begin{equation*}
\begin{aligned} 
|(\partial^\alpha_\beta Q (f, \mu),e^{\frac{\pm A_{\al,\beta}\psi}{\<v\>^2}}  \partial^\alpha_\beta g w^2(\alpha, \beta)  )_{L^2_{x, v} }   | \le C_k \sum_{\beta_1 \le \beta} \Vert \partial^{\alpha}_{\beta_1} f\Vert_{L^2_x L^2_{5}}\Vert \partial^\alpha_\beta g \Vert_{L^2_x L^2_{5}}.
\end{aligned}
\end{equation*}

\end{lem}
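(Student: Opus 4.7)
The plan is to reduce the estimate to the one-point (in $x$) lemmas already established, namely Lemma \ref{L23} for the Boltzmann case and Lemma \ref{L51} for the Landau case, via three ingredients: (i) the fact that $\partial^\alpha$ is a spatial derivative commuting with the velocity-only operator $Q$, so that Leibniz applies only to $\partial_\beta$; (ii) the observation that for each fixed $x$, the factor $w^2(\alpha,\beta)$ is exactly of the form $\langle v\rangle^{2\tilde k}$ used in those lemmas; (iii) a splitting of the exponential weight $e^{\pm A_{\alpha,\beta}\psi/\langle v\rangle^2}$ into its value $1$ at $|v|=\infty$ plus a remainder carrying an extra factor of $\langle v\rangle^{-2}$.

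First, since $Q$ acts only on $v$ and $\partial^\alpha$ only on $x$, one has the expansion
\begin{equation*}
\partial^\alpha_\beta Q(f,\mu)=\sum_{\beta_1\le\beta}C^{\beta_1}_\beta\, Q\bigl(\partial^\alpha_{\beta_1}f,\,\partial_{\beta-\beta_1}\mu\bigr),
\end{equation*}
so the inner product in the statement decomposes into one "diagonal" term ($\beta_1=\beta$) testing $Q(\partial^\alpha_\beta f,\mu)$ against $e^{\pm A_{\alpha,\beta}\psi/\langle v\rangle^2}\partial^\alpha_\beta g\,w^2(\alpha,\beta)$, plus "lower" terms ($\beta_1<\beta$) in which $\mu$ has been differentiated. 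Next I write
\begin{equation*}
e^{\pm A_{\alpha,\beta}\psi/\langle v\rangle^2}=1+\bigl(e^{\pm A_{\alpha,\beta}\psi/\langle v\rangle^2}-1\bigr),
\end{equation*}
and use the elementary inequality $|e^z-1|\le |z|e^{|z|}$ together with \eqref{214c}--\eqref{214d} to obtain the pointwise bound $|e^{\pm A_{\alpha,\beta}\psi/\langle v\rangle^2}-1|\lesssim \|\psi\|_{L^\infty_x}\langle v\rangle^{-2}$.

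For the Boltzmann case, on the diagonal $\beta_1=\beta$, the $1$-part falls directly under Lemma \ref{L23}, inequality \eqref{25a} applied with the weight $w^2(\alpha,\beta)=\langle v\rangle^{2\tilde k}$ (noting $\tilde k\ge k_0-2|\alpha|-2|\beta|+r\ge 14$ when $k\ge k_0$ and $|\alpha|+|\beta|\le 2$); this produces the leading term containing $\|b(\cos\theta)\sin^{k-2}(\theta/2)\|_{L^1_\theta}$ and a $C_k$ times $\|\cdot\|_{L^2_{\gamma/2-1/2}}$ term. The remainder part carries an extra $\langle v\rangle^{-2}$, which trivially upgrades into the factor giving the $L^2_{\gamma/2-1/2}$ lower-order contribution, absorbed into the $C_k$ term. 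For the off-diagonal contributions $\beta_1<\beta$, we invoke \eqref{25b} (whose estimate loses only the coercive leading constant but keeps the $L^2_{\gamma/2}$ regularity), and the exponential factor is handled by the uniform bound $|e^{\pm A_{\alpha,\beta}\psi/\langle v\rangle^2}|\lesssim 1$. Integrating in $x$ via Cauchy--Schwarz (after the pointwise-in-$x$ estimate) yields the Boltzmann conclusion.

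For the Landau case the argument is easier: Lemma \ref{L51}, inequality \eqref{26a} applies uniformly to both $\mu$ and $\partial_{\beta-\beta_1}\mu$ (for $|\beta-\beta_1|\le 2$) to give the bound $C_k\|\partial^\alpha_{\beta_1}f\|_{L^2_5}\|\partial^\alpha_\beta g\|_{L^2_5}$ for each $\beta_1\le\beta$, and the exponential weight is again handled by the $L^\infty_x$ bound on $\psi$ together with the smooth dependence on $v$. Summing the $\beta_1\le\beta$ contributions and integrating in $x$ produces the stated inequality. The only subtle step in both cases is the treatment of the exponential weight: bluntly absorbing it into the constant would pollute the coefficient of the leading $\|b(\cos\theta)\sin^{k-2}(\theta/2)\|_{L^1_\theta}$ term (which must remain with coefficient exactly one since it will be forced to be small by choosing $k$ large later in the paper), so the $1+(e^{\pm A\psi/\langle v\rangle^2}-1)$ splitting is essential and is the point to which the argument must pay careful attention.
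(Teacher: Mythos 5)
Your argument follows the paper's own proof exactly in its main structure: Leibniz in $\beta$ only (the paper has a typo writing $\sum_{\alpha_1\le\alpha}$ where it means $\sum_{\beta_1\le\beta}$, which you correctly avoid), then \eqref{25a} for the $\beta_1=\beta$ term, \eqref{25b} for $\beta_1<\beta$, and \eqref{26a} for Landau, followed by $x$-integration. The one place you deviate is the treatment of the factor $e^{\pm A_{\alpha,\beta}\psi/\langle v\rangle^2}$: the paper simply applies \eqref{25a} to the inner product with this factor present and reports the conclusion with a clean coefficient of $1$ in front of $\Vert b(\cos\theta)\sin^{k-2}(\theta/2)\Vert_{L^1_\theta}$, never commenting on the exponential. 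Your $1+(e^{\pm A\psi/\langle v\rangle^2}-1)$ split, with $|e^{\pm A\psi/\langle v\rangle^2}-1|\lesssim\|\psi\|_{L^\infty_x}\langle v\rangle^{-2}$ pushing the remainder into the lower-order $\langle v\rangle^{-1/2}$-gain term, does close this small gap and lets you reproduce the statement with the leading coefficient as written. That said, be careful not to overstate its necessity: the statement is later used only through a Cauchy--Schwarz step that already introduces a large factor $C_\eta$ before $\Vert b(\cos\theta)\sin^{k-2}(\theta/2)\Vert_{L^1_\theta}$ and then invokes \eqref{39a}, so a $\psi$-dependent multiplicative constant (coming from bounding the exponential by $\|e^{\pm A\psi/\langle v\rangle^2}\|_{L^\infty}$) would be harmless downstream after choosing $k$ a bit larger. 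The split is the cleanest way to match the lemma exactly as written, but calling it "essential" claims more than is true. Also, when you absorb the remainder, make sure you push the $\langle v\rangle^{-2}$ into the weight exponent (i.e.\ apply Lemma \ref{L23} with $\tilde k=k'-1$ where $w^2=\langle v\rangle^{2k'}$) so that both the $f$-factor and the $g$-factor drop by one power of $\langle v\rangle$ — if you instead leave the full weight on $\partial^\alpha_\beta f$ and put both powers of $\langle v\rangle^{-1}$ on $g$, the resulting product $\|f w\|_{L^2_{\gamma/2,*}}\|g w\|_{L^2_{\gamma/2-2}}$ is not dominated by the stated lower-order term $\|f w\|_{L^2_{\gamma/2-1/2}}\|g w\|_{L^2_{\gamma/2-1/2}}$.
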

\begin{proof}
For both cases, it's easily seen that
\begin{align*}
(\partial^\alpha_\beta Q(f, \mu), e^{\frac{\pm A_{\al,\beta}\psi}{\<v\>^2}}\partial^\alpha_\beta g w^2(\alpha, \beta) )_{L^2_{x, v} } =\sum_{\alpha_1 \le \alpha} ( Q(\partial^\alpha_{\beta_1} f, \partial_{\beta - \beta_1} \mu), e^{\frac{\pm A_{\al,\beta}\psi}{\<v\>^2}}\partial^\alpha_\beta g   w^2(\alpha, \beta))_{L^2_{x, v} }  .
\end{align*}
For the Boltzmann case, we split it into two cases: $\beta_1 = \beta$ and $\beta_1 < \beta$. For the case $\beta_1 = \beta$, by  \eqref{25a} and integration about $x$,  we have
\begin{equation*}
\begin{aligned} 
 |( Q(\partial^\alpha_\beta f,  \mu),\partial^\alpha_\beta g w^2(\alpha, \beta)  )_{L^2_{x, v} }    | \le  & \Vert b(\cos \theta) \sin^{k-2} \frac \theta 2 \Vert_{L^1_\theta}   \Vert \partial^\alpha_\beta f  w(\alpha, \beta)\Vert_{L^2_x L^2_{\gamma/2, *}}\Vert \partial^\alpha_\beta g w(\alpha, \beta) \Vert_{L^2_x L^2_{  \gamma/2, *  }} 
\\
&+ C_k \Vert \partial^\alpha_\beta f  w(\alpha, \beta)\Vert_{L^2_x L^2_{\gamma/2-1/2}}\Vert \partial^\alpha_\beta g w(\alpha, \beta) \Vert_{L^2_x L^2_{  \gamma/2 -1/2 }} .
\end{aligned}
\end{equation*}
Notice that  
 \begin{align}\label{psi}
 	\big|e^{\frac{\pm A_{\al,\beta}\psi}{\<v\>^2}}-1\big|\le A_{\al,\beta}\|\psi\|_{L^\infty_x}\<v\>^{-2}\lesssim A_{\al,\beta}\|\na_x\psi\|_{H^1_x}\<v\>^{-2}. 
 \end{align}
By    \eqref{25b} we have
\begin{equation*}
\begin{aligned} 
&|( Q(\partial^\alpha_\beta f,  \mu), (e^{\frac{\pm A_{\al,\beta}\psi}{\<v\>^2}}  -1)\partial^\alpha_\beta g w^2(\alpha, \beta)  )_{L^2_{x, v} }    |
\\
\le& C_k \Vert \partial^\alpha_\beta f  w(\alpha, \beta)  \langle v \rangle^{-1}  \Vert_{L^2_x L^2_{\gamma/2}}\Vert (e^{\frac{\pm A_{\al,\beta}\psi}{\<v\>^2}}  -1)  \partial^\alpha_\beta g w(\alpha, \beta) \langle v \rangle \Vert_{L^2_x L^2_{  \gamma/2 }}
\\
\le& C_k \Vert \partial^\alpha_\beta f  w(\alpha, \beta)\Vert_{L^2_x L^2_{\gamma/2-1/2}}\Vert \partial^\alpha_\beta g w(\alpha, \beta) \Vert_{L^2_x L^2_{  \gamma/2 -1/2 }}.
\end{aligned}
\end{equation*}
For $\beta_1 < \beta$, by    \eqref{25b}, \eqref{214d} and integration about $x$,  we have
\[
|( Q(\partial^\alpha_{\beta_1} f, \partial_{\beta - \beta_1} \mu), e^{\frac{\pm A_{\al,\beta}\psi}{\<v\>^2}}\partial^\alpha_\beta g w^2 (\alpha, \beta))_{L^2_{x, v} }    |  \le  C_k \sum_{\beta_1 < \beta} \Vert \partial^\alpha_{\beta_1} f w(\alpha, \beta) \Vert_{L^2_x L^2_{\gamma/2}}   \Vert \partial^\alpha_\beta g w(\alpha, \beta) \Vert_{L^2_x L^2_{\gamma/2}}.
\]
So the proof for the Boltzmann case is completed by gathering the three terms. The Landau case can be proved similarly by taking integration in  \eqref{26a}.
\end{proof}
The next two coercive estimates play a key role. For the Boltzmann case, we have the following. 
\begin{lem}\label{L210}
Remind $\|\cdot\|_{Y_k}$ is given in \eqref{D}. Suppose that $-3<\gamma\le 1, s \in (0, 1), \gamma+2s >-1$ and $k\ge 14$. For any function $f, g$, let $G= \mu +g \ge 0$ satisfies  $\Vert G \Vert_{L^1} \ge A_1, \quad \Vert G\Vert_{L^1_2} + \Vert G\Vert_{L \log L} \le A_2$ for some generic constants $A_1,A_2$. 
Then for $Q$ as the Boltzmann collision operator, and any $|\alpha| + |\beta| \le 2$, 
\begin{equation}\label{236}
\begin{aligned} 
(\partial^\alpha_\beta Q ( \mu+g, f),  \partial^\alpha_\beta f w^2(\alpha, \beta) )_{L^2_{x, v}} 
&\le  - \frac {1} {8} \Vert  b(\cos \theta) \sin^2 \frac \theta 2  \Vert_{L^1_\theta}\Vert \partial^\alpha_\beta  f w(\alpha, \beta)    \Vert_{L^2_x L^2_{\gamma/2, * }}^2 -\gamma_1 \Vert  \partial^\alpha_\beta  f w(\alpha, \beta) \Vert_{L^2_xH^s_{\gamma/2}}^2
\\
&+C_k\Vert \langle v \rangle^{14} f \Vert_{H^2_{x, v} } 
\Vert g \Vert_{Y_k}\Vert f \Vert_{Y_k} 
+C_k\Vert \langle v \rangle^{14} g \Vert_{H^2_{x, v }} \|f\|^2_{Y_k}+ C_{k}  \Vert \partial^\alpha_\beta f \Vert_{L^2_{x, v}}^2\\&+ C_k \sum_{\beta_1 < \beta} \Vert \partial^\alpha_{\beta_1}  f w(\alpha, \beta_1) \Vert_{H^s_{\gamma/2}}\Vert \partial^\alpha_\beta g w(\alpha, \beta)\Vert_{H^s_{\gamma/2}}, 
\end{aligned}
\end{equation}
for some constants $\gamma_1, C_k \ge 0$. 
Moreover, suppose $\psi$ satisfies \eqref{214c}, we have 
\begin{multline}\label{236a}
	\big| \big( \partial^\alpha_\beta Q (g,  f),\big(e^{\frac{\pm A_{\al,\beta}\psi}{\<v\>^2}}-1\big)\partial^{\alpha}_{\beta} h w^2 (\alpha, \beta)\big)_{L^2_{x, v}} \big| \le C_k \|\<v\>^{14}g\|_{H^2_{x,v}}\|\na_x\psi\|_{H^1_x}\|f\|_{Y_k}\|h\|_{Y_k}
	\\+C_k\|\<v\>^{14}f\|_{H^2_{x,v}}\|\na_x\psi\|_{H^1_x}\|g\|_{Y_k}\|h\|_{Y_k},
\end{multline}
and 
\begin{multline}\label{236c}
	\big| \big( \partial^\alpha_\beta Q (g,  f),\partial^{\alpha}_{\beta} h w^2 (\alpha, \beta)\big)_{L^2_{x, v}} \big|+\big| \big( \partial^\alpha_\beta Q (g,  f),e^{\frac{\pm A_{\al,\beta}\psi}{\<v\>^2}}\partial^{\alpha}_{\beta} h w^2 (\alpha, \beta)\big)_{L^2_{x, v}} \big| \\\le C_k \|\<v\>^{14}g\|_{H^2_{x,v}}\min\{\|\<v\>^{2s}f\|_{Y_k}\|h\|_{Y_k}, \|f\|_{Y_k}\|\<v\>^{2s}h\|_{Y_k}\}
	+C_k\|\<v\>^{14}f\|_{H^2_{x,v}}\|g\|_{Y_k}\|h\|_{Y_k}.
\end{multline}
\end{lem}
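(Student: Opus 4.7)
The plan is a systematic application of Leibniz's rule combined with the pointwise-in-$x$ bilinear bounds of Theorem \ref{T25} and Lemma \ref{L26}, with careful bookkeeping of the weight $w(\alpha, \beta)$ using Lemma \ref{L27}.

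For \eqref{236}, one first writes
\[
\partial^\alpha_\beta Q(\mu+g, f) = Q(\mu+g, \partial^\alpha_\beta f) + R,
\]
where $R$ collects all Leibniz contributions $Q(\partial^{\alpha_1}_{\beta_1}(\mu+g), \partial^{\alpha-\alpha_1}_{\beta-\beta_1} f)$ with $|\alpha_1|+|\beta_1|>0$. Applying Theorem \ref{T25} pointwise in $x$ to the leading term, with $\langle v\rangle^{2k}$ in that theorem replaced by $w^2(\alpha,\beta)$, and then integrating in $x$, one obtains directly the two coercive terms in \eqref{236} along with the benign correction $C_k\|\partial^\alpha_\beta f\|_{L^2_{x,v}}^2$. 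The trilinear error terms coming out of Theorem \ref{T25}, of the shape $\int_x \|\partial^\alpha_\beta f\|_{L^2_{14}}\|g\|_{H^s_{k'+\gamma/2}}\|\partial^\alpha_\beta f\|_{H^s_{k'+\gamma/2}}\,dx$, are handled by Hölder in $x$ plus the Sobolev embedding $H^2_x\hookrightarrow L^\infty_x$: one places the factor with fewest derivatives in $L^\infty_x$, and since $w(\alpha,\beta)\le w(0,0)$ the $g$-norms weighted by $w(\alpha,\beta)$ are controlled by $Y_k$. This produces the stated bounds $\|\langle v\rangle^{14}f\|_{H^2_{x,v}}\|g\|_{Y_k}\|f\|_{Y_k}$ and $\|\langle v\rangle^{14}g\|_{H^2_{x,v}}\|f\|_{Y_k}^2$.

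For the remainder $R$, the contributions with $\alpha_1=0$ and derivatives falling on $\mu$ yield $\partial_{\beta_1}\mu$, which is Gaussian-decaying and is estimated by Corollary \ref{C22} and Lemma \ref{L23}. The contributions where derivatives fall on $g$ are controlled by \eqref{213a} in Lemma \ref{L26}. The $2s$-weight loss in \eqref{213a} (an $H^s_{k+\gamma/2+2s}$ appearing in place of $H^s_{k+\gamma/2}$) is absorbed by the weight inequality \eqref{29}, which gives $w(\alpha,\beta)\langle v\rangle^{6s}\le w(\alpha_1,\beta_1)$ whenever $|\alpha_1|<|\alpha|$ or $|\beta_1|<|\beta|$, providing $4s$ of slack. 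The explicit mixed term $\sum_{\beta_1<\beta}\|\partial^\alpha_{\beta_1}f\,w(\alpha,\beta_1)\|_{H^s_{\gamma/2}}\|\partial^\alpha_\beta g\,w(\alpha,\beta)\|_{H^s_{\gamma/2}}$ collects precisely those Leibniz contributions in which full $\beta$-derivatives fall on $g$ while only $\beta_1$-derivatives remain on $f$.

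The upper bound \eqref{236c} is obtained by the same Leibniz decomposition applied to $\partial^\alpha_\beta Q(g,f)$ and then using \eqref{213a}: the two-sided form of Lemma \ref{L26} yields directly the symmetric minimum in the statement, the exponential factor $e^{\pm A_{\alpha,\beta}\psi/\langle v\rangle^2}$ is uniformly bounded by \eqref{214d}, and Sobolev embedding plus $\|\langle v\rangle^{14}\cdot\|_{H^2_{x,v}}$ converts the required $L^\infty_x$ norms into the stated forms. Finally, \eqref{236a} follows from the elementary inequality $|e^t-1|\le |t|e^{|t|}$ applied to $t=\pm A_{\alpha,\beta}\psi/\langle v\rangle^2$, together with $\|\psi\|_{L^\infty_x}\lesssim \|\nabla_x\psi\|_{H^1_x}$ (Sobolev plus the zero-mean condition), giving $|e^{\pm A_{\alpha,\beta}\psi/\langle v\rangle^2}-1|\lesssim \|\nabla_x\psi\|_{H^1_x}\langle v\rangle^{-2}$; this extra small prefactor multiplies the \eqref{236c}-type upper bound. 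The main obstacle throughout is the tight Hölder bookkeeping for $|\alpha|+|\beta|=2$, where only one of the three trilinear factors can be placed in $L^\infty_x$ via $H^2_x\hookrightarrow L^\infty_x$; the weight hierarchy in Lemma \ref{L27} and the construction of $X_k$, $Y_k$ ensure that such a placement is always available.
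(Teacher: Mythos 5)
Your proposal follows essentially the same route as the paper's own proof: the Leibniz decomposition of $\partial^\alpha_\beta Q(\mu+g,f)$, application of Theorem~\ref{T25} (pointwise in $x$, with $\langle v\rangle^{2k}$ replaced by $w^2(\alpha,\beta)$) to the zeroth-order commutator, Corollary~\ref{C22}/Lemma~\ref{L23} for derivatives landing on $\mu$, Lemma~\ref{L26} for derivatives landing on $g$, the weight hierarchy \eqref{29} to absorb the $2s$-weight loss, $L^\infty_x$--$L^3_x$--$L^6_x$ H\"older bookkeeping paired with $H^2_x\hookrightarrow L^\infty_x$ to dispose of the trilinear terms, and the bound $|e^{\pm A_{\al,\beta}\psi/\langle v\rangle^2}-1|\lesssim\|\nabla_x\psi\|_{H^1_x}\langle v\rangle^{-2}$ for \eqref{236a}. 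The one descriptive slip: the explicit mixed term $\sum_{\beta_1<\beta}$ in \eqref{236} does not come from derivatives falling on $g$ as you describe --- in the paper's derivation \eqref{210a} it arises from $Q(\partial_{\beta_1}\mu,\partial^\alpha_{\beta-\beta_1}f)$ handled via Corollary~\ref{C22} and has $f$ in both factors (the $g$ in the lemma's statement of that term appears to be a typo); this does not affect the soundness of your argument.
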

\begin{rmk}
By assuming $\|g\|_{L^1_v}$ small, we can check $\|G\|_{L^1_v}=\|\mu+g\|_{L^1_v}\ge A_1$ to apply Lemma \ref{L210}. 
\end{rmk}
\begin{proof}
First we have $(\partial^\alpha_\beta Q(G, f), \partial^\alpha f w^2(\alpha, \beta))_{L^2_{x, v}}= \sum_{\alpha_1 \le \alpha, \beta_1 \le \beta} ( Q(\partial^{\alpha_1}_{\beta_1} G, \partial^{\alpha - \alpha_1}_{\beta- \beta_1} f), \partial^{\alpha}_\beta f w^2(\alpha, \beta))_{L^2_{x, v}}$. We split it into several cases. For $\alpha_1 =\beta_1 =0$, after integrating about $x$ in Theorem \ref{T25}, we have
\begin{equation*}
\begin{aligned} 
&\quad\,( Q( G, \partial^{\alpha }_\beta f), \partial^{\alpha}_\beta f w^2(\alpha, \beta))_{L^2_{x, v}} 
\\
&\le - \frac {1} {8} \Vert  b(\cos \theta) \sin^2 \frac \theta 2  \Vert_{L^1_\theta}\Vert \partial^\alpha_\beta f w(\alpha, \beta)\Vert_{L^2_x L^2_{\gamma/2, *}}^2 - \gamma_1  \Vert \partial^\alpha_\beta f w(\alpha, \beta) \Vert_{L^2_x H^s_{\gamma/2}}^2+ C_{k}  \Vert \partial^\alpha_\beta f \Vert_{L^2_{x, v}}^2
\\
&+\int_{\Omega} C_k\Vert \partial^\alpha_\beta f   \Vert_{L^2_{14}} \Vert g w(\alpha, \beta) \Vert_{ H^s_{ \gamma/2 }}\Vert  \partial^\alpha_\beta f w(\alpha, \beta) \Vert_{ H^s_{  \gamma/2}}+ C_k\Vert   g \Vert_{   L^2_{14}} \Vert \partial^\alpha_\beta f w(\alpha, \beta) \Vert_{  H^s_{  \gamma/2}}^2 dx.
\end{aligned}
\end{equation*}
For the case $|\beta_1| > 0, \alpha_1=0$, using $\partial_{\beta_1} G = \partial_{\beta_1} \mu + \partial_{\beta_1} g$, we split it into two parts. By \eqref{29},  if $|\beta_1| > 0$, we have
$
w(\alpha, \beta) \langle v \rangle^{2s} \le w(\alpha, \beta - \beta_1).
$
Then by Corollary  \ref{C22}, we have
\begin{equation*}
\begin{aligned} 
\quad\,|( Q(\partial_{\beta_1} \mu, \partial^\alpha_{\beta - \beta_1} f), \partial^{\alpha}_{\beta} f w^2 (\alpha, \beta))_{L^2_{x, v}} |
&\le  C_k \Vert \partial^\alpha_{\beta - \beta_1}  f w(\alpha, \beta) \Vert_{L^2_x H^s_{  \gamma/2+2s }}\Vert \partial^\alpha_\beta f w(\alpha, \beta) \Vert_{L^2_x H^s_{\gamma/2}} 
\\
&\le   C_k \Vert \partial^\alpha_{\beta - \beta_1}  f w(\alpha, \beta-  \beta_1 ) \Vert_{L^2_x H^s_{  \gamma/2 }}\Vert \partial^\alpha_\beta f w(\alpha, \beta) \Vert_{L^2_x H^s_{\gamma/2}} .
\end{aligned}
\end{equation*}
For the $\partial_{\beta_1} g$ term, by Lemma \ref{L26}, we have
\begin{equation*}
\begin{aligned} 
\quad\,|( Q(\partial_{\beta_1} g, \partial^\alpha_{\beta - \beta_1} f), \partial^{\alpha}_{\beta} f w^2 (\alpha, \beta))_{L^2_{x, v}} |
&\le  \int_{\Omega}  C_k\Vert \partial^\alpha_{\beta - \beta_1} f \Vert_{L^2_{14}} \Vert \partial_{\beta_1} g w(\alpha, \beta)\Vert_{H^s_{ \gamma/2 
 }}\Vert \partial^{\alpha}_{\beta}  f  w(\alpha, \beta) \Vert_{H^s_{  \gamma/2}} 
\\
&
\quad+C_k\Vert \partial_{\beta_1} g \Vert_{L^2_{14}} \Vert \partial^\alpha_{\beta - \beta_1} f w(\alpha, \beta)\Vert_{H^s_{  \gamma/2 +2s }}\Vert \partial^{\alpha}_{\beta}  f w(\alpha, \beta) \Vert_{H^s_{ \gamma/2}} dx,
\end{aligned}
\end{equation*}
For $|\alpha_1|>0$, since $\partial^{\alpha_1} \mu =0$, we have
$( Q(\partial^{\alpha_1}_{\beta_1} G, \partial^{\alpha - \alpha_1}_{\beta -\beta_1} f), \partial^{\alpha}_\beta f \langle v \rangle^{2k})_{L^2_{x, v}} = ( Q(\partial^{\alpha_1}_{\beta_1} g, \partial^{\alpha - \alpha_1}_{\beta -\beta_1} f), \partial^{\alpha}_\beta f \langle v \rangle^{2k})_{L^2_{x, v}}$. By Lemma \ref{L26}, we have
\begin{align*}
\quad\,|( Q(\partial^{\alpha_1}_{\beta_1} g, \partial^{\alpha - \alpha_1}_{\beta -\beta_1} f), \partial^{\alpha}_\beta f \langle v \rangle^{2k})_{L^2_{x, v}}|
&\le  \int_{\Omega}  C_k\Vert \partial^{\alpha - \alpha_1}_{\beta - \beta_1} f \Vert_{L^2_{14}} \Vert \partial^{\alpha_1}_{\beta_1} g w(\alpha, \beta)\Vert_{H^s_{ \gamma/2 }}\Vert \partial^{\alpha}_{\beta}  f  w(\alpha, \beta) \Vert_{L^2_{  \gamma/2}} dx
\\
&\quad+C_k\int_{\Omega} \Vert \partial^{\alpha_1}_{\beta_1} g \Vert_{L^2_{14}} \Vert \partial^{\alpha - \alpha_1}_{\beta - \beta_1} f w(\alpha, \beta)\Vert_{H^s_{ \gamma/2+2s }}\Vert \partial^{\alpha}_{\beta}  f w(\alpha, \beta) \Vert_{H^s_{ \gamma/2}} dx. 
\end{align*}
Gathering  all the terms, we have
\begin{align}\label{210a}\notag
&\quad\,(\partial^\alpha_\beta Q ( \mu+g, f),  \partial^\alpha_\beta f w^2(\alpha, \beta) )_{L^2_{x, v}}\le  - \frac {1} {8} \Vert  b(\cos \theta) \sin^2 \frac \theta 2    \Vert_{L^1_\theta}\Vert \partial^\alpha_\beta f w(\alpha, \beta) \Vert_{L^2_xL^2_{\gamma/2, *}}^2\\& - \gamma_1\Vert \partial^\alpha_\beta f w(\alpha, \beta) \Vert_{L^2_xH^s_{\gamma/2}}^2 
\notag + C_{k}  \Vert \partial^\alpha_\beta f \Vert_{L^2_{x, v} }^2 +C_k \sum_{|\beta_1| < |\beta|} \Vert \partial^\alpha_{\beta_1}  f w(\alpha, \beta_1 ) \Vert_{L^2_x H^s_{\gamma/2}}\Vert \partial^\alpha_\beta f w(\alpha, \beta  ) \Vert_{L^2_xH^s_{\gamma/2}} 
\\
&\notag+C_k \int_{\Omega}  \sum_{\alpha_1 \le \alpha, \beta_1 \le \beta}\Vert \partial^{\alpha - \alpha_1}_{\beta - \beta_1} f \Vert_{L^2_{14}} \Vert \partial^{\alpha_1}_{\beta_1} g w(\alpha, \beta)\Vert_{H^s_{ \gamma/2 }}\Vert \partial^{\alpha}_{\beta}  f  w(\alpha, \beta) \Vert_{H^s_{  \gamma/2}} dx
\\
&\notag+C_k\int_{\Omega} \sum_{\alpha_1 \le \alpha, \beta_1 \le \beta, |\alpha_1| +|\beta_1| >0} \Vert \partial^{\alpha_1}_{\beta_1} g \Vert_{L^2_{14}} \Vert \partial^{\alpha - \alpha_1}_{\beta - \beta_1} f w(\alpha, \beta)\Vert_{H^s_{  \gamma/2+2s }}\Vert \partial^{\alpha}_{\beta}  f w(\alpha, \beta) \Vert_{H^s_{ \gamma/2}} dx
\\
&+C_k\int_{\Omega} \Vert  g \Vert_{L^2_{14}} \Vert \partial^{\alpha }_{\beta} f w(\alpha, \beta)\Vert_{H^s_{  \gamma/2 }}^2 dx.
\end{align}
Now we only need to prove that 
\begin{align}\label{210b}
\int_{\Omega} \Vert  g \Vert_{L^2_{14}} \Vert \partial^{\alpha }_{\beta} f w(\alpha, \beta)\Vert_{H^s_{  \gamma/2 }}^2
 dx \lesssim \Vert \langle v \rangle^{14} g \Vert_{H^2_{x, v} } \|f\|^2_{Y_k},
\end{align} 
and  for all  $\alpha_1 \le \alpha$, $\beta_1 \le \beta$, $|\alpha_1| +|\beta_1| >0$, 
\begin{align}\label{210c}
\int_{\Omega} \Vert \partial^{\alpha_1}_{\beta_1} g \Vert_{L^2_{14}} \Vert \partial^{\alpha - \alpha_1}_{\beta - \beta_1} f w(\alpha, \beta)\Vert_{H^s_{  \gamma/2+2s }}\Vert \partial^{\alpha}_{\beta}  f w(\alpha, \beta) \Vert_{H^s_{ \gamma/2}} dx  \lesssim \Vert \langle v \rangle^{14} g \Vert_{H^2_{x, v} }\|f\|^2_{Y_k}. 
\end{align}
The fifth term on the right-hand side of \eqref{210a} follows similarly by changing the order $f$ and $g$.
 First, for the case $|\alpha_1| = |\beta_1| =0 $,  we have
\begin{equation*}
\begin{aligned}
\int_{\Omega} \Vert  g \Vert_{L^2_{14}} \Vert \partial^{\alpha}_{\beta } f w(\alpha, \beta)\Vert_{H^s_{  \gamma/2 }}^2 dx 
&\lesssim \Vert  g \Vert_{L^\infty_xL^2_{14}} \Vert \partial^{\alpha  }_{\beta } f w(\alpha, \beta)\Vert_{L^2_x H^s_{  \gamma/2 }}^2\lesssim \Vert \langle v \rangle^{14} g \Vert_{H^2_{x, v} } \|f\|^2_{Y_k}.
\end{aligned}
\end{equation*}
This gives \eqref{210b}. 
For \eqref{210c}, 
we split it into two cases:  $|\alpha_1| + |\beta_1| =1 $ and  $|\alpha_1| + |\beta_1| = 2 $. For the case $|\alpha_1| + |\beta_1| =1 $,
by $\Vert fg\Vert_{L^2_x} \le \Vert  f \Vert_{L^6_x} \Vert g \Vert_{L^3_x}$,
we have
\begin{equation}
\label{long1}
\begin{aligned}
&\int_{\Omega} \Vert \partial^{\alpha_1}_{\beta_1} g \Vert_{L^2_{14}} \Vert \partial^{\alpha - \alpha_1}_{\beta - \beta_1} f w(\alpha, \beta)\Vert_{H^s_{  \gamma/2+2s }}\Vert \partial^{\alpha}_{\beta}  f w(\alpha, \beta) \Vert_{H^s_{ \gamma/2}} dx
\\\lesssim & \Vert \partial^{\alpha_1}_{\beta_1} g \Vert_{L^6_x L^2_{14}} \Vert \partial^{\alpha - \alpha_1}_{\beta - \beta_1} f w(\alpha, \beta)\Vert_{L^3_x H^s_{  \gamma/2+2s }}\Vert \partial^{\alpha}_{\beta}  f w(\alpha, \beta) \Vert_{L^2_x H^s_{ \gamma/2}} 
\\
\lesssim &\Vert \langle v \rangle^{14} g \Vert_{H^2_{x, v} } \Vert \partial^{\alpha - \alpha_1}_{\beta - \beta_1} f w(\alpha, \beta)\Vert_{L^3_x H^s_{  \gamma/2+2s }}  \|f \|_{Y_k}. 
\end{aligned}
\end{equation}
We again split it into two parts  $|\alpha|+|\beta| = 1$ and  $|\alpha|+|\beta| = 2$. For the case  $|\alpha_1| + |\beta_1| = |\alpha|+|\beta| =1$, we have $|\alpha - \alpha_1| =|\beta - \beta_1| =0 $. Then by \eqref{L3} and \eqref{l31}, we have
\begin{align*}
\Vert f w(\alpha, \beta)\Vert_{L^3_x H^s_{  \gamma/2+2s }}  \lesssim \Vert f w(0, 0)\Vert_{L^2_x H^s_{  \gamma/2 }}  + \Vert f w(1,0)\Vert_{H^1_x H^s_{  \gamma/2 }} \lesssim \|f \|_{Y_k}. 
\end{align*}
We then consider the case $|\alpha_1| +|\beta_1| =1$, $|\alpha| + |\beta| =2$.   This time we have $|\alpha - \alpha_1| =1$, $|\beta - \beta_1|=0$ or $|\alpha - \alpha_1| =0$, $|\beta - \beta_1| =1$. For the first case, by \eqref{l32}, 
\begin{align*}
\Vert \partial^{\alpha - \alpha_1} f w(\alpha, \beta)\Vert_{L^3_x H^s_{  \gamma/2+2s }}  \le  \Vert f w(1, 0)\Vert_{H^1_x H^s_{ \gamma/2 }} + \Vert f w( 2 , 0)\Vert_{H^2_x H^s_{ \gamma/2 }}  \lesssim \|f \|_{Y_k}.
\end{align*}
For the second case,  $|\beta| \ge 1$ and by \eqref{l32}, we obtain 
\begin{align*}
\Vert  \partial_{\beta - \beta_1}  f w(\alpha, \beta)\Vert_{L^3_x H^s_{ \gamma/2+2s }} \le \Vert f w(0, 1)\Vert_{L^2_x H^{1+s}_{ \gamma/2 }} + \Vert f  w( 1 , 1)\Vert_{H^1_x H^{1+s}_{ \gamma/2 }} \lesssim\|f \|_{Y_k}.
\end{align*}
For the case $|\alpha_1|+|\beta_1| = 2$, we have $|\alpha| +|\beta|=2$ and $|\alpha - \alpha_1| = |\beta - \beta_1|=0$. Thus 
\begin{equation}
\label{long2}
\begin{aligned}
&\quad\,\int_{\Omega} \Vert \partial^{\alpha_1}_{\beta_1} g \Vert_{L^2_{14}} \Vert \partial^{\alpha - \alpha_1}_{\beta - \beta_1} f w(\alpha, \beta)\Vert_{H^s_{  \gamma/2+2s }}\Vert \partial^{\alpha}_{\beta}  f w(\alpha, \beta) \Vert_{H^s_{ \gamma/2}} dx\\
&\lesssim  \Vert \partial^{\alpha}_{\beta} g \Vert_{L^2_x L^2_{14}} \Vert f w(\alpha, \beta)\Vert_{L^\infty_x H^s_{  \gamma/2+2s }}\Vert \partial^{\alpha}_{\beta}  f w(\alpha, \beta) \Vert_{L^2_x H^s_{ \gamma/2}} 
\lesssim \Vert \langle v \rangle^{14} g \Vert_{H^2_{x, v} } \Vert f w(\alpha, \beta)\Vert_{L^\infty_x H^s_{  \gamma/2+2s }}   \|f \|_{Y_k}.
\end{aligned}
\end{equation}
By  \eqref{Linfty} with suitable $k$ and \eqref{linfty2},  we have
\begin{align*}
\Vert  f w(\alpha, \beta)\Vert_{L^\infty_x H^s_{ \gamma/2  +2s}} 
&\le \Vert f w(1, 0)\Vert_{H^1_x H^s_{ \gamma/2 }} + \Vert f w(2 , 0)\Vert_{H^2_x H^s_{ \gamma/2 }}\lesssim \|f \|_{Y_k}.
\end{align*}
These estimates imply \eqref{210c}. Substituting \eqref{210b} and \eqref{210c} into \eqref{210a}, we obtain \eqref{236}. 
 For \eqref{236a}, by \eqref{213a} and \eqref{psi}, we have 
 \begin{align}\label{236b}\notag
 	&\quad\,\big| \big( \partial^\alpha_\beta Q (g,  f),\big(e^{\frac{\pm A_{\al,\beta}\psi}{\<v\>^2}}-1\big)\partial^{\alpha}_{\beta} h w^2 (\alpha, \beta)\big)_{L^2_{x, v}} \big|\\
 	&\notag\lesssim \sum_{\substack{\al_1\le\al\\ \beta_1\le\beta}}
 	\Big(\min\Big\{\big\|\|\pa^{\al_1}_{\beta_1}g\|_{L^2_{14}}\|\pa^{\al-\al_1}_{\beta-\beta_1}fw (\alpha, \beta)\|_{H^s_{\gamma/2}}\big\|_{L^2_x}
 	\|\big(e^{\frac{\pm A_{\al,\beta}\psi}{\<v\>^2}}-1\big)\<v\>^{2s}\partial^{\alpha}_{\beta} hw(\alpha, \beta)\|_{L^2_xH^s_{\gamma/2}},\\
 	&\notag\qquad\qquad\qquad+\big\|\|\pa^{\al_1}_{\beta_1}g\|_{L^2_{14}}\|\<v\>^{2s}\pa^{\al-\al_1}_{\beta-\beta_1}fw (\alpha, \beta)\|_{H^s_{\gamma/2}}\big\|_{L^2_x}
 	\|\big(e^{\frac{\pm A_{\al,\beta}\psi}{\<v\>^2}}-1\big)\partial^{\alpha}_{\beta} hw(\alpha, \beta)\|_{L^2_xH^s_{\gamma/2}}\Big\}\\
 	&\qquad\qquad\qquad
 	+\big\|\|\pa^{\al_1}_{\beta_1}f\|_{L^2_{14}}\|\pa^{\al-\al_1}_{\beta-\beta_1}gw (\alpha, \beta)\|_{H^s_{\gamma/2}}\big\|_{L^2_x}\|\big(e^{\frac{\pm A_{\al,\beta}\psi}{\<v\>^2}}-1\big)\partial^{\alpha}_{\beta} hw(\alpha, \beta)\|_{L^2_xH^s_{\gamma/2}}\Big).
 	\end{align}
 Applying $L^\infty-L^2$ and $L^3-L^6$ H\"{o}lder's inequality and using the first term in the minimum, the first right-hand term of \eqref{236b} can be estimated as 
 \begin{align}\label{al1}\notag
 	&\quad\, \Big(\sum_{|\al_1|+|\beta_1|=0}\|\pa^{\al_1}_{\beta_1}g\|_{L^\infty_xL^2_{14}}\|\pa^{\al-\al_1}_{\beta-\beta_1}fw (\alpha, \beta)\|_{L^2H^s_{\gamma/2}}
 	+\sum_{|\al_1|+|\beta_1|=1}\|\pa^{\al_1}_{\beta_1}g\|_{L^3_xL^2_{14}}\|\pa^{\al-\al_1}_{\beta-\beta_1}fw (\alpha, \beta)\|_{L^6_xH^s_{\gamma/2}}
 	\\&\notag\qquad +\sum_{|\al_1|+|\beta_1|=2}\|\pa^{\al_1}_{\beta_1}g\|_{L^2_xL^2_{14}}\|\pa^{\al-\al_1}_{\beta-\beta_1}fw (\alpha, \beta)\|_{L^\infty_xH^s_{\gamma/2}} \Big)
 	A_{\al,\beta}\|\na_x\psi\|_{H^1_x}\|\partial^{\alpha}_{\beta} hw(\alpha, \beta)\|_{L^2_xH^s_{\gamma/2}}\\
 	&\lesssim \|\<v\>^{14}g\|_{H^2_{x,v}}\|\na_x\psi\|_{H^1_x}\|f\|_{Y_k}\|\partial^{\alpha}_{\beta} hw(\alpha, \beta)\|_{L^2_xH^s_{\gamma/2}}. 
 \end{align}
 The second right-hand term of \eqref{236b} can be estimated similarly and then we deduce that 
 \begin{align*}
 \quad\,\big| \big( \partial^\alpha_\beta Q (g,  f),\big(e^{\frac{\pm A_{\al,\beta}\psi}{\<v\>^2}}-1\big)\partial^{\alpha}_{\beta} h w^2 (\alpha, \beta)\big)_{L^2_{x, v}} \big|
 &\lesssim \|\<v\>^{14}g\|_{H^2_{x,v}}\|\na_x\psi\|_{H^1_x}\|f\|_{Y_k}\|\partial^{\alpha}_{\beta} hw(\alpha, \beta)\|_{L^2_xH^s_{\gamma/2}}\\
 	&\quad+\|\<v\>^{14}f\|_{H^2_{x,v}}\|\na_x\psi\|_{H^1_x}\|g\|_{Y_k}\|\partial^{\alpha}_{\beta} hw(\alpha, \beta)\|_{L^2_xH^s_{\gamma/2}}.
 \end{align*}
 The proof of \eqref{236c} is similar by replacing the term $\big(e^{\frac{\pm A_{\al,\beta}\psi}{\<v\>^2}}-1\big)$ by $1$ and $e^{\frac{\pm A_{\al,\beta}\psi}{\<v\>^2}}$. Note that we keep the minimum in \eqref{236b} this time, and the proof is omitted for brevity. 
Thus Lemma \ref{L210} is proved. 
\end{proof}
We can also prove a similar result for the Landau case.

\begin{lem}\label{L58}
Let $-3\le\gamma\le 1$, $Q$ be the Landau collision operator and $k \ge 7$. There exists a constant $C_k>0$ such that for any function $f, g, h$, and any function $\psi$ satisfies \eqref{214c}, we have the following.

(1) For any $|\alpha| + |\beta| \le 2$, 
\begin{equation}\label{211a}
\begin{aligned} 
(\partial^\alpha_\beta Q ( \mu, f),  \partial^\alpha_\beta f w^2(\alpha, \beta) ) &\le  - \gamma_1 \Vert \partial^\alpha_\beta  f w(\alpha, \beta)\Vert_{L^2_x L^2_{D}}^2 + C_{k}  \Vert \partial^\alpha_\beta f \Vert_{L^2_x L^2_v}^2
\\
&\quad + C_k \sum_{\beta_1 < \beta} \Vert \partial^\alpha_{\beta_1}  f w(\alpha, \beta_1) \Vert_{L^2_x L^2_{D}}\Vert \partial^\alpha_\beta g w( \alpha, \beta)\Vert_{L^2_x L^2_{D}} .
\end{aligned}
\end{equation}

(2) Remind the norm $\|\cdot\|_{Y_k}$ is given in \eqref{D}. For any $|\alpha| + |\beta| \le 2$,  we have  
\begin{align}\label{211b}
\big| \big( \partial^\alpha_\beta Q (g,  f), \partial^\alpha_{\beta} f w^2 (\alpha, \beta)\big)_{L^2_{x, v}} \big| \le C_k \Vert g \langle v \rangle^7 \Vert_{H^2_{x, v}} \|f\|_{Y_k}^2,
\end{align}
\begin{align}\label{211c}
	\big| \big( \partial^\alpha_\beta Q (g,  f),\big(e^{\frac{\pm A_{\al,\beta}\psi}{\<v\>^2}}-1\big)\partial^{\alpha}_{\beta} h w^2 (\alpha, \beta)\big)_{L^2_{x, v}} \big| \le C_k \|g\<v\>^7\|_{H^2_{x,v}}\|\na_x\psi\|_{H^1_x}\|f\|_{Y_k}\|h\|_{Y_k},
\end{align}
and 
\begin{multline}
	\label{211d}
	\big| \big( \partial^\alpha_\beta Q (g,  f),\partial^{\alpha}_{\beta} h w^2 (\alpha, \beta)\big)_{L^2_{x, v}} \big|+\big| \big( \partial^\alpha_\beta Q (g,  f),e^{\frac{\pm A_{\al,\beta}\psi}{\<v\>^2}}\partial^{\alpha}_{\beta} h w^2 (\alpha, \beta)\big)_{L^2_{x, v}} \big|
	\\\le C_k \|g\<v\>^7\|_{H^2_{x,v}}\min\{\|f\|_{Y_k}\|\<v\>h\|_{Y_k},\|\<v\>f\|_{Y_k}\|h\|_{Y_k}\}. 
\end{multline}

\end{lem}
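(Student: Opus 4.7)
The plan is to mirror the structure of Lemma \ref{L210} for the Landau operator, replacing the Boltzmann coercivity and upper bounds by Lemma \ref{L51} and Lemma \ref{L52}.

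\emph{Step 1 (part (1)).} Expanding via Leibniz,
\begin{equation*}
	\partial^\alpha_\beta Q(\mu, f) = \sum_{\beta_1 \le \beta} \binom{\beta}{\beta_1} Q(\partial_{\beta_1}\mu, \partial^\alpha_{\beta-\beta_1} f),
\end{equation*}
since $\partial^{\alpha_1}\mu = 0$ for $|\alpha_1| \ge 1$. For $\beta_1 = 0$, apply \eqref{26b} to $(Q(\mu, \partial^\alpha_\beta f), \partial^\alpha_\beta f w^2)$ to produce the main dissipation $-\gamma_1 \|\partial^\alpha_\beta f w\|_{L^2_xL^2_D}^2$ together with the $\|\partial^\alpha_\beta f\|_{L^2}^2$ error term. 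For $0 < \beta_1 \le \beta$, use \eqref{28} with $f$ replaced by $\partial_{\beta_1}\mu$ (whose $L^2_7$ norm is $O(1)$), then invoke the weight property \eqref{210} in the form $w(\alpha,\beta)\langle v\rangle^3 \le w(\alpha, \beta-\beta_1)$ to absorb the shift and bound this contribution by the lower-order-in-$\beta$ term in \eqref{211a}.

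\emph{Step 2 (part (2)).} Expand $\partial^\alpha_\beta Q(g, f) = \sum Q(\partial^{\alpha_1}_{\beta_1} g, \partial^{\alpha-\alpha_1}_{\beta-\beta_1} f)$. When $(\alpha_1, \beta_1) = (\alpha, \beta)$, applying \eqref{28} with $h = f$ immediately gives $\|g\|_{L^\infty_xL^2_7}\|\partial^\alpha_\beta f w\|_{L^2_xL^2_D}^2 \lesssim \|g\langle v\rangle^7\|_{H^2_{x,v}}\|f\|_{Y_k}^2$. When $(\alpha_1, \beta_1) = (0,0)$, use \eqref{27} and Sobolev embedding on $g$. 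For the intermediate cases, use \eqref{27} together with the $L^\infty_x$--$L^2_x$, $L^3_x$--$L^6_x$ H\"older splittings exactly as in \eqref{long1}--\eqref{long2}, then apply \eqref{Linfty}, \eqref{L3} together with the weight comparisons \eqref{l31}, \eqref{l32}, \eqref{linfty2} to rewrite the mixed-weight $L^3_x H^1_v$ or $L^\infty_x H^1_v$ norms of $f$ in terms of $\|f\|_{Y_k}$ (using $\|\cdot\|_{H^1_v} \le \|\cdot\|_{L^2_D}$ after the appropriate weight).

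\emph{Step 3 (parts (3) and (4)).} For \eqref{211c} we use the pointwise bound
\begin{equation*}
	\bigl|e^{\pm A_{\alpha,\beta}\psi/\langle v\rangle^2} - 1\bigr| \lesssim A_{\alpha,\beta}\|\psi\|_{L^\infty_x}\langle v\rangle^{-2} \lesssim \|\nabla_x\psi\|_{H^1_x}\langle v\rangle^{-2},
\end{equation*}
so that the factor is absorbed by the velocity weight with an extra $\langle v\rangle^{-2}$ gain. Inserting this into the same Leibniz/H\"older scheme as Step 2 and using \eqref{27} gives the bound with the $\|\nabla_x\psi\|_{H^1_x}$ prefactor. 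For \eqref{211d}, since $e^{\pm A_{\alpha,\beta}\psi/\langle v\rangle^2}$ and $1$ are both uniformly bounded by assumption \eqref{214c}, we can simply apply the \emph{min} form of \eqref{27} (or \eqref{28} on the diagonal) throughout the Leibniz expansion, which preserves the freedom to shift the extra $\langle v\rangle$ weight from $f$ to $h$ or vice versa.

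\emph{Main obstacle.} The only real difficulty is \emph{placement of derivatives under H\"older} in Step 2, analogous to \eqref{long1}--\eqref{long2}: when one or two $x$-derivatives fall on $g$ we must route the remaining $v$-regularity of $f$ through $L^3_xH^1_v$ or $L^\infty_xH^1_v$, and the anisotropic $L^2_D$-norm in the Landau case forces us to compare weights carefully via \eqref{l31}--\eqref{linfty2}. Once this bookkeeping is done, estimates \eqref{27}--\eqref{28} deliver the claimed bounds with no further technical input.
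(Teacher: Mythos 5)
Your proposal follows the same route as the paper's proof: Leibniz expansion, the coercive estimate from Lemma~\ref{L51} for the undifferentiated--$\mu$ term, the upper bounds from Lemma~\ref{L52} for the remaining pieces, the weight comparisons from Lemma~\ref{L27}, and the $L^\infty_x$--$L^2_x$ and $L^3_x$--$L^6_x$ H\"older splittings plus \eqref{Linfty}, \eqref{L3} exactly as in Lemma~\ref{L210}. Two bookkeeping errors, however, would make the proof fail as written. First, in Step~1 for $0 < \beta_1 \le \beta$ the term is $\big(Q(\partial_{\beta_1}\mu, \partial^\alpha_{\beta-\beta_1}f), \partial^\alpha_\beta f\, w^2(\alpha,\beta)\big)$; the second and third arguments carry \emph{different} derivatives of $f$, so \eqref{28}, which requires $g=h$, does not apply. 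You must use the general bound \eqref{27}, whose $L^2_{D,k+1}$/$L^2_{D,k}$ pairing is then what produces the one extra power of $\langle v\rangle$ that \eqref{210} subsequently absorbs (you only need $w(\alpha,\beta)\langle v\rangle \le w(\alpha,\beta-\beta_1)$, not $\langle v\rangle^3$). Second, in Step~2 the case labels are reversed: with the paper's convention $\partial^{\alpha_1}_{\beta_1}g$ as the first slot, $(\alpha_1,\beta_1)=(0,0)$ gives $\big(Q(g, \partial^\alpha_\beta f), \partial^\alpha_\beta f\, w^2\big)$, for which \eqref{28} applies and yields the clean $\|g\|_{L^\infty_x L^2_7}\|\partial^\alpha_\beta f\, w\|^2_{L^2_x L^2_D}$ bound; whereas $(\alpha_1,\beta_1)=(\alpha,\beta)$ puts all derivatives on $g$, the second and third arguments again differ, and one must use \eqref{27} combined with the $L^\infty_x$-on-$f$ H\"older step as in \eqref{long2}. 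Once these two mis-citations of \eqref{28} are replaced by \eqref{27} (and the labels un-swapped), your argument coincides with the paper's.
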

\begin{proof}
Notice that $(\partial^\alpha_\beta Q(\mu, f), \partial^\alpha f w^2(\alpha, \beta))_{L^2_{x, v}}= \sum_{\alpha_1 \le \alpha, \beta_1 \le \beta} ( Q(\partial_{\beta_1} \mu, \partial^{\alpha }_{\beta -\beta_1} f), \partial^{\alpha}_\beta f w^2( \alpha, \beta))_{L^2_{x, v}}$. We split it into two cases: $\beta_1 = 0$ and $|\beta_1|>0$. If $\beta_1  =0$, after integrating about $x$ in \eqref{26b}, we have
\begin{align*}
( Q( \mu, \partial^{\alpha }_\beta f), \partial^{\alpha}_\beta f w^2(\alpha, \beta))_{L^2_{x, v}}  \le  -\gamma_1 \Vert \partial^\alpha_\beta f w(\alpha, \beta)\Vert_{L^2_x L^2_{D}}^2 + C_{k}  \Vert \partial^\alpha_\beta f  \Vert_{L^2_x L^2_v }^2.
\end{align*}
For the case $|\beta_1| >0$, by \eqref{210},
$
w( \alpha, \beta) \langle v \rangle \le w( \alpha, \beta - \beta_1).
$ Then by Lemma  \ref{L52} we have
\begin{equation*}
\begin{aligned} 
|( Q(\partial_{\beta_1} \mu, \partial^\alpha_{\beta - \beta_1} f), \partial^{\alpha}_{\beta} f w^2 (\alpha, \beta))_{L^2_{x, v}} |&\le  C_k \Vert \partial^\alpha_{\beta - \beta_1}  f w(\alpha, \beta) \Vert_{L^2_x L^2_{D,1}}\Vert \partial^\alpha_\beta g w(\alpha, \beta) \Vert_{L^2_x L^2_{D}} 
\\
&\le   C_k \Vert \partial^\alpha_{\beta - \beta_1}  f w(\alpha,\beta-\beta_1 ) \Vert_{L^2_x L^2_{D}}  \Vert \partial^\alpha_\beta g w(\alpha, \beta) \Vert_{L^2_x L^2_{D}},
\end{aligned}
\end{equation*}
So \eqref{211a} is thus finished by gathering the above two terms together. Next, for \eqref{211b}, notice that
\begin{align*}
( \partial^\alpha_{\beta } Q( g,  f), \partial^\alpha_{\beta} f w^2 ( \alpha, \beta))_{L^2_{x, v}}   =\sum_{\alpha_1 \le \alpha, \beta_1 \le \beta} ( Q(  \partial^{\alpha_1}_{\beta_1 }  g,  \partial^{\alpha -\alpha_1}_{\beta  -\beta_1}  f), \partial^\alpha_{\beta} f w^2 (\alpha, \beta))_{L^2_{x, v}}.
\end{align*}
We again split it into two cases: $|\alpha_1| = |\beta_1| =0 $ and $|\alpha_1| +|\beta_1| >0$. For the case $|\alpha_1| = |\beta_1| =0 $, by \eqref{27} and \eqref{psi}, we have
\begin{equation*}
\begin{aligned}
|( Q( g, \partial^\alpha_{\beta } f), \partial^{\alpha}_{\beta} f w^2 (\alpha, \beta))_{L^2_{x, v}} |  
&\le   \int_{\Omega} C_k\Vert g \Vert_{L^2_7 }\Vert \big(e^{\frac{\pm A_{\al,\beta}\psi}{\<v\>^2}}-1\big)\<v\> \partial^\alpha_{\beta} f w(\alpha, \beta) \Vert_{L^2_{D}}^2 dx 
\\
&\le  C_k \Vert g \Vert_{L^\infty_x L^2_7 }\|\na_x\psi\|_{H^1_x} \Vert  \partial^\alpha_{\beta } f w( \alpha, \beta) \Vert_{L^2_x L^2_{D}}^2 
\le  C_k \Vert g \langle v \rangle^7 \Vert_{H^2_{x, v}}\|\na_x\psi\|_{H^1_x} \|f\|^2_{Y_k}.
\end{aligned}
\end{equation*}
For  $\alpha_1 \le \alpha$, $\beta_1 \le \beta$, $|\alpha_1| +|\beta_1| >0$, by \eqref{27}, we have
\begin{multline*}
|( Q(\partial^{\alpha_1}_{\beta_1} g, \partial^{\alpha -\alpha_1}_{\beta - \beta_1} f), \partial^{\alpha}_{\beta} f w^2 (\alpha, \beta))_{L^2_{x, v}} |\le  \int_{\Omega}  C_k\Vert \partial^{\alpha_1}_{\beta_1} g \Vert_{L^2_{7}} \Vert \partial^{\alpha -\alpha_1}_{\beta - \beta_1} f w( \alpha, \beta)\Vert_{L^2_{D, 1}}\Vert \partial^{\alpha}_{\beta}  f w(\alpha, \beta) \Vert_{L^2_{D }} dx.
\end{multline*}
We split it into two cases,  $|\alpha_1| + |\beta_1| =1 $ and  $|\alpha_1| + |\beta_1| = 2 $. For the case $|\alpha_1| + |\beta_1| =1 $,
by $\Vert fg\Vert_{L^2_x} \le \Vert  f \Vert_{L^6_x} \Vert g \Vert_{L^3_x}$ and similarly as \eqref{long1}, we have
\begin{multline*}
\int_{\Omega} \Vert \partial^{\alpha_1}_{\beta_1} g \Vert_{L^2_{7}} \Vert \partial^{\alpha - \alpha_1}_{\beta - \beta_1} f w(\alpha, \beta)\Vert_{L^2_{D,1}}\Vert \partial^{\alpha}_{\beta}  f w( \alpha, \beta) \Vert_{L^2_{D}} dx\lesssim \Vert \langle v \rangle^{7} g \Vert_{H^2_{x, v} } \Vert \partial^{\alpha - \alpha_1}_{\beta - \beta_1} f w( \alpha, \beta)\Vert_{L^3_x L^2_{D,1}}   \|f \|_{Y_k}.
\end{multline*}
 We again split it into two parts  $|\alpha|+|\beta| = 1$ and  $|\alpha|+|\beta| = 2$. For the case  $|\alpha_1| + |\beta_1| = |\alpha|+|\beta| =1$, we have $|\alpha - \alpha_1| =|\beta - \beta_1| =0 $. By \eqref{l31} and \eqref{L3} we obtain  that
\begin{align*}
\Vert f w( \alpha, \beta)\Vert_{L^3_x L^2_{D,1}}  \lesssim \Vert f w(0, 0)\Vert_{L^2_x L^2_{D}}  + \Vert f w(1,0)\Vert_{H^1_x L^2_{D}} \lesssim  \|f \|_{Y_k}.
\end{align*}
For the case $|\alpha| + |\beta| =2$,  either $|\alpha - \alpha_1| =1, |\beta - \beta_1|=0$ or $|\alpha - \alpha_1| =0, |\beta - \beta_1| =1$. For the first case, by \eqref{l32} and \eqref{L3}, we have
\begin{align*}
\Vert \partial^{\alpha - \alpha_1} f w( \alpha, \beta)\Vert_{L^3_x L^2_{D,1}}  \le  \Vert f w( 1, 0)\Vert_{H^1_x L^2_{D}} + \Vert f w( 2 , 0)\Vert_{H^2_x L^2_{D}}  \lesssim \|f \|_{Y_k}.
\end{align*}
For the second case, this time $|\beta| \ge 1$. By \eqref{l32} and \eqref{L3}, we have
\begin{align*}
\Vert  \partial_{\beta - \beta_1}  f w(\alpha, \beta)\Vert_{L^3_x L^2_{D,1}} \le \Vert \nabla_v f w(0, 1)\Vert_{L^2_x L^2_{D}} + \Vert \nabla_v f  w( 1 , 1)\Vert_{H^1_x L^2_{D}} \lesssim  \|f \|_{Y_k}.
\end{align*}
For the case $|\alpha_1|+|\beta_1| = 2$, we have obviously $|\alpha| +|\beta|=2$ and $|\alpha - \alpha_1| = |\beta - \beta_1|=0$. Similar to \eqref{long2}, we have
\begin{equation*}
\int_{\Omega} \Vert \partial^{\alpha_1}_{\beta_1} g \Vert_{L^2_{7}} \Vert \partial^{\alpha - \alpha_1}_{\beta - \beta_1} f w(\alpha, \beta)\Vert_{L^2_{D,1}}\Vert \partial^{\alpha}_{\beta}  f w(\alpha, \beta) \Vert_{L^2_{D}} dx
\lesssim \Vert \langle v \rangle^{7} g \Vert_{H^2_{x, v} } \Vert f w(\alpha, \beta)\Vert_{L^\infty_x L^2_{D,1}}    \|f \|_{Y_k}.
\end{equation*}
We obtain again from \eqref{Linfty} and \eqref{linfty2} that
\begin{align*}
\Vert  f w(\alpha, \beta)\Vert_{L^\infty_x L^2_{D,1}} \le \Vert f w(1, 0)\Vert_{H^1_x L^2_{D}} + \Vert f w(2 , 0)\Vert_{H^2_x L^2_{D}}\lesssim \Vert f \Vert_{Y_k}.
\end{align*}
The proof of \eqref{211b} is done after gathering the above terms.
For \eqref{211c}, 
by \eqref{27} and \eqref{psi}, we have 
\begin{align}\label{252}\notag
	&\quad\,\big| \big( \partial^\alpha_\beta Q (g,  f),\big(e^{\frac{\pm A_{\al,\beta}\psi}{\<v\>^2}}-1\big)\partial^{\alpha}_{\beta} h w^2 (\alpha, \beta)\big)_{L^2_{x, v}} \big|\\
	&\lesssim \sum_{\al_1\le\al,\,\beta\le\beta_1}\int_{\Omega}\|\pa^{\al_1}_{\beta_1}g\|_{L^2_7}\min\big\{\|\pa^{\al-\al_1}_{\beta-\beta_1}fw (\alpha, \beta)\|_{L^2_{D}}\|\big(e^{\frac{\pm A_{\al,\beta}\psi}{\<v\>^2}}-1\big)\<v\>\partial^{\alpha}_{\beta} hw(\alpha, \beta)\|_{L^2_{D}},
	\\&\notag\qquad\qquad\qquad\qquad\qquad\qquad\qquad\|\<v\>\pa^{\al-\al_1}_{\beta-\beta_1}fw (\alpha, \beta)\|_{L^2_{D}}\|\big(e^{\frac{\pm A_{\al,\beta}\psi}{\<v\>^2}}-1\big)\partial^{\alpha}_{\beta} hw(\alpha, \beta)\|_{L^2_{D}}\big\}\\
	&\notag\lesssim \Big(\sum_{|\al_1|+|\beta_1|=0}\|\pa^{\al_1}_{\beta_1}g\|_{L^\infty_xL^2_7}\|\pa^{\al-\al_1}_{\beta-\beta_1}fw (\alpha, \beta)\|_{L^2_xL^2_{D}}
		\notag\qquad +\sum_{|\al_1|+|\beta_1|=1}\|\pa^{\al_1}_{\beta_1}g\|_{L^3_xL^2_7}\|\pa^{\al-\al_1}_{\beta-\beta_1}fw (\alpha, \beta)\|_{L^6_xL^2_{D}}
			\\&\notag\qquad +\sum_{|\al_1|+|\beta_1|=2}\|\pa^{\al_1}_{\beta_1}g\|_{L^2_xL^2_7}\|\pa^{\al-\al_1}_{\beta-\beta_1}fw (\alpha, \beta)\|_{L^\infty_xL^2_{D}} \Big)
		A_{\al,\beta}\|\na_x\psi\|_{H^1_x}\|\partial^{\alpha}_{\beta} hw(\alpha, \beta)\|_{L^2_xL^2_{D}}\\
	&\notag\lesssim \|\<v\>^7g\|_{H^2_{x,v}}\|\na_x\psi\|_{H^1_x}\|f\|_{Y_k}\|h\|_{Y_k}, 
\end{align}
where we apply $L^\infty-L^2$ and $L^3-L^6$ H\"{o}lder's inequality. 
The proof of \eqref{211d} is similar by replacing the term $\big(e^{\frac{\pm A_{\al,\beta}\psi}{\<v\>^2}}-1\big)$ by $1$ and $e^{\frac{\pm A_{\al,\beta}\psi}{\<v\>^2}}$. Note that this time we keep the minimum in \eqref{252}, and we omit the proof for brevity. 
This completes the proof of Lemma \ref{L58}. 
\end{proof}

For the local existence, similar to \cite{AMSY} we have
\begin{thm}\label{T32}
Suppose that $-3<\gamma\le 1$ for Boltzmann case and $-3\le\gamma\le 1$ for Landau case. For any $k\ge k_0+2$, there exist small constants $\varepsilon_0,\tau_0, T_0>0$, such that if $f_0\in X_{k_0}$ satisfies $\mu+f_0\ge 0$ and
\begin{align}\label{312}
\E_{k_0}(0)\le\varepsilon_0,\quad \E_{k}(0)\le A,
\end{align}
for some $A>0$, 
Then the Cauchy problem
\begin{equation}
	\label{Cauchy1}
	\left\{
	\begin{aligned}
		&\partial_tf_\pm + v\cdot\nabla_xf_\pm \mp\nabla_x\phi\cdot\nabla_vf_\pm \pm \nabla_x\phi\cdot v\mu = Q(f_\pm+f_\mp,\mu)+ Q(\mu+f_\pm,f_\pm) +Q(\mu+f_\mp,f_\pm),\\
		&-\Delta_x\phi=\int_{\mathbb R^3}{(f_+-f_-)dv},\quad\int_{\Omega}{\phi(x) dx}=0, \quad f(0) = f_0,\quad E(0) = E_0,\\
		&f(t,x,R_xv) = f(t,x,v) \text{ on } \gamma_-\text{ and }
		\pa_n\phi = 0 \text{ on }\pa\Omega, \text{ if }\Omega\text{ is given by \eqref{Omega}},
	\end{aligned}\right.
\end{equation}
admits a unique weak solution $f\in L^{\infty}_{T_0}X_k$ satisfying $\mu+f  \ge 0$ and 
\begin{align}\label{315a}
	\sup_{0\le t\le T_0}\E_{k_0}(t)\le \tau_0,\quad \sup_{0\le t\le T_0}\E_{k}(t)\le A. 
\end{align}
\end{thm}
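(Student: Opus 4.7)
The plan is a standard Picard--type iteration built on the linear estimate of Lemma \ref{lemlocal1}. Define $f^0 \equiv 0$ and, given $f^n$ with associated potential $\phi^n$ (i.e. $-\Delta_x\phi^n=\int_{\R^3}(f^n_+-f^n_-)\,dv$ and zero mean), construct $f^{n+1}$ as the solution of the linear Cauchy problem \eqref{vplocal3}, which is precisely the linearized system \eqref{cauchy} with the choice $g=\psi=(f^n,\phi^n)$. To apply Lemma \ref{lemlocal1} iteratively, I set the {\it a priori} smallness bound
\begin{equation*}
\|f^n\|_{L^\infty_{T_0} X_{k_0}}+\|\na_x\phi^n\|_{L^\infty_{T_0}H^3_x}+\|\pa_t\phi^n\|_{L^\infty_{T_0}L^\infty_x}+\|f^n\|_{L^2_{T_0}Y_{k_0}}\le \tau_0,
\end{equation*}
with $\tau_0$ chosen as in Lemma \ref{lemlocal1}, and propagate it to $f^{n+1}$ using \eqref{35a}. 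Here the bounds $\|\na_x\phi^n\|_{H^3_x}\lesssim \|f^n\|_{X_{k_0}}$ and $\|\pa_t\phi^n\|_{L^\infty_x}\lesssim \|f^n\|_{X_{k_0}}$ follow from \eqref{39} and \eqref{39b} applied to $f^n$ (using the continuity equation for $f^n_\pm$). Choosing $\varepsilon_0 \le \tau_0/(2\sqrt C)$ in \eqref{312}, the estimate \eqref{35a} with $k=k_0$ gives
\begin{equation*}
\|f^{n+1}\|^2_{L^\infty_{T_0}X_{k_0}}+\|f^{n+1}\|^2_{L^2_{T_0}Y_{k_0}} \le C\varepsilon_0^2 + \tfrac14\tau_0^2 \le \tau_0^2,
\end{equation*}
which closes the {\it a priori} assumption at the $k_0$--level. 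The higher-regularity bound $\|f^n\|_{L^\infty_{T_0}X_k}+\|f^n\|_{L^2_{T_0}Y_k}\le C\|f_0\|_{X_k}$ follows analogously by iterating \eqref{35a} at level $k$ and absorbing $\tfrac14\|f^n\|^2_{L^2_{T_0}Y_k}$ into the left-hand side.

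The main obstacle is showing $\{f^n\}$ is Cauchy in a suitable norm. Set $h^n := f^{n+1}-f^n$ and $\varphi^n:=\phi^{n+1}-\phi^n$; then $h^n$ satisfies
\begin{align*}
&\pa_t h^n_\pm + v\cdot\nabla_x h^n_\pm \mp \nabla_x\phi^n\cdot\nabla_v h^n_\pm \pm \nabla_x\varphi^n\cdot v\mu \\
&\quad= Q(h^{n-1}_\pm+h^{n-1}_\mp,\mu)+Q(h^{n-1}_\pm,f^n_\pm)+Q(h^{n-1}_\mp,f^n_\pm)\\
&\quad\ \ +Q(\mu+f^{n-1}_\pm,h^n_\pm)+Q(\mu+f^{n-1}_\mp,h^n_\pm)\pm \nabla_x\varphi^{n-1}\cdot\nabla_v f^n_\pm,
\end{align*}
with $-\Delta_x\varphi^n=\int_{\R^3}(h^n_+-h^n_-)\,dv$ and $h^n(0)=0$. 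I apply exactly the same energy method as in the proof of Lemma \ref{lemlocal1} to $h^n$ at the $k_0$--level (testing against $e^{\pm A_{\al,\beta}\phi^n/\langle v\rangle^2}w^2(\al,\beta)\pa^\al_\beta h^n_\pm$), using the upper-bound estimates \eqref{236c}, \eqref{213a} for the Boltzmann case and \eqref{211d}, \eqref{27} for the Landau case to handle the $Q(h^{n-1},f^n)$ and $Q(\mu+f^{n-1},h^n)$ terms, Lemma \ref{L212} together with the bound $\|\na_x\varphi^n\|_{H^3_x}\lesssim \|h^n\|_{X_{k_0}}$ for the Poisson-force terms, and Lemma \ref{L211} for the linear Poisson term $\pm\na_x\varphi^n\cdot v\mu$. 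The smallness of $f^n,f^{n-1},\na_x\phi^n$ secured above absorbs the ``same level'' terms into the dissipation, producing
\begin{equation*}
\|h^n\|^2_{L^\infty_{T_0}X_{k_0}}+\|\na_x\varphi^n\|^2_{L^\infty_{T_0}H^3_x}+\|h^n\|^2_{L^2_{T_0}Y_{k_0}} \le \tfrac12\Big(\|h^{n-1}\|^2_{L^2_{T_0}Y_{k_0}}+\|\na_x\varphi^{n-1}\|^2_{L^\infty_{T_0}H^1_x}\Big),
\end{equation*}
after possibly shrinking $T_0$ and $\tau_0$. This yields geometric contraction, so $\{f^n\}$ is Cauchy in $L^\infty_{T_0}X_{k_0}\cap L^2_{T_0}Y_{k_0}$, with limit $f$.

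Passing to the limit in \eqref{vplocal3} in the weak sense gives a solution of \eqref{Cauchy1}; the uniform bounds upgrade to $f\in L^\infty_{T_0}X_k$ by weak-$\ast$ lower semicontinuity. The non-negativity $\mu+f\ge 0$ is inherited from Lemma \ref{lemlocal1} (applied at each iteration step with $\mu+g^n\ge 0$ propagating to $\mu+f^{n+1}\ge 0$) and preserved in the limit. The smallness bound \eqref{315a} follows from the uniform $X_{k_0}$ and $Y_{k_0}$ bounds on $f^n$. Finally, uniqueness is obtained by precisely the same difference argument applied to two solutions $f$ and $\tilde f$ with the same initial data: the contraction estimate gives $\|f-\tilde f\|_{L^\infty_{T_0}X_{k_0}}\le \tfrac12\|f-\tilde f\|_{L^\infty_{T_0}X_{k_0}}$, forcing $f=\tilde f$. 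The main technical obstacle is the Cauchy-sequence step, since it requires that the nonlinear terms $Q(h^{n-1},f^n)$ and the Vlasov-force differences $\na_x\varphi^{n-1}\cdot\na_v f^n$ be controlled by the dissipation of $h^{n-1}$ rather than $h^n$; this is precisely what the ``min'' form in estimates \eqref{236c} and \eqref{211d}, together with the exponential weight trick absorbing $\na_x\varphi\cdot\na_v$ losses, is designed to do.
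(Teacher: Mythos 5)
Your proposal mirrors the paper's proof almost exactly: Picard iteration using Lemma \ref{lemlocal1}, uniform bounds from \eqref{35a}, a difference estimate to obtain a Cauchy sequence, and non-negativity/uniqueness from the same tools, so the approach is the same. Two small points are worth fixing. First, when you subtract the $n$-th and $(n-1)$-st iterates, the telescoping of the collision term gives $Q(\mu+f^n_\pm, f^{n+1}_\pm)-Q(\mu+f^{n-1}_\pm, f^n_\pm)=Q(h^{n-1}_\pm, f^n_\pm)+Q(\mu+f^{n}_\pm, h^n_\pm)$, not $Q(h^{n-1}_\pm, f^n_\pm)+Q(\mu+f^{n-1}_\pm, h^n_\pm)$, so your stated RHS is off by a residual $Q(h^{n-1}_\pm, h^n_\pm)+Q(h^{n-1}_\mp, h^n_\pm)$ (harmless because $h^{n-1}$ is small, but the accounting should be corrected). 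Second, the contraction cannot be run with the full weight $w(\alpha,\beta)$ at level $k$: the paper tests $h^n$ against $e^{\pm A_{\alpha,\beta}\phi^n/\langle v\rangle^2}w^2(\alpha,\beta)\langle v\rangle^{-4}\partial^\alpha_\beta h^n_\pm$ precisely so that the moment loss from $Q(h^{n-1},f^n)$ and from $\nabla_x\varphi^{n-1}\cdot\nabla_v f^n$ lands on the higher-moment control of $f^n$ in $Y_k$, yielding contraction in $X_{k-2}\cap Y_{k-2}$ (with $k\ge 16$). Your phrase ``at the $k_0$-level'' is consistent with this mechanism, but should be made explicit: the weight in the test function for $h^n$ must be strictly lower than the weight at which $f^n$ is controlled.
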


\section{Global regularity}\label{sec4}
In this section, we prove the Theorem \ref{globaldecay}. We first establish the macroscopic estimates. 
\subsection{Macroscopic estimate}
We recall the operator $\P$ defined in \eqref{P}. 
It is direct to obtain
$\Vert a \Vert_{L^2_x}^2 + \Vert b \Vert_{L^2_x}^2 + \Vert c \Vert_{L^2_x}^2 \sim \Vert \P f \Vert_{L^2_{x, v}}^2$. We also rewrite the equation \eqref{vplocal1} as
\begin{equation}
\label{rewrite}
\partial_tf_\pm + v\cdot\nabla_xf_\pm  \pm \nabla_x\phi\cdot v\mu -L_\pm f = N_\pm(f),\quad
-\Delta_x\phi=\int_{\mathbb R^3}{(f_+-f_-)dv},
\end{equation}
with initial data $f(0) = f_0$, $ E(0) = E_0$,
where $L=[L_+,L_-]$ and $N=[N_+,N_-]$ are given by 
\begin{equation}\label{nlp}
	\begin{aligned}
		L_\pm f :&= Q(f_\pm+f_\mp,\mu) + 2Q(\mu,f_\pm),\quad
		N_\pm(f):=\pm\nabla_x\phi\cdot\nabla_vf_\pm + Q(f_\pm,f_\pm)+Q(f_\mp,f_\pm). 
	\end{aligned}
\end{equation}
Our next goal is to estimate $a(t, x), b(t, x),c(t, x)$ in terms of $\{\I-\P\} f$.
In contrast to  \cite{G4}, our $P$ is not symmetric and we can not compare the $v_i, v_j$  terms on both sides.
The following Lemma gives the macroscopic estimates.

\begin{lem}\label{L420}
	For both Landau and Boltzmann case, 
suppose $f$ solves \eqref{rewrite}. For any integer $m\ge 0$, there exists function $G=G(t)$ satisfying
\begin{equation}
	\label{ing}
	G(t)\lesssim\sum_{|\alpha|\le 1}\|\partial^{\alpha}f\|_{L^2_xL^2_{10}}  \|\partial^{\alpha}\nabla_x \P f\|_{L^2_{x, v}},
\end{equation}
such that 
\begin{multline}
\label{uni4}
	\pa_tG(t)+\lambda\|[a_+,a_-,b,c]\|^2_{H^2_x} +\lambda \|\na_x\phi\|^2_{H^2_x}
	\lesssim 
	\|\{\I-\P\}f\|^2_{H^2_xL^2_{10}} +   \|N_{\|}\|_{H^1_xL^2_v}^2 +\|\na_x\phi\|_{L^2_x}^4,
\end{multline}
Here $N_{\|}$ is the inner product of $N(f)$ with some linear combination of $(1,v_i,  v_i v_j, v_i^2, v_i|v|^2)$ over $v\in\R^3$. 
\end{lem}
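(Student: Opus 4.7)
The plan is to establish the macroscopic bounds by following the standard test-function strategy developed by Guo in the two-species Vlasov-Poisson-Boltzmann setting, adapted to accommodate the polynomial weight $L^2_{10}$ required here. The strategy is to extract local conservation laws for $(a_\pm,b,c)$ from \eqref{rewrite}, convert them into elliptic-type estimates via auxiliary Poisson potentials, and then close the bound on $\|\nabla_x\phi\|_{H^3_x}$ through \eqref{rewrite}$_2$.

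\textbf{Step 1 (Local conservation laws).} Using the basis $\{[1,0]\mu,[0,1]\mu,v_i[1,1]\mu,(|v|^2-3)[1,1]\mu\}$ of $\ker L$ together with the auxiliary moments $(v_iv_j-\delta_{ij}|v|^2/3)\mu$ and $v_i(|v|^2-5)\mu$, I would take the $L^2_v$-inner product of \eqref{rewrite}$_1$ against each of these, noting that every one of them annihilates $L_\pm f$. This yields a macroscopic system of the schematic form
\begin{align*}
&\partial_t a_\pm+\nabla_x\cdot b = (\{\I-\P\}f\text{-contribution})+N_\|,\\
&\partial_t b_j+\partial_j(\tfrac{a_++a_-}{2}+c)\pm\tfrac{1}{2}\partial_j\phi+\nabla_x\cdot(\Theta_{ij}[\{\I-\P\}f]) = N_\|,\\
&\partial_t c+\tfrac{1}{3}\nabla_x\cdot b+\nabla_x\cdot(\Lambda[\{\I-\P\}f]) = N_\|,
\end{align*}
where $\Theta_{ij}$ and $\Lambda$ are explicit velocity moments which reside in $L^2_v$ but require the weight $\langle v\rangle^{10}$ to be controlled with enough room.

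\textbf{Step 2 (Elliptic test functions and construction of $G$).} For $|\alpha|\le 1$, introduce scalar Poisson-type potentials $\Psi^\alpha_{a_\pm},\Psi^\alpha_{b_j},\Psi^\alpha_c$ on $\T^3$ (with zero mean, possible thanks to \eqref{conse2}) solving, e.g.\ $-\Delta_x\Psi^\alpha_c=\partial^\alpha c$. Pair the equations of Step 1 with suitable derivatives of these potentials (for instance, $\Psi^\alpha_c$ is paired with the $v_j(|v|^2-5)\mu$ moment, and $\Psi^\alpha_{b_j}$ with a traceless second-order moment), then integrate by parts in $x$. The temporal parts yield $\partial_t G(t)$ for a bilinear form $G$ of the form $\sum_{|\alpha|\le 1}\int_{\T^3}\partial^\alpha f\cdot\partial^\alpha\Psi\,dx$; because $\|\nabla_x\Psi\|_{L^2}\lesssim\|\partial^\alpha\P f\|_{L^2_{x,v}}$ by elliptic regularity, this matches the structure \eqref{ing} after a Cauchy--Schwarz pairing with $\|\partial^\alpha f\|_{L^2_xL^2_{10}}$. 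The remaining terms produce the coercive piece $\|\nabla_x[a_\pm,b,c]\|_{H^1_x}^2$, upgraded to full $\|[a_\pm,b,c]\|_{H^2_x}^2$ via Poincar\'e using the conservation laws.

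\textbf{Step 3 (Electric field) and main obstacle.} For $\nabla_x\phi$, one uses $-\Delta_x\phi=a_+-a_-$ and iterated elliptic regularity to obtain $\|\nabla_x\phi\|_{H^3_x}\lesssim\|a_+-a_-\|_{H^2_x}+\|\nabla_x\phi\|_{L^2_x}$, where the second summand is transferred to the left-hand side using the small $L^4$ tail $\|\nabla_x\phi\|_{L^2_x}^4$. The main obstacle is the two-species coupling through $\phi$: the electric terms $\pm\partial_j\phi$ in the momentum equations do not cancel species-by-species but only after a sum/difference decomposition $(a_++a_-,a_+-a_-)$. Isolating the difference by combining the $a_\pm$-equations and feeding the result through Poisson provides the only route to $\|\nabla_x\phi\|_{H^3_x}$, and requires care to ensure that the electric contributions produced by the test-function pairings are either absorbed into $\partial_t G$ or estimated by $\|N_\|\|_{H^1_xL^2_v}^2$ and $\|\nabla_x\phi\|_{L^2_x}^4$. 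A secondary subtle point is bookkeeping the velocity weight: the higher moments $\Theta_{ij},\Lambda$ generated in Step 1 force the micro norm on the right of \eqref{uni4} to carry the weight $\langle v\rangle^{10}$, which fixes the final form of the estimate.
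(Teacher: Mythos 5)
Your proposal follows the ``dual'' / Poisson-potential route (pair the local conservation laws against $\nabla_x\Delta_x^{-1}$ of the macroscopic quantities), whereas the paper's proof uses Guo's direct pairing from \cite{G4}: it applies $\partial^\alpha$ to identities such as \eqref{abc1}--\eqref{abc2} and \eqref{uni14}, takes inner products with $\nabla_x\partial^\alpha c$, $\partial^\alpha b_i$, $\nabla_x\partial^\alpha(a_+\pm a_-)$ themselves, integrates by parts in $t$ to produce $\partial_t G$, and then substitutes the moment equations for $\partial_t c$, $\partial_t b$, $\partial_t a_\pm$. Both methods are standard and would close. Note, though, that the paper exploits the specific algebraic identity $\big(\nabla_x\partial^\alpha\phi,\nabla_x\partial^\alpha(a_+-a_-)\big)_{L^2_x}=\|\Delta_x\partial^\alpha\phi\|_{L^2_x}^2$ from the Poisson equation, which makes the electric dissipation appear cleanly without extra test functions; with your potentials $\Psi$ you would need the analogous cancellation by hand. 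Also, your proposed $G$ would only satisfy the weaker bound $G\lesssim\sum_{|\alpha|\le1}\|\partial^\alpha f\|_{L^2_xL^2_{10}}\|\partial^\alpha\P f\|_{L^2_{x,v}}$ (one fewer $\nabla_x$ than \eqref{ing}), since $\|\nabla_x\Psi^\alpha_c\|_{L^2_x}\lesssim\|\partial^\alpha c\|_{L^2_x}$, not $\|\nabla_x\partial^\alpha c\|_{L^2_x}$; this would still be usable downstream, but it is not the estimate as stated.

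There is one concrete error. In Step~3 you attribute the $\|\nabla_x\phi\|_{L^2_x}^4$ term to a leftover from iterated elliptic regularity, but with the zero-mean normalization $\int_{\T^3}\phi\,dx=0$ one has the clean estimate $\|\nabla_x\phi\|_{H^3_x}\lesssim\|a_+-a_-\|_{H^2_x}$ with no such remainder. The quartic term actually arises in Step~2, when upgrading $\|\nabla_x c\|_{H^1_x}$ to $\|c\|_{H^2_x}$ by Poincar\'e: because of the energy conservation law \eqref{conse2}, the $x$-average of $c$ is \emph{not} zero but equals $-\tfrac1{12}\int_{\T^3}|\nabla_x\phi|^2\,dx$, so $\big(\int_{\T^3}c\,dx\big)^2\sim\|\nabla_x\phi\|_{L^2_x}^4$. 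As written, your argument has no mechanism to produce or control this mean-value defect, so the $\|[a_\pm,b,c]\|_{H^2_x}^2$ coercivity in \eqref{uni4} would not follow; you need to invoke \eqref{conse2} explicitly when applying Poincar\'e to $c$ (and to $a_\pm$, $b$, where the means \emph{do} vanish).
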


\begin{proof}
Here we only consider the non-cutoff Boltzmann case and the Landau case is similar. The proof follows the idea in \cite[Section 6]{G4}.  Splitting $f=\P f+\{\I-\P\}f$, we rewrite \eqref{rewrite} to be 
\begin{multline}\label{47}
\big(\partial_t a_\pm + \partial_t b \cdot v + \partial_t c (|v|^2-3)\big)\mu+v\cdot\nabla_x\big( a_\pm +
b \cdot v+  c (|v|^2-3)\big)\mu\pm\nabla_x\phi \cdot v\mu
\\
=-(\partial_t+v\cdot\nabla_x)\{\II-\PP\}f- L_\pm\{\I-\P\}f+N_\pm(f). 
\end{multline}
Taking the  inner product of \eqref{47} with $1, v_i^2, |v|^2, v_iv_j$, $i, j =1, 2, 3$, $i \neq j$ over $v\in\mathbb R^3$, we have
\begin{equation}
\label{uni55}
\begin{aligned}
\partial_t a_\pm+\nabla_x \cdot  b&=(-(\partial_t+v\cdot\nabla_x)\{\II-\PP\}f- L_\pm\{\I-\P\}f+N_\pm(f), 1)_{L^2_v},
\\
\partial_t a_\pm +2 \partial_t c + 2\partial_{x_i} b_i  +\nabla_x \cdot b&=(-(\partial_t+v\cdot\nabla_x)\{\II-\PP\}f- L_\pm\{\I-\P\}f+N_\pm(f), v_i^2)_{L^2_v},
\\
3\partial_t a_\pm + 6\partial_t c + 5\nabla_x \cdot  b&=(-(\partial_t+v\cdot\nabla_x)\{\II-\PP\}f- L_\pm\{\I-\P\}f+N_\pm(f), |v|^2)_{L^2_v}.
\end{aligned}
\end{equation}
and 
\begin{equation}\label{49}
	\partial_{x_j} b_i  +  \partial_{x_i} b_j=(-(\partial_t+v\cdot\nabla_x)\{\II-\PP\}f- L_\pm\{\I-\P\}f, v_iv_j)_{L^2_v}+(N_\pm(f), v_iv_j)_{L^2_v}
	=:\gamma_{1ij}+\gamma_{2ij}, \quad i\ne j.
\end{equation}
By taking the subtraction of \eqref{uni55}$_2$ and \eqref{uni55}$_1$, we have 
\begin{align}\notag
\label{abc0}
\partial_t  c+  \partial_{x_i} b_i&=\big(-(\partial_t+v\cdot\nabla_x)\{\II-\PP\}f- L_\pm\{\I-\P\}f, \frac{ |v_i|^2-1}  {  2  }\big)_{L^2_v}+\big(N_\pm(f), \frac{ |v_i|^2-1}  {  2  }\big)_{L^2_v}\\&=:\gamma_{1i}+\gamma_{2i}.
\end{align}
Taking the inner product of \eqref{47} with $v$, $v|v|^2$ over $v\in\mathbb R^3$, we have
\begin{equation}
\label{uni5new}
\begin{aligned}
\partial_t b+ \nabla_x (a_\pm\pm\phi)+2\nabla_x c&=(-(\partial_t+v\cdot\nabla_x)\{\II-\PP\}f- L_\pm\{\I-\P\}f+N_\pm(f), v)_{L^2_v},
\\
5\partial_t b+ 5\nabla_x (a_\pm\pm\phi)+20\nabla_x c&= (-(\partial_t+v\cdot\nabla_x)\{\II-\PP\}f- L_\pm\{\I-\P\}f+N_\pm(f), v|v|^2)_{L^2_v}.
\end{aligned}
\end{equation}
The two identities in \eqref{uni5new} implies 
\begin{equation}\label{abc1}
\nabla_x c=  \big(-(\partial_t+v\cdot\nabla_x)\{\II-\PP\}f- L_\pm\{\I-\P\}f+N_\pm(f), \frac{v(|v|^2-5)}{10}\big)_{L^2_v},
\end{equation}
and
\begin{equation}\label{abc2}
\nabla_x a_\pm=- \partial_t b\mp\nabla_x\phi+ \big(-(\partial_t+v\cdot\nabla_x)\{\II-\PP\}f- L_\pm\{\I-\P\}f+N_\pm(f), \frac{v(10-|v|^2)}{5}\big)_{L^2_v}. 
\end{equation}
Here we use the fact that
$\int_{\mathbb R^3}{|v|^2\mu dv}=3, \quad\int_{\mathbb R^3}{|v|^4\mu dv}=15, \quad\int_{\mathbb R^3}{|v|^6\mu dv}=105$. By the definitions \eqref{P} of $\P f$ and \eqref{nlp} of $L$, we have for $\psi(v) =1, v_i, |v|^2$  that 
\begin{align*}
(\{\II-\PP\}f, \psi(v) ) =0,\,\, (L_\pm\{\I-\P\}f, \psi(v)) =0, \,\,(\partial_t \{\II-\PP\} f, \psi(v) ) =0,\,\, (v \cdot \nabla_x \{\II-\PP\}f, 1) =0. 
\end{align*}
Thus the inequalities \eqref{uni55}$_1$, \eqref{uni55}$_3$ and \eqref{uni5new}$_1$ 
(i.e. inner product with $1, v_i, |v|^2$) become 
\begin{equation}
\label{uni5}
\begin{aligned}
\partial_t a_\pm+\nabla_x \cdot  b&= (N_\pm(f), 1)_{L^2_v},
\\
 \partial_t b+ \nabla_x a_\pm+2\nabla_x c\pm\nabla_x\phi&=-( v\cdot\nabla_x\{\II-\PP\}f, v  )_{L^2_v}+ (N_\pm(f), v)_{L^2_v},
\\
3 \partial_t a_\pm + 6  \partial_t c+ 5\nabla_x \cdot b&= -( v\cdot \nabla_x\{\II-\PP\} f, |v|^2 )_{L^2_v}+(N_\pm(f), |v|^2)_{L^2_v}. 
\end{aligned}
\end{equation}
Combining \eqref{uni5}$_1$ and \eqref{uni5}$_3$, we have 
\begin{equation}
\label{uni8}
\partial_t c+\frac{1}{3}\nabla_x \cdot  b=-\frac{1}{6}(v\cdot\nabla_x\{\II-\PP\}f,|v|^2)_{L^2_v}+\Big( N_\pm(f),\frac{|v|^2-3}{6} \Big)_{L^2_v}. 
\end{equation}
For brevity, we define 
\begin{align}\label{xi}
\xi_{a}=\frac{v(10-|v|^2)}{5}, \quad  \xi_{bi}=\frac{2v_i^2-5}{2}, \quad \xi_{c}= \frac{v(|v|^2-5)}{10}.
\end{align}
 It follows that
\begin{multline*}
|(v \cdot \nabla_x \{\II-\PP\}f,  \xi_{a})| +|(v \cdot \nabla_x \{\II-\PP\}f,  \xi_{bi})| +|(v \cdot \nabla_x \{\II-\PP\}f,  v_iv_j)|\\ +|(v \cdot \nabla_x \{\II-\PP\}f,  \xi_{c})| \le \Vert  \nabla_x \{\II-\PP\} f\Vert_{L^2_6}. 
\end{multline*}
For the Boltzmann case, by Lemma \ref{L21}, we have
\begin{align*}
|(Q(\mu, f), \xi_{a})| 
&\le  \Vert \mu \Vert_{L^2_{10}} \Vert f \Vert_{L^2_{10}} \Vert \xi_{a}\Vert_{H^{2s}_{-7}} \le C\Vert f \Vert_{L^2_{10}} ,\quad
 |(Q(f, \mu), \xi_{a})| &\le  \Vert f \Vert_{L^2_{10}} \Vert \mu \Vert_{L^2_{10}} \Vert \xi_{a}\Vert_{H^{2s}_{-7}} \le C\Vert f \Vert_{L^2_{10}}.
\end{align*}
For the Landau case, by Lemma \ref{L55}, we have
\begin{align*}
|(Q(\mu, f), \xi_{a})| &\le  \Vert \mu \Vert_{L^2_{10}} \Vert f \Vert_{L^2_{10}} \Vert \xi_{a}\Vert_{H^{2}_{-5}} \le C\Vert f \Vert_{L^2_{10}} , \quad |(Q(f, \mu), \xi_{a})| \le  \Vert f \Vert_{L^2_{10}} \Vert \mu \Vert_{L^2_{10}} \Vert \xi_{a}\Vert_{H^{2}_{-5}} \le C\Vert f \Vert_{L^2_{10}}. 
\end{align*}
Similar arguments can be carried on inner product with $\xi_{bi}$, $v_iv_j$, $\xi_c$. 
Recalling the definition \eqref{nlp} of $L=[L_+,L_-]$, we have 
\begin{equation}
\label{ineqi}
|( Lf,\xi_{a})_{L^2_v}|+ |( Lf,\xi_{bi})_{L^2_v}| + |( Lf, v_iv_j)_{L^2_v}| + |( Lf,\xi_{c})_{L^2_v}| \lesssim \Vert f \Vert_{L^2_{10}}. 
\end{equation}
\textbf{Step 1: Computation of $\nabla_x\partial^{\alpha}c$.}
Let $|\al|\le 2$. Applying $\partial^\alpha$ on both side of \eqref{abc1} and taking inner product with $\nabla_x\partial^\alpha c$ over $x\in\Omega$, we deduce that
\begin{align}
	\notag
\label{uni11}
\|\nabla_x\partial^\alpha c\|_{L^2_x}^2
&\notag\le - \frac{d}{dt}\int_{\Omega}{(\{\II-\PP\}\partial^\alpha f, \xi_{c})_{L^2_v}\cdot\nabla_x\partial^\alpha c\,dx}\notag-\int_{\Omega}{( \{\II-\PP\}\nabla_x\partial^\alpha f, \xi_{c})_{L^2_v}\cdot\partial_t\partial^\alpha c\,dx}
\\
&+C\|\nabla_x\partial^\alpha c\|_{L^2_x}\big(\|\partial^{\alpha}\{\II-\PP\}f\|_{H^1_xL^2_{10}}+\|\partial^{\alpha}N_{\|}\|_{L^2_{x}} \big).
\end{align}
Here, for the torus case, one can take integration by parts directly. For the union of cubes case, we have from boundary \eqref{abc} that $\pa_{x_i}c=0$ on $\Gamma_i$. On the other hand, when $\al_i=0,2$, we have from Lemma \ref{highspecular} and definition \eqref{xi} of $\xi_c$ that for $x\in\Gamma_i$, 
\begin{align*}
	(\{\II-\PP\}\partial^\alpha f, \xi_{c})_{L^2_v}
	&= (\{\II-\PP\}\partial^\alpha f(R_xv), \xi_{c}(R_xv))_{L^2_v}= -(\{\II-\PP\}\partial^\alpha f(v), \xi_{c}(v))_{L^2_v}=0. 
\end{align*}
Note that $\partial^\alpha\PP f(R_xv)=\partial^\alpha\PP f(v)$ by using \eqref{boundaryabc}. This implies the integration by parts about $\na_x$ in \eqref{uni11}. 
For the term $\pa_t\pa^\al c$ in \eqref{uni11}, it follows from \eqref{uni8} that 
\begin{align*}
\partial_t\partial^\alpha c=\frac{1}{3}\nabla_x\partial^{\alpha}b-\frac{1}{6}( v\cdot\nabla_x\{\II-\PP\}\partial^{\alpha}f,|v|^2)_{L^2_v}+\Big( \partial^{\alpha}N_\pm(f),\frac{|v|^2-3}{6}\Big)_{L^2_v}.
\end{align*}
By Cauchy-Schwarz inequality, for any small $\varepsilon>0$,  we have
\[
-\int_{\Omega}{( \{\II-\PP\}\nabla_x\partial^\alpha f, \xi_{c})_{L^2_v}\cdot\partial_t\partial^\alpha c\,dx}
\le {\varepsilon}\|\nabla_x\partial^\alpha b\|^2+\frac{C}{\varepsilon}\|\nabla_x\partial^\alpha\{\II-\PP\}f\|^2_{L^2_xL^2_{10}}+\varepsilon\|\partial^\alpha N_{\|}\|_{L^2_{x}}^2.
\]
Plugging this into \eqref{uni11}, we deduce that
\begin{multline}\label{uni11a}
\|\nabla_x\partial^\alpha c\|^2\le -\frac{d}{dt}\int_{\Omega}{(\{\II-\PP\}\partial^\alpha f, \xi_{c})_{L^2_v}\cdot\nabla_x\partial^\alpha c\,dx}+ \varepsilon\|\nabla_x\partial^\alpha b\|^2+ \varepsilon\|\nabla_x\partial^\alpha c\|^2\\
+\frac{C}{\varepsilon}\big(
\|\partial^{\alpha}\{\II-\PP\}f\|^2_{H^1_xL^2_{10}}+\|\partial^{\alpha}N_{\|}\|_{L^2_{x}} ^2\big).
\end{multline}
\textbf{Step 2: Computation of $\nabla_x\partial^{\alpha}b$.} For fixed $i$, we use \eqref{49}, \eqref{abc0} to compute
\begin{align}
\label{uni14}\notag
\Delta_x  b_i&=\sum_{j\ne i}\partial_{x_j x_j}  b_i+\partial_{x_ix_i} b_i
\notag=\sum_{j\ne i}\left(-\partial_{x_ix_j} b_j+\partial_{x_j}( \gamma_{1ij}+ \gamma_{2ij})\right)+\partial_{x_i} (\gamma_{1i}+\gamma_{2i})-\partial_t\partial_{x_i} c
\\
&\notag=\sum_{j\ne i}\left(\partial_t\partial_{x_i} c-\partial_{x_i} ( \gamma_{1j}+ \gamma_{2j})\right)+\sum_{j\ne i}\partial_{x_j}  ( \gamma_{1ij}+ \gamma_{2ij})+\partial_{x_i} (\gamma_{1i}+\gamma_{2i})-\partial_t\partial_{x_i} c
\\
&=\sum_{j\ne i}\left(\partial_{x_j} ( \gamma_{1ij}+ \gamma_{2ij})-\partial_{x_i} (\gamma_{1j}+\gamma_{2j})  \right)-\partial_{x_ix_i}   b_i+2\partial_{x_i}(\gamma_{1i}+\gamma_{2i}).
\end{align}
We can rewrite the linear terms $\sum_{j\ne i}\left(\partial_{x_j}\gamma_{1ij}-\partial_{x_i} \gamma_{1j}  \right)+2\partial_{x_i}\gamma_{1i}$ including $\gamma_{1i}$, $\gamma_{1j}$, $\gamma_{1ij}$ as the linear combination of $\partial_{x_j}\left(-((\partial_t+v\cdot\nabla_x)\{\II-\PP\}f,\xi_{ij})_{L^2_v}-(L_\pm\{\I-\P\}f,\xi_{ij})_{L^2_v}\right)$, where $\xi_{ij}$ are certain linear combinations of the basis $\{1, v_iv_j, |v_i|^2\}_{i,j=1}^3$. Similar to \eqref{ineqi}, we have
$
|(Lf, \xi_{ij})_{L^2_v}|\lesssim \|f\|_{L^2_{10}}.
$
Note  that  $-(\Delta_x\partial^\alpha  b_i,\partial^\alpha b_i)_{L^2_x} =\|\nabla_x\partial^\alpha b_i\|_{L^2_x}^2$ and $(\pa_{x_ix_i}\pa^\al b_i,\pa^\al b_i)_{L^2_x} = -\|\pa_{x_i}\pa^\al b_i\|_{L^2_x}^2, $ 
which is trivial for the torus case and follows from \eqref{boundaryabc} for the union of cubes case. 
Applying $\partial^\alpha$ on both side of \eqref{uni14}, taking inner product with $\partial^\alpha b_i$ over $x\in\Omega$ and summation over $i=1,2,3$, we deduce that
\begin{align}\label{420}\notag
\|\nabla_x\partial^\alpha b\|_{L^2_x}^2&=\sum_{i}    \Big(-\int_{\Omega}{\sum_{j\ne i}\left(\partial_{x_j}\partial^\alpha(\gamma_{1ij}+\gamma_{2ij})-\partial_{x_i}\partial^\alpha(\gamma_{1j}+\gamma_{2j})\right)\cdot\partial^\alpha b_i\,dx}\Big)\\
&\notag\quad- \sum_{i}\Big(\int_{\Omega}{|\partial_{x_i}\partial^\alpha b_{i}|^2dx}-2\int_{\Omega}{\partial_{x_i}\partial^\alpha( \gamma_{1i}+ \gamma_{2i})\cdot\partial^\alpha b_i\,dx}\Big)
\\&\notag\le\sum_{i,j}\frac{d}{dt}\int_{\Omega}{( \{\II-\PP\}\partial_{x_j}\partial^\alpha f, \xi_{ij})_{L^2_v}\cdot\partial^\alpha b_i\,dx}\notag\quad-\sum_{i,j}\int_{\Omega}{( \{\II-\PP\}\partial_{x_j}\partial^\alpha f, \xi_{ij})_{L^2_v}\cdot\partial_t\partial^\alpha b_i\,dx}
\\
&\quad+C\|\nabla_x\partial^\alpha b\|\big(
\|\partial^{\alpha}\{\II-\PP\}f\|_{H^1_xL^2_{10 }}+\|\partial^{\alpha}N_{\|}\|_{L^2_{x}}\big).
\end{align}
Here we used integration by parts on $\pa_{x_i}$ and $\pa_{x_j}$. For the case of the union of cubes, we need the following boundary values to complete the integration by parts. 
For $i=1,2,3$ and $\al=(\al_1,\al_2,\al_3)$. If $\al_i=0$ or $2$, then it follows from \eqref{boundaryabc} that $\pa^\al b_i=0$ on $\Gamma_i$. 
If $\al_i=1$, then applying Lemma \ref{highspecular} and change of variable $v\mapsto R_xv$, we can obtain from \eqref{abc0} that $\pa^\al\gamma_{1j} = 0,\ \text{ on }\Gamma_i\text{ for }j=1,2,3$. 
Similarly, for $j\neq i$, if $\al_j=1$, then $\pa^\al b_i=0$ on $\Gamma_j$. If $\al_j=0$ or $2$, then it follows from \eqref{49} that $\pa^\al \gamma_{1ij} = 0,\ \text{ on }\Gamma_j\text{ for }j\neq i$.

Next we calculate the term $\partial_t\partial^\alpha b_i$. 
Applying $\pa^\al$ to \eqref{uni5}$_2$ yields  
\begin{align*}
\partial_t\partial^\alpha b_i=-\nabla_{x_i}\partial^{\alpha}a_\pm-2\nabla_{x_i}\partial^{\alpha}c
\mp\nabla_{x_i}\partial^{\alpha}\phi
-(v\cdot\nabla_x\{\II-\PP\}\partial^{\alpha}f +\partial^{\alpha}N_\pm(f),v_i)_{L^2_v}.
\end{align*}
Then by Cauchy-Schwarz inequality, we have 
\begin{multline*}
-\int_{\Omega}{( \{\II-\PP\}\partial_{x_j}\partial^\alpha f, \xi_{ij})_{L^2_v}\cdot\partial_t\partial^\alpha b_i\,dx}\le {\varepsilon}\big(\|\nabla_x\partial^\alpha a_\pm\|^2_{L^2_x}+\|\nabla_x\partial^\alpha c\|^2_{L^2_x}+\|\nabla_x\partial^{\alpha}\phi\|^2_{L^2_x}\big)
\\+{\varepsilon}\|\partial^\alpha N_{\|}\|_{L^2_{x}}^2
+\frac{C}{\varepsilon}\|\nabla_x\partial^\alpha\{\II-\PP\}f\|^2_{L^2_xL^2_{10}},
\end{multline*}
and \eqref{420} becomes 
\begin{multline}\label{uni15}
\|\nabla_x\partial^\alpha b\|^2\le- \sum_{i,j}\frac{d}{dt}\int_{\Omega}{(\{\II-\PP\}   \partial^\alpha f, \xi_{ij})_{L^2_v}\cdot\partial_{x_j}\partial^\alpha b_i\,dx} +\varepsilon\|\nabla_x \partial^{\alpha}\phi\|^2_{L^2_x}\\
+ \varepsilon\|\nabla_x\partial^\alpha a_\pm\|^2_{L^2_x}+\ve\|\nabla_x\partial^\alpha c\|^2_{L^2_x}+\frac{C}{\varepsilon}\big(
\|\partial^{\alpha}\{\II-\PP\}f\|^2_{H^1_xL^2_{10}}+\|\partial^{\alpha}N_{\|}\|_{L^2_{x}}^2\big),
\end{multline}
where we choose $\ve$ small enough. \\

\textbf{Step 3: Computation of $\nabla_x\partial^{\alpha}a$.} Taking the addition and difference of \eqref{abc2} over $\pm$, it yields that 
\begin{align}\label{422}
	\nabla_x (a_++a_-)=- 2\partial_t b+\sum_\pm \big(-(\partial_t+v\cdot\nabla_x)\{\II-\PP\}f- L_\pm\{\I-\P\}f+N_\pm(f), \xi_a\big)_{L^2_v},
\end{align}
and 
\begin{align}\label{423}
	\nabla_x (a_+-a_-)=-2\nabla_x\phi+ \sum_\pm\pm\big(-(\partial_t+v\cdot\nabla_x)\{\II-\PP\}f- L_\pm\{\I-\P\}f+N_\pm(f), \xi_a\big)_{L^2_v}.
\end{align}
Applying $\partial^\alpha$ derivative on both side of   \eqref{422} and taking inner product with $\nabla_x \partial^\alpha (a_++a_-)$ over $\Omega$, we deduce that
\begin{multline}
\label{uni16}
\|\nabla_x\partial^\alpha (a_++a_-)\|_{L^2_x}^2= 
-\int_{\Omega}\partial_t\Big(\partial^\alpha b+
(\{\II-\PP\}\partial^\alpha f,\xi_{a})_{L^2_v}\Big)\cdot  \nabla_x  \partial^\alpha   (a_++a_-)\,dx
\\
- \int_{\Omega}{\big( v \cdot \nabla_x\{\II-\PP\}\partial^\alpha f+L_\pm\{\I-\P\}\partial^\alpha f+\partial^\alpha N_\pm(f),\xi_{a}\big)_{L^2_v}\cdot   \nabla_x  \partial^\alpha (a_++a_-)\,dx}.
\end{multline}
By integration by parts about $\pa_t$ and $\na_x$, we have 
\begin{align}\label{424}\notag
&\quad\,-\int_{\Omega}\partial_t\Big(\partial^\alpha b
	 +(\{\II-\PP\}\partial^\alpha f,\xi_{a})_{L^2_v}\Big)\cdot \nabla_x  \partial^\alpha   (a_++a_-)\,dx\notag\\&\notag=-\frac{d}{dt}\int_{\Omega}\Big(\partial^\alpha b
	+(\{\II-\PP\}\partial^\alpha f,\xi_{a})_{L^2_v}\Big)\cdot \nabla_x  \partial^\alpha  (a_++a_-)\,dx
\\
&\quad\,-\int_{\Omega}{    \nabla_x\cdot\Big(  \partial^\alpha b+
(\{\II-\PP\}\partial^\alpha f,\xi_{a})_{L^2_v}\Big)\,\partial_t \partial^\alpha (a_++a_-)\,dx}. 
\end{align}
To apply the integration by parts for the union of cubes case, we write the following boundary values by using \eqref{boundaryabc}. 
If $\al_i=1$, then $\pa^\al(a_++a_-)=0$ on $\Gamma_i$. If $\al_i=0,2$, then $\pa^\al b_i=0$ on $\Gamma_i$ and by change of variable $v\mapsto R_xv$ and the definition \eqref{xi} of $\xi_a$, we have 
\begin{align*}
	(\{\II-\PP\}\partial^\alpha f,\xi_{a})_{L^2_v}&=(\{\II-\PP\}\partial^\alpha f(R_xv),\xi_{a}(R_xv))_{L^2_v}=-(\{\II-\PP\}\partial^\alpha f,\xi_{a})_{L^2_v}=0. 
\end{align*}
The time derivative and tangent derivatives don't affect the zero boundary values. This completes the integration by parts in \eqref{424}. 
In view of \eqref{424} and \eqref{uni16}, applying \eqref{uni5}$_1$, \eqref{ineqi} and Cauchy-Schwarz inequality, we have 
\begin{multline}
\label{uni18}
\|\nabla_x\partial^\alpha (a_++a_-)\|^2
+\frac{d}{dt}\sum_\pm\int_{\Omega}\Big(\partial^\alpha b
+(\{\II-\PP\}\partial^\alpha f,\xi_{a})_{L^2_v}\Big)\cdot \nabla_x  \partial^\alpha  (a_++a_-)\,dx
\\
\lesssim 
 \|\partial^{\alpha}\{\II-\PP\}f\|_{H^1_xL^2_{10 }} 
+ 
 \|\partial^\alpha N_{\|}\|_{L^2_{x}}^2  +   \|\nabla_x\partial^\alpha b\|_{L^2_x}^2.  
\end{multline}
Similarly, noticing that $\big(\na_x\pa^\al\phi,\na_x\pa^\al(a_+-a_-)\big)_{L^2_x}
	= -\big(\Delta_x\pa^\al\phi,\pa^\al(a_+-a_-)\big)_{L^2_x} = \|\Delta_x\pa^\al\phi\|_{L^2_x}^2$,
we apply $\pa^\al$ to \eqref{423} and take inner product with $\na_x\pa^\al(a_+-a_-)$ over $\Omega$ to deduce that 
\begin{multline}\label{426}
	\|\na_x\pa^\al(a_+-a_-)\|_{L^2_x}^2 + 2\|\Delta_x\pa^\al\phi\|_{L^2_x}^2 
	+ \frac{d}{dt}\int_{\Omega}\sum_\pm\mp(\{\II-\PP\}\partial^\alpha f,\xi_{a})_{L^2_v}\cdot \nabla_x  \partial^\alpha  (a_+-a_-)\,dx\\
	\lesssim 
	\|\partial^{\alpha}\{\II-\PP\}f\|_{H^1_xL^2_{10 }} 
	+ 
	\|\partial^\alpha N_{\|}\|_{L^2_{x}}^2  +   \|\nabla_x\partial^\alpha b\|_{L^2_x}^2.  
\end{multline}
Notice that $|a_++a_-|^2 +  |a_+-a_-|^2=2|a_+|^2+2|a_-|^2.$ For the torus case, by Sobolev inequality, we have 
\begin{align}\label{phibound}
\|\nabla_x \phi\|^2\lesssim \|\na_x^2 \phi\|^2 =\|\Delta_x\phi\|_{L^2_x}^2. 
\end{align}
For the zeroth order, we apply the Poincar\'e inequality to $a_\pm, b, c$ in $x$ and use the conservation law \eqref{conse2.torus} to deduce that 
\begin{align}\label{abcc}\notag
	\Vert a_\pm \Vert_{L^2_{x}}^2 &\le C \left(\int_{\Omega} a_\pm(t, x) dx \right)^2 + C\Vert \nabla_x  a_\pm \Vert_{L^2_{x}}^2\lesssim \|\na_xa_\pm\|_{L^2_x}^2,\\
	\Vert b \Vert_{L^2_{x}}^2 &\le C\left( \int_{\Omega} b(t, x) dx \right)^2+ C \Vert \nabla_x  b \Vert_{L^2_{x}}^2\lesssim \|\na_xb\|_{L^2_x}^2 ,\\
	\Vert c \Vert_{L^2_{x}}^2 &\notag\le C \left ( \int_{\Omega} c(t, x) dx \right)^2   +C \Vert \nabla_x  c \Vert_{L^2_{x}}^2\lesssim \|\na_x\phi\|_{L^2_x}^4+\|\na_xc\|_{L^2_x}^2. 
\end{align}
For the union of cubes case, it follows from \eqref{Neumann} that $\pa_{x_i}\phi=0$ on $\Gamma_i$. Then by Sobolev embedding (cf. \cite[Theorem 6.7-5]{Ciarlet2013}), we also have \eqref{phibound}. The second equality in \eqref{phibound} can be obtained by using integration by parts. 
For the zeroth order, the conservation laws \eqref{conservation_bounded} give the conservation on mass and energy and hence, the estimates for $a_\pm,c$ in \eqref{abcc} still hold. For the estimate on $b$, we have from \eqref{boundaryabc} that $b_i=0$ on $\Gamma_i$ for $i=1,2,3$. Then by Sobolev embedding (cf. \cite[Theorem 6.7-5]{Ciarlet2013}), we obtain $\|b\|_{L^2_x}\lesssim \|\na_xb\|_{L^2_x}$. 
In view of these facts, we take linear combination $\lam\times\eqref{426}+\lam\times\eqref{uni18}+\eqref{uni11a}+\eqref{uni15}$ and summation over $|\al|\le 1$ and $\pm$ with sufficiently small $\lam,\ve>0$ to deduce that 
\begin{align*}
\pa_tG(t)+\lambda\sum_{|\al|\le 2}\|\pa^\al[a_+,a_-,b,c]\|^2_{L^2_x} +\lambda \|\na_x\phi\|^2_{H^2_x}
\lesssim 
  \|\{\I-\P\}f\|^2_{H^2_xL^2_{10}} +   \|N_{\|}\|_{H^1_x}^2 +\|\na_x\phi\|_{L^2_x}^4,
\end{align*}
where 
\begin{align*}
	G &= \sum_\pm\sum_{|\al|\le 1}\Big(\int_{\Omega}{(\{\II-\PP\}\partial^\alpha f, \xi_{c})_{L^2_v}\cdot\nabla_x\partial^\alpha c\,dx}
	 + \sum_{i,j}\int_{\Omega}{(\{\II-\PP\}  \partial^\alpha f, \xi_{ij})_{L^2_v}\cdot\partial_{x_j}\partial^\alpha b_i\,dx}\\
	&+\lam\int_{\Omega}\Big(\partial^\alpha b	+(\{\II-\PP\}\partial^\alpha f,\xi_{a})_{L^2_v}\Big)\cdot \nabla_x  \partial^\alpha  (a_++a_-)\,dx
	\\
	&\mp\lam\int_{\Omega}(\{\II-\PP\}\partial^\alpha f,\xi_{a})_{L^2_v}\cdot \nabla_x  \partial^\alpha  (a_+-a_-)\,dx\Big). 
\end{align*}
This implies \eqref{uni4} and the estimate \eqref{ing} can be directly deduced by Cauchy-Schwarz inequality.
\end{proof}

\subsection{Estimates on the energy} 
In this subsection, we give the energy estimates on \eqref{vplocal1}. 
\begin{lem}\label{L43}
	Let $T>0$, $\eta>0$ and $k\ge k_0$. 
	Suppose $[f,\phi]$ is the solution to equation \eqref{vplocal1} satisfying $\mu+f\ge 0$ and 
	$\E(t)\le C$.Then there exists functional $\E_1(t)$ satisfying 
	\begin{align}
		\label{E111}\E_1(t) \approx \Vert   f  \Vert_{X_k }^2+\|\na_x\phi\|^2_{H^3_x}, 
	\end{align}
such that, for any $T, \eta>0$ and  $k \ge k_0$, we have
	\begin{multline}\label{eq1}
\pa_t\E_1(t)+\lam\eta\|f\|^2_{Y_k} +\lam\sum_{|\al|\le2} \Vert  w(\alpha,0) \partial^\alpha\{\I-\P\}  f\Vert_{L^2_xL^2_{D}}^2
\lesssim \|f\|_{X_{k_0}}\|f\|_{X_k}^2\\+C_{\eta,k}  \Vert  \{\I-\P\} f \Vert_{H^2_xL^2_v}^2+ \eta\|\na_x\phi\|^2_{H^2_x}+\eta\|\P f\|_{H^2_{x}L^2_v}^2+C(\sqrt{\E_{k_0}(t)}+\E_{k_0}(t))\D_k(t). 
	\end{multline}
for some generic constant $\lam>0$ and any sufficiently small $\eta>0$. 
\end{lem}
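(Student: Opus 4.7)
The approach is to repeat the weighted energy computation carried out in Lemma~\ref{lemlocal1}, now applied directly to the genuine equation \eqref{vplocal1} with the free parameters chosen as $g=f$ and $\psi=\phi$. For each multi-index with $|\alpha|+|\beta|\le 2$, the plan is to apply $\partial^\alpha_\beta$ to \eqref{vplocal1}, take the $L^2_{x,v}$ inner product against the weighted test function $e^{\pm A_{\alpha,\beta}\phi/\langle v\rangle^2}w^2(\alpha,\beta)\partial^\alpha_\beta f_\pm$, and sum over $\pm$ and $(\alpha,\beta)$ with the hierarchical constants $C_{|\alpha|,|\beta|}$ from \eqref{Cal}. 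The functional $\E_1(t)$ will be defined as the resulting combination $\sum C_{|\alpha|,|\beta|}\|e^{\pm A_{\alpha,\beta}\phi/(2\langle v\rangle^2)}w(\alpha,\beta)\partial^\alpha_\beta f_\pm\|^2_{L^2_{x,v}}$ augmented by $\sum_{|\alpha|\le 2}c_\alpha\|\partial^\alpha\nabla_x\phi\|^2_{L^2_x}$; the equivalence \eqref{E111} follows from $|\phi|\lesssim \|\nabla_x\phi\|_{H^1_x}\lesssim\sqrt{\E_{k_0}(t)}\ll 1$, which puts the exponential factor in $[1/2,2]$.

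Each term is dispatched by the corresponding estimate from Section~\ref{sec2}. The transport plus Vlasov--Poisson pair $v\cdot\nabla_x\partial^\alpha_\beta f_\pm\mp\nabla_x\phi\cdot\nabla_v\partial^\alpha_\beta f_\pm$ is handled by \eqref{214a}, yielding a harmless $\|\nabla_x\phi\|_{H^2_x}\|f\|^2_{Y_k}\lesssim\sqrt{\E_{k_0}}\D_k$, while the commutator pieces \eqref{estiv3} and \eqref{estiv5} are absorbed through the hierarchy \eqref{Cal}. The linear Poisson source $\pm\partial^\alpha_\beta(\nabla_x\phi\cdot v\mu)$ is treated by Lemma~\ref{L211}: for $|\beta|=0$ the identity \eqref{231} produces the time derivative $\partial_t\|\partial^\alpha\nabla_x\phi\|^2_{L^2_x}$ that is folded into $\E_1(t)$, and for $|\beta|\ge 1$ the bound \eqref{estiv4} is soaked up by $\varepsilon\|\nabla_x\phi\|^2_{H^2_x}$. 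For the linearized collision operator $L_\pm f=Q(f_\pm+f_\mp,\mu)+2Q(\mu,f_\pm)$, the coercivity of $Q(\mu,\cdot)$ in \eqref{236}/\eqref{211a} produces the dissipation $-\gamma_1\sum C_{|\alpha|,|\beta|}\|w(\alpha,\beta)\partial^\alpha_\beta f\|^2_{L^2_xH^s_{\gamma/2}}\gtrsim \|f\|^2_{Y_k}$, and the $Q(f,\mu)$ loss coming from Lemma~\ref{L29} is absorbed exactly as in \eqref{39a} once $k$ is taken sufficiently large in the Boltzmann case. The cubic interaction $Q(f,f)$ is controlled via \eqref{236c}/\eqref{211d}, while the mismatch coming from replacing the test function with $(e^{\pm A_{\alpha,\beta}\phi/\langle v\rangle^2}-1)$ is controlled by \eqref{236a}/\eqref{211c} together with $\|\nabla_x\phi\|_{H^1_x}\lesssim\sqrt{\E_{k_0}(t)}$. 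The spurious $\partial_t\phi$ contribution created by differentiating the exponential weight in time is tamed by \eqref{39b}, which bounds $\|\partial_t\phi\|_{L^\infty_x}$ by $\|f\|_{X_{k_0}}$, and produces the $\|f\|_{X_{k_0}}\|f\|^2_{X_k}$ term on the right.

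To isolate the stronger $\{\I-\P\}$ dissipation advertised on the left of \eqref{eq1}, the plan is to exploit that spatial derivatives commute with the microscopic projection $\P$, so for $|\beta|=0$ one has $\partial^\alpha f=\partial^\alpha\P f+\{\I-\P\}\partial^\alpha f$. Substituting this splitting into the dissipation $\|w(\alpha,0)\partial^\alpha f\|^2_{L^2_xL^2_D}$ produced by the coercivity step and transferring the hydrodynamic contribution to the right-hand side leaves $\sum_{|\alpha|\le 2}\|w(\alpha,0)\partial^\alpha\{\I-\P\}f\|^2_{L^2_xL^2_D}$ intact on the left, at the price of an error $\lesssim\varepsilon\|\P f\|^2_{H^2_xL^2_v}$ which is already present on the right. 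The lower-order remainder $C_k\|\partial^\alpha_\beta f\|^2_{L^2_{x,v}}$ generated at every step by the coercivity lemmas survives as $C_{\varepsilon,k}\|\{\I-\P\}f\|^2_{H^2_xL^2_v}$ after a further application of the same $\P/\{\I-\P\}$ splitting together with the elementary bound $\|\P f\|_{L^2_v}\lesssim\|f\|_{L^2_v}$.

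The main difficulty is the balance of constants: the fully weighted dissipation $\|f\|^2_{Y_k}$ must be retained with only a small coefficient $\lambda\varepsilon$, because the velocity-derivative commutators from \eqref{estiv3} and the Vlasov--Poisson nonlinear cross-terms from \eqref{estiv5} and \eqref{214a} scale by $\|\nabla_x\phi\|_{H^3_x}$, which is only a priori small, so they can only be absorbed by a term that still carries the extra weight $w(\alpha,\beta)$. This forces the hierarchy \eqref{Cal} to be chosen so that each commutator is strictly dominated by the dissipation of the layer above it, and simultaneously forces $k$ to be large enough (in the Boltzmann case) for \eqref{39a} to succeed. Once this bookkeeping is settled, selecting $\varepsilon$ sufficiently small closes \eqref{eq1}.
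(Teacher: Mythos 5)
Your proposal captures Step~1 of the paper's argument (the weighted energy estimate over all $|\alpha|+|\beta|\le 2$, i.e.\ the analogue of \eqref{36}--\eqref{37} with $(g,\psi)=(f,\phi)$), and the handling of most individual terms is described correctly. The gap lies in the last two paragraphs, where you try to manufacture the $\{\I-\P\}$ dissipation and the particular form of the remainder by algebraic rearrangement of the ``generic'' inequality. That cannot work, for the following reason.

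After Step~1 alone, the analogue of \eqref{44} has the remainder $C_k\|f\|^2_{H^2_xL^2_v}$ with a \emph{large} constant $C_k$ that comes from the coercivity lemmas (Theorem~\ref{T25}, Lemma~\ref{L51}) and the interpolation \eqref{430}, because for soft potentials the dissipation norm $L^2_D$ carries the negative weight $\langle v\rangle^{\gamma/2}$ and does not control $L^2_v$. Splitting this remainder as $C_k\|\P f\|^2_{H^2_xL^2_v}+C_k\|\{\I-\P\}f\|^2_{H^2_xL^2_v}$ leaves a \emph{large} constant $C_k$ in front of $\|\P f\|^2_{H^2_xL^2_v}$, whereas \eqref{eq1} requires the \emph{small} constant $\ve$ there. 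The distinction is crucial downstream: in the proof of Theorem~\ref{T46} the term $\ve\|\P f\|^2_{H^2_xL^2_v}$ must be absorbed by $\lambda\eta\|[a_+,a_-,b,c]\|^2_{H^2_x}$ coming from Lemma~\ref{L420}, and this only works because $\ve$ is chosen \emph{after} $\eta$ and can be made $\ll\eta$; a $k$-dependent constant in front of $\|\P f\|^2$ would force $\eta$ to be large and break the rest of the bookkeeping. Likewise your proposed extraction of $\sum_{|\alpha|\le 2}\|w(\alpha,0)\partial^\alpha\{\I-\P\}f\|^2_{L^2_D}$ from $\|f\|^2_{Y_k}$ via the reverse triangle inequality produces an error $\gtrsim\|w(\alpha,0)\partial^\alpha\P f\|^2_{L^2_D}$ with a \emph{fixed} constant of order $\gamma_1$, not $\ve$; no version of Young's inequality flips the size of the two coefficients, and the ``elementary bound $\|\P f\|_{L^2_v}\lesssim\|f\|_{L^2_v}$'' gives nothing here.

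What is missing is Step~2 of the paper's proof: a second energy estimate, over pure $x$-derivatives $|\alpha|\le 2$ only, in which $f$ is decomposed as $\P f+\{\I-\P\}f$ \emph{inside} the collision operator before the coercivity is applied. The key algebraic fact is $L_\pm\P f=0$, which lets one rewrite the collision term in $J_6$ so that the second argument of the dominant piece is $\{\I-\P\}f$ (see the split into $J_7, J_8, J_9$). The coercivity then acts only on $\{\I-\P\}f$, so the lower-order remainder is $C_k\|\partial^\alpha\{\I-\P\}f\|^2_{L^2_{x,v}}$ rather than $C_k\|\partial^\alpha f\|^2_{L^2_{x,v}}$, and all cross-terms in which $\P f$ appears (e.g.\ $J_{7,2}$, $J_{9}$) can be Young-split with $\P f$ on the $\ve$ side because $\P f$ sits in the good slot and has Gaussian velocity decay. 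Step~2 produces \eqref{44a}, whose right-hand side carries $C_{\ve,k}\|\{\I-\P\}f\|^2+\ve\|\P f\|^2$, and only after taking the combination $\ve\times\eqref{44}+\eqref{44a}$ does the full-$f$ remainder from Step~1 enter with a small $\ve$ prefactor and get absorbed (via \eqref{434b}) by the fixed-coefficient $\{\I-\P\}$ dissipation generated in Step~2. Without Step~2, the required estimate \eqref{eq1} cannot be reached.
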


\begin{proof}
%
We proceed by considering spatial derivatives and mixed derivatives. Notice that 
$\|\phi\|_{L^\infty_x}\lesssim \|\na_x\phi\|_{H^1_x}\lesssim \E(t)\le C$,
which verify \eqref{214c} and we can apply estimates in Section \ref{secpri}. 

\smallskip
\noindent{\bf Step 1. Estimates with mixed derivatives.} 
 Recall that $A_{\al,\beta}$ is given in \eqref{Cab1}. 
For any $|\al|+|\beta|\le 2$, we apply $\pa^\al_\beta$ to \eqref{vplocal1},  and obtain 
\begin{multline}\label{35}
	\quad\,\partial_t\pa^\al_\beta f_\pm + \pa^\al_\beta( v\cdot\nabla_x f_\pm) \mp\pa^\al_\beta(\nabla_x\phi\cdot\nabla_vf_\pm) \pm \pa^\al_\beta(\nabla_x\phi\cdot v\mu) \\
	= \pa^\al_\beta Q(f_\pm+f_\mp,\mu)+\pa^\al_\beta Q(\mu+f_\pm, f_\pm) +\pa^\al_\beta Q(\mu+f_\mp, f_\pm). 
\end{multline}
Thus, taking inner product of \eqref{35} with $e^{\frac{\pm A_{\al,\beta}\phi}{\<v\>^2}}w^2(\al,\beta)\pa^\al_\beta  f_\pm$ over $\Omega\times\R^3$, one has 
\begin{align}\label{36}\notag
	&\frac{1}{2}\partial_t\|e^{\frac{\pm A_{\al,\beta}\phi}{2 \<v\>^2}}\pa^\al_\beta f_\pm\|^2_{L^2_{x,v}}+ \big(A_{\al,\beta}\pa_t\phi\<v\>^{-2}\pa^\al_\beta f_\pm,e^{\frac{\pm A_{\al,\beta}\phi}{\<v\>^2}}w^2(\al,\beta)\pa^\al_\beta f_\pm\big)_{L^2_{x,v}}\\
	&\notag+ \big(v\cdot\nabla_x\pa^\al_\beta f_\pm,e^{\frac{\pm A_{\al,\beta}\phi}{\<v\>^2}}w^2(\al,\beta)\pa^\al_\beta f_\pm\big)_{L^2_{x,v}}+\sum_{|\beta_1|=1}( \pa_{\beta_1}v \cdot \nabla_x\partial^\alpha_{\beta-\beta_1} f_\pm ,e^{\frac{\pm A_{\al,\beta}\phi}{\<v\>^2}}w^2(\al,\beta)\partial^\alpha_{\beta}f_\pm    )_{L^2_{x, v}}\\
	&\notag\notag\mp\big(\nabla_x\phi\cdot\nabla_v\pa^\al_\beta f_\pm, e^{\frac{\pm A_{\al,\beta}\phi}{\<v\>^2}}w^2(\al,\beta)\pa^\al_\beta f_\pm\big)_{L^2_{x,v}}\mp\sum_{\substack{\al_1\le \al\\|\al_1|\ge 1}}\big(\pa^{\al_1}\nabla_x\phi\cdot\nabla_v\pa^{\al-\al_1}_\beta f_\pm,e^{\frac{\pm A_{\al,\beta}\phi}{\<v\>^2}}w^2(\al,\beta)\pa^\al_\beta f_\pm\big)_{L^2_{x,v}}\notag\\
	&\notag\pm \big(\pa^\al_\beta(\nabla_x\phi\cdot v\mu),e^{\frac{\pm A_{\al,\beta}\phi}{\<v\>^2}}w^2(\al,\beta)\pa^\al_\beta f_\pm\big)_{L^2_{x,v}}= \big(\pa^\al_\beta Q(f_\pm+f_\mp,\mu),e^{\frac{\pm A_{\al,\beta}\phi}{\<v\>^2}}w^2(\al,\beta)\pa^\al_\beta f_\pm\big)_{L^2_{x,v}}\notag\\
	&\notag+\big(\pa^\al_\beta Q(\mu+f_\pm,f_\pm) + \pa^\al_\beta Q(\mu+f_\mp,f_\pm),w^2(\al,\beta)\pa^\al_\beta f_\pm\big)_{L^2_{x,v}}
	  \\&+\big(\pa^\al_\beta Q(\mu+f_\pm,f_\pm) + \pa^\al_\beta Q(\mu+f_\mp,f_\pm),\big(e^{\frac{\pm A_{\al,\beta}\phi}{\<v\>^2}}-1\big)w^2(\al,\beta)\pa^\al_\beta f_\pm\big)_{L^2_{x,v}}.
\end{align}

	We denote the second to tenth terms in \eqref{36} by $I_1$ to $I_9$ and estimate them one by one. 
	For $I_1$, 
	we make a rough estimate: 
	\begin{align}\label{I1}
		|I_1|\lesssim \|\pa_t\phi\|_{L^\infty_x}\|f\|_{X_k}^2. 
	\end{align}
Notice that $\|\phi\|_{L^\infty_x}\lesssim \|\na_x\phi\|_{H^1_x}\le C$, which verify \eqref{214c} and we can apply estimates in Section \ref{secpri}. By \eqref{214a}, we deduce that 
$|I_2+I_4|\lesssim \|\na_x\phi\|_{H^2_x}\|f\|^2_{Y_k}$. Next, after applying \eqref{estiv3}, when $|\beta|\ge 1$, we have 
\begin{align*}
	|I_3|\lesssim C_\eta\sum_{\substack{|\beta_2|=|\beta|-1 \\ |\alpha_2| =|\alpha|+1}} \Vert \partial^{\alpha_2}_{\beta_2}  f w(\alpha_2, \beta_2)\Vert_{L^2_xL^2_{\gamma/2}}^2+\eta\Vert \partial^\alpha_{\beta} f w(\alpha, \beta) \Vert_{L^2_{x}L^2_{\gamma/2} }^2,
\end{align*}
for any $\eta>0$. By \eqref{estiv5}, we have $|I_5|\lesssim \|\na_x\phi\|_{H^3_x}\|f\|^2_{Y_k}$. 
When $|\beta|=0$, using \eqref{231}, we have 
\begin{align*}
	\sum_\pm I_6&\ge C_\al\pa_t\|\partial^\alpha\na_x\phi\|_{L^2_x}^2- C\|\pa^\al\na_x\phi\|_{L^2_x}\|\pa^\al\{\I-\P\}f\|_{L^2_xL^2_5}-C\|\partial^\alpha \nabla_x \phi\|_{L^2_x}\|\na_x\psi\|_{H^1_x}\|f\|_{Y_k}\\
	&\ge C_\al\pa_t\|\partial^\alpha\na_x\phi\|_{L^2_x}^2- \eta\|\pa^\al\na_x\phi\|^2_{L^2_x}-\eta\|w(\al,0)\pa^\al f\|^2_{L^2_xL^2_{\gamma/2}}\\&\quad -C_\eta\|\pa^\al f\|^2_{L^2_{x,v}}-C\|\partial^\alpha \nabla_x \phi\|_{L^2_x}\|\na_x\phi\|_{H^1_x}\|f\|_{Y_k}, 
\end{align*}
for any $\eta>0$. 
When $|\beta|\ge 1$, we have from \eqref{estiv4} that 
\begin{align}\label{I6}
	\sum_\pm |I_6|\le C_k\Vert \na_x\phi \Vert_{H^{2}_x }   \Vert f \Vert_{H^{2}_x L^2_{v}  }.
\end{align}
For the Boltzmann case and for convenience, we define  
\begin{align}\label{Ybark}
\Vert f \Vert_{\bar{Y}_k} := \sum_{|\alpha| + |\beta| \le  2} C_{|\alpha|, |\beta|} 
\Vert \partial^{\alpha}_{\beta} f w (\alpha, \beta) \Vert_{L^2_xL^2_{\gamma/2, *}}^2.
\end{align}
For $I_7$, since $\theta \in [0, \frac {\pi } 2]$,  we have $\sin^2 \frac \theta 2 \le \frac 1 2 $ and hence if $k \ge 14$ we have
\begin{align}\label{39a}
       \Vert b(\cos\theta) \sin^{k-2 } \frac \theta 2 \Vert_{L^1_\theta} \|f\|^2_{\bar Y_k}\le \frac {1} {16} \Vert  b(\cos \theta) \sin^2 \frac \theta 2  \Vert_{L^1_\theta}\|f\|^2_{\bar Y_k}. 
\end{align}
 For Boltzmann case, applying Lemma \ref{L29}, we have for any $\eta>0$ that 
\begin{align*}
	\sum_\pm |I_7|&\le \Vert b(\cos\theta) \sin^{k-2 } \frac \theta 2 \Vert_{L^1_\theta}   \Vert \partial^\alpha_\beta f w(\alpha, \beta) \Vert_{L^2_x L^2_{\gamma/2, *}}\Vert \partial^\alpha_\beta f  w(\alpha, \beta) \Vert_{L^2_x L^2_{\gamma/2, *}}
	\\
	&\quad + \eta \|f\|^2_{\bar Y_k }+C_\eta\|f\|_{H^2_{x, v}}^2
+ C_k \sum_{\beta_1 < \beta} \Vert \partial^{\alpha}_{\beta_1} f w(\alpha, \beta_1) \Vert_{L^2_x L^2_{\gamma/2}}\Vert \partial^\alpha_\beta f w(\alpha, \beta) \Vert_{L^2_x L^2_{\gamma/2  }}\\
	&\le\frac {1} {16} \Vert  b(\cos \theta) \sin^2 \frac \theta 2  \Vert_{L^1_\theta} \Vert \partial^\alpha_\beta f w(\alpha, \beta) \Vert_{L^2_x L^2_{\gamma/2, *}}^2+ \eta \|f\|^2_{\bar Y_k }  +C_\eta\|f\|_{H^2_{x, v}}^2
	\\
	&\quad+ C_k \sum_{\beta_1 < \beta} \Vert \partial^{\alpha}_{\beta_1} f w(\alpha, \beta_1) \Vert_{L^2_x L^2_{\gamma/2}}\Vert \partial^\alpha_\beta f w(\alpha, \beta) \Vert_{L^2_x L^2_{\gamma/2  }}
.
\end{align*}
For the Landau case, it follows from Lemma \ref{L29} that 
\begin{align*}
	\sum_\pm |I_7|&\le C_k \sum_{|\alpha| +|\beta| \le 2} \Vert \partial^\alpha_\beta   f \Vert_{L^2_{x} L^2_5}^2   \le \eta\|f\|^2_{Y_k}   +  C_\eta\|f\|^2_{H^2_{x,v}}, 
\end{align*}
For $I_8$, for Boltzmann case, we deduce from \eqref{236} that 
\begin{align*}
\sum_\pm I_8  &\le  - \frac {1} {8} \Vert  b(\cos \theta) \sin^2 \frac \theta 2  \Vert_{L^1_\theta}\Vert \partial^\alpha_\beta  f w(\alpha, \beta)    \Vert_{L^2_x L^2_{\gamma/2, * }}^2  -\gamma_1 \Vert  \partial^\alpha_\beta  f w(\alpha, \beta) \Vert_{L^2_xH^s_{\gamma/2}}^2
+C_k\Vert \langle v \rangle^{14} f \Vert_{H^2_{x, v }} \|f\|^2_{Y_k}
\\
&\quad + C_{k}  \Vert \partial^\alpha_\beta f \Vert_{L^2_{x, v}}^2+ C_k \sum_{\beta_1 < \beta} \Vert \partial^\alpha_{\beta_1}  f w(\alpha, \beta_1) \Vert_{H^s_{\gamma/2}}\Vert \partial^\alpha_\beta f w(\alpha, \beta)\Vert_{H^s_{\gamma/2}},
\end{align*}
For Landau case, note that  and from \eqref{211a} and \eqref{211b}, 
\begin{align*}
\sum_\pm I_8 &\le  - \gamma_1 \Vert \partial^\alpha_\beta  f w(\alpha, \beta)\Vert_{L^2_x L^2_{D}}^2 + C_{k}  \Vert \partial^\alpha_\beta f \Vert_{L^2_x L^2_v}^2
\\
&\quad + C_k \sum_{\beta_1 < \beta} \Vert \partial^\alpha_{\beta_1}  f w(\alpha, \beta_1) \Vert_{L^2_x L^2_{D}}\Vert \partial^\alpha_\beta f w( \alpha, \beta)\Vert_{L^2_x L^2_{D}} + C_k \Vert f \langle v \rangle^7 \Vert_{H^2_{x, v}} \|f\|_{Y_k}^2,
\end{align*}
for Landau case, with some constants $\gamma_1,C_k>0$. 
For $I_9$, for Boltzmann case, we have from \eqref{236a} that 
\begin{align*}
	|I_9|\le C_k \big(\|\<v\>^{14}f\|_{H^2_{x, v}}+C\big)\|\na_x\phi\|_{H^1_x}\|f\|_{Y_k}^2
	+\|\<v\>^{14}f\|_{H^2_{x,v}}\|\na_x\phi\|_{H^1_x}\big(\|f\|_{Y_k}+C\big)\|f\|_{Y_k},
\end{align*}
 and for Landau case, from \eqref{211c} we deduce
\begin{align*}
	|I_9|\le C_k \big(\|\<v\>^7f\|_{H^2_{x,v}}+C\big)\|\na_x\phi\|_{H^1_x}\|f\|_{Y_k}^2.
\end{align*}

Combining the above estimates, we take summation $\sum_{|\al|+|\beta|\le 2,\,\pm}C_{|\al|,|\beta|}\times\eqref{36}$ with $0<\eta\ll 1$ and constants $C_{|\al|,|\beta|}$ satisfying \eqref{Cal} to deduce that for the Boltzmann case: 
\begin{align}\label{37}\notag
	&\quad\,\frac{1}{2}\partial_t\|f\|_{X_k}^2 +\pa_t\sum_{|\al|\le 2}C_\al C_{|\al|,0}\|\partial^\alpha\na_x\phi\|_{L^2_x}^2 
	+\frac {1} {8} \Vert  b(\cos \theta) \sin^2 \frac \theta 2  \Vert_{L^1_\theta}\|f\|^2_{\bar Y_k}
		+\gamma_1 \|f\|_{Y_k}^2
	\\
	&\notag\le \frac {1} {16} \Vert  b(\cos \theta) \sin^2 \frac \theta 2  \Vert_{L^1_\theta} \|f\|^2_{\bar Y_k} 
	+\eta\|   f   \|_{Y_k}^2+C_{\eta, k}\|f\|^2_{H^2_{x, v}}\\
	&\notag\quad+C\|\pa_t\phi\|_{L^\infty_x}\|f\|_{X_k}^2 +
	C_k\Big(\|\na_x\phi\|_{H^3_x}\|f\|^2_{Y_k} 
	+\|\na_x\phi\|^2_{H^2_x}
	+\Vert f \Vert_{X_{14}} \|f\|^2_{Y_k}
	\\
	&\quad+ \big(\|  f \|_{X_{14}}+C\big)\|\na_x\phi\|_{H^1_x}\|f\|_{Y_k}^2
	+\|f\|_{X_{14}}\|\na_x\phi\|_{H^1_x}\big(\|  f   \|_{Y_k}+C\big)\|f\|_{Y_k}\Big),
\end{align}
and for the Landau case:
\begin{align}\notag\label{37a}
	&\quad\,\frac{1}{2}\partial_t\|f\|_{X_k}^2 +\pa_t\sum_{|\al|\le 2}C_\al C_{|\al|,0}\|\partial^\alpha\na_x\phi\|_{L^2_x}^2 
	+\gamma_1 \|f\|_{Y_k}^2
	\\
	&\notag\le \eta\|f\|_{Y_k}^2+C_{\eta,k}\|f\|^2_{H^2_{x,v}}+C\|\pa_t\phi\|_{L^\infty_x}\|f\|_{X_k}^2+
	C_k\Big(\|\na_x\phi\|_{H^3_x}\|f\|^2_{Y_k} +\|\na_x\phi\|^2_{H^2_x}\\
	&
	+\Vert f \Vert_{X_7} \|f\|^2_{Y_k}
	+ \big(\|f\|_{X_7}+C\big)\|\na_x\phi\|_{H^1_x}\|f\|_{Y_k}^2
	+\|f\|_{X_7}\|\na_x\phi\|_{H^1_x}\big(\|f\|_{Y_k}+C\big)\|f\|_{Y_k}\Big),
\end{align}
for some generic constant $\gamma_1$ independent of $k$, 
where we let $k\ge 14$.
Next notice that 
\begin{align}\label{39}
\|\na_x^3\phi\|_{L^2_x}^2 &\lesssim \|\na_x\Delta_x\phi\|^2_{L^2_x} \lesssim \|\na_xf\|_{L^2_xL^2_4}^2 \lesssim \|f\|_{X_{k_0}}^2.  
\end{align}
For union of cubes case, we apply embedding \cite[Theorem 6.7-5]{Ciarlet2013} with boundary values $\pa_{x_i}\phi=0$ on $\Gamma_i$ to obtain \eqref{39}. 
Taking difference of \eqref{19} over $\pm$, we have $\pa_t(a_+-a_-) = -\na_x\cdot\int_{\R^3}v(f_+-f_-)\,dv$. So
\begin{multline}\label{39b}
	\|\pa_t\phi\|_{L^\infty_x}\lesssim \|\na_x\pa_t\phi\|_{L^2_x}^{1/2}\|\na_x\pa_t\phi\|_{H^1_x}^{1/2}\lesssim \|\na_x^2\pa_t\phi\|_{L^2_x}
	\lesssim \|\Delta_x\pa_t\phi\|_{L^2_x}\\\lesssim \|\pa_t(a_+-a_-)\|_{L^2_x}
	\lesssim \|\na_xf\|_{L^2_xL^2_3} \lesssim \|f\|_{X_{k_0}}. 
\end{multline}
For the Boltzmann case, applying \eqref{39a}, \eqref{39}, \eqref{39b} and choosing $\eta<\frac{\gamma_1}{4}$, we have 
\begin{align}\notag\label{426a}
		&\quad\,\pa_t\Big(\frac{1}{2}\|f\|_{X_k}^2+\sum_{|\al|\le 2}C_\al C_{|\al|,0}\|\partial^\alpha\na_x\phi\|_{L^2_x}^2\Big)
		 +\frac{\gamma_1}{2} \|f\|_{Y_k}^2
	\\
	&\notag\le 
	C_{k}\|f\|^2_{H^2_{x, v}}+C    \|f\|_{X_{k_0}}   \|f\|_{X_k}^2+
	C_k\Big(\|\na_x\phi\|_{H^3_x}\|f\|^2_{Y_k}
	+\|\na_x\phi\|^2_{H^2_x}
	+\Vert f \Vert_{X_{k_0}} \|f\|^2_{Y_k}
	\\
	&\quad+ \big(\|f\|_{X_{k_0}}+C\big)\|\na_x\phi\|_{H^1_x}\|f\|_{Y_k}^2
	+\|f\|_{X_{k_0}}\|\na_x\phi\|_{H^1_x}\big(\|f\|_{Y_k}+C\big)\|f\|_{Y_k}\Big),
\end{align}
for some generic constant $\gamma_1$ independent of $k$. 
Here $C_\al$ and $C_{|\al|,0}$ are given in \eqref{231} and \eqref{Cal} respectively.
If $|\beta| \ge 1$, by interpolation we have
\begin{align}\label{430}
	C_k\Vert  \partial^\alpha_\beta  f \Vert_{L^2_{x, v}}^2 \le C \Vert \partial^\alpha f\Vert_{L^2_xH^{|\beta|}_v}^2 \le \frac {\gamma_1} {24} \Vert \partial^\alpha f \Vert_{L^2_xH^{{|\beta|}+s}_v}^2  + C_k \Vert \partial^\alpha f \Vert_{L^{2}_{x, v}}^2  \le  \frac {\gamma_1} {24} \Vert  f \Vert_{Y_k}^2  + C_k \Vert \partial^\alpha f \Vert_{L^{2}_{x, v}}^2. 
\end{align}
Also, by Sobolev embedding or Poincar\'{e}'s inequality, 
\begin{align*}
	\|\na_x\phi\|_{H^2_x}\lesssim \|\na_x^2\phi\|_{H^1_x} = \|\Delta_x\phi\|_{H^1_x}\lesssim \|f\|_{H^1_xL^2_4}\lesssim \eta\|f\|_{Y_k}+C_\eta\|f\|_{H^1_xL^2_v}. 
\end{align*}
In view of the above two estimates, choosing $\eta>0$ small enough, \eqref{426a} implies that 
\begin{multline}\label{44}
	\quad\,\pa_t\Big(\frac{1}{2}\|f\|_{X_k}^2+\sum_{|\al|\le 2}C_\al C_{|\al|,0}\|\partial^\alpha\na_x\phi\|_{L^2_x}^2\Big)
	+\frac{\gamma_1}{4} \|f\|_{Y_k}^2
	\\\lesssim 
	\|f\|^2_{H^2_{x}L^2_v}+C\|f\|_{X_{k_0}}     \|f\|_{X_k}^2+\|\na_x\phi\|^2_{H^2_x}+\big(\sqrt{\E_{k_0}(t)}+\E_{k_0}(t)\big)\D_k(t). 
\end{multline}
%
%
For the Landau case, one can apply a similar calculation as \eqref{37a} instead of \eqref{37} to derive \eqref{44} by using the same technique and we omit the details for brevity.  

 \smallskip\noindent{\bf Step 2. Estimates with spatial derivatives. }
For any $|\al|\le 2$, we apply $\pa^\al$ to \eqref{vplocal1}, take inner product with $e^{\frac{\pm A_{\al,0}\phi}{\<v\>^2}}w^2(\al,0)\pa^\al  f_\pm$ over $\Omega\times\R^3$ to deduce that 
\begin{multline}\label{36a}
	\frac{1}{2}\partial_t\|e^{\frac{\pm A_{\al,0}\phi}{2 \<v\>^2}}\pa^\al f_\pm\|^2_{L^2_{x,v}}+ \big(A_{\al,0}\pa_t\phi\<v\>^{-2}\pa^\al f_\pm,e^{\frac{\pm A_{\al,0}\phi}{\<v\>^2}}w^2(\al,0)\pa^\al f_\pm\big)_{L^2_{x,v}}\\
	 + \big(v\cdot\nabla_x\pa^\al f_\pm,e^{\frac{\pm A_{\al,0}\phi}{\<v\>^2}}w^2(\al,0)\pa^\al f_\pm\big)_{L^2_{x,v}}\mp\big(\nabla_x\phi\cdot\nabla_v\pa^\al f_\pm,e^{\frac{\pm A_{\al,0}\phi}{\<v\>^2}}w^2(\al,0)\pa^\al f_\pm\big)_{L^2_{x,v}}\\
	\mp\sum_{\substack{\al_1\le \al\\|\al_1|\ge 1}}\big(\pa^{\al_1}\nabla_x\phi\cdot\nabla_v\pa^{\al-\al_1} f_\pm,e^{\frac{\pm A_{\al,0}\phi}{\<v\>^2}}w^2(\al,0)\pa^\al f_\pm\big)_{L^2_{x,v}}\pm \big(\pa^\al(\nabla_x\phi\cdot v\mu),e^{\frac{\pm A_{\al,0}\phi}{\<v\>^2}}w^2(\al,0)\pa^\al f_\pm\big)_{L^2_{x,v}}\\ 
	= 
	\big(\pa^\al L_\pm f+\pa^\al Q(f_\pm+f_\mp,f_\pm) ,e^{\frac{\pm A_{\al,0}\phi}{\<v\>^2}}w^2(\al,0)\pa^\al f_\pm\big)_{L^2_{x,v}},
\end{multline}
where $L_\pm$ is given in \eqref{nlp}. 
We denote the second to seventh terms in \eqref{36a} by $J_1$ to $J_6$ and estimate them one by one.
Similar to the calculation of $I_1$ to $I_6$ from \eqref{I1} to \eqref{I6}, one can obtain the following estimates by using \eqref{214a} and \eqref{estiv5}:
\begin{equation*}
\begin{aligned}
	|J_1+J_2+J_3+J_4|&\lesssim \big(\|\pa_t\phi\|_{L^\infty_x}+\|\na_x\phi\|_{H^3_x}\big)\|f\|_{Y_k}^2
	\\&\lesssim C\|\pa_t\phi\|_{L^\infty_x}\|f\|_{X_k}^2+\|\na_x\phi\|_{H^2_x}\|f\|_{Y_k}^2
	\lesssim \|f\|_{X_{k_0}}\|f\|_{X_k}^2+\sqrt{\E_{k_0}(t)}\D_k(t),
\end{aligned}
\end{equation*}
where we apply \eqref{39} and \eqref{39b} for the terms $\|\na_x^3\phi\|_{L^2_x}$ and $\|\pa_t\phi\|_{L^\infty_x}$ respectively. 
For $J_5$, using \eqref{231}, we have 
\begin{align*}
	J_5&\ge C_\al\pa_t\|\partial^\alpha\na_x\phi\|_{L^2_x}^2- C\|\pa^\al\na_x\phi\|_{L^2_x}\|\pa^\al\{\I-\P\}f\|_{L^2_xL^2_5}-C\|\partial^\alpha \nabla_x \phi\|_{L^2_x}\|\na_x\phi\|_{H^1_x}\|f\|_{Y_k}\\
	&\ge C_\al\pa_t\|\partial^\alpha\na_x\phi\|_{L^2_x}^2- \eta\|\pa^\al\na_x\phi\|^2_{L^2_x}-\eta\|w(\al,0)\pa^\al\{\I-\P\}f\|^2_{L^2_xL^2_{\gamma/2}}
	\\&\quad-C_\eta\|\pa^\al\{\I-\P\}f\|^2_{L^2_xL^2_v}-C\sqrt{\E_{k_0}(t)}\D_k(t),
\end{align*}
for any $\eta>0$. Here $C_\al$ is given in \eqref{231}. 
For the term $J_6$, we need some different estimates. 
Noticing 
	$L_\pm ( \partial^\alpha \PP f) = 0$ ,
we split $J_6$ into
\begin{align*}
	J_6&=\big(\pa^\al Q(2\mu+f_\pm+f_\mp,\{\II-\PP\}f),e^{\frac{\pm A_{\al,0}\phi}{\<v\>^2}}w^2(\al,0)\pa^\al f_\pm\big)_{L^2_{x,v}}\\
	&\quad\notag+\big(\pa^\al Q(f_\pm+f_\mp,\PP f),e^{\frac{\pm A_{\al,0}\phi}{\<v\>^2}}w^2(\al,0)\pa^\al f_\pm\big)_{L^2_{x,v}}\\
	&\quad\notag+\big(\pa^\al Q(\{\II-\PP\}f+\{\I_\mp-\P_\mp\}f,\mu),e^{\frac{\pm A_{\al,0}\phi}{\<v\>^2}}w^2(\al,0)\pa^\al f_\pm\big)_{L^2_{x,v}}\\
	&=: J_7+J_8+J_{9}. 
\end{align*}
We further split $J_7$ as 
\begin{align*}
	J_7 &= \big(\pa^\al Q(2\mu+f_\pm+f_\mp,\{\II-\PP\}f),w^2(\al,0)\pa^\al \{\II-\PP\}f\big)_{L^2_{x,v}}\\
	&\quad + \big(\pa^\al Q(2\mu+f_\pm+f_\mp,\{\II-\PP\}f),w^2(\al,0)\pa^\al \PP f\big)_{L^2_{x,v}}\\
	&\quad + \big(\pa^\al Q(2\mu+f_\pm+f_\mp,\{\II-\PP\}f),\big(e^{\frac{\pm A_{\al,0}\phi}{\<v\>^2}}-1\big)w^2(\al,0)\pa^\al f_\pm\big)_{L^2_{x,v}}\\
	&=: J_{7,1}+J_{7,2}+J_{7,3}. 
\end{align*}
For Boltzmann case, we deduce from \eqref{236} that  $J_{7,1}$ satisfies 
\begin{align*}
	\sum_\pm J_{7,1}  
	&\le  - \frac {1} {8} \Vert  b(\cos \theta) \sin^2 \frac \theta 2  \Vert_{L^1_\theta}     \Vert w(\alpha,0)  \partial^\alpha \{\I-\P\} f   \Vert_{L^2_x L^2_{\gamma/2, * }}^2  \\&\quad
	-\frac{\gamma_1}{2} \Vert  w(\alpha,0) \partial^\alpha \{\I-\P\} f \Vert_{L^2_xH^s_{\gamma/2}}^2+C_k\sqrt{\E_{k_0}(t)}\D_k(t) + C_{k}  \Vert \partial^\alpha \{\I-\P\} f \Vert_{L^2_{x, v}}^2. 
\end{align*}
For $J_{7,2}$ and $J_{7,3}$, for the Boltzmann case,  we deduce from Lemma \ref{L21} that   
\begin{align*}
	|J_{7,2}|&\lesssim  (C+\|\<v\>^{14}f\|_{H^2_xL^2_v})\|\{\I-\P\}f\|_{H^2_xL^2_v}\|w^2(\al,0)\P f\|_{H^2_xH^{2s}_{\gamma+2s}}\\
	&\lesssim \eta\|\P f\|^2_{H^2_xL^2_v}
	+ C_\eta\|\{\I-\P\}f\|^2_{H^2_xL^2_v}+C_k\sqrt{\E_{k_0}(t)}\D_k(t).
\end{align*}
 It follows from \eqref{236a} that 
\begin{align*}
	|J_{7,3}|&\lesssim \|\na_x\phi\|_{H^1_x}\Big((C+\|\<v\>^{14}f\|_{H^2_{x,v}})\|\{\I-\P\}f\|_{Y_k}\|\P f\|_{Y_k}+\|\<v\>^{14}\{\I-\P\}f\|_{H^2_{x,v}}(C+\|f\|_{Y_k})\|\P f\|_{Y_k}\Big)\\
	&\lesssim \big(\sqrt{\E_{k_0}(t)}+\E_{k_0}(t)\big)\D_k(t). 
\end{align*}
For the Landau case, it follows from \eqref{211a} and \eqref{211b} that 
\begin{align*}
	\sum_\pm J_{7,1} &\le  - \frac{\gamma_1}{2} \Vert  w(\alpha,0) \partial^\alpha\{\I-\P\}  f\Vert_{L^2_x L^2_{D}}^2 + C_{k}  \Vert \partial^\alpha\{\I-\P\} f \Vert_{L^2_x L^2_v}^2+C_k\sqrt{\E_{k_0}(t)}\D_k(t),
\end{align*}
with some constants $\gamma_1,C_k>0$. 
Applying Lemma \ref{L55} and \eqref{211c}, we have 
\begin{align*}
	|J_{7,2}|&\lesssim(C+\Vert f \Vert_{L^2_5}) \Vert \{\I-\P\}f \Vert_{H^2_xL^2_v}\Vert \P f \Vert_{H^2_xH^2_5}
	\lesssim \eta\|\P f\|^2_{H^2_xL^2_v} + C_\eta\|\{\I-\P\}f\|^2_{H^2_xL^2_v}+C_k\sqrt{\E_{k_0}(t)}\D_k(t),
\end{align*}
and 
\begin{align*}
	|J_{7,3}|&\lesssim(C+\|\<v\>^7f\|_{H^2_{x,v}})\|\na_x\phi\|_{H^1_x}\|\{\I-\P\}f\|_{Y_k}\|f\|_{Y_k}
	\lesssim \big(\sqrt{\E_{k_0}(t)}+\E_{k_0}(t)\big)\D_k(t). 
\end{align*}
For $J_8$, we deduce from \eqref{236c} that for the Boltzmann case:
\begin{align*}
|J_8|\lesssim  \|\<v\>^{14}f\|_{H^2_{x,v}}\|\<v\>^{2s}\P f\|_{Y_k}\|f\|_{Y_k}
+\|\<v\>^{14}\P f\|_{H^2_{x,v}}\|f\|_{Y_k}\|f\|_{Y_k}\lesssim \sqrt{\E_{k_0}(t)}\D_k(t), 
\end{align*}
and from \eqref{211d} that for the Landau case:
\begin{align*}
|J_8|\lesssim\|f\<v\>^7\|_{H^2_{x,v}}\|\<v\>\P f\|_{Y_k}\|f\|_{Y_k} \lesssim \sqrt{\E_{k_0}(t)}\D_k(t). 
\end{align*}
Here we let $k\ge k_0$. 
For $J_{9}$, we split it as  
\begin{align*}
J_9 =& \big(\pa^\al Q(\{\II-\PP\}f+\{\I_\mp-\P_\mp\}f,\mu),e^{\frac{\pm A_{\al,0}\phi}{\<v\>^2}}w^2(\al,0)\pa^\al \{\II-\PP\}f f_\pm\big)_{L^2_{x, v}}
\\
&+\big(\pa^\al Q(\{\II-\PP\}f+\{\I_\mp-\P_\mp\}f,\mu),e^{\frac{\pm A_{\al,0}\phi}{\<v\>^2}}w^2(\al,0)\partial^\alpha \PP f_\pm\big)_{L^2_{x, v}}
\\
=: &J_{9,1} +  J_{9,2}. 
\end{align*}
Then for Boltzmann case, applying Lemma \ref{L29}, we have for any $\eta>0$ that 
\begin{align*}
	|J_{9, 1}|&\le \Vert b(\cos\theta) \sin^{k-2 } \frac \theta 2 \Vert_{L^1_\theta}   \Vert w(\alpha,0)  \partial^\alpha \{\I-\P\}f\Vert_{L^2_x L^2_{\gamma/2, *}}\Vert \partial^\alpha \{\I-\P\}f    w(\alpha,0) \Vert_{L^2_x L^2_{\gamma/2, *}}
	\\
		&\quad+ C_k \Vert w(\alpha,0) \partial^\alpha \{\I-\P\}f \Vert_{L^2_{x} L^2_{\gamma/2-1/2}}   \Vert \partial^\alpha \{\I-\P\}f  w(\alpha,0)  \Vert_{L^2_{x}L^2_{\gamma/2-1/2}} 
	\\
	&\le \frac {1} {16} \Vert  b(\cos \theta) \sin^2 \frac \theta 2  \Vert_{L^1_\theta}   \Vert w(\alpha,0) \partial^\alpha \{\I-\P\}f \Vert_{L^2_x L^2_{\gamma/2, *}}^2 
	\\
	&\quad+ \eta\Vert w(\alpha,0) \partial^\alpha \{\I-\P\}f  \Vert_{L^2_{x} L^2_{\gamma/2}}^2 + C_{k,\eta}\Vert  \{\I-\P\}f \Vert_{H^2_{x}L^2_v}^2
	,
\end{align*}
For the term $J_{9,2}$, by Lemma \ref{L21} we have
\begin{align*}
	|J_{9, 2}|&\le C  \Vert w(\alpha,0)  \partial^\alpha \{\I-\P\}f\Vert_{L^2_x L^2_{7}}  \Vert \mu \Vert_{H^{2s}} \Vert \partial^\alpha \P f    w(\alpha,0) \Vert_{L^2_x L^2_{\gamma+2s}}
	\\
	&\le \eta\|\pa^\al \P f\|_{L^2_{x, v}}^2+ \eta\Vert w(\alpha,0) \partial^\alpha \{\I-\P\}f  \Vert_{L^2_{x} L^2_{\gamma/2}}^2 + C_{k,\eta}\Vert  \{\I-\P\}f \Vert_{H^2_{x}L^2_v}^2	,
\end{align*}
For the Landau case, we have from Lemma \ref{L29} that 
\begin{multline*}
	|J_{9}|\le C_k\Vert \partial^\alpha \{\I-\P\}f \Vert_{L^2_{x} L^2_5}\Vert \partial^\alpha  f \Vert_{L^2_{x} L^2_5} \\\le \eta\|w(\al,0)\pa^\al\{\I-\P\}f\|^2_{L^2_xL^2_{D}}+\eta\|\pa^\al \P f\|_{L^2_{x,v}}^2+C_\eta\|\{\I-\P\}f\|^2_{H^2_{x}L^2_v}. 
\end{multline*}
Taking summation $\sum_\pm\sum_{|\al|\le 2}\eqref{36a}$,
combining the above estimates for $J_1$ to $J_{9}$, applying \eqref{39a} and \eqref{430}, we deduce that for small $\eta>0$, 
\begin{multline}\label{44a}
	\quad\,\partial_t\sum_{|\al|\le 2}\Big(\frac{1}{2}\|e^{\frac{\pm A_{\al,0}\phi}{2 \<v\>^2}}\pa^\al f_\pm\|^2_{L^2_{x,v}}
	+C_\al\|\na_x\phi\|_{H^2_x}^2\Big)+\frac{\gamma_1}{3}\sum_{|\al|\le2} \Vert w(\alpha,0) \partial^\alpha \{\I-\P\} f  \Vert_{L^2_xH^s_{\gamma/2}}^2\\
	\lesssim \|f\|_{X_{k_0}}\|f\|_{X_k}^2+C_{\eta,k}  \Vert  \{\I-\P\} f \Vert_{H^2_xL^2_v}^2+ \eta\|\na_x\phi\|^2_{H^2_x}+\eta\|\P f\|_{H^2_{x}L^2_v}^2+C(\sqrt{\E_{k_0}(t)}+\E_{k_0}(t))\D_k(t). 
\end{multline}
Here, 
for the Boltzmann case, we use the fact that $ \Vert b(\cos\theta) \sin^{k-2 } \frac \theta 2 \Vert_{L^1_\theta} \le \frac {1} {16} \Vert  b(\cos \theta) \sin^2 \frac \theta 2  \Vert_{L^1_\theta}$,   
which follows from $k\ge k_0$. Taking linear combination $\eta \times\eqref{44}+\eqref{44a}$ and applying \eqref{39b}, we obtain 
\begin{multline}\label{434a}
	\pa_t\E_1(t)+\lam\eta \|f\|^2_{Y_k} +\frac{\gamma_1}{4}\sum_{|\al|\le2} \Vert  w(\alpha,0) \partial^\alpha\{\I-\P\}  f\Vert_{L^2_x L^2_{D}}^2\\
	\lesssim \|f\|_{X_{k_0}}\|f\|_{X_k}^2+\eta\|f\|^2_{H^2_xL^2_v}+ C_{\eta,k}  \Vert  \{\I-\P\} f \Vert_{H^2_xL^2_v}^2+ \eta\|\na_x\phi\|^2_{H^2_x}+\eta\|\P f\|_{H^2_{x}L^2_v}^2+C(\sqrt{\E_{k_0}(t)}+\E_{k_0}(t))\D_k(t).
\end{multline}
where $\E_1(t)$ is given by
\begin{multline}\label{EE}
	\E_1(t) = \frac{\eta}{2}\|f\|_{X_k}^2+\eta \sum_{|\al|\le 2}C_\al C_{|\al|,0}\|\partial^\alpha\na_x\phi\|_{L^2_x}^2+ \sum_\pm\sum_{|\al|\le 2}\Big(\frac{1}{2}\|e^{\frac{\pm A_{\al,0}\phi}{2 \<v\>^2}}\pa^\al f_\pm\|^2_{L^2_{x,v}}
	+C_\al\pa_t\|\na_x\phi\|_{H^2_x}^2\Big).  
\end{multline}
It's direct to verify that $\E_1(t)$ satisfies \eqref{E111}. 
For the second right-hand term of \eqref{434a}, we split it as 
\begin{align}\label{434b}
	\eta\|f\|^2_{H^2_xL^2_v} \lesssim \eta\|\P f\|^2_{H^2_xL^2_v} + \eta\sum_{|\al|\le2} \Vert  w(\alpha,0) \partial^\alpha\{\I-\P\}  f\Vert_{L^2_x L^2_{D}}^2.
\end{align}
Choosing $\eta$ sufficiently small,  
the second right hand term of \eqref{434b} can be absorbed by the left hand side of \eqref{434a} and we obtain \eqref{eq1}. 
This completes the proof of Lemma \ref{L43}.

\end{proof}

\subsection{Recover the energy from semigroup method}
According to Lemma \ref{L43}, we only need to deal with term $\|\{\I-\P\}f\|_{H^2_xL^2_v}$ without velocity derivative on the right hand side of \eqref{eq1}. In order to eliminate this term, we define the semigroup generated by $\L$ given in \eqref{L} to be $S_\L(t)$. Then we first give some estimate on $S_\L(t)$, which is the solution operator to the equation 
\begin{equation}
	\begin{aligned}\label{vpl}
			\pa_tf + v\cdot\na_xf \pm\na_x\phi\cdot v\mu = L_\pm f,\quad f(0)=f_0,\quad
			-\Delta_x\phi = \int_{\R^3}(f_+-f_-)\,dv, \quad\int_{\Omega}\phi(t, x) dx =0.
		\end{aligned}
\end{equation}
If the domain $\Omega$ is union of cubes given by \eqref{Omega}, then we further assume 
\begin{align}\label{boundaryphi1}
	\pa_n\phi=0\ \ \text{ on }\pa\Omega. 
\end{align}
To obtain the estimate of \eqref{vpl}, we denote $-\Delta_x^{-1}\int_{\R^3}(\cdot)_+-(\cdot)_-\,dv$ to be the solution operator to the second equation of \eqref{vpl}. Then we define linear operators $A=[A_+,A_-]$, $B=[B_+,B_-]$ and $K_1,K_2$ as 
\begin{align*}
	&A_\pm = -v\cdot\na_x+L_\pm  -M\chi_R, \quad B_\pm = -v\cdot\na_x+L_\pm, \quad   K_1  = M\chi_R, \\
	 &K_2 = \pm\mu v\cdot\na_x\Delta_x^{-1}\int_{\R^3}\big((\cdot)_+-(\cdot)_-\big)\,dv,\quad
	\L = -v\cdot\na_x\pm\mu v\cdot\na_x\Delta_x^{-1}\int_{\R^3}\big((\cdot)_+-(\cdot)_-\big)\,dv+L_\pm,
\end{align*}
where $M>0$ is a large constant and $\chi \in D(\R)$ is the truncation function satisfying $1_{[-1,1]} \le \chi \le 1_{[-2,2]}$ and we denote $\chi_R(\cdot) := \chi(\cdot/R)$ for $R > 0$.
For the case of the union of cubes, we consider the domain of these operators, i.e. $A,B,\L$, with restriction of specular-reflection boundary condition for $f$ and Neumann boundary condition for $\phi$:
\begin{align*}
	f(R_xv) = f(v) \text{ on }\gamma_-,\quad \pa_n\Delta_x^{-1}\int_{\R^3}\big(f_+-f_-\big)\,dv=0\text{ on }\pa\Omega.
\end{align*}
Then we have the following lemma. 

\begin{lem}\label{Lem41}
	Consider both Boltzmann and Landau cases. For $k\ge k_0$, there exists $\ve_0>0$ such that if 
	\begin{align}
		\label{fphi}
		\|f\|_{H^2_xL^2_{k_0+4}}\le \ve_0,
	\end{align}
	then we have 
	\begin{align}\label{SL1}
		\|S_\L(t)f\|_{H^2_xL^2_{k}} 
		&\lesssim\<t\>^{-\frac{7}{6}}\|f\|_{H^2_xL^2_{k+4}}, 
	\end{align}
and (suppose $s=1$ for Landau case)
\begin{align}\label{SL3}
	\Big(\int^\infty_0\|\<v\>^{k}S_\L(t)f\|_{H^2_xL^2_v}^2\,dt\Big)^{\frac{1}{2}}
	\lesssim \|f\|_{H^2_xH^{-s}_{k-\gamma/2}}.
\end{align}
\end{lem}
\begin{proof}
	We will prove \eqref{SL1} and \eqref{SL3} in two steps. We only prove the Boltzmann case and the Landau case can be proved similarly. 
	
	\smallskip\noindent{\bf Step 1.}
 By Duhamel's principle, we have 
\begin{align}\label{Du1}
	S_B(t) = S_A(t) + \int^t_0S_B(t-s)K_1S_A(s)\,ds,
\end{align}
and 
\begin{align}\label{Du2}
	S_\L(t) = S_B(t) + \int^t_0S_\L(t-s)K_2S_B(s)\,ds.
\end{align}
Using Lemma \ref{L23} and Theorem \ref{T25} with $g=0$ for the Boltzmann case and Lemma \ref{L51} for the Landau case, we deduce that for $k\ge k_0$, 
\begin{align}\label{48}
	(-Af,\<v\>^{2k}f)_{L^2_{x,v}}\ge \frac{\gamma_1}{2}\|\<v\>^kf\|_{L^2_xL^2_{D}}^2\gtrsim \|\<v\>^{k+\frac{\gamma}{2}}f\|_{L^2_{x,v}}^2, 
\end{align}
here $(v\cdot\na_x\pa^\al f,\pa^\al f)_{L^2_{x,v}}=0$ by using change of variable $v\mapsto R_xv$ for the case of union of cubes. Moreover, we use Lemma \ref{highspecular} to obtain 
\begin{align}\label{444}
	(v\cdot\na_xf,\<v\>^{2k}f)_{L^2_{x,v}}=0.
\end{align} 
For the case of the union of cubes,  and by using change of variable $v\mapsto R_xv$. For the case of torus, \eqref{444} also holds.  Then $A$ generates a semigroup on $L^2_k$ such that 
\begin{align}\label{48a}
	\|S_A(t)f\|_{L^2_xL^2_k}\le \|f\|_{L^2_xL^2_k}.
\end{align}
Also, by definition of semigroup, we have the equation 
\begin{align}\label{patSA}
	\pa_tS_A(t)f-AS_A(t)f=0.
\end{align}
Taking inner product of \eqref{patSA} with $\<v\>^{2k}S_A(t)f$ over $\Omega\times\R^3$, we have from \eqref{48} that
\begin{align}\label{patSA1}
	\frac{1}{2}\pa_t\|\<v\>^{k}S_A(t)f\|^2_{L^2_{x,v}} + \lam \|\<v\>^{k}S_A(t)f\|_{L^2_xL^2_D}^2 \le 0. 
\end{align} 
For the hard potential case, i.e. $\gamma\ge 0$, we have \[\|\<v\>^{k}S_A(t)f\|_{L^2_xL^2_D}\gtrsim \|\<v\>^{k}S_A(t)f\|_{L^2_xL^2_v},\] and hence, 
\begin{align}\label{SAex}
	\|S_A(t)f\|_{L^2_xL^2_k}\le e^{-\lam t}\|f\|_{L^2_xL^2_k},
\end{align}for some $\lam>0$. 
For the soft potential case and $k\ge k_1\ge k_0$, it follows from \eqref{patSA1} that 
\begin{align*}
	\pa_t\|S_A(t)f\|_{L^2_xL^2_{k_1}}^2&\lesssim -\|S_A(t)f\|_{L^2_xL^2_{k_1+\frac{\gamma}{2}}}^2\lesssim -\lam\<R\>^{\gamma}\|S_A(t)f\|_{L^2_xL^2_{k_1}}^2 + \<R\>^{2(k-k_1)+\gamma}\|S_A(t)f\|_{L^2_xL^2_k}^2,
\end{align*}
for some $\lam>0$. 
Solving this ODE and using \eqref{48a}, we have 
\begin{align*}
	\|S_A(t)f\|_{L^2_xL^2_{k_1}}^2 &\lesssim e^{-\lam\<R\>^{\gamma}t}\big(\|f\|_{L^2_xL^2_{k_1}}^2 + \<R\>^{2(k-k_1)+\gamma}\int^t_0e^{\lam\<R\>^\gamma s}\,ds\|f\|_{L^2_xL^2_k}^2\big)\\
	&\lesssim e^{-\lam\<R\>^{\gamma}t}\|f\|_{L^2_xL^2_{k_1}}^2+\<R\>^{2(k-k_1)}\|f\|_{L^2_xL^2_k}^2. 
\end{align*}
Choosing $\<R\> = \big(\<t\>[\log\<t\>]^{-\frac {2(k-k_0)} {\lambda}}\big)^{-\frac{1}{\gamma}}$, we have 
\begin{align}\label{SA1}
	\|S_A(t)f\|_{L^2_xL^2_{k_1}}\lesssim \<t\>^{-\frac{k-k_1-1/2}{|\gamma|}}\|f\|_{L^2_xL^2_k},
\end{align}
For $k\ge k_1+4$, since $|\gamma|\le 3$, we know that $\<t\>^{-\frac{k-k_1-1/2}{|\gamma|}}\le \<t\>^{-\frac{7}{6}}$. 
Together with \eqref{SAex}, we deduce that 
\begin{align}\label{SA}
	\|S_A(t)f\|_{L^2_xL^2_{k_1}}\lesssim \<t\>^{-\frac{7}{6}}\|f\|_{L^2_xL^2_k}, 
\end{align}
For the semigroups $S_B(t)$ and $S_\L(t)$ in exponential weighted space, we have 
\begin{equation}\label{498}
	\begin{aligned}
		\pa_tS_B(t)f - BS_B(t)f& =0,\quad
		\pa_tS_\L(t)f - \L S_\L(t)f=0.
	\end{aligned}
\end{equation}
Taking $H^2_xL^2_v$ inner product with $\mu^{-1}S_B(t)f$ and $\mu^{-1}S_\L(t)f$ over $\Omega\times\R^3$ respectively, we have 
\begin{equation}\label{499}
	\begin{aligned}
		&\pa_t\|\mu^{-\frac{1}{2}}S_B(t)f\|_{H^2_xL^2_v}^2 + \lam\|\mu^{-\frac{1}{2}}\{\I-\P\mu^{\frac{1}{2}}\}(S_B(t)f)\|^2_{H^2_xH^s_{\gamma/2}}\le 0,\\
		&\pa_t\|\mu^{-\frac{1}{2}}S_\L(t)f\|_{H^2_xL^2_v}^2 + \pa_t\|\na_x\phi\|_{H^2_x}^2 + \lam\|\mu^{-\frac{1}{2}}\{\I-\P\mu^{\frac{1}{2}}\}(S_\L(t)f)\|^2_{H^2_xH^s_{\gamma/2}}\le 0.
	\end{aligned}
\end{equation}
where the dissipation rate of $L_\pm$ for exponential perturbation can be found in \cite{GS,G} and $\phi$ is given by 
\begin{align}\label{phid}
	\phi = \Delta_x^{-1}\int_{\R^3}(S_\L(t)f)_+-(S_\L(t)f)_-\,dv. 
\end{align} 
In the second estimate of \eqref{499}, we used the fact that 
$\pa_t\int_{\R^3}S_\L(t)f\,dv + \int_{\R^3}v\cdot\na_x S_\L(t)f\,dv=0$. Thus
\begin{align*}
	&\quad\,\sum_\pm\big(\pm\mu v\cdot\na_x\pa^\al\Delta_x^{-1}\int_{\R^3}(S_\L(t)f)_+-(S_\L(t)f)_-\,dv,\mu^{-1}\pa^\al(S_\L(t)f)_\pm\big)_{L^2_{x,v}}\\
	&= \big(\pa^\al\phi,\int_{\R^3}v\cdot\na_x\pa^\al((S_\L(t)f)_+-(S_\L(t)f)_-)\,dv\big)_{L^2_{x}}\\
	&= \big(\pa^\al\phi,-\pa_t\pa^\al\int_{\R^3}((S_\L(t)f)_+-(S_\L(t)f)_-)\,dv\big)_{L^2_{x}}  = \frac{1}{2}\pa_t\|\pa^\al\na_x\phi\|_{L^2_x}^2.
\end{align*}
Here we take integration by parts on $\na_x$ with \eqref{boundaryphi1} and use boundary condition \eqref{specular} to deduce $\int_{\R^3}v((S_\L(t)f)_+-(S_\L(t)f)_-)\,dv=0$ on $\pa\Omega$ for the case of union of cubes.

In order to derive the weighted version of \eqref{499}, 
taking $H^2_x L^2_v$ inner product of \eqref{498} with $\<v\>^{2k}\mu^{-1}S_B(t)f$ and $\<v\>^{2k}\mu^{-1}S_\L(t)f$ over $\Omega\times\R^3$ respectively, we have 
\begin{equation}\label{4991}
	\begin{aligned}
		&\pa_t\|\<v\>^k\mu^{-\frac{1}{2}}S_B(t)f\|_{H^2_xL^2_v}^2 + \lam\|\<v\>^k(\mu^{-\frac{1}{2}}S_B(t)f)\|^2_{H^2_xH^s_{\gamma/2}}\\&\quad\le C_R \|\chi_R\mu^{-\frac{1}{2}}S_B(t)f\|^2_{H^2_xL^2_v},\\
		&\pa_t\|\<v\>^k\mu^{-\frac{1}{2}}S_\L(t)f\|_{H^2_xL^2_v}^2 + \pa_t\|\na_x\phi\|_{H^2_x}^2 + \lam\|\<v\>^k(\mu^{-\frac{1}{2}}S_\L(t)f)\|^2_{H^2_xH^s_{\gamma/2}}\\&\quad\le C \|\na_x\phi\|_{H^2_x}^2  
		+C_{R}\|\chi_R\mu^{-\frac{1}{2}}S_\L(t)f\|^2_{H^2_xL^2_v}. 
	\end{aligned}
\end{equation}
By macroscopic estimates from \cite[Theorem 3.4]{Deng2021e} for the case of the union of cubes and exponential perturbation, we have
\begin{equation}\label{499a}
	\begin{aligned}
		&\pa_t\E_{int,B}(t) + \lam\|\mu^{-\frac{1}{2}}\P\mu^{\frac{1}{2}}S_B(t)f\|^2_{H^2_xH^s_{\gamma/2}}\le C\|\mu^{-\frac{1}{2}}\{\I-\P\mu^{\frac{1}{2}}\}S_B(f)\|_{H^2_xL^2_{\gamma/2}}^2,\\
		&\pa_t\E_{int,\L}(t) + \pa_t\|\na_x\phi\|_{H^2_x}^2 + \lam\|\mu^{-\frac{1}{2}}\P\mu^{\frac{1}{2}}S_\L(t)f\|^2_{H^2_xH^s_{\gamma/2}} + \lam\|\na_x\phi\|^2_{H^2_x}\\
		\le &C\|\mu^{-\frac{1}{2}}\{\I-\P\mu^{\frac{1}{2}}\}S_\L(f)\|_{H^2_xL^2_{\gamma/2}}^2 + \|\na_x\phi\|^4_{L^2_x},
	\end{aligned}
\end{equation}
where $\phi$ is given by \eqref{phid} and $\E_{int,B},\E_{int,\L}$ are functional satisfying 
\begin{align}\label{4993}
	\E_{int,B}(t)\lesssim \|\mu^{-\frac{1}{2}}S_B(t)f\|_{H^2_xL^2_v}^2,\quad
	\E_{int,\L}(t)\lesssim \|\mu^{-\frac{1}{2}}S_\L(t)f\|_{H^2_xL^2_v}^2. 
\end{align}respectively. 
Although the macroscopic estimates from \cite{Deng2021e} are for the union of cubes, the case of the torus can be similarly derived with simpler calculations; see also \cite{G8}.  
 Taking linear combinations $\eqref{499}+\kappa^2\times \eqref{4991}+\kappa\times\eqref{499a}$ and assuming the {\it a priori} assumption 
 \begin{align}\label{prioriphi}
 	\|\na_x\phi\|^2_{H^2_x}\le \ve, 
 \end{align}
 with $\ve>0$ sufficiently small and $\phi$ given by \eqref{phid}, we have 
\begin{equation}\label{499b}
	\begin{aligned}
		&\pa_t\E_{B}(t) + \lam\|\<v\>^k\mu^{-\frac{1}{2}}S_B(t)f\|^2_{H^2_xH^s_{\gamma/2}}\le 0,
		&\pa_t\E_{\L}(t) + \lam\|\<v\>^k\mu^{-\frac{1}{2}}S_\L(t)f\|^2_{H^2_xH^s_{\gamma/2}} + \lam\|\na_x\phi\|^2_{H^2_x}\le 0,
	\end{aligned}
\end{equation}
where 
\begin{align*}
	\E_{B}(t) &= \|\mu^{-\frac{1}{2}} S_B(t)f\|_{H^2_xL^2_v}^2 + \kappa^2\|\<v\>^k\mu^{-\frac{1}{2}}S_B(t)f\|_{H^2_xL^2_v}^2 + \kappa\E_{int,B},\\
	\E_{\L}(t) &= \|\mu^{-\frac{1}{2}} S_\L(t)f\|_{H^2_xL^2_v}^2 + \kappa^2\|\<v\>^k\mu^{-\frac{1}{2}}S_\L(t)f\|_{H^2_xL^2_v}^2 + \kappa\E_{int,\L} + \|\na_x\phi\|_{H^2_x}^2.
\end{align*}
With \eqref{4993}, it's direct to check that 
\begin{align*}
	\E_{B}(t) &\sim \|\mu^{-\frac{1}{2}} S_B(t)f\|_{H^2_xL^2_v}^2 + \|\<v\>^k\mu^{-\frac{1}{2}}S_B(t)f\|_{H^2_xL^2_v}^2,\\
	\E_{\L}(t) &\sim \|\mu^{-\frac{1}{2}} S_\L(t)f\|_{H^2_xL^2_v}^2 + \|\<v\>^k\mu^{-\frac{1}{2}}S_\L(t)f\|_{H^2_xL^2_v}^2  + \|\na_x\phi\|_{H^2_x}^2.
\end{align*}
We calculate the first inequality of \eqref{499b} and the second one is similar. When $\gamma\ge 0$, we have 
\[
\pa_t\E_{B}(t) + \lam\|\<v\>^k\mu^{-\frac{1}{2}}S_B(t)f\|^2_{H^2_xL^2_v}\le 0,
\]
thus
\[
\|\<v\>^k\mu^{-\frac{1}{2}}S_B(t)f\|_{H^2_xL^2_v}^2\le e^{-\lam t}\|\<v\>^k\mu^{-\frac{1}{2}}f\|_{H^2_xL^2_v}^2. 
\]
When $\gamma<0$, we have 
\[
\pa_t\E_{B}(t) + \lam\|\<v\>^{k+\frac{\gamma}{2}}\mu^{-\frac{1}{2}}S_B(t)f\|^2_{H^2_xL^2_v}\le 0.
\]
Then we apply similar arguments for obtaining \eqref{SA1} to deduce that 
\begin{align}\label{SB}
	\|\mu^{-\frac{1}{2}}S_B(t)f\|^2_{H^2_xL^2_{k-4}} \lesssim \<t\>^{-\frac{2(3-\frac{1}{2})}{|\gamma|}}\|\mu^{-\frac{1}{2}}f\|_{H^2_xL^2_k}^2
	\lesssim \<t\>^{-\frac{7}{6}}\|\mu^{-\frac{1}{2}}f\|_{H^2_xL^2_k}^2. 
\end{align}
The estimate for hard potential case is also included in \eqref{SB}. 
Similarly, it follows from the second estimate of \eqref{499b} that 
\begin{align}\label{SL}
	\|\mu^{-\frac{1}{2}}S_\L(t)f\|^2_{H^2_xL^2_{k-4}} +\|\na_x\phi\|_{H^2_x}^2
\lesssim \<t\>^{-\frac{7}{6}}\big(\|\mu^{-\frac{1}{2}}f\|_{H^2_xL^2_k}^2 + \|\na_x\phi|_{t=0}\|_{H^2_x}^2\big). 
\end{align}
Now we turn back to Duhamel's principle \eqref{Du1} and \eqref{Du2}. For any $f$, 
we have from \eqref{Du1}, \eqref{SA} and \eqref{SB} that for $k_1\ge k_0$, 
\begin{align}\label{SBB}\notag
	\|S_B(t)f\|_{H^2_xL^2_{k_1}} &\lesssim  \|S_A(t)f\|_{H^2_xL^2_{k_1}} + \int^t_0\|\mu^{-\frac{1}{2}}S_B(t-s)K_1S_A(s)f\|_{H^2_xL^2_v}\,ds\\
	&\notag\lesssim \<t\>^{-\frac{7}{6}}\|f\|_{H^2_xL^2_{k_1+4}} + 
	\int^t_0\<t-s\>^{-\frac{7}{6}}\|\mu^{-\frac{1}{2}}K_1S_A(s)f\|_{H^2_xL^2_{4}}\,ds\\
	&\lesssim \<t\>^{-\frac{7}{6}}\|f\|_{H^2_xL^2_{k_1+4}} + 
	\int^t_0\<t-s\>^{-\frac{7}{6}}\|S_A(s)f\|_{H^2_xL^2_v}\,ds\lesssim \<t\>^{-\frac{7}{6}}\|f\|_{H^2_xL^2_{k_1+4}}. 
\end{align}
Thus, recalling $\phi$ is given by \eqref{phid}, it follows from \eqref{Du2}, \eqref{SL} and \eqref{SBB} that for $k_1\ge k_0$, 
\begin{align}\label{455a}\notag
	\|S_\L(t)f\|_{H^2_xL^2_{k_1}} &\lesssim  \|S_B(t)f\|_{H^2_xL^2_{k_1}} + \int^t_0\|\mu^{-\frac{1}{2}}S_\L(t-s)K_2S_B(s)f\|_{H^2_xL^2_v}\,ds\\
	&\notag\lesssim \<t\>^{-\frac{7}{6}}\|f\|_{H^2_xL^2_{k_1+4}} + 
	\int^t_0\<t-s\>^{-\frac{7}{6}}\big(\|\mu^{-\frac{1}{2}}K_2S_B(s)f\|_{H^2_xL^2_{4}}
	\\&\qquad\qquad+ \| \nabla_x \Delta_x^{-1}\int_{\R^3}(K_2S_B(s)f)_+-(K_2S_B(s)f)_-\,dv\|_{H^2_x}\big)\,ds.
\end{align}
Note that by using odd property on $\mu v$, we have 
\begin{align*}
\int_{\R^3}&(K_2S_B(s)f)_+-(K_2S_B(s)f)_-\,dv = 2\int_{\R^3}\mu(v) v\cdot\na_x\Delta_x^{-1}\int_{\R^3}\big((S_B(s)f)_+(u)-(S_B(s)f)_-(u)\big)\,du\,dv=0.
\end{align*}
Also, letting $$\phi_1=\Delta_x^{-1}\int_{\R^3}\big((S_B(s)f)_+(u)-(S_B(s)f)_-(u)\big)\,du,$$ when $\Omega$ is torus, we know that 
\begin{align}\label{440}
	\|\na_x\phi_1\|_{L^2_x}\lesssim \|\Delta_x\phi_1\|_{L^2_x}. 
\end{align}
If $\Omega$ is union of cubes given by \eqref{Omega}, noticing from \eqref{boundaryphi1} that $\pa_{x_i}\phi_1=0$ on $\Gamma_i$, by Sobolev inequality \cite[Theorem 6.7-5]{Ciarlet2013}, we also have \eqref{440}. 
 Then we obtain from \eqref{455a} and \eqref{SA} that for $k_1\ge k_0$, 
\begin{align}\label{SLa}\notag
	\|S_\L(t)f\|_{H^2_xL^2_{k_1}} 
	&\lesssim \<t\>^{-\frac{7}{6}}\|f\|_{H^2_xL^2_{k_1+4}} + 
	\int^t_0\<t-s\>^{-\frac{7}{6}}\|\<v\>^4\mu^{\frac{1}{2}}(v) v\cdot\na_x\phi_1\|_{H^2_xL^2_v}\,ds\\
	&\notag\lesssim\<t\>^{-\frac{7}{6}}\|f\|_{H^2_xL^2_{k_1+4}} +  \int^t_0\<t-s\>^{-\frac{7}{6}}\|\Delta_x\phi_1\|_{H^2_xL^2_{k_1+4}}\,ds\\
	&\lesssim\<t\>^{-\frac{7}{6}}\|f\|_{H^2_xL^2_{k_1+4}} +  \int^t_0\<t-s\>^{-\frac{7}{6}}\|S_B(s)f\|_{H^2_xL^2_{2}}\,ds \lesssim\<t\>^{-\frac{7}{6}}\|f\|_{H^2_xL^2_{k_1+4}}. 
\end{align}
Next, we check that \eqref{prioriphi} is fulfilled if $\ve_0>0$ in \eqref{fphi} is small enough. Similar to \eqref{440}, we have 
\begin{align}\label{phi2}
	\|\na_x\phi\|_{H^2_x}\lesssim \|\Delta_x\phi\|_{H^1_x} = \|\int_{\R^3}(S_\L(t)f)_+-(S_\L(t)f)_-\,dv\|_{H^1_x}. 
\end{align}
Then we can obtain that $\|\na_x\phi\|_{H^2_x} \lesssim \|S_\L(t)f\|_{H^1_xL^2_{2}} \lesssim \|f\|_{H^2_xL^2_{k_0+4}}$. This closes the {\it a priori} assumption \eqref{prioriphi} and completes the proof of \eqref{SL1}.

\smallskip\noindent{\bf Step 2.}
For brevity of notations, we let $s=1$ for the Landau case. 
In order to obtain \eqref{SL3}, we consider the dual $A^*_{l}$ of $A_l=\<v\>^lA(\<v\>^{-l}f)$ for any $l\in\R$.
Notice that 
\begin{align*}
	\big(A^*_{l}f,\<v\>^{2k}f\big)_{L^2_xL^2_v} = 
	\big(\<v\>^lf,A(\<v\>^{2k-l}f)\big)_{L^2_xL^2_v}=
	\big(\<v\>^{2l-2k}g,A g\big)_{L^2_xL^2_v}, 
\end{align*}
where we let $g=\<v\>^{2k-l}f$. 
Then the taking $L^2_xL^2_v$ inner product of $\pa_tS_{A^*_{l}}(t)f-A^*_lS_{A^*_{l}}(t)f=0$ with $\<v\>^{2k}S_{A^*_{l}}(t)f$, we have 
\[
\pa_t\|\<v\>^{k}S_{A^*_{l}}(t)f\|^2_{L^2_xL^2_v} + \big(A^*_lS_{A^*_{l}}(t)f,\<v\>^{2k}S_{A^*_{l}}(t)f\big)_{L^2_xL^2_v} = 0.
\]
Hence for $l-k\ge k_0$, 
\[
\pa_t\|\<v\>^{l-k}g\|^2_{L^2_xL^2_v} + \big(\<v\>^{2l-2k}g, Ag\big)_{L^2_xL^2_v} = 0,
\]
where $g=\<v\>^{2k-l}S_{A^*_{l}}(t)f$.
Integrate over $t$, we have 
\begin{align*}
	\lam \int^\infty_0\|\<v\>^{l-k}g\|_{L^2_xH^s_{\gamma/2}}^2\,dt\le \|\<v\>^{l-k}g|_{t=0}\|^2_{L^2_xL^2_v}.
\end{align*}
That is, for $l-k\ge k_0$, 
\begin{align}\label{intSA}
	\lam \int^\infty_0\|\<v\>^{k}S_{A^*_{l}}(t)f\|_{L^2_xH^s_{\gamma/2}}^2\,dt\le \|\<v\>^{k}f\|^2_{L^2_xL^2_v}.
\end{align}
Observe that if $f$ satisfies equation $\pa_tf=Af$ then $g=\<v\>^lf$ satisfies $\pa_tg=A_lg$, and also that $(Af,\<v\>^{2l}f)_{L^2_{x,v}}=(A_lg,g)_{L^2_{x,v}}$. 
Thus, $\<v\>^lS_A(t)f=S_{A_l}(\<v\>^lf)$. 
Moreover, by duality, we have 
\begin{align*}
	(S_{A_l}f,g)_{L^2_{x,v}} = (f,S_{A_l^*}g)_{L^2_{x,v}}. 
\end{align*}
Therefore, for some sequence $\{\varphi_n\}$ in Schwartz space such that $\|\varphi_n\|_{L^2_xL^2_{k}}\le 1$, we have from \eqref{intSA} that 
\begin{align}\label{intSA1}\notag
	\int^\infty_0\|\<v\>^{k}S_A(t)f\|_{L^2_xL^2_v}^2\,dt
	&= \int^\infty_0\lim_{n\to\infty}\big|\big(\<v\>^{2k}S_{A}(t)f,\varphi_n\big)_{L^2_{x,v}}\big|^2\,dt\notag= \liminf_{n\to\infty}\int^\infty_0\big|\big(\<v\>^{2k}f,S_{A^*_{2k}}(t)\varphi_n\big)_{L^2_{x,v}}\big|^2\,dt\\
	&\notag\le\liminf_{n\to\infty}\int^\infty_0\|\<v\>^{k}f\|^2_{L^2_xH^{-s}_{-\gamma/2}}\|\<v\>^kS_{A^*_{2k}}(t)\varphi_n\|_{L^2_xH^s_{\gamma/2}}^2\,dt\lesssim \|\<v\>^{k-\gamma/2}f\|^2_{L^2_xH^{-s}_v}, 
\end{align}
for $k\ge k_0$. This is the estimate for $S_A(t)$. To obtain the estimates for $S_B(t)$, 
it follows from Duhamel's principle \eqref{Du1} that 
\begin{equation}\label{465}
\begin{aligned}
	\Big(\int^\infty_0\|\<v\>^{k}S_B(t)f\|_{H^2_xL^2_v}^2\,dt\Big)^{\frac{1}{2}}
	&\lesssim \Big(\int^\infty_0\|\<v\>^{k}S_A(t)f\|_{H^2_xL^2_v}^2\,dt\Big)^{\frac{1}{2}}\\
	&+ \Big(\int^\infty_0\Big\|\<v\>^{k}\int^t_0S_B(s)K_1S_A(t-s)f\,ds\Big\|_{H^2_xL^2_v}^2\,dt\Big)^{\frac{1}{2}}.
	\end{aligned}
\end{equation}
Using \eqref{SB}, the second right-hand term of \eqref{465} can be estimated by 
\begin{align*}
	\quad\, \int^\infty_0\Big(\int^\infty_s\|\mu^{-\frac{1}{2}}S_B(s)K_1S_A(t-s)f\|_{H^2_xL^2_v}^2\,dt\Big)^{\frac{1}{2}}\,ds&\lesssim\int^\infty_0\Big(\int^\infty_0\<s\>^{-\frac{7}{6}}\|\<v\>^k\mu^{-\frac{1}{2}}K_1S_A(t)f\|^2_{H^2_xL^2_v}\,dt\Big)^{\frac{1}{2}}ds\\
	&\lesssim\Big(\int^\infty_0\|S_A(s)f\|^2_{H^2_xL^2_v}\,ds\Big)^{\frac{1}{2}}
	\lesssim \|f\|_{H^2_xH^{-s}_{k_0-\gamma/2}}.
\end{align*}
Note that $\na_x$ commutes with $S_A(t)$. The first term on the right-hand side of \eqref{465} can be estimated in the same way.
Thus, for $k\ge k_0$, we have from \eqref{465} that 
\begin{align}\label{466}
	\Big(\int^\infty_0\|\<v\>^{k}S_B(t)f\|_{H^2_xL^2_v}^2\,dt\Big)^{\frac{1}{2}}
	\lesssim \|f\|_{H^2_xH^{-s}_{k-\gamma/2}}.
\end{align}
Next we apply \eqref{Du2}, and \eqref{466} to derive the estimates on $S_\L(t)$: 
\begin{align}\label{465a}
        \nonumber
	&\Big(\int^\infty_0\|\<v\>^{k}S_\L(t)f\|_{H^2_xL^2_v}^2\,dt\Big)^{\frac{1}{2}}
	\\ \nonumber
	\lesssim& \Big(\int^\infty_0\|\<v\>^{k}S_B(t)f\|_{H^2_xL^2_v}^2\,dt\Big)^{\frac{1}{2}}
	+ \Big(\int^\infty_0\Big\|\<v\>^{k}\int^t_0S_\L(s)K_2S_B(t-s)f\,ds\Big\|_{H^2_xL^2_v}^2\,dt\Big)^{\frac{1}{2}}
	\\
	\lesssim &\|f\|_{H^2_xH^{-s}_{k-\gamma/2}} + \int^\infty_0\Big(\int^\infty_s\|\<v\>^{k}S_\L(s)K_2S_B(t-s)f\|_{H^2_xL^2_v}^2\,dt\Big)^{\frac{1}{2}}\,ds.
\end{align}
Applying \eqref{SL}, \eqref{440} and \eqref{466}, the second right-hand term of \eqref{465a} can be estimated by 
\begin{align*}
	&\quad\,\int^\infty_0\Big(\int^\infty_s\|\<v\>^{k}\mu^{-\frac{1}{2}}S_\L(s)K_2S_B(t-s)f\|_{H^2_xL^2_v}^2\,dt\Big)^{\frac{1}{2}}\,ds\\
	&\lesssim \int^\infty_0\<s\>^{-\frac{7}{6}}\Big(\int^\infty_0\|\<v\>^{k+4}\mu^{-\frac{1}{2}}K_2S_B(t)f\|^2_{H^2_xL^2_v}  \\&\qquad\qquad+ \|\na_x\Delta_x^{-1}\underbrace{\int_{\R^3}(K_2S_B(t)f)_+-(K_2S_B(t)f)_-\,dv}_{=0}\|_{H^2_x}^2\,dt\Big)^{\frac{1}{2}}\,ds\\
	&\lesssim \Big(\int^\infty_0\|\<v\>^{k+4}\mu^{-\frac{1}{2}}\mu v\cdot\na_x\Delta_x^{-1}\int_{\R^3}\big((S_B(t)f)_+-(S_B(t)f)_-\big)\,dv\|^2_{H^2_xL^2_v}\,dt\Big)^{\frac{1}{2}}\\
	&\lesssim \Big(\int^\infty_0\|S_B(t)f\|^2_{H^2_xL^2_2}\,dt\Big)^{\frac{1}{2}}
	\lesssim \|f\|_{H^2_xH^{-s}_{k-\gamma/2}}. 
\end{align*}
The term with brace is equal to zero because $\int_{\R^3}v\mu\,dv=0$. Inserting this into \eqref{465a}, we obtain \eqref{SL3}. 
This completes the proof of Lemma \ref{Lem41}. 
\end{proof}

Next, we can introduce a norm 
\begin{align}\label{tri}
	\vertiii{f}^2 :=
	\sum_{|\al|\le 2}\int_{0}^{+\infty} \Vert S_{\L}(\tau)\pa^\al f\Vert_{L^2_x L^2_v}^2 d\tau,
\end{align}
where the associated inner product is given by
\begin{align*}
	((f,g)) : = 
	\int_{0}^{+\infty}\sum_{|\al|\le 2} ( S_{\L}(\tau)\pa^\al f,  S_{\L}(\tau)\pa^\al g)_{L^2_xL^2_v} d\tau,
\end{align*}
Note that $\pa^\al$ commutes with $\L$ and hence, commutes with $S_\L(t)$. Then by \eqref{SL1} and \eqref{SL3}, we have 
\begin{align}\label{41a}
		\vertiii{f}\lesssim \Vert f \Vert_{H^2_xL^2_{k_0+4}}, \quad 
	\vertiii{f}\lesssim\Vert  f \Vert_{H^2_xH^{-s}_{k_0-\frac{\gamma}{2}}}\lesssim\Vert  f \Vert_{H^2_xH^{-s}_{k_0+\frac{3}{2}}},
\end{align}
where we let $s=1$ for the Landau case. 
Then we can recover the loss of energy by using this norm. 

\begin{lem}
\label{L44}
For both the Boltzmann and Landau cases, suppose $(f,\phi)$ is the solution to the equation \eqref{vplocal1}. Then we have 
 \begin{align}\label{ttt}
 	\frac{1}{2}\pa_t {\vertiii{f}^2}+\|f\|_{H^2_xL^2_v}^2\lesssim \sqrt{\E_{k_0}(t)}\D_k(t),  
 \end{align}
 where $\vertiii{\cdot}$ is given in \eqref{tri}, $\E_k(t)$ and $\D_k(t)$ are given in \eqref{DefE} and \eqref{DefD}. 
\end{lem}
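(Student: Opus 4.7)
The plan is to differentiate $\vertiii{\pa^\al f(t)}^2$ in time, substitute the evolution equation in the form $\pa_t f = \L f \mp\na_x\phi\cdot v\mu + N(f)$ from \eqref{rewrite}, and extract the coercive quantity $\|\{\I-\P\}\pa^\al f\|^2_{L^2_{x,v}}$ from the linear part via the semigroup identity $\pa_\tau S_\L(\tau)=\L S_\L(\tau)$. Since $\pa_x^\al$ commutes with $\L=-v\cdot\na_x+L$ and $L\P=\P L=0$, one obtains the commutator identity
\[
\{\I-\P\}\L\pa^\al f = \L\{\I-\P\}\pa^\al f + [\P,v]\cdot\na_x\pa^\al f,
\]
and the principal contribution becomes
\[
2\int_0^\infty(S_\L(\tau)\{\I-\P\}\pa^\al f, S_\L(\tau)\L\{\I-\P\}\pa^\al f)\,d\tau = \int_0^\infty\pa_\tau\|S_\L(\tau)\{\I-\P\}\pa^\al f\|_{L^2_{x,v}}^2\, d\tau = -\|\{\I-\P\}\pa^\al f\|^2_{L^2_{x,v}},
\]
where the boundary contribution at $\tau=\infty$ vanishes by the decay \eqref{43a} (Boltzmann) or \eqref{431} (Landau). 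Summing over $|\al|\le 2$ then produces the $\|\{\I-\P\}f\|^2_{H^2_xL^2_v}$ on the left-hand side of the claim.

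The three remaining contributions should all be absorbed into $\sqrt{\E_{k_0}(t)}\D_k(t)$ via Cauchy--Schwarz in the $\tau$-integral. For the commutator remainder $[\P,v]\cdot\na_x\pa^\al f = \na_x\cdot([\P,v]\pa^\al f)$, integration by parts in $x$ on the torus (legitimate since $S_\L$ commutes with $\na_x$) transfers the $\na_x$ to the other factor; because $[\P,v]$ is finite-rank in $v$ with Gaussian output, its image sits in every $L^2_xL^2_k$ and is controlled by $\|\pa^\al f\|_{L^2_xL^2_4}\subset\|f\|_{Y_k}$, while the resulting $\na_x\{\I-\P\}\pa^\al f$ is closed using Lemma~\ref{L420} for $|\al|\le 1$ and, for $|\al|=2$, by invoking the time-derivative relations between macroscopic moments already built into the macroscopic estimate. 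For the Poisson source $\pa^\al(\na_x\phi\cdot v\mu)$, the Gaussian profile in $v$ gives $\vertiii{\pa^\al(\na_x\phi\cdot v\mu)}\lesssim\|\na_x\phi\|_{H^{|\al|}_x}\lesssim \|f\|_{L^2_xL^2_4}\lesssim \sqrt{\E_{k_0}(t)}$ via the Poisson equation. For $\pa^\al N(f)$, I would split into $\pm\pa^\al(\na_x\phi\cdot\na_v f)$ and $\pa^\al Q(f,f)$, estimate each in $L^2_xH^{-s}_{14}$ (Boltzmann) or $L^2_xH^{-1}_7$ (Landau) via \eqref{2134}/\eqref{27a}, and then apply the finer decay \eqref{43s}/\eqref{432}, whose kernel $\theta(\tau)\theta_1(\tau)$ is integrable on $[0,\infty)$.

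The main obstacle I expect is the $\na_v f$ factor inside $\na_x\phi\cdot\na_v f$: a direct $L^2\to L^2$ decay of $S_\L(\tau)$ would put $\|\na_v f\|_{L^2_k}$ under the $\tau$-integral, and this velocity derivative is not available from $\|f\|_{Y_k}$ alone. The resolution is to move $\na_v$ into a negative Sobolev space in $v$: for Boltzmann one uses $\|\na_v f\|_{H^{-s}_v}\lesssim \|f\|_{H^{1-s}_v}\lesssim \|f\|_{H^s_v}$, which forces $s\ge 1/2$ exactly as flagged in the strategy discussion, whereas for Landau the $H^{-1}$-decay \eqref{432} absorbs one full $v$-derivative automatically. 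A secondary subtlety is the $|\al|=2$ commutator, whose third-order macroscopic remainder must be reabsorbed through the $\pa_t G(t)$ contribution of Lemma~\ref{L420}, so that the final instant functional $\E_k$ in \eqref{realE} is adjusted to swallow this extra order.
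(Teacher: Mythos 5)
Your overall route coincides with the paper's: differentiate $\vertiii{\cdot}^2$ in time, substitute the evolution equation, extract $\|\{\I-\P\}\partial^\alpha f\|^2_{L^2_{x,v}}$ from the linear part via the semigroup telescoping identity, and absorb the Poisson source (which vanishes because $v\mu$ lies in the range of $\P$), the $\nabla_x\phi\cdot\nabla_v f$ term (via the negative-Sobolev semigroup decay \eqref{43s}/\eqref{432}, requiring $s\ge 1/2$ in the Boltzmann case), and the quadratic collision term into $\sqrt{\E_{k_0}(t)}\D_k(t)$. You also catch the commutator $[\P,v]\cdot\nabla_x\partial^\alpha f = \{\I-\P\}\L\partial^\alpha f - \L\{\I-\P\}\partial^\alpha f$, which the paper's display silently drops when it passes from $\{\I-\P\}\L\partial^\alpha f$ to $L\{\I-\P\}\partial^\alpha f$; since $\P$ is a $v$-only projection it does not commute with the transport part $-v\cdot\nabla_x$ of $\L$, so this term is genuinely nonzero.

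The gap is in the treatment of that commutator at top order $|\alpha|=2$. After the integration by parts in $x$ you propose, one factor becomes $S_\L(\tau)\{\I-\P\}\nabla_x\partial^\alpha f$, which for $|\alpha|=2$ carries three spatial derivatives of $f$. Neither $\E_k$ nor $\D_k$ controls a third spatial derivative: both sum only over $|\alpha|+|\beta|\le 2$, and $\|\nabla_x\phi\|^2_{H^3_x}$ yields, via the Poisson equation, only $\nabla_x(a_+-a_-)$ in $H^1_x$. The appeal to ``time-derivative relations between macroscopic moments already built into the macroscopic estimate'' names no mechanism: Lemma~\ref{L420} controls only $\|[a_+,a_-,b,c]\|_{H^2_x}$, and $G(t)$ in \eqref{ing} involves at most $\nabla_x^2\P f$. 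To complete the argument you would need either an exact cancellation of the $|\alpha|=2$ commutator (not visible from the structure given) or a semigroup bound that gains a spatial derivative on the commutator's image; your proposal supplies neither, so the step does not close as written.
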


\begin{proof}
 Recall the first equation of \eqref{vplocal1}:
\begin{align}\label{433}
\partial_t f_\pm 
=  \underbrace{\pm \nabla_x \phi \cdot \nabla_ v f}_{\text{Term {1}}} 
+ \underbrace{ \L f}_{\text{Term 2}}+\underbrace{ Q(f_\pm+f_\mp,f_\pm) }_{\text{Term {3}}},  
\end{align}
where $\L$ is given in \eqref{L}. 
Then we take the $(( \cdot, \cdot))$ inner product of \eqref{433} with $f$ and compute every term separately. 
   Since $\partial_t$ commute with $L$, we know that $\pa_t$ commute with $S_{\L}(\tau)$ and hence, 
\begin{align*}
((\partial_t \partial^\alpha f , \partial^\alpha f )) = \frac{1}{2}\pa_t {\vertiii{ \partial^\alpha f}^2}.
\end{align*}
Next, we compute Term 1. 
Since  $s \ge \frac 1 2$ implies  $1-s \le s$, by Lemma \ref{L26} and \eqref{41a}, we have 
\begin{equation*}
\begin{aligned}
&\quad\,\int_{0}^{+\infty}\sum_{|\al|\le 2} ( S_{\L}(\tau)\partial^\alpha(  \nabla_x \phi \cdot  \nabla_ v f  ),  S_{\L}(\tau) \partial^\alpha f )_{L^2_x L^2_v} d\tau
\\
&\lesssim  \sum_{|\al|\le 2}\Vert  \partial^\alpha(\nabla_x \phi \cdot  \nabla_ v f)   \Vert_{L^2_{x}H^{-s}_{k_0+3/2} }   \sum_{|\al|\le 2} \Vert \partial^\alpha f\Vert_{L^2_x L^2_{k_0+3/2}} 
\\
&\lesssim  \Vert  \nabla_x \phi \Vert_{H^2_{x}}   \Vert  f   \Vert_{H^2_x H^{1-s}_{k_0+3/2}}   \Vert  f\Vert_{H^2_x L^2_{k_0+3/2  }}\lesssim \sqrt{\E_{k_0}(t)}\D_k(t), 
\end{aligned}
\end{equation*}
for the non-cutoff Boltzmann case, where we let $k\ge k_0+3/2$. 
Similarly, for the Landau case,  we have from \eqref{41a} that for $k\ge k_0+4$, 
\begin{equation*}
\begin{aligned}
\int_{0}^{+\infty}\sum_{|\al|\le 2} ( S_{\L}(\tau)\partial^\alpha(  \nabla_x \phi \cdot  \nabla_ v f  ),  S_{\L}(\tau) \partial^\alpha f )_{L^2_x L^2_v} d\tau
&\lesssim  \sum_{|\al|\le 2} \Vert  \partial^\alpha (\nabla_x \phi \cdot \nabla_vf )  \Vert_{L^2_{x}L^2_{k_0+4} }    \Vert   f\Vert_{H^2_x L^2_{k_0+4}} 
\\
&\lesssim  \Vert  \nabla_x \phi \Vert_{H^2_{x}}   \Vert  f   \Vert_{H^2_x H^{1}_{k_0+4}}   \Vert  f\Vert_{H^2_x L^2_{k_0+4}}\lesssim   \sqrt{\E_{k_0}(t)}\D_k(t). 
\end{aligned}
\end{equation*}
For the term 2 and for both Boltzmann and Landau case, since $\lim_{t\to\infty}\<t\>^{-\frac{7}{6}} = 0$ in \eqref{SL1}, we have
\begin{equation*}
\begin{aligned}
\quad\,\int_0^\infty\sum_{|\al|\le 2} ( S_{\L}(\tau)\L \partial^\alpha f, S_{\L}(\tau)  \partial^\alpha f )_{L^2_x L^2_v} d\tau 
&=\int_0^\infty \frac {d} {d\tau}\sum_{|\al|\le 2} \Vert S_{\L}(\tau)  \partial^\alpha f\Vert_{L^2_x L^2_v}^2 d\tau 
= -\sum_{|\al|\le 2}\Vert  \partial^\alpha f \Vert_{L^2_xL^2_v}^2.
\end{aligned}
\end{equation*}
Finally, we consider the nonlinear part, i.e. Term 3. For the Boltzmann case, by \eqref{2134}, \eqref{41a}, we have
\begin{align*}
&\quad\,\int_0^\infty \sum_{|\al|\le 2}( S_{\L}(\tau)\partial^\alpha Q(f_\pm+f_\mp, f_\pm), S_{\L}(\tau)  \partial^\alpha f )_{L^2_x L^2_v} d\tau 
\\
&\le  C_k\sum_{|\al|\le 2}\sum_{\alpha_1 \le \alpha }\Vert  Q(\partial^{\alpha_1} (f_\pm+f_\mp), \partial^{\alpha -\alpha_1} f_\pm)\Vert_{L^2_x H^{-s}_{k_0-\gamma/2}} \Vert  \partial^\alpha f \Vert_{L^2_x L^2_{k_0}}
\\
&\le C_k (\Vert  f \Vert_{H^{2}_x L^2_{14}} \Vert  f\Vert_{H^{2}_x H^s_{k_0+\gamma/2+2s}}  +  \Vert f \Vert_{H^{2}_x L^2_{14}} \Vert  f\Vert_{H^{2}_x H^s_{k_0+\gamma/2}}  )\Vert \partial^\alpha f \Vert_{L^2_x L^2_{k_0}}\le  C_k\sqrt{\E_{k_0}(t)}\D_k(t).
\end{align*}
Here we let $k_0\ge 14$, $k\ge k_0+\gamma/2+2s$ and apply similar discussion on $\al_1$ as in \eqref{al1}. 
For the Landau case, by \eqref{27a}, \eqref{41a}, we have
\begin{align*}
&\quad\,\int_0^\infty\sum_{|\al|\le 2} ( S_{\L}(\tau) \partial^\alpha Q(f_\pm+f_\mp, f_\pm), S_{\L}(\tau) \partial^\alpha f )_{L^2_x L^2_v} d\tau \\&
\le  C_k\sum_{|\al|\le 2}\sum_{\alpha_1 \le \alpha }\Vert   Q(\partial^{\alpha_1} f_\pm+f_\mp,  \partial^{\alpha -\alpha_1} f_\pm)\Vert_{L^2_x H^{-1}_{k_0+3/2}} \Vert \partial^\alpha f \Vert_{L^2_x L^2_{k_0}}\\&\le C_k \Vert  f \Vert_{H^{2}_x L^2_{k_0+5/2}} \Vert  f\Vert_{H^{2}_x L^2_{D,k_0+5/2}}\Vert \partial^\alpha f \Vert_{L^2_x L^2_{k_0+5/2}}\le  C_k\sqrt{\E_{k_0}(t)}\D_k(t),
\end{align*}
where $k\ge k_0+4$ and we also apply similar discussion on $\al_1$ as in \eqref{al1}. 
Combining the above estimates, the $((\cdot,\cdot))$ inner product of \eqref{433} with $\pa^\al f$ yields 
\begin{align*}
\frac{1}{2}\pa_t {\vertiii{  f}^2}+\sum_{|\al|\le 2}\| \partial^\alpha f\|_{L^2_xL^2_v}^2\lesssim \sqrt{\E_{k_0}(t)}\D_k(t). 
\end{align*}
Then we conclude Lemma \ref{L44}. 
\end{proof}

\subsection{Proof of the main theorem} We start this subsection by proving the main stability theorem as below. Theorem \ref{T46} and \ref{T47} together with local existence from Theorem \ref{T32} will imply Theorem \ref{globaldecay}. We give the details as the following. 
To prove Theorem \ref{globaldecay}, we assume the {\em a priori} assumption as 
\begin{align}\label{priass}
	\sup_{0\le t\le T}\E_{k_0}(t)\le 2M,
\end{align}
for the case $\gamma\in[0,1]$ and 
\begin{align}
	\label{priass2}
	\sup_{0\le t\le T}(1+t)^{\frac{2l}{|\gamma|}}\E_{k_0}(t)\le CM,
\end{align}
for the case $\gamma\in[-3,0),$ where $l>\frac{|\gamma|}{2}$ is a constant and we further assume $k\ge k_0+l$ in this case. 

\begin{thm}\label{T46} 
 Assume that $f_0$ satisfies the conservation laws \eqref{conse2.torus} and $F_0  = \mu + f_0\ge 0$. 
 There exists $k_1\ge k_0$ such that for 
 \begin{align*}
 	k\ge k_0 \text{ for Landau case and }k\ge k_1\text{ for Boltzmann case}, 
 \end{align*}
 there exists a small constant $M\ge 0$ such that if $\E_{k_0}(0) \le M, \E_k(0)<\infty$, then there exist a unique global solution $f(t, x, v )$ to the Vlasov-Poisson-Boltzmann/Landau system \eqref{vplocal1} with $F = \mu+  f \ge 0$ satisfying 
\begin{align}\label{45}
	\pa_t\E_{k}(t)+\lam\D_k(t)\lesssim \|f\|_{X_{k_0}}\E_{k}(t), 
\end{align}
for any $T>0$, for some generic constant $\lam>0$. 
\end{thm}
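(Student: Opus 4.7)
The plan is to combine the three core {\em a priori} estimates—the weighted energy estimate (Lemma \ref{L43}), the semigroup-generated dissipation on $\{\I-\P\}f$ (Lemma \ref{L44}), and the macroscopic estimate (Lemma \ref{L420})—into a single Lyapunov-type functional that is equivalent to $\|f\|_{X_k}^2+\|\na_x\phi\|_{H^3_x}^2$, then close the estimate via the smallness assumption on $\E_{k_0}(0)$. Global existence then follows from the local theory of Theorem \ref{T32} together with the standard continuation argument.

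\textbf{Step 1: Construction of the total energy functional.} For constants $0<C_2\ll C_1\ll 1$ to be chosen, I define
\begin{equation*}
	\E_k(t) := \E_1(t) + C_1 \sum_{|\alpha|\le 2} \vertiii{\partial^\alpha f}^2 + C_2\, G(t),
\end{equation*}
where $\E_1(t)$ is the functional in \eqref{EE}, $\vertiii{\cdot}$ is defined in \eqref{tri}, and $G(t)$ comes from Lemma \ref{L420}. By \eqref{E111}, by the bounds \eqref{41a}/\eqref{41b} giving $\vertiii{\partial^\alpha f}\lesssim \|\partial^\alpha f\|_{L^2_xL^2_{k_0}}$, and by the Cauchy--Schwarz bound \eqref{ing} controlling $G$ by $\|f\|_{X_k}^2$, taking $C_1,C_2$ sufficiently small ensures $\E_k(t)\approx \|f\|_{X_k}^2+\|\na_x\phi\|_{H^3_x}^2$, as needed for the stated definition of the instant energy in \eqref{DefE}.

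\textbf{Step 2: The differential inequality.} I take the combination $\eqref{eq1} + C_1\cdot\eqref{ttt} + C_2\cdot\eqref{uni4}$, with $\varepsilon$, $C_1$, $C_2$ chosen in this order. The left-hand side gives $\pa_t\E_k(t)$ plus a dissipation of the form
\begin{equation*}
	\lambda\varepsilon\|f\|_{Y_k}^2 + \lambda\sum_{|\alpha|\le 2}\|w(\alpha,0)\partial^\alpha\{\I-\P\}f\|_{L^2_xL^2_D}^2 + C_1\|\{\I-\P\}f\|_{H^2_xL^2_v}^2 + \lambda C_2 \bigl(\|[a_+,a_-,b,c]\|_{H^2_x}^2 + \|\na_x\phi\|_{H^3_x}^2\bigr).
\end{equation*}
The first key cancellation: the error term $C_{\varepsilon,k}\|\{\I-\P\}f\|_{H^2_xL^2_v}^2$ in \eqref{eq1} is absorbed by the $C_1\|\{\I-\P\}f\|_{H^2_xL^2_v}^2$ dissipation produced by Lemma \ref{L44}, provided $C_1\gg C_{\varepsilon,k}$. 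The second key cancellation: the error terms $\varepsilon\|\na_x\phi\|_{H^2_x}^2+\varepsilon\|\P f\|_{H^2_xL^2_v}^2$ in \eqref{eq1} (and the companion $\|\{\I-\P\}f\|_{H^2_xL^2_{10}}^2$ produced by Lemma \ref{L420}) are absorbed by $\lambda C_2\bigl(\|[a_+,a_-,b,c]\|_{H^2_x}^2+\|\na_x\phi\|_{H^3_x}^2\bigr)$ and by the already-established macroscopic dissipation; once $C_2$ is fixed, one shrinks $\varepsilon$ to dominate. After these absorptions the total dissipation controls $\D_k(t)$ as defined in \eqref{DefD}.

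\textbf{Step 3: Handling the nonlinear right-hand side.} The remaining terms on the right are of two types. The first type, $\|f\|_{X_{k_0}}\|f\|_{X_k}^2 + \bigl(\sqrt{\E_{k_0}(t)}+\E_{k_0}(t)\bigr)\D_k(t)$, is exactly what appears in the statement \eqref{45} (the first summand gives the right-hand side of \eqref{45}; the second is absorbed into the left using the {\em a priori} smallness \eqref{priass} or \eqref{priass2} for $M$ small enough). The second type consists of the nonlinear pieces $\|N_\|\|_{H^1_xL^2_v}^2+\|\na_x\phi\|_{L^2_x}^4$ from Lemma \ref{L420}. For $N_\pm = \pm\na_x\phi\cdot\na_vf_\pm + Q(f_\pm,f_\pm)+Q(f_\mp,f_\pm)$, the Vlasov-Poisson contribution is bounded via Sobolev embedding and $\|\na_x\phi\|_{H^3_x}\lesssim\sqrt{\E_{k_0}}$, while the collisional inner products $(Q(f,f),\psi)_{L^2_v}$ against the smooth, rapidly decaying test functions $\psi\in\mathrm{span}\{1,v_i,v_iv_j,v_i|v|^2\}$ are estimated by Lemma \ref{L26}/Lemma \ref{L52}; both give $\sqrt{\E_{k_0}(t)}\D_k(t)$. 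The quartic term $\|\na_x\phi\|_{L^2_x}^4\le\E_{k_0}(t)\|\na_x\phi\|_{H^3_x}^2\le\E_{k_0}(t)\D_k(t)$ absorbs likewise. This yields precisely \eqref{45}.

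\textbf{Step 4: Closure and global existence.} With $M$ chosen so that the factor $C\sqrt{\E_{k_0}(t)}+C\E_{k_0}(t)<\lambda/2$ under the {\em a priori} bound \eqref{priass} (respectively \eqref{priass2} after multiplying by the appropriate time-weight, which is handled in Theorem \ref{T47}), the inequality \eqref{45} together with Grönwall on the $k=k_0$ version yields $\sup_{0\le t\le T}\E_{k_0}(t)\le \tfrac{3}{2}M$, strictly improving the {\em a priori} bound and allowing continuation of the local solution from Theorem \ref{T32}. The higher-weight bound $\E_k(t)\le 2\E_k(0)$ then follows from \eqref{45} at level $k$, since the factor $\|f\|_{X_{k_0}}$ is time-integrable from the dissipation at level $k_0$. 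The non-negativity $F=\mu+f\ge 0$ is preserved by the local theory at each step. I expect the main technical obstacle to lie in Step 2, specifically in the careful ordering of constants $\varepsilon\ll C_2\ll C_1\ll 1$ so that each error term is absorbed by the correct piece of dissipation; the smallness of $\varepsilon$ in Lemma \ref{L43} must be chosen \emph{after} $C_1$ (to beat $C_{\varepsilon,k}$) but \emph{after} $C_2$ has already been fixed to dominate $\varepsilon\|\P f\|_{H^2_xL^2_v}^2$ by macroscopic dissipation. This nested ordering is what makes the coupling between the three estimates work.
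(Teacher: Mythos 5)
Your overall strategy is the one the paper uses: combine \eqref{eq1}, \eqref{ttt}, and \eqref{uni4} into a single Lyapunov functional and absorb error terms into the dissipation. However, your stated constraints on the constants are internally inconsistent in a way that would prevent the absorptions from closing.

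Specifically, in Step~1 you require $C_1 \ll 1$, but in Step~2 you require $C_1 \gg C_{\ve,k}$ — these two conditions cannot both hold, since $C_{\ve,k}$ in \eqref{eq1} grows as $\ve \to 0$ and is not small. In the paper's proof the semigroup coefficient (there called $C_0$) is chosen \emph{large}, not small, and the equivalence $\E_k \approx \|f\|_{X_k}^2 + \|\na_x\phi\|^2_{H^3_x}$ still holds because $\vertiii{\pa^\al f}^2 \ge 0$ contributes only on the upper-bound side; only the macroscopic coefficient $\eta$ must be small (to prevent $G$, which has no sign, from destroying the lower bound). Your later discussion of ordering compounds the problem: you say $\ve$ must be chosen \emph{after} $C_1$ ``to beat $C_{\ve,k}$,'' but shrinking $\ve$ \emph{increases} $C_{\ve,k}$, so this reasoning runs in the wrong direction and creates a circular dependency. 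The consistent order, as in the paper, is: (i) fix $C_2 = \eta$ small so that the macroscopic error $\eta\|\{\I-\P\}f\|^2_{H^2_xL^2_{10}}$ is dominated by the weighted $Y_k$-dissipation via $w(\alpha,0)\ge\<v\>^{k+6}$ (not by ``macroscopic dissipation'' as you write); (ii) then choose $\ve$ small relative to $\lambda\eta$ so that $\ve\|\na_x\phi\|^2_{H^2_x}$ and $\ve\|\P f\|^2_{H^2_xL^2_v}$ are absorbed by the macroscopic dissipation $\lambda\eta(\|[a_+,a_-,b,c]\|^2_{H^2_x}+\|\na_x\phi\|^2_{H^3_x})$; (iii) finally, with $\ve$ fixed, take $C_1 = C_0$ large enough to dominate $C_{\ve,k}$. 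Without this corrected ordering and the recognition that $C_1$ must be large, Step~2 does not close. The rest of your argument (Steps 3 and 4, the treatment of $N_{\|}$, the quartic term, and the continuation argument) is sound and matches the paper.
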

\begin{proof}
	We take linear combination $\kappa\times\eqref{uni4}+\eqref{eq1}+C_0\times\eqref{ttt}$ with $\kappa,C_0>0$ to deduce that 
\begin{multline*}
	\pa_t\E_{k}(t)+\lambda\kappa\|[a_+,a_-,b,c]\|^2_{H^2_x} +\lambda\kappa \|\na_x\phi\|^2_{H^2_x}
	+\lam\eta\|f\|^2_{Y_k}+C_0\|f\|_{H^2_xL^2_v}^2 +\lam\sum_{|\al|\le2} \Vert  w(\alpha,0) \partial^\alpha\{\I-\P\}  f\Vert_{L^2_xL^2_{D}}^2
	\\\lesssim  \|f\|_{X_{k_0}}\|f\|_{X_k}^2 + 
	\kappa\|\{\I-\P\}f\|^2_{H^2_xL^2_{10}} + \kappa\| N_{\|}\|_{H^1_xL^2_v}^2+
	C_{\eta,k}  \Vert  \{\I-\P\} f \Vert_{H^2_xL^2_v}^2\\+ \eta\|\na_x\phi\|^2_{H^2_x}+\eta\|\P f\|_{H^2_{x}L^2_v}^2+C(\sqrt{\E_{k_0}(t)}+\E_{k_0}(t))\D_k(t),
\end{multline*}
for some generic constant $\lam>0$, 
where $\E_{k}(t)$ is given by 
\begin{align}\label{realE}
	\E_{k}(t) := \kappa G(t) + \E_1(t) + \frac{C_0}{2}\sum_{|\al|\le2}{\vertiii{ \partial^\alpha f}^2}.
\end{align}
Here $G(t)$, $\E_1(t)$ and $\vertiii{\cdot}$ are given in Lemma \ref{L420}, Lemma \ref{L43} and \eqref{tri} respectively. 
Choosing $\kappa>0$ sufficiently small, and then $\eta>0$ sufficiently small, and finally $C_0>0$ sufficiently large, we have 
\begin{align}\label{447}
	&\quad\,\pa_t\E_{k}(t)+\lam\D_k(t)\lesssim \|f\|_{X_{k_0}}\E_{k}(t)+
	\kappa\| N_{\|}\|_{H^1_xL^2_v}^2
	+(\sqrt{\E_{k_0}(t)}+\E_{k_0}(t))\D_k(t),
\end{align}
for some generic constant $\lam>0$, where $\D_k(t)$ is given by \eqref{DefD}. 
Then one can check \eqref{DefE} by using \eqref{ing}, \eqref{EE}, \eqref{41a}  with sufficiently small $\kappa$. Note that $e^{\frac{\pm A_{\al,0}\phi}{2 \<v\>^2}}\approx 1$ as in \eqref{214d}.

\smallskip 
Next, recall from  Lemma \ref{L420} that 
\[
N_{\|}=(\pm\nabla_x\phi\cdot\nabla_vf_\pm)_{\|} + Q(f_\pm+f_\mp,f_\pm)_{\|} = (\pm\nabla_x\phi\cdot\nabla_vf_\pm, \xi) + (Q(f_\pm+f_\mp,f_\pm), \xi),
\]
where $\xi$ is some linear combination of $1,v_i,  v_i v_j, v_i^2,v_i|v|^2$. For $|\al|\le 1$, we have 
\begin{align}\label{450}\notag
	\Vert \partial^\alpha (\nabla_x \phi \cdot \nabla_v f, \xi)_{L^2_v} \Vert_{L^2_x}  &\lesssim \sum_{|\al_1|=0}\Vert \partial^{\alpha_1} \nabla_x \phi \Vert_{L^\infty_x} \Vert \partial^{\alpha-\alpha_1}\na_v f \Vert_{L^2_x L^2_5}\notag+\sum_{|\al_1|=1}\Vert \partial^{\alpha_1} \nabla_x \phi \Vert_{L^3_x} \Vert \partial^{\alpha-\alpha_1}\na_v f \Vert_{L^6_x L^2_5}\\
	&\lesssim \|\na_x\phi\|_{H^2_x}\|f\|_{H^1_xH^1_5} \lesssim \sqrt{\E_{k_0}(t)\D_k(t)}.
\end{align}
For the Boltzmann case, by Lemma \ref{L21}, we apply similar discussion on $\al_1$ as in \eqref{450} to deduce that 
\begin{align*}
	\Vert  \partial^\alpha (Q(f_\pm+f_\mp,f_\pm),\xi) \Vert_{L^2_{x, v}}^2 &\lesssim \| f\Vert_{H^{2}_xL^2_{10}} \Vert  f \Vert_{H^{2}_xL^2_{10}}\Vert \xi \Vert_{H^{2s}_{-7}  } \lesssim  \sqrt{\E_{k_0}(t)\D_k(t)}. 
\end{align*}
For the Landau case, by \eqref{27} we have 
\begin{align*}
	\Vert  \partial^\alpha (Q(f_\pm+f_\mp,f_\pm),\xi\<v\>^{-14}\<v\>^{14}) \Vert_{L^2_{x, v}}^2 &\lesssim \| f\Vert_{H^{2}_xL^2_{7}} \Vert  f \Vert_{H^{2}_xL^2_{D,8}}\Vert \xi \Vert_{L^2_{D,-7}  } \lesssim  \sqrt{\E_{k_0}(t)\D_k(t)}. 
\end{align*}
Therefore, we obtain $\sum_{|\alpha| \le 1}\Vert \partial^\alpha N_{\parallel}\Vert_{L^2_{x}}^2 \lesssim \E_{k_0}(t)\D_k(t)$.
Then \eqref{447} implies 
\begin{align}\label{448}
	\pa_t\E_{k}(t)+\lam\D_k(t)\lesssim \|f\|_{X_{k_0}}\E_{k}(t)+ (\sqrt{\E_{k_0}(t)}+\E_{k_0}(t))\D_k(t). 
\end{align}
%
%
%
%
	Choosing $M$ in the {\it a priori} assumption \eqref{priass} and \eqref{priass2} sufficiently small, we have from \eqref{448} that $\pa_t\E_{k}(t)+\lam\D_k(t)\lesssim \|f\|_{X_{k_0}}\E_{k}(t)$
for some $\lam>0$. 
This concludes Theorem \ref{T46}. 
\end{proof}

To conclude Theorem \ref{globaldecay}, we need to prove the large-time behavior as the following.
\begin{thm}\label{T47}
	Let $l=0$ for hard potential case and $l>\frac{|\gamma|}{2}$ for soft potential case. 
	Let $k\ge k_0+2+l$ (and let $k$ sufficiently large for Boltzmann case). 
If the solution $(f,\phi)$ of \eqref{vplocal1} satisfies $\E_{k_0+2+l}(0) \le M,\,\,\E_k(0)<\infty.$ Then there exists a constant $\lambda>0$, such that for any $t>0$, we have 
\begin{align}
	\label{453}
	\sup_{0\le t\le T}\E_k(t)\le 2\E(0). 
\end{align}
Moreover, we have $\E_{k}(t)\le e^{-\lam t}\E_k(0)\quad\text{if}\quad 0\le\gamma\le 1, \quad \E_{k-l}(t) \lesssim (1+t)^{-\frac{2l}{|\gamma|}}\E_{k}(0)\quad\text{if} \quad \gamma<0$.

\end{thm}

\begin{proof}
If $0\le \gamma\le 1$, noticing that $\|\cdot\|_{L^2_{x,v}}\lesssim \|\cdot\|_{L^2_D}$ and choosing $M$ in \eqref{priass} small enough, we have $\|f\|_{X_{k_0}}\lesssim\E_{k_0}\lesssim M$, and hence, by \eqref{45} we have $\pa_t\E_k(t) + \lam\E_k(t)\le 0$. Solving this ODE, we obtain $\E_k(t)\le e^{-\lam t}\E_{k}(0)$. This closes the {\it a priori} assumption \eqref{priass} and concludes the case of hard potential. 
	
	\smallskip
Next, we assume $\gamma<0$. For any $l>\frac{|\gamma|}{2}$, we have from \eqref{priass2} that 
\begin{align}\label{454}
	\|f\|_{X_{k_0}}\lesssim \E_{k_0}(t) \lesssim M(1+t)^{-\frac{2l}{|\gamma|}}. 
\end{align}
Let $p=\frac{-\gamma+2l}{2l}$ and $p'=\frac{-\gamma+2l}{-\gamma}$. Then by $L^p-L^{p'}$ H\"{o}lder's inequality, we have 
\begin{align*}
	\|w(\al,\beta)\pa^\al_\beta f\|^2_{L^2_{v}}&= \int_{\R^3}\<v\>^{\frac{2l\gamma}{-\gamma+2l}}\<v\>^{-\frac{2l\gamma}{-\gamma+2l}}|w(\al,\beta)\pa^\al_\beta f|^2\,dv\\
	&\le \|\<v\>^{\gamma/2}w(\al,\beta)\pa^\al_\beta f\|_{L^2_v}^{\frac{4l}{-\gamma+2l}}\|\<v\>^{l}w(\al,\beta)\pa^\al_\beta f\|_{L^2_v}^{\frac{-2\gamma}{-\gamma+2l}}\le \D_k^{\frac{2l}{-\gamma+2l}}\E_{k+l}^{\frac{-\gamma}{-\gamma+2l}}. 
\end{align*}
From definition \eqref{DefE} and \eqref{E}, we have $\E^{\frac{-\gamma+2l}{2l}}_k\E_{k+l}^{\frac{\gamma}{2l}}\le \D_k$. Then it follows from \eqref{45} and \eqref{454} that 
\begin{align}\label{455}
	\pa_t\E_{k}(t) + \lam\E^{\frac{-\gamma+2l}{2l}}_k(t)\Big(\sup_{0\le t\le T}\E_{k+l}\Big)^{\frac{\gamma}{2l}} \le CM(1+t)^{-\frac{2l}{|\gamma|}}\E_{k}(t). 
\end{align}
Neglecting the second left hand term of \eqref{455}, we have  $\pa_t\E_{k}(t) \le CM(1+t)^{-\frac{2l}{|\gamma|}}\E_{k}(t)$. Taking integration over $t\in[0,T]$ yields 
\begin{align*}
	\sup_{0\le t\le T}\E_k(t)\le \E_k(0)+ CM\int^T_0(1+t)^{-\frac{2l}{|\gamma|}}\,dt \sup_{0\le t\le T}\E_k(t).
\end{align*} 
Since $\frac{2l}{|\gamma|}>1$, choosing $M>0$ sufficiently small, we have 
\begin{align}\label{456}
	\sup_{0\le t\le T}\E_k(t) \le 2\E_k(0). 
\end{align}
This gives \eqref{453}. 
Next, we solve \eqref{455} directly.  It's direct to obtain that 
\begin{align*}
	\pa_t(\E_k^{\frac{\gamma}{2l}}(t)) = \frac{\gamma}{2l}\E_k^{\frac{\gamma-2l}{2l}}(t)\pa_t\E_k(t)
	\ge -\frac{\gamma\lam}{2l}\Big(\sup_{0\le t\le T}\E_{k+l}\Big)^{\frac{\gamma}{2l}} + \frac{CM\gamma}{2l}(1+t)^{-\frac{2l}{|\gamma|}}\E_k^{\frac{\gamma}{2l}}(t),
\end{align*}
and thus
\begin{multline*}
	\pa_t\big(\exp\big\{\frac{CM|\gamma|^2}{2l(-2l+|\gamma|)}(1+t)^{-\frac{2l}{|\gamma|}+1}\big\}\E_k^{\frac{\gamma}{2l}}(t)\big)
	\ge -\frac{\gamma\lam}{2l}\exp\big\{\frac{CM|\gamma|^2}{2l(-2l+|\gamma|)}(1+t)^{-\frac{2l}{|\gamma|}+1}\big\}\Big(\sup_{0\le t\le T}\E_{k+l}\Big)^{\frac{\gamma}{2l}}. 
\end{multline*}
Note that $-2l+|\gamma|<0$. 
Taking integration over $t\in[0,T]$, we have 
\begin{align*}
	\exp\big\{\frac{CM|\gamma|^2}{2l(-2l+|\gamma|)}(1+t)^{-\frac{2l}{|\gamma|}+1}\big\}\E_k^{\frac{\gamma}{2l}}(t)
	&\ge C_{\gamma,l}\E_k^{\frac{\gamma}{2l}}(0) + C_{\gamma,l}\int^t_01\,dt \Big(\sup_{0\le t\le T}\E_{k+l}\Big)^{\frac{\gamma}{2l}},
\end{align*}
and hence $\E_k(t) \le C_{\gamma,l}(1+t)^{\frac{2l}{\gamma}}\sup_{0\le t\le T}\E_{k+l}(t)$. Replacing $k$ by $k-l\ge k_0$, we apply \eqref{456} to deduce  
\begin{align*}
	\E_{k-l}(t) \lesssim (1+t)^{\frac{2l}{\gamma}}\sup_{0\le t\le T}\E_{k}(t)\lesssim (1+t)^{\frac{2l}{\gamma}}\E_k(0). 
\end{align*}
Noticing $\gamma<0$, choosing $k-l=k_0$ and applying \eqref{small}, we close the {\it a priori} assumption \eqref{priass2}. Using the standard continuity arguments, we complete the proof of Theorem \ref{T47}.
\end{proof}

\bigskip{\bf Acknowledgments.}  C.-Q. Cao has been supported by grants from Beijing Institute of Mathematical Sciences and Applications and Yau Mathematical Science Center, Tsinghua University. D.-Q. Deng was partially supported by the National Research Foundation of Korea (NRF) grant funded by the Korea government (MSIT) No. RS-2023-00210484 and No. RS-2023-00212304. X.-Y. Li has been supported by grants from project ANR-17-CE40-0030
of the French National Research Agency (ANR) and CEREMADE, Universit\'e Paris Dauphine.

\medskip

\noindent{\bf Conflict of Interest:} The authors declare that they have no conflict of interest.


\end{document}